\newcommand{\calL}{\mathcal{L}}
\newcommand{\bbC}{\mathbb{C}}
\newcommand{\bbN}{\mathbb{N}}
\newcommand{\bbP}{\mathbb{P}}
\newcommand{\bbZ}{\mathbb{Z}}
\newcommand{\ttc}{\mathtt{c}}
\newcommand{\ttB}{\mathtt{B}}
\newcommand{\ttS}{\mathtt{S}}
\newcommand{\ttP}{\mathtt{P}}
\newcommand{\ttM}{\mathtt{M}}
\newcommand{\ttV}{\mathtt{V}}
\newcommand{\p}{\mathbb{P}}
\newcommand{\A}{\mathbb{A}}
\newcommand{\N}{\mathbb{N}}
\newcommand{\C}{\mathbb{C}}
\newcommand{\AAA}{\mathbb{A}}
\newcommand{\Bs}{\mathop{\rm Bs}\nolimits}
\renewcommand{\vert}{\mathop{\rm vert}\nolimits}
\newcommand{\SV}{\mathop{\rm SV}\nolimits}
\newcommand{\V}{\mathop{\rm V}\nolimits}
\newcommand{\Bl}{\mathop{\rm Bl}\nolimits}
\newcommand{\codim}{\mathop{\rm codim}\nolimits}
\newcommand{\Sym}{\mathop{{\rm Sym}}\nolimits}
\newcommand{\Span}[1]{\langle#1\rangle}
\newcommand{\edim}{\mathop{\rm edim}\nolimits}
\newcommand{\vdim}{\mathop{\rm vdim}\nolimits}
\newcommand{\Tr}{\mathop{\rm Tr}\nolimits}
\newcommand{\Res}{\mathop{\rm Res}\nolimits}
\theoremstyle{plain}
\newtheorem{theorem}{Theorem}[section]
\newtheorem{proposition}[theorem]{Proposition}
\newtheorem{lemma}[theorem]{Lemma}   
\newtheorem{corollary}[theorem]{Corollary}
\newtheorem{claim}{Claim}
\theoremstyle{definition}
\newtheorem{notation}[theorem]{Notation}
\newtheorem{example}[theorem]{Example}
\newtheorem{definition}[theorem]{Definition}
\newtheorem{remark}[theorem]{Remark}
\numberwithin{equation}{section}
\newcommand{\im}[0]{\operatorname{im}}
\newcommand{\LL}{\mathcal{L}}
\newcommand{\rd}[1]{\left\lfloor #1\right\rfloor}
\newcommand{\ru}[1]{\left\lceil #1\right\rceil}
\newcommand{\Ale}[1]{{\color{black}#1}}
\newcommand{\Fra}[1]{{\color{black} #1}}
\title{Secant non-defectivity via collisions of fat points}
\author{Francesco Galuppi and Alessandro Oneto}
\address[Francesco Galuppi]{Institute of Mathematics of the Polish Academy of Sciences, ulica Śniadeckich 8,
00-656 Warszawa, Poland.}
\email{fgaluppi@impan.pl}
\address[Alessandro Oneto]{Department of Mathematics, 
Universit\`a di Trento, Via Sommarive, 14, 38123 Povo (Trento), Italy}
\email{alessandro.oneto@unitn.it}
\keywords{secant varieties, Segre-Veronese varieties, defectivity, linear systems of divisors, fat points, degeneration techniques}
\subjclass[2020]{Primary: 14N05, 14N07; Secondary: 13D40, 14J70, 14M99, 14Q15, 15A69}
\begin{document}
	\maketitle

\begin{abstract}
The computation of dimensions of secant varieties of projective varieties is classically approached via dimensions of linear systems with multiple base points in general position. Non-defectivity can be proved via degenerations. In this paper, we use a technique that allows some of the base points to collapse together in order to deduce a new general criterion for non-defectivity. We apply this criterion to prove a conjecture by Abo and Brambilla: for $c\ge 3$ and $d\ge 3$, the Segre-Veronese embedding of $\p^m\times\p^n$ in bidegree $(c,d)$ is non-defective.
\end{abstract}

\section{Introduction}

A classical problem in algebraic geometry that goes back to late XIX century concerns the classification of \textit{defective varieties}, i.e., algebraic varieties whose \textit{secant varieties} have dimension strictly smaller than the one expected by a direct parameter count. In the last decades, this problem gained a lot of attention due to its relation with \textit{additive decompositions} of \textit{tensors} which are used in many areas of applied mathematics and engineering. Indeed, \textit{Segre varieties} parametrize \textit{decomposable tensors}; similarly, \textit{Veronese varieties} and \textit{Segre-Veronese varieties} are the symmetric and partially-symmetric analogous. We refer to \cite{carlinifourlectures} and \cite{guida} for an overview on the geometric problem and to \cite{Lan} for relations between secant varieties and questions on tensors.

The most celebrated result in this area of research is the celebrated \textit{Alexander-Hirschowitz Theorem}, proven in \cite{AH}, which classifies defective Veronese varieties by completing the work started more than 100 years earlier by the classical school of algebraic geometry. Denote by $\V_n^d$ the Veronese variety given by the embedding of $\bbP^n$ via the linear system of degree $d$ divisors. Several examples of defective Veronese varieties were known already at the time of Clebsch, Palatini and Terracini, but we had to wait until a series of enlightening papers by Alexander and Hirschowitz which culminated in 1995 with the proof that those are the only exceptional cases among Veronese varieties. We refer to \cite[Section~7]{BraOtt08} for an historical overview on this theorem. 
\begin{theorem}[Alexander-Hirschowitz]\label{thm:AH}\label{thm: AH}
	Let $n$, $d$ and $r$ be positive integers. The Veronese variety $\V^d_n$ is $r$-defective if and only if either
	\begin{enumerate}
		\item $d = 2$ and $2 \leq r \leq n$, or
		\item $(n,d,r) \in \{(2,4,5), (3,4,9), (4,3,7), (4,4,14)\}$.
	\end{enumerate}
\end{theorem}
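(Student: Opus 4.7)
The plan is to translate via Terracini's Lemma: the variety $\V_n^d$ is $r$-defective precisely when a general union $Z = 2p_1 + \cdots + 2p_r$ of $r$ double points in $\bbP^n$ fails to impose independent conditions on forms of degree $d$, i.e., when
\[
h^0\bigl(\bbP^n, \mathcal{I}_Z(d)\bigr) \neq \max\!\left\{0,\;\tbinom{n+d}{d} - r(n+1)\right\}.
\]
Thus the task reduces to a cohomology computation on a linear system of fat points, and I would attack this by a double induction on $n$ and $d$.

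The case $d=2$ is elementary: secants of the quadratic Veronese correspond to the rank stratification of symmetric $(n{+}1)\times(n{+}1)$ matrices, from which the defect in the range $2 \le r \le n$ follows by a direct dimension count. The four sporadic triples for $d \ge 3$ are then disposed of by dedicated ad hoc arguments—for instance, $(n,d,r)=(2,4,5)$ is explained by the unique plane conic through five general points, whose double is a quartic singular at all of them, while $(4,3,7)$ is linked to the Scorza quartic obstruction. The core of the work is to show non-defectivity for every other triple.

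For the main inductive step I would apply the Horace method. Fix a hyperplane $H \cong \bbP^{n-1}\subset \bbP^n$, specialize an appropriately chosen number $k$ of the double points onto $H$, and use the trace--residue exact sequence
\[
0 \longrightarrow \mathcal{I}_{\Res_H Z}(d-1) \longrightarrow \mathcal{I}_Z(d) \longrightarrow \mathcal{I}_{Z\cap H,\,H}(d) \longrightarrow 0
\]
to split the problem into a \emph{trace} system of $k$ double points on $\bbP^{n-1}$ in degree $d$, and a \emph{residual} system on $\bbP^n$ in degree $d-1$ consisting of the remaining $r-k$ double points together with $k$ simple residual points. A naive specialization almost never matches the numerology on the nose, so the decisive tool is the \emph{differential Horace method} of Alexander and Hirschowitz: by taking a flat limit in which only a tangent direction of each fat point lies in $H$, one can reshuffle how the $n+1$ conditions carried by a double point distribute between trace and residue, and thereby balance the dimension counts exactly.

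The main obstacle is the delicate numerical bookkeeping that drives the induction. For each $(n,d,r)$ outside the exceptional list one must choose the correct split between fully specialized and differentially specialized points so that both the trace and the residual linear systems fall inside the range of the inductive hypothesis, while carefully steering around the four sporadic triples and the $d=2$ locus. The base case $n=2$ must be treated separately via surface-theoretic arguments and low-degree verifications, and the inductive jump from $d$ to $d+1$ splits into subcases according to the residue of $\tbinom{n+d}{d}$ modulo $n+1$. Once the calibration is done, the exceptions in~(2) emerge precisely as the finitely many configurations in which no admissible specialization exists, which gives both directions of the classification.
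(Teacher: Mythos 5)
The paper does not prove this theorem: it is stated as a classical result and cited from \cite{AH} (with \cite{BraOtt08} referenced for historical context and an alternative proof). So there is no internal proof to compare your proposal against. That said, your sketch accurately reflects the strategy of the original Alexander--Hirschowitz proof: reduce via Terracini's Lemma to the postulation problem for general double points, handle $d=2$ by a direct rank-of-quadrics argument, treat the four sporadic triples ad hoc, and run a Horace-type induction on $(n,d)$ using the trace--residue sequence, refined by the \emph{differential} Horace method (taking flat limits in which only a tangent direction of a fat point sits on $H$) to resolve the arithmetic mismatches that block the naive specialization. The $n=2$ base case and the low-degree verifications are indeed where much of the labor lives. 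What your outline necessarily omits is the actual calibration: the choice of how many points to fully specialize versus differentially specialize at each step, the treatment of the ``crabe'' and ``d\^{e}me'' configurations, and the careful routing around the exceptional cases, which occupied Alexander and Hirschowitz across several papers and is the genuine content of the theorem. Note also that the present paper's own contribution is a \emph{different} degeneration technique (Evain-style collisions of several $2$-fat points into a triple point with prescribed infinitely near simple points, formalized in Theorem \ref{thm: generale, ambizioso, avveniristico}), which the authors explicitly propose as an alternative to the differential Horace method for proving non-defectivity; so while your proposal matches the historical route to Theorem \ref{thm:AH}, it is not the mechanism this paper uses to establish its new results.
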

After this result, the community tried to extend the classification of defective varieties to Segre and Segre-Veronese varieties by applying and refining the powerful methods introduced by Alexander and Hirschowitz. In the present paper, we focus on Segre-Veronese varieties with two factors, i.e., the image $\SV_{m\times n}^{c,d}$ of the embedding of $\bbP^m\times\bbP^n$ via the linear system of divisors of bidegree $(c,d)$. 
Among them, a long list of defective cases have been found (see \cite[Table 1]{AboBra13}) by Catalisano, Geramita and Gimigliano \cite{CatGerGim05,CatGerGim08}, 
Abrescia \cite{Abrescia}, Bocci \cite{Bocci}, Dionisi and Fontanari \cite{DioFon01}, Abo and Brambilla \cite{AboBra09}, Carlini and Chipalkatti \cite{CarChi03} and Ottaviani \cite{Ott07}. In \cite[Conjecture~5.5]{AboBra13}, Abo and Brambilla conjectured that these are the only defective cases. The fact that in all defective examples either $c$ or $d$ is strictly smaller than three suggested a weaker conjecture, stated in \cite[Conjecture~5.6]{AboBra13}. In this paper we prove this conjecture by proving the following result which is a step towards a complete classification of \Fra{defective} Segre-Veronese varieties with two factors.
\begin{theorem}\label{thm: main}
Let $m$ and $n$ be positive integers. If $c\ge 3$ and $d\ge 3$, then $\SV_{m\times n}^{c,d}$ is not defective.
\end{theorem}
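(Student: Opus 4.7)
The plan is to combine Terracini's lemma with a Horace-style degeneration enhanced by the collision technique advertised in the abstract. By Terracini's lemma, proving that $\SV_{m\times n}^{c,d}$ is not $r$-defective is equivalent to showing that, for a general set $Z$ of $r$ points of $\bbP^m\times\bbP^n$,
\[
h^0\bigl(\mathcal{I}_{2Z}(c,d)\bigr)\;=\;\max\Bigl\{0,\;\binom{m+c}{c}\binom{n+d}{d}-r(m+n+1)\Bigr\}.
\]
By standard semicontinuity, it suffices to verify this equality at the two critical values of $r$ where the two arguments of the maximum meet, so the problem reduces to a dimension count for a linear system on $\bbP^m\times\bbP^n$ with general double base points.

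The second step is an induction on the bidegree, say on $c+d$. Fix a fibre $D=\{p\}\times\bbP^n$ and specialise a carefully chosen number of the general double points onto $D$. The restriction-residue exact sequence
\[
0\longrightarrow \mathcal{I}_{\operatorname{Res}_D(2Z)}(c-1,d)\longrightarrow \mathcal{I}_{2Z}(c,d)\longrightarrow \mathcal{I}_{2Z\cap D,\,D}(c,d)|_D\longrightarrow 0
\]
relates $h^0(\mathcal{I}_{2Z}(c,d))$ to a bidegree $(c-1,d)$ system on $\bbP^m\times\bbP^n$ (residue) and a degree-$d$ system on $D\cong\bbP^n$ (trace). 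The residue is handled by the inductive hypothesis once $c\ge 4$, while the trace is controlled by the Alexander-Hirschowitz theorem~\ref{thm:AH}. The innovation is the collision step: rather than keeping the specialised double points in general position on $D$, one allows several of them to collapse together along a linear subspace of $D$. The general result of the paper certifies that the resulting collided scheme imposes the expected number of conditions, providing the flexibility to tune the Horace argument so that trace and residue jointly account for exactly the right codimension.

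The base cases split into two families. First, the minimal bidegrees $(c,d)\in\{(3,3),(3,4),(4,3)\}$ for small $m,n$, which should either be covered by \cite{AboBra13} or verifiable by direct computation. Second, low ambient dimension (e.g.\ $m=1$), where degeneration to a fibre reduces matters to a Veronese situation handled by Alexander-Hirschowitz. In my view, the main obstacle is combinatorial bookkeeping: one must tune simultaneously the number of double points specialised onto $D$, the dimension of the linear subspace along which groups of them collide, and the multiplicity of the resulting fat scheme, so that both the trace and the residue land in cases covered either by the inductive hypothesis or by the general collision criterion, with no residual numerical gap left by divisibility mismatches between $r(m+n+1)$ and $\binom{m+c}{c}\binom{n+d}{d}$. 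Making this threading work uniformly across all $m,n\geq 1$ and $c,d\geq 3$ is where the technical weight of the proof concentrates.
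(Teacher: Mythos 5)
Your outline collapses precisely at the point where the actual content of the theorem lies. You treat the low bidegrees as \emph{base cases} ``for small $m,n$, covered by \cite{AboBra13} or verifiable by direct computation''. But \cite[Theorem 1.3]{AboBra13} already carries out the induction on the bidegree: it reduces the whole conjecture to the non-defectivity of $\SV_{m\times n}^{3,3}$, $\SV_{m\times n}^{3,4}$ and $\SV_{m\times n}^{4,4}$ for \emph{all} $m,n$, and these three infinite families are exactly what Abo and Brambilla could not prove and what this paper establishes (Sections \ref{sec: 33}, \ref{sec: bidegree (3,4)}, \ref{sec: bidegree (4,4)}). They cannot be disposed of by finitely many computer checks, and your induction on $c+d$ never decreases $m$ or $n$ in the residue, so nothing in your scheme attacks them: the ``combinatorial bookkeeping'' you defer at the end is not a technicality, it is the theorem. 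Moreover, your recursion bottoms out at $c=3$ (``the residue is handled by the inductive hypothesis once $c\ge 4$''), i.e.\ precisely at the unproven cases; low degrees are rich in defective examples and cannot serve as a free floor for the induction.

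There are also concrete errors in the degeneration step. For $m\ge 2$ the fibre $D=\{p\}\times\bbP^n$ has codimension $m$, it is not a divisor, so the sequence you write is false: the kernel of restriction to $D$ is not the bidegree $(c-1,d)$ residue system. One must use a divisor of bidegree $(1,0)$, i.e.\ $D\cong\bbP^{m-1}\times\bbP^n$, and then the trace is a bidegree $(c,d)$ system on $\bbP^{m-1}\times\bbP^n$ (not a Veronese system handled by Theorem \ref{thm:AH}), while the residue acquires \emph{simple} points constrained to lie on $D$, which the inductive non-defectivity statement alone does not control --- this is why the paper needs Lemma \ref{lemma:extra_points} and a long list of arithmetic lemmas. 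Finally, the collision technique is not a plug-in certificate that ``the collided scheme imposes the expected number of conditions'': the infinitely near tangent directions produced by the collision are \emph{not} in general position (Example \ref{example: punti infinitamente vicini in una star configuration}) and require Proposition \ref{pro: magia di Max} and a monodromy argument to handle. In the paper the collision is used once, globally: $m+n+1$ of the double points are collapsed to a single point (Remark \ref{rmk: collisione di punti doppi}) to prove the general criterion Theorem \ref{thm: quello che usiamo}, which converts $r$-non-defectivity into regularity of $\LL(3,2^{r-n-1})$ and vanishing of $\LL(4,2^{r-n-1})$; verifying those two statements for the bidegrees $(3,3)$, $(3,4)$, $(4,4)$ by a Horace-type double induction on $m$ and $n$ is the bulk of the proof, and your proposal contains no substitute for it.
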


 Abo and Brambilla themselves managed to greatly reduce the problem. Thanks to \cite[Theorem~1.3]{AboBra13}, in order to prove Theorem \ref{thm: main} it is enough to prove the base cases $(c,d) \in \{(3,3),(3,4),(4,4)\}$. This reminds what happened with Alexander-Hirschowitz Theorem, where the last to be overcome was the case of cubics which constituted the base case of the inductive proof. Low degrees are difficult to handle because they are rich of defective cases. 

Dimensions of secant varieties of polarized varieties are classically computed by translating the problem to the computation of dimensions of certain linear systems of divisors with multiple base points in general position; see Section \ref{sec: linear systems}. In this context, defectivity means that the linear system has not the dimension expected by a direct parameter count. Degeneration techniques are very powerful tools to tackle these problems. Indeed, the expected dimension is always a lower bound for the actual one, while a specialization of the base points can only increase it. Hence, the whole approach boils down to finding a good specialization for which we can prove that the dimension is equal to the expected one. The seminal idea of Castelnuovo was to specialize some of the supports on a hypersurface. Then, via the so-called \textit{Castelnuovo exact sequence}, one can proceed with a double induction on dimensions and degrees; see Section \ref{sec:inductive}. However, this approach has arithmetic constrains: when the virtual dimension, i.e., the parameter count, is close to zero then there might be not enough freedom to find a good specialization to make the double induction work. At the same time, the cases with small virtual dimension are particular compelling because most defective cases appear among them.
In the 1980s, Alexander and Hirschowitz improved drastically this method by introducing a new degeneration technique, called \textit{differential Horace method}, which allowed them to complete the classification of defective Veronese varieties overcoming the arithmetic issues. The drawback of the differential Horace method is that, when the virtual dimension is close to zero, it might lead to consider linear systems whose base locus has a complicated non-reduced structure. In order to overcome the latter complication, in the literature, there are several examples of results on linear systems having virtual dimension sufficiently distant from zero; see e.g. \cite[Theorem 2.3]{BCC} or \cite[Theorem~1.3]{Abo10}.

In this paper, we employ a different degeneration technique, introduced by Evain in \cite{EvainIdea}: the base points are not only degenerated to a special position, but also allowed to collide together; see Section~\ref{sec:collapsing_points} for details. The degenerated linear system has 
a $0$-dimensional base point with a very special, yet understood, non-reduced structure which is contained in a $4$-fat point and contains a $3$-fat point. Apparently a disadvantage, this new scheme turned out to be very useful to prove the following criterion for non-defectivity. Given a linear system $\calL$ on a variety, we denote by $\calL(m,2^r)$ the linear subsystem of $\calL$ obtained by imposing a base point of multiplicity $m$ and $r$ general base double points.
\begin{theorem}\label{thm: quello che usiamo}
	Let $(V,\calL)$ be a polarized smooth irreducible projective variety of dimension $n$. Suppose that $\calL$ embeds $V$ as a proper closed subvariety of $\bbP\calL^\vee$ and $W$ is the image of such embedding. Assume that
	\begin{enumerate}
		\item $\LL(3,2^{r-n-1})$ is regular, i.e., $\dim\LL(3,2^{r-n-1}) = \dim\calL - \binom{n+2}{2} - (r-n-1)(n+1)$,
        \item $\LL(4,2^{r-n-1})$ is zero,
        \item $\dim\LL(3)-\dim\LL(4)\ge\binom{n+1}{2}$ and
		\item $\dim\LL\ge (n+1)^2$
	\end{enumerate}
	for $r\in\left\{\rd{\frac{\dim\LL}{n+1}},\ru{\frac{\dim\LL}{n+1}}\right\}$. Then $W$ is not defective.
\end{theorem}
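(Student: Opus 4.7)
The plan is to combine Terracini's lemma with the collision technique recalled in Section~\ref{sec:collapsing_points}. By Terracini, $W$ is non-defective if and only if $\calL(2^s)$ has dimension $\max\{0,\dim\calL-s(n+1)\}$ for every $s\ge 1$, and a standard monotonicity argument (Section~\ref{sec:inductive}) reduces the verification to the two critical values $s=r$ with $r\in\{\rd{\dim\calL/(n+1)},\ru{\dim\calL/(n+1)}\}$. At these values hypothesis~(4) yields $r\ge n+1$, which is exactly what is needed to have enough double points to collide.

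Fix one such $r$. The idea is to degenerate the $r$-tuple of general double points by letting $n+1$ of them collide at a single point $p\in V$ along generic tangent directions. By the construction of Section~\ref{sec:collapsing_points}, the limit scheme $Z$ is supported at $p$, has length $(n+1)^2$, and its ideal sits in a flag
\begin{equation*}
\mathfrak{m}_p^4 \;\subseteq\; I_Z \;\subseteq\; \mathfrak{m}_p^3,
\end{equation*}
so that $\mathfrak{m}_p^3/I_Z$ is a distinguished $\binom{n+1}{2}$-dimensional subspace of $\mathfrak{m}_p^3/\mathfrak{m}_p^4$ determined by the collision directions. Upper semicontinuity of cohomology gives $\dim\calL(2^r) \le \dim\calL(Z,2^{r-n-1})$; it therefore suffices to prove that the right-hand side equals $\max\{0,\dim\calL-r(n+1)\}$.

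Combining the short exact sequences associated with the inclusions $I_Z\subseteq\mathfrak{m}_p^3$ and $\mathfrak{m}_p^4\subseteq\mathfrak{m}_p^3$ (after twisting by $\calL$ and by the ideal of the other $r-n-1$ double points), the problem reduces to computing the rank of the composite evaluation
\begin{equation*}
\mathrm{ev}\colon \calL(3,2^{r-n-1}) \;\hookrightarrow\; \mathfrak{m}_p^3/\mathfrak{m}_p^4 \otimes \calL|_p \;\twoheadrightarrow\; \mathfrak{m}_p^3/I_Z \otimes \calL|_p,
\end{equation*}
where injectivity of the first arrow follows from hypothesis~(2) ($\calL(4,2^{r-n-1})=0$), and the source has dimension $\dim\calL-r(n+1)+\binom{n+1}{2}$ by hypothesis~(1). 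A direct length comparison then shows that if $\mathrm{ev}$ has maximal rank--surjective when $\dim\calL\ge r(n+1)$ and injective otherwise--the dimension of $\calL(Z,2^{r-n-1})$, and hence of $\calL(2^r)$, is exactly the expected one.

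The hardest step is to establish that $\mathrm{ev}$ has maximal rank. The strategy is to exploit the freedom in choosing the collision directions $v_1,\ldots,v_{n+1}\in T_pV$: these parametrize the $(n+1)$-dimensional kernel $K$ of the surjection $\mathfrak{m}_p^3/\mathfrak{m}_p^4\otimes\calL|_p \twoheadrightarrow \mathfrak{m}_p^3/I_Z\otimes\calL|_p$, and one needs to show that for generic directions $K$ lies in general position with respect to the image of $\calL(3,2^{r-n-1})$. Hypothesis~(3) is the precise input that makes this possible: by forcing $\dim\calL(3)-\dim\calL(4)\ge\binom{n+1}{2}$, it ensures that already the image of $\calL(3)$ fills at least $\binom{n+1}{2}$ dimensions inside $\mathfrak{m}_p^3/\mathfrak{m}_p^4\otimes\calL|_p$, which is the threshold needed for a generic $\binom{n+1}{2}$-dimensional quotient to be transversal. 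The main obstacle is to verify rigorously that the map from $(n+1)$-tuples of tangent directions to the Grassmannian of $(n+1)$-planes in $\mathfrak{m}_p^3/\mathfrak{m}_p^4$ meets the open transversality locus, and this is where Evain's explicit description of the collision scheme enters essentially.
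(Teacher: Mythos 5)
Your setup is sound: you correctly reduce to the two critical values of $r$ via Remark~\ref{rmk: bastano r alto e r basso}, note that hypothesis~(4) gives $r\ge n+1$, specialize by colliding $n+1$ of the double points, and observe that hypotheses~(1) and~(2) place $\dim\calL(Z,2^{r-n-1})$ between $\dim\calL(3,2^{r-n-1})$ and $\dim\calL(4,2^{r-n-1})$, leaving a ``window'' of $\binom{n+1}{2}$ conditions to account for. That matches the structure of the paper's Theorem~\ref{thm: generale, ambizioso, avveniristico}.

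The gap is in the final step, which is precisely the heart of the proof. You reduce to showing that a certain evaluation map $\mathrm{ev}$ has maximal rank and you propose to do this by choosing the collision directions so that the kernel $K$ is in general position against the image of $\calL(3,2^{r-n-1})$, appealing to a transversality locus in a Grassmannian. This does not work as stated. First, a minor but symptomatic error: $K=I_Z/\mathfrak{m}_p^4$ has dimension $\binom{n+2}{3}-\binom{n+1}{2}=\binom{n+1}{3}$, not $n+1$; what is $(n+1)$-dimensional is the space of tangent directions parametrizing the collision, not $K$ itself. More seriously, the $\binom{n+1}{2}$ infinitely-near points $t_{ij}$ on the exceptional divisor $E$ that cut out $I_Z/\mathfrak{m}_p^3$ are \emph{not} in general position: as the paper stresses around Example~\ref{example: punti infinitamente vicini in una star configuration}, they sit in a star configuration, and the map from $(n+1)$-tuples of directions to the Grassmannian does not dominate. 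So one cannot simply invoke a generic transversality statement; one must prove a maximal-rank statement for this constrained configuration, and this is exactly what you flag as ``the main obstacle'' without resolving it.

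The paper takes a genuinely different route at this point. Rather than a direct rank computation, it argues by contradiction: if some $t_{i+1}$ were a base point of $\calL(3[t_1,\ldots,t_i],2^{h})$, then since $t_{i+1}$ is independent of the general double points, Proposition~\ref{pro: magia di Max} forces the image of the blown-up system to be a cone with $t_{i+1}$ in the vertex, hence \emph{every} element of $\calL(3[T],2^{h-1})$ is \emph{singular} at $t_{i+1}$. Monodromy then propagates the singularity to all of $T$, and Lemma~\ref{lem: the tangent directions give independent conditions} (proved specifically for the star configuration, not for generic points) shows that the system of cubics on $E$ singular along $T$ is zero, pushing $\calL(3[T],2^{r-n-1})$ into $\calL(4,2^{r-n-1})=0$ and yielding the contradiction. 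Hypothesis~(3) enters only in Claim~\ref{claim1}, via Lemma~\ref{lemma:extra_points}, to guarantee $h\ge 1$. None of the three ingredients --- the cone proposition, the monodromy step, and the star-configuration lemma --- appears in your proposal, and without them (or a substitute) the maximal-rank claim remains unproven.
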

The key of success of the latter criterion should be made clear, in light of the historical remarks above, when we consider one of the problematic cases where $\calL(2^r)$ has virtual dimension close to zero. By letting collide $n+1$ of the general $2$-fat base points, we get a base locus with $r-(n+1)$ $2$-fat points and one component which is between a $3$-fat and a $4$-fat point. Then, regularity follows from proving the conditions (1) and (2), where the latter component of the base locus is replaced once with the $3$-fat point and once with the $4$-fat point. Even if in the latter two cases we no longer have only double points, the virtual dimension becomes sufficiently distant from zero to let us find further classic degenerations to complete the proof.

Our proof of Theorem \ref{thm: main} is indeed an application of Theorem \ref{thm: quello che usiamo}. Our result adds up to previous successful applications of this degeneration technique: in \cite{EvainSHGH} for linear systems of plane curves, in \cite{Galuppi} for linear systems in $\bbP^3$ and in \cite{GM} in the context of Waring decompositions of polynomials. For this reason, we believe that our general result can be used to attack also other questions on non-defectivity of projective varieties for which previous methods presented technical obstacles.

\Fra{Our Theorems \ref{thm: main} and \ref{thm: quello che usiamo} have consequences also on \textit{identifiability}. Given a non-degenerate projective variety $W\subset\p^N$ and a point $p\in\p^N$ in the ambient space, the \textit{rank} of $p$ with respect to $W$ is \Ale{the smallest cardinality of a set of distinct points on $W$ whose linear span contains $p$. The $k$-th secant variety of $W$ is the Zariski-closure of the set of points of rank at most $k$}. 
The point $p$ is \textit{$k$-identifiable} with respect to $W$ \Ale{if such spanning set of minimal cardinality is unique}. The variety $W$ is called \textit{$k$-identifiable} if the general rank-$k$ point of the ambient space is $k$-identifiable. In \cite{casarotti2019non}, the authors relate the study of defectivity to the study of identifiability. Combining their main result with our Theorems \ref{thm: main} and \ref{thm: quello che usiamo}, we obtain the following  identifiability statement for all ranks smaller than the general rank.

\begin{corollary}
\label{cor: identifiability in general} Under the same hypothesis as in Theorem \ref{thm: quello che usiamo}, the variety $W$ is $(k-1)$-identifiable for every $2\dim(W)<k\le\rd{\frac{\dim\LL}{\dim(W)+1}}$.
\end{corollary}

\Ale{In the particular case of Segre-Veronese varieties of two factors, we conclude the following.}

\begin{corollary}\label{cor: identifiability for SV}
Let $m$, $n$, $c$ and $d$ be positive integers such that $c\ge 3$ and $d\ge 3$. If \begin{equation}\label{eq:upper_bound_ident}
k\le\left\lfloor \frac{1}{m+n+1}\binom{m+c}{c}\binom{n+d}{d}\right\rfloor,
\end{equation}
then $\SV_{m\times n}^{c,d}$ is $(k-1)$-identifiable. \end{corollary}

\Ale{As mentioned, when the algebraic variety $W$ with respect to which we consider the notion of rank is a variety like Veronese, Segre, or Segre-Veronese varieties, then the geometric notion of rank corresponds to the one of symmetric rank, tensor rank and partially symmetric rank of tensors. If we consider partially symmetric tensors $T \in {\rm Sym}^c\bbC^{m+1}\otimes{\rm Sym}^d\bbC^{n+1}$, our results are rephrased as follows.
\begin{itemize}
    \item (Theorem \ref{thm: main}) If $T$ is general, then it has rank $k = \left\lceil {m+c \choose m}{n+d \choose n}/(m+n+1)\right\rceil$, i.e., there exists $\{v_1,\ldots,v_k\} \subset \bbC^{m+1}$ and $\{w_1,\ldots,w_k\}\subset\bbC^{n+1}$ such that
    \begin{equation}\label{eq:tensorDec}
    T = \sum_{i=1}^k v_i^{\otimes c}\otimes w_i^{\otimes d}.
    \end{equation}
    \item (Corollary \ref{cor: identifiability for SV}) If $T$ is general of rank $k < \left\lfloor {m+c \choose m}{n+d \choose n}/(m+n+1)\right\rfloor$ then the additive decomposition \eqref{eq:tensorDec} is uniquely determined up to permutations of the summands and where the $v_i$'s (respectively, $w_i$'s) are determined up to $c$-th (respectively $d$-th) roots of unity.
\end{itemize}
}
}

\subsection*{Structure of the paper}
In Section \ref{sec:basics} we recall the basic definitions for secant varieties and linear systems with multiple base points. We also illustrate the tools we use in our computation, such as Castelnuovo exact sequence and collisions of fat points. In Section \ref{sec: parte teorica interessante} we prove Theorem \ref{thm: quello che usiamo} via collisions of fat points. The rest of the paper is an application of Theorem \ref{thm: quello che usiamo} to Segre-Veronese varieties with two factors $\SV_{m\times n}^{c,d}$ in order to prove Theorem \ref{thm: main}. The three key cases $(c,d)\in\{(3,3),(3,4),(4,4)\}$ are solved in Sections \ref{sec: 33}, \ref{sec: bidegree (3,4)} and \ref{sec: bidegree (4,4)} respectively. This completes the proof of Theorem \ref{thm: main}. In Appendix \ref{appendix: M2} we describe the software computations we performed to check the initial cases of our inductive proofs. In Appendix \ref{appendix: contacci} we collect the long and tedious arithmetic computations needed in our proofs.

\subsection*{Acknowledgements.} 
The project started while FG was a postdoc at MPI MiS Leipzig (Germany) and AO was a postdoc at OVGU Magdeburg (Germany). AO thanks MPI MiS Leipzig for providing a perfect environment  during several visits at the beginning of the project. The authors also thank Maria Virginia Catalisano, Alessandro Gimigliano and Massimiliano Mella for useful discussions. \Fra{The authors also wish to thank the anonymous referee for pointing out a mistake in the first draft of the article.} FG is supported by the National Science Center, Poland, project "Complex contact manifolds and geometry of secants", 2017/26/E/ST1/00231, and acknowledges partial support from the fund FRA 2018 of University of Trieste - project DMG
. AO acknowledges partial financial support from A. v. Humboldt Foundation/Stiftung through a fellowship for postdoctoral researchers. \Ale{AO is a member of  GNSAGA of INdAM (Italy)}. 

\section{Basics and background}\label{sec:basics}
We recall useful definitions and constructions in the context of secant varieties and linear systems of divisors with multiple base points. We will work over the field of complex numbers $\C$. 
\subsection{Segre-Veronese varieties and their secants.}
\begin{definition}
Fixed $m,n,c,d\in\N$, the \textbf{Segre-Veronese variety} $\SV_{m\times n}^{c,d}$ is the image of the embedding of $\bbP^m\times\bbP^n$ via the linear system of divisors of bidegree $(c,d)$.
\end{definition}
The Segre-Veronese variety has a precise interpretation in terms of partially symmetric tensors. Let $\Sym^d_{n+1}$ be the vector space of degree $d$ homogeneous polynomials in $n+1$ variables with complex coefficients. The variety $\SV_{m\times n}^{c,d}$ is parametrized by partially symmetric tensors in $\Sym^c_{m+1}\otimes \Sym^d_{n+1}$ which are \textit{decomposable}, or of \textit{rank $1$}, i.e., 
\[
	\SV_{m\times n}^{c,d} = \{f\otimes g ~:~ f \in \Sym^c_{m+1}, g \in \Sym^d_{n+1}\} \subset \bbP \left(\Sym^c_{m+1}\otimes \Sym^d_{n+1}\right).
\]
\begin{definition}\label{def: edim per secanti}
	Let $V \subset \bbP^N$ 
be a projective variety. The \textbf{$r$-th secant variety} of $V$ is the Zariski-closure of the union of all linear spaces spanned by $r$ points on $V$, i.e.,
	\[
		\sigma_r(V) := \overline{\bigcup_{p_1,\ldots,p_r \in V} \langle p_1,\ldots,p_r \rangle} \subset \bbP^N.
	\]
In the case of Segre-Veronese varieties, the $r$-th secant variety consists of the Zariski-closure of the set of \textit{rank-$r$} partially symmetric tensors, i.e.,
\[
\sigma_r(\SV_{m\times n}^{c,d}) = \overline{\left\{ \sum_{i=1}^r 
	f_i \otimes g_i ~:~ f_i \in \Sym^c_{m+1}, g_i \in \Sym^d_{n+1}\right\}} \subset \bbP\left(\Sym^c_{m+1}\otimes \Sym^d_{n+1}\right).
\]
\end{definition}
Our goal is to compute the dimension of $\sigma_r(V)$. By a count of parameters, its \textbf{expected dimension} is
\begin{align*}
\edim\sigma_r\left(V\right) & = \min\left\{N, r(\dim V + 1) - 1 \right\}.
\end{align*}
The actual dimension is always smaller than or equal to the expected one. If it is strictly smaller, then we say that $V$ is \textbf{$r$-defective}. A variety is \textbf{defective} if it is $r$-defective for some $r$.
\subsection{Linear systems with non-reduced base locus}\label{sec: linear systems}
In this section we recall how to translate the problem to a question about dimensions of linear systems with multiple base points. For this purpose, we fix some notation we will use through the paper. Let $V$ be a smooth projective variety.

\begin{definition}
   Let $p$ be a point on $V$ defined by an ideal $I_p$ in the coordinate ring of $V$. If $a\in\N$, then the \textbf{$a$-fat point} supported at $p$ is the $0$-dimensional scheme defined by $I_p^a$. If $p_1,\dots,p_r\in V$, then the \textbf{scheme of fat points of type $(a_1,\ldots,a_r)$}, denoted by $a_1p_1+\ldots+a_rp_r$, is the union of fat points defined by the ideal  $I_{p_1}^{a_1}\cap\ldots\cap I_{p_r}^{a_r}$. We call it \textbf{general} when $p_1,\dots,p_r$ are general points of $V$. 
\end{definition}

Since the arguments used in this paper do not require cohomology of sheaves, with a slight abuse of terminology we always regard linear systems simply as $\bbC$-vector spaces.

\begin{definition}\label{def: edim}
Let $p_1,\dots,p_r\in V$ and let $a_1,\dots,a_r\in\N$. Let $I_X$ be the ideal in the coordinate ring of $V$ defining the scheme of fat points $X=a_1p_1+\ldots+a_rp_r$. If $\LL$ is a complete linear system on $V$, then we denote by $\calL(X)$ the vector space $I_X \cap \LL$ of divisors in $\LL$ passing through each $p_i$ with mulitplicity \Fra{at least $a_i$}. The \textbf{virtual dimension} of $\calL(X)$ is given by the count of parameters
\[
\vdim \calL(X) := \dim\LL - \sum_{i=1}^r\binom{a_i+\dim V-1}{\dim V}.\]
If $V=\p^n$, then we write $\LL_n^d$ for the complete linear system of degree $d$ hypersurfaces in $\p^n$. In this case $\LL_n^d(X)$ is identified with the vector space of degree $d$ homogeneous polynomials vanishing with multiplicity at least $a_i$ at $p_i$. 
Its virtual dimension is
\[
\vdim \calL_n^d(X) = \binom{n+d}{n} - \sum_{i=1}^r\binom{a_i+n-1}{n}.\]
In a similar way, if we consider  $V=\bbP^m\times\bbP^n$ then we denote by $\calL_{m\times n}^{c,d}(X)$ the vector space of bihomogeneous polynomials of bidegree $(c,d)$ vanishing with multiplicity at least $a_i$ at $p_i$. Its virtual dimension is
\[
\vdim \calL_{m\times n}^{c,d}(X) = \binom{m+c}{m}\binom{n+d}{n} - \sum_{i=1}^r\binom{a_i+m+n-1}{m+n}.\]
In all cases, the \textbf{expected dimension} is the maximum between $0$ and the virtual dimension. Hence the actual dimension is larger than or equal to the expected one. If the inequality is strict, then we say that the linear system is {\bf special}. If the virtual dimension is non-negative and the linear system is not special, then we say that it is {\bf regular}. 
\end{definition}

It is important to recall what happens to the dimension of linear systems under specialization of the base locus. Consider a scheme of fat points $X = a_1p_1 + \ldots + a_rp_r\subset V$. By semicontinuity, there exists a Zariski-open subset of $V^{\times r}$ where the dimension of $\calL(X)$ is constant and takes the minimal value among all possible choices of $(p_1,\ldots,p_r) \in V^{\times r}$. We denote by $\calL(a_1,\ldots,a_r)$ the linear system associated to a general choice of the support. In particular, 
\[
\dim \calL(X) \geq \dim\calL(a_1,\ldots,a_r).
\]
In case of repetitions in the vector $(a_1,\ldots,a_r)$, we use the notation $a^s$ for the $s$-tuple $(a,\ldots,a)$. 

If $(V,\calL)$ is a polarized smooth irreducible projective variety such that $\calL$ embeds $V$ as a closed subvariety $W \subset \bbP\calL^\vee$, then the dimension of secant varieties of $W$ is related to the dimension of certain linear systems on $V$. Indeed, 
\[
    \codim\sigma_r(W)=\dim\calL(2^r).
\]
This is a consequence of Terracini's Lemma. See \cite[Corollary 1]{guida} for a \Ale{recent} reference. In particular,  $\SV_{m\times n}^{c,d}$ is $r$-defective if and only if $\calL_{m\times n}^{c,d}(2^r)$ is special. Note that in order to prove that  $W\subset\p\LL^\vee$ is not $r$-defective for every $r\in\N$, it is enough to check few values of $r$ thanks to the following straightforward observation.
\begin{remark}
    \label{rmk: bastano r alto e r basso}
Let $\LL$ be a linear system on the variety $V$ and let $X^\prime\subset X$ be two schemes of fat points on $V$. Then:
\begin{itemize}
\item if $\calL(X)$ is regular, then $\calL(X')$ is regular.
	\item if $\calL(X^\prime)=0$, then $\calL(X)=0$.
\end{itemize}
Hence, in order to prove that $\calL(2^r)$ is non-special for every $r$, it is enough to consider 
\[
	r_*:=\max\left\{s\in\N ~:~ \vdim \calL(2^s)\ge 0\right\}\mbox{ and }
r^*:=\min\left\{s\in\N ~:~ \vdim \calL(2^s)< 0\right\}
\]
and prove that $\calL(2^{r_*})$ is regular and $\calL(2^{r^*})$ is zero.
\end{remark}

\subsection{Inductive methods}\label{sec:inductive}
Our proof of Theorem \ref{thm: main} relies on a classical inductive approach. Let $V$ be a smooth projective variety and let $H$ be a subvariety of $V$. Consider a linear system $\calL$ on $V$ and let $\calL_{H}$ be the linear system on $H$ given by
\[
	\calL_{H} := \{D \cap H ~:~ D \in \calL\}.
\]
Let $X \subset V$ be a scheme of fat points. Then there is an exact sequence of vector spaces
\begin{equation}\label{exact_sequence}
	0 \rightarrow \calL(X + H) \rightarrow \calL(X) \rightarrow \calL_H(X \cap H),
\end{equation}
sometimes called \textit{Castelnuovo exact sequence}, where $\calL(X + H)$ denotes the subsystem of $\calL$ of divisors containing $X \cup H$.
\begin{definition}
	\label{def: trace and residue}
Let $H$ be a divisor of $V$. Let $I_X$ be the ideal defining a scheme of fat points $X\subset V$.
	\begin{itemize}
\item The \textbf{residue} of $X$ with respect to $H$ is the subscheme $\Res_H(X)\subset V$ defined by the saturation $(I_X : I_H)^{\rm sat}$.
\item The \textbf{trace} of $X$ on $H$ is the scheme-theoretic intersection $\Tr_H(X) := H \cap X$, defined by $I_X+I_H$.
	\end{itemize}
\end{definition}
In this paper we are interested in the case $V=\p^m\times\p^n$ and $\LL=\LL_{m\times n}^{c,d}$. Let $H\cong\p^{m-1}\times\p^n$ be a divisor of bidegree $(1,0)$. Then \eqref{exact_sequence} becomes
\begin{equation}\label{exact_sequence_bihom}
	0 \rightarrow \calL_{m\times n}^{c-1,d}(\Res_H(X)) \rightarrow  \calL_{m\times n}^{c,d}(X) \rightarrow \calL_{(m-1)\times n}^{c,d}(\Tr_H(X)),
\end{equation}
	where the left-most arrow corresponds to the multiplication by $H$. Hence
	\begin{equation}\label{eq:chain_dim}
	    \vdim\calL_{m\times n}^{c,d}(X) \leq \dim\calL_{m\times n}^{c,d}(X) \leq \dim\calL_{m\times n}^{c-1,d}(\Res_H(X)) + \dim\calL_{(m-1)\times n}^{c,d}(\Tr_H(X)).
	\end{equation}
	It is possible to consider analogous constructions also for divisors of bidegree $(0,1)$. If $X=a_1p_1+\ldots+a_rp_r+b_1q_1+\ldots+b_sq_s$, where $p_1,\dots,p_r$ are general points in $\bbP^m \times \bbP^n$ and $q_1,\dots,q_s$ are general on $H$, then
\begin{align*}
&\Res_HX=a_1p_1+\ldots+a_rp_r+(b_1-1)q_1+\ldots+(b_s-1)q_s\subset\p^m\times\p^n\mbox{ and}\\
&\Tr_HX=b_1q_1+\ldots+b_sq_s\subset H \cong \bbP^{m-1}\times\bbP^n.
\end{align*}
A straightforward computation gives
\begin{equation}\label{eq:straightforward_vdim}
    \vdim\calL_{m\times n}^{c-1,d}(\Res_H(X))+\vdim\calL_{(m-1)\times n}^{c,d}(\Tr_H(X))=\vdim\calL_{m\times n}^{c,d}(X).
\end{equation}
By employing \eqref{eq:chain_dim}, we use the Castelnuovo exact sequence in two ways.
\begin{enumerate}
\item If we want to prove that $\calL_{m\times n}^{c,d}(X)=0$, then it is enough to prove that \[
    \calL_{m\times n}^{c-1,d}(\Res_H(X))=\calL_{(m-1)\times n}^{c,d}(\Tr_H(X))=0.\]
\item If we want to prove that $\calL_{m\times n}^{c,d}(X)$ is regular, by \eqref{eq:straightforward_vdim} it is enough to prove that both $\calL_{m\times n}^{c-1,d}(\Res_H(X))$ and $\calL_{(m-1)\times n}^{c,d}(\Tr_H(X))$ are regular. 
\end{enumerate}
Other classical tools are degeneration arguments. If $\tilde{X}$ is a specialization of the scheme $X$, then $\dim\calL_{m\times n}^{c,d}(\tilde{X})\ge\dim\calL_{m\times n}^{c,d}(X)$. Sometimes it is convenient to use exact sequence \eqref{exact_sequence} not only when $H$ is an hyperplane, but also with $H$ of higher codimension. In \cite[Section 5]{BraOtt08}, Brambilla and Ottaviani used this approach to obtain a different proof of Alexander-Hirschowitz Theorem. Here is how we will use these observations: if $H \cong \bbP^{m'}\times\bbP^{n'}$ is a subvariety of $\bbP^m\times\bbP^n$
, then
\begin{align}
	\vdim\calL_{m\times n}^{c,d}(\tilde{X}) = \vdim\calL_{m\times n}^{c,d}(X) & \leq \dim \calL_{m\times n}^{c,d}(X) \leq \dim \calL_{m\times n}^{c,d}(\tilde{X}) \nonumber \\ & \leq \dim \calL_{m\times n}^{c,d}(\tilde{X}+H) + \dim\calL_{m'\times n'}^{c,d}(\tilde{X}\cap H) \label{chain_inequalities}
\end{align}
where the latter inequality follows from \eqref{exact_sequence}. Therefore, in order to prove that $\calL_{m\times n}^{c,d}(X)$ is non-special, the task is to find a suitable specialization $\tilde{X}$ for which we are able to compute the two summands on the right-hand-side of \eqref{chain_inequalities} and such that upper and lower bound coincide. 

Note that \eqref{eq:chain_dim} allows a double induction: in one of the summands we have a lower degree while in the other we have a lower dimension. The classical method of specializing the support of $X$ does not always work due to arithmetic constrains that do not allow to match the upper and the lower bound in \eqref{chain_inequalities}. This was the case for systems of cubics in projective space with general $2$-fat base points. After a series of papers, Alexander and Hirschowitz refined the classical method and managed to complete the classification of special linear systems $\calL_n^d(2^r)$ and, as byproduct, to completely classify defective Veronese varieties. This method is called \textit{differential Horace method} and, in the last decades, it has been used to prove the non-speciality of several linear systems in projective and multiprojective space. Despite its success, this method requires a deep understanding of the geometry of the problem and a choice of specialization which sometimes is too difficult to handle. For this reason, in this paper we consider a different specialization in which the components of the base locus are allowed to collide together. We explain it in the next section. 
\subsection{Collapsing points}\label{sec:collapsing_points}
In this section we recall the specialization method we use to prove Theorem~\ref{thm: quello che usiamo}. We refer to \cite[Remark 20 and Proposition 21]{GM} or \cite[Construction 10]{Galuppi} for details.
 \begin{remark}\label{rmk: collisione di punti doppi}
Let $V$ be a smooth variety of dimension $n$. We consider a general scheme of fat points of type $(2^{n+1})$ on $V$ and we let it \textit{collapse} to one component, i.e., we let all the points of its support approach the same point $q\in V$ from general directions. The result of such a limit is a scheme supported at $q$, containing the 3-fat point $3q$ with the following property: its restriction to a general line $L$ containing $q$ is a 3-fat point of $L$, but there are $\binom{n+1}{2}$ lines through $q$ such that the restriction is a $4$-fat point on each of these special lines. We call it a \textit{triple point with $\binom{n+1}{2}$ tangent directions}. If we call $E$ the exceptional divisor of $\Bl_qV$, then these tangent directions correspond to simple points $\{t_{ij}\mid 0\le i<j\le n\}$ of $E$.
\end{remark}

The following instructive example is discussed also in \cite[Example 22]{GM} and \cite[Example~15]{Galuppi}.
\begin{example}
\label{example: spiegazione della figura}Since the limit is a local construction, we work out an example on $\A^2$. Let $\Delta$ be a complex disk around the origin. Let $Y=\A^2\times\Delta$ and $Y_t=\A^2\times\{t\}$ for $t \in\Delta$. Fix a point $q\in Y_0$ and three general maps $\sigma_1,\sigma_2,\sigma_3:\Delta\to Y$ such that $\sigma_1(t),\sigma_2(t),\sigma_3(t)$ are general points of $Y_t$ for $t \neq 0$ and $\sigma_1(0)=\sigma_2(0)=\sigma_3(0)=q$. See Figure \ref{fig:deformation}. For every $t\neq 0$, let $$X_t=2\sigma_1(t)+2\sigma_2(t)+2\sigma_3(t)\subset Y_t$$ be a general scheme of fat points of type $(2^3)$. 
We are interested in the limit $X_0 := \lim_{t \rightarrow 0}X_t$. For every $t\neq 0$ the ideal $I_{X_t}$ contains a plane cubic $C_t$, consisting of the union of three lines. Hence the limit $C_0$ belongs to $I_{X_0}(3)$. Actually, one can show that $I_{X_0}(3)=\Span{C_0}$. By \cite[Proposition~13]{Galuppi}, $X_0$ strictly contains a 3-fat point but does not contain a 4-fat point. 
In order to completely understand 
its structure, we look at the blow-up $\mu:\tilde{Y}\to Y$ of $Y$ at the point $q$
with exceptional divisor $W$. Let $\tilde{\sigma}_i:\Delta\to\tilde{Y}$ be the map corresponding to $\sigma_i$. We want to stress that, since the sections $\sigma_i$ are general, $\tilde{\sigma}_{1}(0),\tilde{\sigma}_{2}(0),\tilde{\sigma}_{3}(0)$ are three general points of $W$. If we set $\tilde{Y}_t=\mu^{-1}(Y_t)$, then $\tilde{Y}_t\cong Y_t\cong\A^2$ for every $t\neq 0$, but the special fiber $\tilde{Y}_0$ has two irreducible components. We write $\tilde{Y}_0\cong W\cup \Bl_q Y_0$. If we call $E=W\cap \Bl_qY_0$, then $E\cong\p^1$ is the exceptional divisor of $\Bl_qY_0$. Let $\tilde{I}_{X_0}$ be the ideal consisting of all the strict transforms on $\tilde{Y}_0$ of elements of $I_{X_0}$. Since $X_0\supsetneq 3q$, then $\tilde{I}_{X_0}(3)_{|E}\subsetneq H^0\mathcal{O}_E(3)$. 
Indeed, $\tilde{I}_{X_0}(3)_{|W}$ is the system of cubics of $W$ containing the general scheme of fat points $2\tilde{\sigma}_{1}(0)+2\tilde{\sigma}_{2}(0)+2\tilde{\sigma}_{3}(0)$. There is exactly one such cubic
, consisting of the union of the three lines $\Span{\tilde{\sigma}_{i}(0),\tilde{\sigma}_{j}(0)}$, which cut three simple points $t_{ij}=E\cap\Span{\tilde{\sigma}_{i}(0),\tilde{\sigma}_{j}(0)}$ on $E$.  
We regard $X_0$ as the fat point $3q$ together with three infinitely near simple points corresponding to $t_{12},t_{13},t_{23}$.
\end{example}

\begin{figure}[ht]
    \centering
    \includegraphics[scale=0.45]{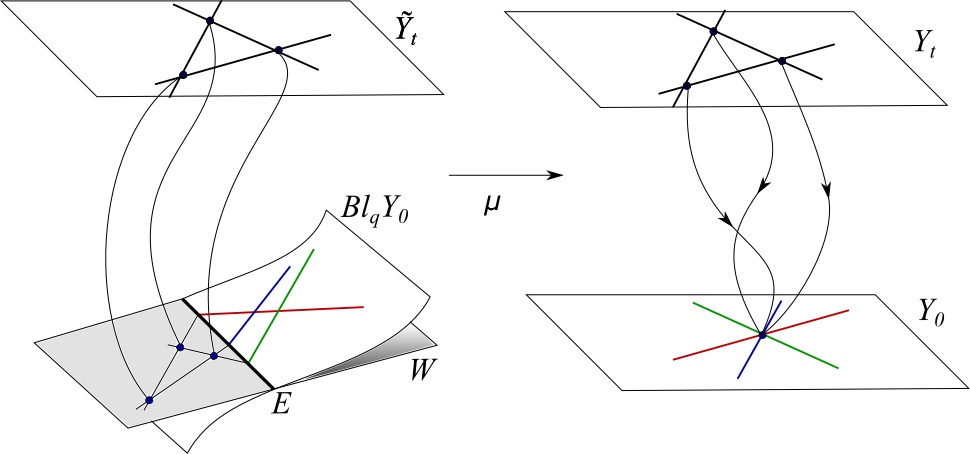}
    \caption{The collision of three $2$-fat points as described in Example \ref{example: spiegazione della figura}.}
    \label{fig:deformation}
\end{figure}

\begin{example}\label{example: punti infinitamente vicini in una star configuration}
Consider the collision of four 2-fat points in $\A^3$. We proceed in the same way as in Example \ref{example: spiegazione della figura} and we see that $\tilde{I}_{X_0}(3)_{|W}$ is the system of cubics of $W\cong\p^3$ containing a general scheme of fat points of type $(2^4)$ supported, say, at $p_0,p_1,p_2,p_3$. Its base locus consists of the $\binom{4}{2}=6$ lines joining each pair of points. These six lines cut six simple points $t_{ij}=E\cap\Span{p_i,p_j}$ on $E\cong\p^2$, but they are not in general position. For instance, all the three points $t_{12}$, $t_{13}$ and $t_{23}$ belong to the line $E\cap\Span{p_1,p_2,p_3}$. The six points are in the configuration described in Figure \ref{fig:starconfiguration}.

\begin{figure}[h!]
    \centering
    \includegraphics[scale=0.3]{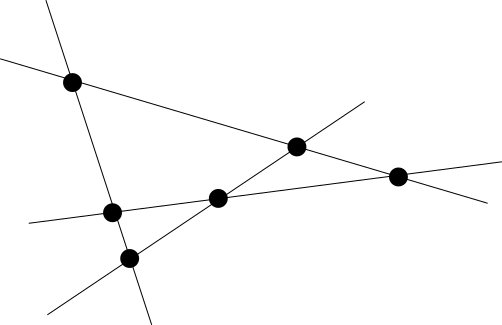}
    \caption{Six points in a \textit{star configurations}, i.e., as intersections of a four general lines.}
    \label{fig:starconfiguration}
\end{figure}
\end{example}

It is crucial to notice that the tangent directions described in Remark \ref{rmk: collisione di punti doppi} are not in general position: \Ale{as mentioned in \cite[Remark 26]{GM}}, for every choice of a set of indices $I\subset\{0,\dots, n\}$ of cardinality $s\ge 3$, the $\binom{s}{2}$ points $\{t_{ij}\mid i,j\in I\mbox{ and }i<j\}$ are contained in a linear space $\p^{s-2}\subset E$. For later purpose we need to check that, even if the infinitely near points are not in general position, they are not too special. More precisely, we show that they impose independent conditions on low degree divisors of the exceptional divisor $E$.

\begin{lemma}\label{lem: the tangent directions give independent conditions}\label{lemma:T_conditions}
Let $n\geq 2$. Let $p_0,\dots,p_n\in\p^n$ be general points and let $E$ be a hyperplane such that $\{p_0,\dots,p_n\}\cap E=\varnothing$. Define $t_{ij}:=\langle p_i,p_j\rangle\cap E$. Set
$$T:=\{t_{ij}\mid 0\le i<j\le n\} \quad \text{ and } \quad 2T = \sum_{0 \leq i < j \leq n}2t_{ij}.$$
Then the linear systems $\LL_{n-1}^2(T)$, $\LL_{n-1}^3(T)$ and $\LL_{n-1}^3(2T)$ on $E$ are non-special.
	\begin{proof}
The system $\LL_{n-1}^2(T)$ is regular by \cite[Lemma 25]{GM}. As a consequence, $\LL_{n-1}^3(T)$ is regular as well, so we focus on $\LL_{n-1}^3(2T)$. We argue by induction on $n$.
\begin{itemize}
    \item \textit{Case} $n=2$. It is enough to observe that every linear system on $\p^1$ is non-special.
\item \textit{Case} $n\ge 3$. For $i\in\{0,\dots,n\}$, let $H_i:=\langle p_j\mid j\neq i\rangle=\p^{n-1}$. We note that $H_i \cap E$ is a fixed component of $\LL_{n-1}^3(2 T)$ for every $i \in \{0,\ldots,n\}$. Indeed, \[\LL_{n-1}^3(2T)_{|H_i\cap E} \subset \LL_{n-2}^3(\Tr_{H_i \cap E}(2T))=0,\] where the latter equality holds by induction hypothesis. Since all the elements of $\LL_{n-1}^3(2 T)$ have degree $3<n+1$, we conclude that $\LL_{n-1}^3(2 T) = 0$.\qedhere
		\end{itemize}
	\end{proof}
\end{lemma}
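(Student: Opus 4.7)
The plan is to dispatch the three systems in order, with each case bootstrapping from the previous one. For $\LL_{n-1}^2(T)$, the regularity is precisely the content of \cite[Lemma 25]{GM}, so I would invoke that. Since $|T| = \binom{n+1}{2} = \dim\LL_{n-1}^2$, regularity amounts to the vanishing $\LL_{n-1}^2(T) = 0$, i.e.\ $T$ imposes independent conditions on quadrics on $E$.

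For $\LL_{n-1}^3(T)$, I would promote the independence of conditions from degree $2$ to degree $3$: after picking a linear form on $E$ not vanishing at any $t_{ij}$, multiplying representatives of the degree-$2$ evaluation basis by that form produces a family of cubics whose values at $T$ span $\bbC^{|T|}$. Since the virtual dimension of $\LL_{n-1}^3(T)$ is non-negative (for $n \geq 2$), this gives regularity.

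For $\LL_{n-1}^3(2T)$, whose virtual dimension is negative for $n \geq 2$, the goal is the stronger vanishing $\LL_{n-1}^3(2T) = 0$. I would induct on $n$. The base case $n = 2$ is immediate: on $E \cong \bbP^1$ a cubic has $4$ parameters while three double points impose $6$ conditions. For $n \geq 3$, the key geometric observation is that for each $i \in \{0,\dots,n\}$ the hyperplane $H_i := \langle p_j : j \neq i\rangle \subset \bbP^n$ cuts $E$ in a hyperplane $E_i \cong \bbP^{n-2}$, and the points $t_{jk}$ lying on $E_i$ are exactly those with $i \notin \{j,k\}$; the genericity of the $p_j$'s guarantees that the remaining points $t_{ij}$ avoid $E_i$, so $\Tr_{E_i}(2T) = \sum_{j,k \neq i} 2 t_{jk}$. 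This is precisely the analogue of our $2T$-configuration for the $n$ generic points $\{p_j : j \neq i\}$ spanning $H_i \cong \bbP^{n-1}$ relative to $E_i$, so the inductive hypothesis gives $\LL_{n-2}^3(\Tr_{E_i}(2T)) = 0$. The Castelnuovo exact sequence \eqref{exact_sequence} then forces $E_i$ to be a fixed component of $\LL_{n-1}^3(2T)$. Varying $i \in \{0,\dots,n\}$ produces $n+1$ distinct linear fixed components, whose product has degree $n+1 > 3$, incompatible with cubics on $E$. Hence $\LL_{n-1}^3(2T) = 0$.

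The main obstacle is the combinatorial verification in the inductive step that $\Tr_{E_i}(2T)$ reproduces the lemma's setup on $E_i$, in particular that the points $t_{ij}$ with $j \neq i$ all avoid $E_i$; this is where the genericity of $p_0, \ldots, p_n$ is really used. Once that is in place, the fixed-component counting closes the argument cleanly.
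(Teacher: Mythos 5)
Your proposal is correct and follows essentially the same route as the paper's own proof: invoke \cite[Lemma 25]{GM} for $\LL_{n-1}^2(T)$, bootstrap independence to degree $3$, and then induct on $n$ for $\LL_{n-1}^3(2T)$ by showing each $H_i\cap E$ is a fixed component and deriving a contradiction from the count of $n+1$ distinct linear fixed components against degree $3$. The only differences are cosmetic elaborations — you spell out the multiplication-by-a-linear-form argument that the paper summarizes with ``as a consequence,'' and you make explicit the combinatorial check that $\Tr_{H_i\cap E}(2T)$ reproduces the inductive hypothesis — both of which the paper leaves implicit.
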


\section{Non-defectivity via collisions of fat points}\label{sec: parte teorica interessante}
In this section we prove Theorem \ref{thm: main} and Theorem \ref{thm: quello che usiamo} by using the deformation methods described in the previous section. We compute the dimension of a linear system of divisors with $2$-fat base points on a smooth variety by specializing the points, allowing some of them to collapse. Theorem \ref{thm: generale, ambizioso, avveniristico} is the key of our proofs of Theorem \ref{thm: main} and Theorem \ref{thm: quello che usiamo}. We first recall some auxiliary results that will be useful in the proof.
\begin{lemma}[{\cite[Lemma 1.9]{catalisano2007segre}}]\label{lemma:extra_points}
	Let $V$ be a projective variety and let $H\subset V$ be a positive dimensional subvariety. Let $X \subset V$ be a scheme of fat points and let $ Y \subset H$ be a set of points. Let $\calL$ be a linear system on $V$. Assume that
	\begin{enumerate}
		\item \label{item: restr} $Y$ imposes independent conditions on $\calL(X)_{|H}$, and
		\item \label{item: cardin} $\dim \calL(X) - \dim (\calL(X) \cap I_H) \geq \# Y$.
	\end{enumerate}
	Then $Y$ imposes independent conditions on $\calL(X)$. In particular,
	if $\dim\LL(X) \le \#Y$ and $\calL(X)\cap I_H = 0$, then $\LL(X+ Y)=0$.
\end{lemma}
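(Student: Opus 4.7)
The strategy I would follow is to relate the evaluation map at $Y$ on $\calL(X)$ to the evaluation map at $Y$ on the restriction to $H$, exploiting the containment $Y \subset H$. By the Castelnuovo exact sequence \eqref{exact_sequence}, the restriction map $\rho : \calL(X) \to \calL_H(X \cap H)$ has kernel $\calL(X) \cap I_H$. I would read $\calL(X)_{|H}$ as $\im(\rho)$, so that
\[
\dim \calL(X)_{|H} \;=\; \dim \calL(X) - \dim(\calL(X) \cap I_H),
\]
which is exactly the left-hand side appearing in hypothesis (\ref{item: cardin}); condition (\ref{item: cardin}) is therefore the statement that $\dim \calL(X)_{|H} \ge \#Y$.

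The core observation is that since $Y \subset H$, any element of $\calL(X)$ vanishing identically on $H$ vanishes on $Y$ as well; hence the evaluation map $\mathrm{ev}_Y : \calL(X) \to \bbC^{\#Y}$ is zero on $\ker(\rho)$, and so factors as $\mathrm{ev}_Y = \widetilde{\mathrm{ev}_Y} \circ \rho$ with $\widetilde{\mathrm{ev}_Y} : \calL(X)_{|H} \to \bbC^{\#Y}$. Hypothesis (\ref{item: restr}) is precisely the statement that $\widetilde{\mathrm{ev}_Y}$ is surjective, and since $\rho$ is surjective onto $\calL(X)_{|H}$ by construction, the composition $\mathrm{ev}_Y$ is surjective as well. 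This is the assertion that $Y$ imposes independent conditions on $\calL(X)$.

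For the \emph{in particular} part, the hypothesis $\calL(X) \cap I_H = 0$ makes $\rho$ injective, so condition (\ref{item: cardin}) reads $\dim \calL(X) \ge \#Y$, and combined with the assumption $\dim \calL(X) \le \#Y$ it forces equality. The first part then gives that $\mathrm{ev}_Y : \calL(X) \to \bbC^{\#Y}$ is a surjection between vector spaces of equal dimension, hence an isomorphism, so its kernel $\calL(X+Y)$ is zero. I do not expect a genuine obstacle in this argument; the whole thing is a short diagram chase through a single commutative triangle. The only point requiring care is reading $\calL(X)_{|H}$ as $\im(\rho)$ rather than as the ambient $\calL_H(X \cap H)$: under the latter interpretation conditions (\ref{item: restr}) and (\ref{item: cardin}) would not be enough, because independent conditions on a larger space need not restrict to independent conditions on $\im(\rho)$.
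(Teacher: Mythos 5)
Your argument is correct. Note that the paper itself does not prove this lemma at all: it is quoted from \cite[Lemma 1.9]{catalisano2007segre}, so there is no internal proof to compare against. Your proof is the standard one and it does establish the statement as written: factor the evaluation map $\mathrm{ev}_Y$ through the restriction map $\rho$ of the Castelnuovo sequence \eqref{exact_sequence}, use $\ker\rho=\calL(X)\cap I_H$ (which needs $Y\subset H$ only to know $\ker\rho\subseteq\ker \mathrm{ev}_Y$), and conclude surjectivity of $\mathrm{ev}_Y$ from surjectivity of the induced map on $\im(\rho)$. Your reading of $\calL(X)_{|H}$ as $\im(\rho)$ is the one consistent with the paper's convention $\calL_H=\{D\cap H : D\in\calL\}$, and your two side remarks are accurate: under this reading hypothesis \eqref{item: cardin} is subsumed by hypothesis \eqref{item: restr} (surjectivity of $\widetilde{\mathrm{ev}}_Y$ already forces $\dim\im(\rho)\ge\#Y$), its independent role appearing only in the \emph{in particular} statement and in applications where \eqref{item: restr} is verified for general points or on a system containing $\im(\rho)$; and if \eqref{item: restr} were instead read on the full trace system $\calL_H(X\cap H)$, the two hypotheses alone would not suffice, exactly as you point out.
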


Theorem \ref{thm: quello che usiamo} will be a direct consequence of the following result.
\begin{theorem}\label{thm: generale, ambizioso, avveniristico} Let $(V,\calL)$ be a polarized smooth irreducible projective variety of dimension $n$. Assume that $\calL$ embeds $V$ as a proper closed subvariety of $\bbP\calL^\vee$ and $W$ is the image of such embedding.
	Let $r\ge n+1$ and assume that
\begin{enumerate}
\item\label{bullet: triplo e doppi impongono indipendenti su LL} $\dim\LL(3,2^{r-n-1})=\dim\LL-\binom{n+2}{2}-(n+1)(r-n-1)$,
\item\label{bullet: col quartuplo LL vuoto} $\dim\LL(4,2^{r-n-1})=0$, and
\item\label{bullet: quartuplo e triplo si comportano bene} $\dim\LL(3)-\dim\LL(4)\ge\binom{n+1}{2}$.
\end{enumerate}
Then 
$\dim\LL(2^{r})=\max\{0,\dim\LL-(n+1)r\}$.
\begin{proof}
Since $r\ge n+1$, we can degenerate $\Ale{\LL(2^r)}$ according to Remark \ref{rmk: collisione di punti doppi}. Let $X\subset V$ be a general scheme of fat points of type $(2^r)$ and let $X_0$ be a general specialization of $X$ when $n+1$ of the $r$ 2-fat points collide together to the point $q\in V$ and the remaining $r-n-1$ 2-fat points are left in general position. Let $E$ be the exceptional divisor of $\Bl_qV$. Let \Ale{$T$ be the set of $N:=\binom{n+1}{2}$} simple points in $E$ evincing the extra tangent directions of $X_0$ in $q$. If $S$ is a subset of $T$, we denote by $\LL(3[S])$ the linear subsystem of $\LL(3)$ of divisors whose strict transform on $\Bl_qV$ contains $S$. \Ale{By semicontinuity,} in order to prove that $\dim\LL(X)=\max\{0,\dim\LL-(n+1)r\}$ it is enough to prove that $\dim\LL(X_0)=\max\{0,\dim\LL-(n+1)r\}$. 

\Ale{Denote $D := \dim\LL(3,2^{r-n-1})=\dim\LL-\binom{n+2}{2}-(n+1)(r-n-1)$. If $D \in \{0,1\}$ there is nothing to prove. Indeed, if $D = 0$, then $\dim \calL(X_0) \leq \dim \calL(3,2^{r-n-1}) = 0$ while, for $D = 1$, $\dim \calL(X_0) = 0$ by Lemma \ref{lemma:extra_points}. From now on we assume that $D \geq 2$.}

By assumption \eqref{bullet: triplo e doppi impongono indipendenti su LL}, we only need to prove that $\Ale{T}$ impose the expected number of conditions on \Ale{the strict transform of} $\LL(3,2^{r-n-1})$ on $\Bl_qV$. Assume by contradiction that
\begin{equation}\label{eq: ipotesi dell-assurdo}
		\dim\LL(X_0)>\max\{0,\dim\LL-(n+1)r\}.
		\end{equation}
		\Ale{Then there exists a ordering of $T = \{t_1,\ldots,t_N\}$ and $1 \leq i \leq \min\left\{N,D\right\}-1$ such that 
		\[
		    \tilde{\calL}(3[t_1,\ldots,t_i],2^{r-n-1})\text{ is regular }
		    \]
		    and 
\begin{equation}\label{eq:base_condition_allT}
t_{j} \in \Bs(\tilde{\calL}(3[t_1,\ldots,t_i],2^{r-n-1}))\text{ for all }i+1 \leq j \leq N,
\end{equation} where the tilde means that we consider the strict transform with respect to the blow-up of $V$ at $q$ and $\Bs$ denotes the base locus of a linear system}. Recall that the definition of \Fra{$t_1,\dots,t_N$} depends only on the first $n+1$ colliding $2$-fat points of the original scheme $X$. Hence the special points \Fra{$t_{i+1},\dots,t_N$ do }
not depend on the support of the $r-n-1$ $2$-fat points, as long as they are in general position. Define
		\[
h := \min\left\{a \in \bbN \mid t_{i+1} \in \Bs(\calL(3[t_1,\ldots,t_i],2^{a})\right\}.
		\]
By construction $h\in\{0,\ldots,r-n-1\}$.
\begin{claim}\label{claim1}
			$h \geq 1$.
		\end{claim}
		\begin{proof}[Proof of Claim \ref{claim1}]
It is enough to show that $T$ imposes independent conditions on \Ale{$\tilde{\LL}(3)$}. Let $E \cong \bbP^{n-1}$ be the exceptional divisor of the blow-up $\Bl_qV$. We want to apply Lemma \ref{lemma:extra_points}, so we check that its two hypotheses hold. The first one 
	is satisfied because $T$ imposes independent conditions on $\tilde{\calL}(3)_{|E}$  by Lemma \ref{lem: the tangent directions give independent conditions}. For the second one, note  that the linear \Ale{subsystem} of $\tilde{\calL}(3)$ of divisors which contain the exceptional divisor $E$ is $\tilde{\calL}(4)$ on $\Bl_qV$. Hence the second hypothesis of Lemma~\ref{lemma:extra_points} is satisfied thanks to our assumption \eqref{bullet: quartuplo e triplo si comportano bene}.
		\end{proof}
		By definition of $h$,
		\begin{equation}\label{eq:base_condition}
		t_{i+1} \not\in \Bs(\tilde{\calL}(3[t_1,\ldots,t_i],2^{h-1})) \text{ but } t_{i+1}\in \Bs(\tilde{\calL}(3[t_1,\ldots,t_i],2^h)).
		\end{equation}
\Ale{Let $\varphi$ denote the rational function on $\Bl_q(V)$ defined by the linear system $\tilde{\calL}(3[t_1,\ldots,t_i],2^{h-1})$ and $\psi$ be the one defined by $\tilde{\calL}(3[T],2^{h-1})$. Note that the closure of the image of $\varphi$ is a proper subvariety because 
\[
    \dim \calL(3[t_1,\ldots,t_i],2^{h-1}) \geq \dim \calL(3,2^{r-n-2}) - i = D + (n+1) - i \geq n+2,
\]
since $i \leq D-1$. Hence it makes sense to consider its tangent space. Recall that the tangent space to $\overline{\im(\varphi)}$ at a general point $p \in \Bl_qV$ corresponds under $\varphi^*$ to the linear subsystem $\tilde{\calL}(3[t_1,\ldots,t_i],2^{h-1}) \cap I_p^2$.} As mentioned before, $t_1,\dots,t_N$ depend only on the collapsed $2$-fat points and not the remaining ones, as long as they are general. Therefore, by \eqref{eq:base_condition}, 
\[
t_{i+1} \in \Bs(\tilde{\calL}(3[t_1,\ldots,t_i],2^{h-1}) \cap I_p^2)\text{ for a general choice of }p.
\] 
Equivalently, $\overline{\im(\varphi)}$ is a cone and $\varphi(t_{i+1})$ is a point in the vertex of the Zariski closure of the image $\overline{\im(\varphi)}$ of $\varphi$. Indeed, recall that the vertex is given by the intersection of all tangent spaces to the variety (see \cite[Proposition 4.6.11]{FOV}). By \eqref{eq:base_condition_allT}, we deduce that $t_j \in \Bs(\tilde{\calL}(3[t_1,\ldots,t_i],2^h))$ for every $i+1 \leq j \leq N$; \Ale{in other words, \begin{equation}\label{eq:vertex_eq}
\tilde{\calL}(3[t_1,\ldots,t_i],2^h) = \tilde{\calL}(3[T],2^h).
\end{equation}
By \eqref{eq:vertex_eq} we have that, for a general point $p \in \Bl_qV$, 
\[
\tilde{\calL}(3[t_1,\ldots,t_i],2^{h-1}) \cap I_p^2 = \tilde{\calL}(3[T],2^{h-1}) \cap I_p^2,
\]
i.e., the vertices of two images coincide, namely $\vert(\overline{\im(\varphi)}) = \vert(\overline{\im(\psi)})$.} As illustrated in  Section~\ref{sec:collapsing_points}, for every $t\in T$ there are $n-1$ lines on the exceptional divisor $E \simeq \bbP^{n-1}$ each containing $t$ and two more points of $T$. Let $\ell_1,\ldots,\ell_{n-1}$ be the lines through $t_{i+1}$.  
 Since $\tilde{\calL}(3[T],2^h)_{|E}$ is a linear system of cubics on $E$ passing through $T$ and there is a unique cubic on a projective line with three base points, we have that the map $\psi$ contracts each one of the lines $\ell_1,\ldots,\ell_{n-1}$. From the previous part, we deduce that they are contracted to $\varphi(t_{i+1})$ which is a point of the vertex of $\overline{\im(\varphi)}$, that is $\psi(\ell_j) \in \vert(\overline{\im(\psi)})$.
 Therefore, we conclude that $\ell_j \in \Bs(\tilde{\calL}(3[T],2^h))$ for all $j \in \{1,\ldots,n-1\}$. The lines $\ell_1,\ldots,\ell_{n-1}$ span $E$, hence we conclude that all cubics in $\tilde{\calL}(3[T],2^h)_{|E}$ must be singular at $t_{i+1}$. By monodromy, the latter holds for all points in $T$. By Lemma \ref{lemma:T_conditions}, there are no cubics on $E$ which are singular at all the points of $T$, we conclude that $\calL(3[T],2^h) \subset \calL(4,2^h)$ and, in particular,
		\[
		\calL(3[T],2^{r-n-1}) \subset \calL(4,2^{r-n-1}).
		\]
		By assumption \eqref{bullet: col quartuplo LL vuoto}, we conclude that 	\begin{equation}\label{eq:main}
			\calL(3[T],2^{r-n-1})=0.
		\end{equation}
Here we get a contradiction. If $\vdim \calL(3[T],2^{r-n-1}) > 0$, then \eqref{eq:main} contradicts the fact that the actual dimension is at least the virtual one. If $\vdim\calL(3[T],2^{r-n-1}) \le 0$, then \eqref{eq:main} contradicts \eqref{eq: ipotesi dell-assurdo}.\end{proof}\end{theorem}

	\begin{proof}[Proof of Theorem \ref{thm: quello che usiamo}]
As we recalled in Section \ref{sec: linear systems}, in order to show that $V$ is not defective we have to prove that $\dim\LL(2^{r})=\max\{0,\dim\LL-(n+1)r\}$ for every $r\in\N$. By Remark \ref{rmk: bastano r alto e r basso}, it is enough to prove this for $r\in\left\{\rd{\frac{\dim\LL}{n+1}},\ru{\frac{\dim\LL}{n+1}}\right\}$. We note that
		\[r\ge\rd{\frac{\dim\LL}{n+1}}\ge\rd{\frac{(n+1)^2}{n+1}}=n+1.\]
Hence we have enough $2$-fat points to apply Theorem \ref{thm: generale, ambizioso, avveniristico}.
	\end{proof}
We use the latter results to prove the non-defectivity of Segre-Veronese varieties of $\bbP^m\times\bbP^n$ embedded in bidegree $(c,d)$ whenever $c,d \geq 3$, proving a conjecture by Abo and Brambilla. We believe that this underlines the strength of these type of deformations. We also recall that in \cite{GM} the first author and Mella used similar methods to completely solve the problem of identifiability of Waring decompositions for general polynomials in any degree and number of variables.


\begin{proof}[Proof of Theorem \ref{thm: main}]
Since a single $j$-fat point imposes the expected number of conditions on $\calL_{m\times n}^{c,d}$ whenever $c,d \geq j-1$, it is simple to check that Segre-Veronese varieties $\SV_{m \times n}^{c,d}$ satisfy the last two hypotheses of Theorem \ref{thm: quello che usiamo}. We devote Sections \ref{sec: 33}, \ref{sec: bidegree (3,4)} and \ref{sec: bidegree (4,4)} to prove the first two conditions for the cases $(c,d) \in \{(3,3),(3,4),(4,4)\}$. This completes the proof of Theorem \ref{thm: main}.
\end{proof}
\begin{remark}
If $c \leq j-2$ or $d \leq j-2$, then a $j$-fat point does not impose independent conditions on $\LL_{m\times n}^{c,d}$. Therefore our proof of Theorem \ref{thm: main} does not apply to lower bidegrees.
\end{remark}
\Ale{We can now prove also} our results on identifiability.

\Fra{
\begin{proof}[Proof of Corollary \ref{cor: identifiability in general}] 
Following the notation of \cite[Section 1]{casarotti2019non}, we let $W^{(k)}$ be the variety parametrizing unordered sets of $k$ points of $W$. Consider the \textit{abstract secant variety}
\[
\sec_k(W)=\overline{\{(w_1,\dots,w_k,p)\in W^{(k)}\times\p^N\mid p\in\langle w_1,\ldots,w_k\rangle\}}\subset W^{(k)}\times\p^N
\]
and call \textit{secant map} the second projection $\pi:\sec_k(W)\to\p^N$. The variety $W$ is not defective by Theorem \ref{thm: quello che usiamo}. Hence, since $k \leq \rd{\frac{\dim\LL}{\dim(W)+1}}$, the map $\pi$ is generically finite. This fact
allows us to conclude by applying \cite[Theorem, page 2]{casarotti2019non}.
\end{proof}

\begin{proof}[Proof of Corollary \ref{cor: identifiability for SV}]
\Ale{Since $k$-identifiability implies $(k-1)$-identifiability}, it is enough to prove our statement for $k=\left\lfloor \frac{1}{m+n+1}\binom{m+c}{c}\binom{n+d}{d}\right\rfloor$. Theorem \ref{thm: main}, together with the definition of $k$, tells us that the secant map is generically finite. In order to apply \cite[Theorem, page 2]{casarotti2019non} we just have to check that the assumption on the dimension of $\SV_{m\times n}^{c,d}$ is satisfied. Indeed,
\begin{align*}
k-2\dim(\SV_{m\times n}^{c,d})
&\ge \frac{1}{m+n+1}\binom{m+3}{3}\binom{n+3}{3}-1-2(m+n)\\
&=\frac{1}{m+n+1}\left[\binom{m+3}{3}\binom{n+3}{3}-(m+n+1)(1+2m+2n)\right]\\
&\ge \frac{1}{m+n+1}\left[(m+2)(m+1)(n+2)(n+1)-(m+n+1)(1+2m+2n)\right]\\
&=\frac{m^2n^2+3m^2n+3mn^2+5mn+3m+3n+3}{m+n+1}>0.\qedhere
\end{align*}
\end{proof}
}

\begin{notation}\label{notation}
	Throughout the next sections, we will use the following notation. For ${c,d,m,n\in\N}$, 
	\begin{align*}
&r^*(c,d;m,n):=\ru{\frac{\binom{m+c}{m}\binom{n+d}{n}}{m+n+1}},&& 	r_*(c,d;m,n):=\rd{\frac{\binom{m+c}{m}\binom{n+d}{n}}{m+n+1}},\\
		&k^*(c,d;m,n):=r^*(c,d;m,n)-m-n-1,&& k_*(c,d;m,n):=r_*(c,d;m,n)-m-n-1.
	\end{align*}
\end{notation}

\section{\texorpdfstring{$\SV_{m\times n}^{3,3}$}{SVmn33} is not defective.}\label{sec: 33}
In this section we show that Theorem \ref{thm: main} holds for $c=d=3$. The specializations need to be chosen carefully and satisfy several arithmetic properties: in order to make our proofs easier to read, we moved some elementary but tedious computations to Appendix \ref{appendix: contacci}. 

\begin{proposition}\label{pro: bigrado 33 come punti doppi} If $m$ and $n$ are positive integers, then $\LL_{m\times n}^{3,3}(2^r)$ is nonspecial for every $r\in\N$.
\end{proposition}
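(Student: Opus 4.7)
The plan is to apply Theorem \ref{thm: quello che usiamo} to $(V,\calL) = (\bbP^m\times\bbP^n, \calL_{m\times n}^{3,3})$, noting that $\dim V = m+n$ so that the parameter $n$ of Theorem \ref{thm: quello che usiamo} becomes $m+n$ here. Hypotheses (3) and (4) of that theorem are elementary: (4) reduces to the inequality $\binom{m+3}{m}\binom{n+3}{n} \geq (m+n+1)^2$, which is easily checked for all $m,n\ge 1$; and (3) follows because a single $j$-fat point imposes independent conditions on $\calL_{m\times n}^{c,d}$ whenever $c,d \geq j-1$, so one can compute $\dim\calL(3)-\dim\calL(4)$ explicitly and compare it with $\binom{m+n+1}{2}$.

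The real content is verifying hypotheses (1) and (2). Setting $k = r - m - n - 1$, for $r\in\{r_*(3,3;m,n), r^*(3,3;m,n)\}$ we must show that
\[
\calL_{m\times n}^{3,3}(3,2^{k}) \text{ is regular} \quad\text{and}\quad \calL_{m\times n}^{3,3}(4,2^{k}) = 0.
\]
I would prove both statements by induction on $m+n$, assuming without loss of generality that $m\le n$, with base cases for small $m$ and $n$ verified by a Macaulay2 computation as described in Appendix \ref{appendix: M2}.

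For the inductive step, I would use the Castelnuovo exact sequence \eqref{exact_sequence_bihom} with a hyperplane $H \cong \bbP^{m-1}\times \bbP^n$ of bidegree $(1,0)$ and specialize carefully chosen numbers of the $k$ double points onto $H$ (and, if helpful, the triple/quadruple point as well). The trace on $H$ will be a linear system of the form $\calL_{(m-1)\times n}^{3,3}(3 \text{ or } 4, 2^s)$ and the residue on $\bbP^m\times\bbP^n$ will be of the form $\calL_{m\times n}^{2,3}(\ldots)$. The trace falls under the inductive hypothesis; the residue is of bidegree $(2,3)$ and is handled by existing results of Abo--Brambilla \cite{AboBra13} (recall that, once Theorem \ref{thm: main} is known for bidegrees $(3,3),(3,4),(4,4)$, the remaining bidegrees are reduced to these three). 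I would then check that, for the specialization chosen, the virtual dimensions of the trace and residue sum to the virtual dimension of the original system (as in \eqref{eq:straightforward_vdim}) and that both summands realize the regular or zero value predicted, forcing the Castelnuovo chain of inequalities \eqref{eq:chain_dim} to collapse to an equality.

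The main obstacle, and the reason for Appendix \ref{appendix: contacci}, is arithmetic: the number of points to specialize on $H$ must be an integer in $\{0,\ldots,k\}$ and must be chosen so that both the trace and residue have virtual dimensions that fall inside the ranges covered by the inductive hypothesis (or by Abo--Brambilla's results), and so that these virtual dimensions agree with the actual dimensions predicted by regularity/vanishing. The quantities $r_*(3,3;m,n)$ and $r^*(3,3;m,n)$ behave differently modulo $m+n+1$, and a handful of residue classes of $m,n$ modulo small integers typically have to be treated separately. Thus I expect the proof to split into several cases, each requiring a tailored specialization; the software computations of Appendix \ref{appendix: M2} will take care of finitely many small $(m,n)$ that do not fit the general inductive scheme.
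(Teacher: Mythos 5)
Your overall strategy coincides with the paper's: apply Theorem \ref{thm: quello che usiamo} to $(\bbP^m\times\bbP^n,\calL_{m\times n}^{3,3})$, dispose of hypotheses (3) and (4) by the independence of conditions imposed by a single fat point, and prove hypotheses (1) and (2) by induction via Castelnuovo exact sequences \eqref{exact_sequence_bihom}, with specializations onto bidegree-$(1,0)$ (or $(0,1)$) divisors, software-checked base cases, and arithmetic lemmas controlling how many points can be specialized. This is exactly how Propositions \ref{pro: triplo e doppi} and \ref{pro: quartuplo e doppi} are organized.

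However, there is a genuine gap in the way you dispose of the residual systems. You claim that the residues, which live in bidegree $(2,3)$, "are handled by existing results of Abo--Brambilla," invoking the reduction of \cite[Theorem 1.3]{AboBra13}. That theorem only reduces the conjecture for $c,d\ge 3$ to the three key bidegrees $(3,3),(3,4),(4,4)$; it says nothing about bidegree $(2,3)$, which lies outside the range of the conjecture (low degrees are precisely where the known defective examples occur, which is why they cannot serve as black boxes). Moreover, even a hypothetical non-speciality statement for general schemes $\calL_{m\times n}^{2,3}(2^s)$ would not suffice: after specializing, the residue is not a general scheme of double points but a scheme of type $(2^a,1^b)$ or $(3,2^a,1^b)$ whose simple points are constrained to lie on the divisor $D$, and one must show these non-general simple points still impose independent conditions (or annihilate the system). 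The paper handles this with Lemma \ref{lemma:extra_points} combined with vanishing statements in still lower bidegrees $(1,3)$, $(0,3)$, $(3,1)$, etc., and this forces a whole cascade of new auxiliary results in bidegrees $(2,3)$ and $(3,2)$ — Lemmas \ref{lem: bigrado 2,3 con pochi punti doppi}, \ref{lem: triplo e doppi, bigrado ()2,3)}, \ref{lem: 2 x n, bigrado (2,3)}, \ref{lem:1xn_(3,2s)}--\ref{lem:1xn_(3,2s)_4thStep}, \ref{lem: 2 x n, bigrado (3,2)}--\ref{lem: secondo di due} — including multi-step inductions where the base points are specialized onto codimension-$2$ and codimension-$3$ subvarieties defined by $(0,1)$-forms, together with external inputs such as \cite[Corollary 2.2]{BCC}, \cite[Theorem 4.2]{Abrescia}, \cite[Theorem 3.1]{BalBerCat12} and Theorem \ref{thm:AH}. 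This auxiliary machinery is the real content of Section \ref{sec: 33}; your proposal leaves it unproved, so as written the inductive step does not close.
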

\begin{proof}
 We may assume $m \leq n$ by symmetry and $2 \leq m$ by \cite[Theorem 3.1]{BalBerCat12}. We apply Theorem~\ref{thm: quello che usiamo} by checking its hypothesis:
condition (1) in Proposition~\ref{pro: triplo e doppi}, condition (2) in Proposition~\ref{pro: quartuplo e doppi}, condition (3) can be checked easily since a single $j$-fat point always impose independent conditions on $\calL_{m\times n}^{c,d}$ for $c,d \geq j-1$ and condition (4) is trivial.
\end{proof}

\begin{proposition}\label{pro: triplo e doppi} 
If $m$ and $n$ are positive integers, then $\LL_{m\times n}^{3,3}(3,2^{k^*(3,3;m,n)})$ is regular.
\begin{proof} 
Without loss of generality, we assume that $m\le n$. We proceed by induction on $m$. The case $m=1$ is Lemma \ref{lem: caso iniziale triplo e doppi}. Assume that $m\ge 2$. Let $D\subset\p^m\times \p^n$ be a divisor of bidegree $(1,0)$ and consider a scheme of fat points $X$ of type $(3,2^{k^*(3,3;m,n)})$ such that $X \cap D$ is general of type $(3,2^{k^*(3,3;m-1,n)})$ on $D$. Note that
\begin{align}
\vdim & \LL_{m\times n}^{3,3}(X) \nonumber \\ & = \binom{m+3}{3}\binom{n+3}{3} - \binom{m+n+2}{2} - (m+n+1)\left(\left\lceil \frac{\binom{m+3}{3}\binom{n+3}{3}}{m+n+1} \right\rceil-(m+n+1)\right) \nonumber \\
& \geq (m+n+1)^2 - \binom{m+n+2}{2} - (m+n) = \binom{m+n+1}{2} - (m+n) \ge 0. \label{eq:vdim_withTriple}
\end{align}
As explained in Section \ref{sec:inductive}, it is enough to prove that  residue and trace of $\calL_{m\times n}^{3,3}(X)$ with respect to $D$ are regular.
		\begin{itemize}
\item \textit{Trace.}  The trace of $X$ on $D \cong \bbP^{m-1}\times\bbP^n$ is general of type $(3,2^{k^*(3,3;m-1,n)})$, hence the linear system $\calL_{(m-1)\times n}^{3,3}(\Tr_D(X))$ is regular by induction.
\item \textit{Residue.} The residue $\Res_D(X)$ is of type $(2^{1+k^*(3,3;m,n)-k^*(3,3;m-1,n)},1^{k^*(3,3;m-1,n)})$, where $\Res_D(X) \cap D$ is general of type $(2,1^{k^*(3,3;m-1,n)})$ on $D$. The system $\LL_{m\times n}^{2,3}(\Res_D(X))$ has non-negative virtual dimension by Lemma \ref{conto: vdim positiva, bigrado (2,3)}, and $\LL_{m\times n}^{2,3}(2^{k^*(3,3;m,n)-k^*(3,3;m-1,n)+1})$ is regular by Lemma \ref{lem: bigrado 2,3 con pochi punti doppi}. In order to prove that $\calL_{m\times n}^{2,3}(\Res_D(X))$ is regular, we need to show that the $k^*(3,3;m-1,n)$ simple points on $D$ impose independent conditions on the linear system $\LL_{m\times n}^{2,3}(2^{1+k^*(3,3;m,n)-k^*(3,3;m-1,n)})$. Thanks to Lemma \ref{lemma:extra_points}, we only have to prove that  $\LL_{m\times n}^{1,3}(1,2^{k^*(3,3;m,n)-k^*(3,3;m-1,n)})=0$. This is true by Lemma \ref{conto: kbasso, usare bcc} and \cite[Theorem~2.3]{BCC}. \qedhere
	\end{itemize}
	\end{proof}
\end{proposition}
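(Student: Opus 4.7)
My plan is to argue by induction on $m$, assuming without loss of generality that $m \leq n$. The base case $m=1$ reduces to the regularity of $\LL_{1\times n}^{3,3}(3,2^{k^*(3,3;1,n)})$, which should follow from known results about linear systems on $\bbP^1\times\bbP^n$ with mixed fat base points (referenced as Lemma~\ref{lem: caso iniziale triplo e doppi}). For the inductive step, I would apply the Castelnuovo exact sequence \eqref{exact_sequence_bihom} to a divisor $D$ of bidegree $(1,0)$, so $D \cong \bbP^{m-1} \times \bbP^n$. Before checking that trace and residue are both regular, one must verify that $\vdim\LL_{m\times n}^{3,3}(X) \geq 0$, which is a short calculation.

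To exploit induction, the natural specialization is to move the support of $X$ so that its trace on $D$ is general of type $(3, 2^{k^*(3,3;m-1,n)})$. Concretely, I place the $3$-fat point and $k^*(3,3;m-1,n)$ of the $2$-fat points on $D$, leaving the remaining $k^*(3,3;m,n) - k^*(3,3;m-1,n)$ $2$-fat points in general position away from $D$. Then $\calL_{(m-1)\times n}^{3,3}(\Tr_D X)$ is regular by the inductive hypothesis, so by the discussion in Section~\ref{sec:inductive} it suffices to show that the residue system $\LL_{m\times n}^{2,3}(\Res_D X)$ is regular as well.

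Computing residues: the $3$-fat point on $D$ becomes a $2$-fat point on $D$, each of the $k^*(3,3;m-1,n)$ $2$-fat points on $D$ becomes a simple point on $D$, and the remaining general $2$-fat points are unaffected. Thus $\Res_D X$ consists of $1 + k^*(3,3;m,n) - k^*(3,3;m-1,n)$ general $2$-fat points together with $k^*(3,3;m-1,n)$ simple points on $D$. I would first check $\vdim \LL_{m\times n}^{2,3}(\Res_D X) \geq 0$ by a direct count, and separately establish that the "pure" system $\LL_{m\times n}^{2,3}(2^{1+k^*(3,3;m,n)-k^*(3,3;m-1,n)})$ (forgetting the simple points) is regular — this requires a standalone lemma on bidegree $(2,3)$ systems with a small number of $2$-fat points.

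The key step is showing that the simple points on $D$ impose independent conditions on the pure system. For this I would invoke Lemma~\ref{lemma:extra_points}: hypothesis (1) is automatic because simple points on a multiprojective space always impose independent conditions on any trace provided the number is small enough, and hypothesis (2) reduces to showing that $\LL_{m\times n}^{2,3}(\Res_D X'') \cap I_D = 0$, where $X''$ is the pure $2$-fat part. After peeling off the factor $D$, this vanishing amounts to $\LL_{m\times n}^{1,3}(1, 2^{k^*(3,3;m,n)-k^*(3,3;m-1,n)}) = 0$, which I expect to deduce from results on bidegree $(1,3)$ Segre–Veronese varieties (the theorem of Ballico–Bernardi–Catalisano). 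The main obstacle is the arithmetic bookkeeping: I would have to check that the counts of $2$-fat points in the auxiliary bidegree $(2,3)$ and $(1,3)$ systems fall in ranges where the regularity/vanishing results actually apply, which is why the paper defers these verifications to appendix-style lemmas.
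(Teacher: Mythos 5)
Your proposal is correct and follows the same route as the paper's proof: induction on $m$ with $m\le n$, Castelnuovo sequence with respect to a bidegree-$(1,0)$ divisor $D$ after specializing the triple point and $k^*(3,3;m-1,n)$ of the double points onto $D$, handling the trace by the inductive hypothesis, and handling the residue by first proving regularity of the pure $2$-fat system in bidegree $(2,3)$ and then adding the simple points on $D$ via Lemma~\ref{lemma:extra_points}, which reduces to the vanishing $\LL_{m\times n}^{1,3}(1,2^{k^*(3,3;m,n)-k^*(3,3;m-1,n)})=0$ from \cite{BalBerCat12}. The only minor imprecision is that you describe all $2$-fat points in $\Res_D(X)$ as general, whereas the one coming from the residue of the triple point remains supported on $D$; the paper states this correctly, but it does not change the logic of the argument.
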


\begin{lemma}
	\label{lem: caso iniziale triplo e doppi} If $n$ is a positive integer, then $\LL_{1\times n}^{3,3}(3,2^{k^*(3,3;1,n)})$ is regular.
	\begin{proof}
We proceed by induction on $n$. The case $n = 1$ is checked directly with the support of an algebra software; see Appendix \ref{appendix: M2}. Assume that $n \geq 2$. Let $D \subset \bbP^1\times\bbP^n$ be a divisor of bidegree $(0,1)$ and consider the scheme of fat points $X$ of type $(3,2^{k^*(3,3;1,n)})$ such that $X \cap D$ is general of type $(3,2^{k^*(3,3;1,n-1)})$ on $D$. Note that $\vdim \LL_{1\times n}^{3,3}(X) \geq 0$ by \eqref{eq:vdim_withTriple}. As explained in Section \ref{sec:inductive}, it is enough to prove that residue and trace of $\calL_{1\times n}^{3,3}(X)$ with resepct to $D$ are regular.
		\begin{itemize}
\item \textit{Trace.} The trace of $X$ on $D \cong \bbP^1\times\bbP^{n-1}$ is general of type $(3,2^{k^*(3,3;1,n-1)})$ and the linear system $\calL_{1\times (n-1)}^{3,3}(3,2^{k^*(3,3;1,n-1)})$ is regular by induction.
\item \textit{Residue.} $\Res_D(X)$ is a scheme of fat points of type $(2^{1+k^*(3,3;1,n)-k^*(3,3;1,n-1)},1^{k^*(3,3;1,n-1)})$, where $\Res_D(X) \cap D$ is general of type $(2,1^{3,3;k^*(1,n-1)})$ on $D$. By symmetry and by Lemma~\ref{conto: vdim positiva, bigrado (2,3)}, the virtual dimension of $\calL_{1\times n}^{3,2}(2^{1+k^*(3,3;1,n)-k^*(3,3;1,n-1)},1^{k^*(3,3;1,n-1)})$ is non-negative. The system $\calL_{1\times n}^{3,2}(2^{1+k^*(3,3;1,n)-k^*(3,3;1,n-1)})$ is regular by \cite[Theorem 3.1]{BalBerCat12}. Hence, in order to prove that $\calL_{1\times n}^{3,2}(\Res_D(X))$ is regular, we need to show that the additional $k^*(3,3;1,n-1)$ simple points lying on $D$ impose independent conditions on $\calL_{1 \times n}^{3,2}(2^{1+k^*(3,3;1,n)-k^*(3,3;1,n-1)})$. Thanks to Lemma~\ref{lemma:extra_points}, we just need to check that $\LL_{1\times n}^{3,1}(1,2^{k^*(3,3;1,n)-k^*(3,3;1,n-1)})=0$. This holds by Lemma \ref{conto: bigrado 3,1 atteso vuoto} and \cite[Theorem 3.1]{BalBerCat12}. 
\qedhere
	\end{itemize}
	\end{proof}
\end{lemma}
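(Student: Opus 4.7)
The plan is to induct on $n$. The base case $n=1$ concerns $\LL_{1\times 1}^{3,3}(3, 2^{k^*(3,3;1,1)})$ on $\p^1 \times \p^1$, a fixed finite-dimensional check that can be carried out by a direct Macaulay2 computation.

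For the inductive step, I would apply the Castelnuovo exact sequence \eqref{exact_sequence_bihom} along a divisor $D \cong \p^1 \times \p^{n-1}$ of bidegree $(0,1)$, mirroring the strategy of Proposition \ref{pro: triplo e doppi} but with the two factors exchanged. Specialize $X$ so that the triple point and exactly $k^*(3,3;1,n-1)$ of the double points lie on $D$; the non-negativity of $\vdim \LL_{1\times n}^{3,3}(X)$ follows from the same estimate as \eqref{eq:vdim_withTriple}. It then suffices to show that both the trace and the residue of the specialized system are regular.

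The trace on $D$ is a scheme of type $(3, 2^{k^*(3,3;1,n-1)})$ in bidegree $(3,3)$ on $\p^1 \times \p^{n-1}$, hence regular by induction. The residue, living in bidegree $(3,2)$ on $\p^1 \times \p^n$, is of type $(2^{1+k^*(3,3;1,n)-k^*(3,3;1,n-1)}, 1^{k^*(3,3;1,n-1)})$, with the simple points lying on $D$. Here I would first invoke \cite[Theorem 3.1]{BalBerCat12} to handle the purely $2$-fat subsystem (using the symmetry that swaps the two factors to turn bidegree $(3,2)$ into $(2,3)$, where the classification directly applies), and then append the remaining $k^*(3,3;1,n-1)$ simple points by Lemma \ref{lemma:extra_points}. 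The second hypothesis of that lemma will reduce to showing that the bidegree-$(3,1)$ system $\LL_{1\times n}^{3,1}(1, 2^{k^*(3,3;1,n)-k^*(3,3;1,n-1)})$ vanishes, which is once more handled by \cite{BalBerCat12} once the virtual dimension is checked to be non-positive.

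The main obstacle here is not conceptual but bookkeeping: one must verify that the virtual dimensions of the $(3,2)$-bidegree residue system and of the auxiliary $(3,1)$-bidegree system lie in exactly the right ranges so that the above chain of regularity statements matches the virtual count. This amounts to several fiddly identities involving the ceiling in the definition of $k^*(3,3;1,n)$, and I would isolate them in the arithmetic appendix, as analogues of Lemma \ref{conto: vdim positiva, bigrado (2,3)} and Lemma \ref{conto: bigrado 3,1 atteso vuoto}.
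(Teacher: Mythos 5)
Your proposal follows essentially the same route as the paper's proof: induction on $n$ with a software check at $n=1$, specialization of the triple point and $k^*(3,3;1,n-1)$ double points onto a bidegree-$(0,1)$ divisor, trace handled by induction, and the bidegree-$(3,2)$ residue handled via \cite[Theorem 3.1]{BalBerCat12} plus Lemma \ref{lemma:extra_points}, with the final reduction to the vanishing of $\LL_{1\times n}^{3,1}(1,2^{k^*(3,3;1,n)-k^*(3,3;1,n-1)})$ and the arithmetic estimates deferred to the appendix (indeed the paper uses exactly Lemma \ref{conto: vdim positiva, bigrado (2,3)} and Lemma \ref{conto: bigrado 3,1 atteso vuoto} for these). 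No substantive difference or gap.
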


\begin{lemma}
	\label{lem: bigrado 2,3 con pochi punti doppi}
If $1 \leq m\le n$, then $\LL_{m\times n}^{2,3}(2^{1+k^*(3,3;m,n)-k^*(3,3;m-1,n)})$ is regular.
\begin{proof} 
In order to simplify the notation, we set $f(m,n):=1+k^*(3,3;m,n)-k^*(3,3;m-1,n)$. We argue by induction on $m$. The case $m = 1$ follows by \cite[Theorem 4.2]{Abrescia}. Assume that $m\ge 2$ and let $D\subset\p^m\times\p^n$ be a divisor of bidegree $(1,0)$. Let $X$ be a scheme of fat points of type $(2^{f(m,n)})$ such that $X \cap D$ is general of type $(2^{f(m-1,n)})$ on $D$. Note that we are allowed to do it because $f(m-1,n) \leq f(m,n)$ by Lemma \ref{conto:funzioniCrescenti}(4). By Lemma \ref{conto:f_vdim_positiva},  $\vdim\calL_{m\times n}^{2,3}(X) \geq 0$. Hence it is enough to prove that residue and trace of $\calL_{m\times n}^{2,3}(X)$ with respect to $D$ are regular.
		\begin{itemize}
\item {\it Trace}.  The trace is a general scheme of fat points on $D \cong \bbP^{m-1}\times\bbP^n$ of type $(2^{f(m-1,n)})$ and $\calL^{2,3}_{(m-1)\times n}(2^{f(m-1,n)})$ is regular by induction.
\item {\it Residue}. The residue $\Res_D(X)$ is a scheme of fat points of type $(2^{f(m,n)-f(m-1,n)},1^{f(m-1,n)})$, where $\Res_D(X) \cap D$ is a general scheme of type $(1^{f(m-1,n)})$ on $D$. By Lemma \ref{conto:vdim_lowerbound_13}, \[\vdim\LL_{m\times n}^{1,3}(\Res_D(X))\ge 0.\]
By Lemma \ref{conto:upperBound_l-l}(1),
	\[
f(m,n)-f(m-1,n)\le\rd{\frac{m+1}{m+n+1}\binom{n+3}{3}}-m,
	\]
hence $\LL_{m\times n}^{1,3}(2^{f(m,n)-f(m-1,n)})$ is regular by \cite[Corollary 2.2]{BCC}. In order to prove that $\calL_{m\times n}^{1,3}(\Res_D(X))$ is regular, we need to show that the $f(m-1,n)$ simple points on $D$ impose independent conditions on $\LL_{m\times n}^{1,3}(2^{f(m,n)-f(m-1,n)})$. By Lemma \ref{lemma:extra_points}, it is enough to check that $\dim\LL_{m\times n}^{0,3}(2^{f(m,n)-f(m-1,n)})=0$. Lemma \ref{conto:funzioniCrescenti}(3) and Theorem \ref{thm:AH} imply that \[\LL_{m\times n}^{0,3}(2^{f(m,n)-f(m-1,n)})\cong\LL_{n}^{3}(2^{f(m,n)-f(m-1,n)})=0.\qedhere\]\end{itemize}
	\end{proof}
\end{lemma}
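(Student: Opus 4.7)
My plan is to mirror the strategy of the preceding propositions: induct on $m$, and in the inductive step degenerate the configuration by specializing a large subset of the $2$-fat points onto a divisor $D$ of bidegree $(1,0)$, then exploit the Castelnuovo exact sequence \eqref{exact_sequence_bihom}. Set $f(m,n):=1+k^*(3,3;m,n)-k^*(3,3;m-1,n)$ for notational convenience, so that our target system is $\LL_{m\times n}^{2,3}(2^{f(m,n)})$.

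For the base case $m=1$, the statement follows directly from Abrescia's classification \cite[Theorem~4.2]{Abrescia} of special systems of bidegree $(2,3)$ on $\p^1\times\p^n$: one just needs to check that $f(1,n)$ does not fall in the list of exceptions, which is a routine arithmetic check. For the inductive step $m\ge 2$, let $D\subset \p^m\times\p^n$ be a divisor of bidegree $(1,0)$, isomorphic to $\p^{m-1}\times\p^n$. I would specialize $f(m-1,n)$ of the $f(m,n)$ double points to lie in general position on $D$, which is legal once one verifies the inequality $f(m-1,n)\le f(m,n)$; this I expect to be an arithmetic check corresponding to Lemma \ref{conto:funzioniCrescenti}(4). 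The virtual dimension of $\LL_{m\times n}^{2,3}(X)$ is non-negative by Lemma \ref{conto:f_vdim_positiva}, so it suffices to prove that trace and residue are regular.

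The trace on $D$ is a general scheme of type $(2^{f(m-1,n)})$ in bidegree $(2,3)$ on $\p^{m-1}\times\p^n$, which is regular by the inductive hypothesis. The residue is the more delicate piece: it is a scheme of fat points of type $(2^{f(m,n)-f(m-1,n)},1^{f(m-1,n)})$ on $\p^m\times\p^n$ in bidegree $(1,3)$, with the simple points lying generally on $D$. The virtual dimension here is non-negative by Lemma \ref{conto:vdim_lowerbound_13}. I would first forget the simple points and show that $\LL_{m\times n}^{1,3}(2^{f(m,n)-f(m-1,n)})$ is regular using \cite[Corollary~2.2]{BCC}; to apply that result one needs an upper bound on the number of double points, which I expect to follow from Lemma \ref{conto:upperBound_l-l}(1). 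Then I would reinsert the simple points via Lemma~\ref{lemma:extra_points}: this reduces the problem to checking that $\LL_{m\times n}^{0,3}(2^{f(m,n)-f(m-1,n)})=0$, which is equivalent to the vanishing of the analogous system of cubics in $\p^n$ and follows from the Alexander--Hirschowitz Theorem \ref{thm:AH}, combined with a size check of $f(m,n)-f(m-1,n)$ provided by Lemma \ref{conto:funzioniCrescenti}(3).

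The conceptual argument is entirely standard by this point in the paper; the main obstacle I foresee is purely arithmetic: one has to carefully verify the chain of numerical inequalities, namely that $f(m-1,n)\le f(m,n)$, that $f(m,n)-f(m-1,n)$ is small enough to trigger the BCC regularity result for $\LL_{m\times n}^{1,3}$, and that it is simultaneously large enough (in the sense of Alexander--Hirschowitz) to force the $(0,3)$ system to vanish. These computations are the reason the proof outsources several lemmas to Appendix \ref{appendix: contacci}, and I would proceed similarly.
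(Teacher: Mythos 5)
Your proposal follows the paper's own proof essentially verbatim: the same induction on $m$ with base case via Abrescia, the same specialization of $f(m-1,n)$ double points onto a bidegree $(1,0)$ divisor, and the same treatment of trace (induction) and residue (BCC regularity for the $(1,3)$ system plus Lemma \ref{lemma:extra_points} reducing to the vanishing of $\LL_n^3(2^{f(m,n)-f(m-1,n)})$ by Alexander--Hirschowitz), citing exactly the arithmetic lemmas the paper uses. It is correct and takes the same approach.
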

Now the proof of Proposition \ref{pro: triplo e doppi} is complete, so $\SV_{m\times n}^{3,3}$ satisfies the first hypothesis of Theorem~\ref{thm: quello che usiamo}. Let us move to the second one.
\begin{proposition}
\label{pro: quartuplo e doppi}
If $n\ge m\ge 2$, then $\dim\LL_{m\times n}^{3,3}(4,2^{k_*(3,3;m,n)})=0$.
	\begin{proof}
	We argue by induction on $m$. By Lemma \ref{lem: quartuplo e doppi, 2 x n},  $\LL_{2\times n}^{3,3}(4,2^{k_*(3,3;2,n)})=0$. Assume that $m\ge 3$ and take a divisor $D\subset\p^m\times\p^n$ of bidegree $(1,0)$. Let $X$ be a scheme of fat points of type $(4,2^{k_*(3,3;m,n)})$ such that $X \cap D$ is general of type $(4,2^{k_*(3,3;m-1,n)})$ on $D$. As explained in Section \ref{sec:inductive}, it is enough to prove that residue and trace of $\calL_{m\times n}^{3,3}(X)$ are zero.
		\begin{itemize}
\item {\it Trace.} The trace of $X$ on $D$ is a general scheme of type $(4,2^{k_*(3,3;m-1,n)})$ and we know that $\LL_{(m-1)\times n}^{3,3}(4,2^{k_*(3,3;m-1,n)})=0$ by induction hypothesis.
\item {\it Residue.} $\Res_D(X)$ is a scheme of fat points  of type $(3,2^{k_*(3,3;m,n)-k_*(3,3;m-1,n)},1^{k_*(3,3;m-1,n)})$ where $X \cap D$ is general of type \Ale{$(3,1^{k_*(3,3;m-1,n)})$} on $D$. The residue linear system is expected to be zero by Lemma \ref{conto: bigrado 2,3, triplo e doppi, left hand side}. The linear system $\LL_{m\times n}^{2,3}(3,2^{k_*(3,3;m,n)-k_*(3,3;m-1,n)})$ is regular by Lemma \ref{lem: triplo e doppi, bigrado ()2,3)}. Now we need to prove that the extra $k_*(3,3;m-1,n)$ simple points on $D$ impose enough conditions to make $\LL_{m\times n}^{2,3}(\Res_D(X))$ \Ale{to be zero}. 
By Lemma \ref{lemma:extra_points}, it is enough to prove that $\LL_{m\times n}^{1,3}(2,2^{k_*(3,3;m,n)-k_*(3,3;m-1,n)})= 0$. Thanks to \cite[Corollary 2.2]{BCC}, we just need to show that
			\[
1+k_*(3,3;m,n)-k_*(3,3;m-1,n)\ge\ru{\frac{m+1}{m+n+1}\binom{n+3}{3}}+m,\]
and this is done in Lemma \ref{conto: bigrado 2,3, usare bcc}.\qedhere
		\end{itemize}
	\end{proof}
\end{proposition}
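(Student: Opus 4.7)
The plan is to proceed by induction on $m$, mirroring the pattern used throughout the paper: specialize part of the base scheme onto a divisor of bidegree $(1,0)$ and apply the Castelnuovo exact sequence, showing both the trace and the residue linear systems vanish.

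\smallskip

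For the base case $m=2$, the statement $\LL_{2\times n}^{3,3}(4,2^{k_*(3,3;2,n)})=0$ should be handled as a separate lemma (let us call it Lemma \ref{lem: quartuplo e doppi, 2 x n}), proved by its own inductive argument on $n$, with the initial case for small $n$ verified by direct computer algebra computation (as done for analogous base cases in Lemma \ref{lem: caso iniziale triplo e doppi}).

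\smallskip

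For the inductive step assume $m\ge 3$ and fix a divisor $D\subset \bbP^m\times\bbP^n$ of bidegree $(1,0)$, so $D\cong\bbP^{m-1}\times\bbP^n$. Specialize the scheme $X$ of type $(4,2^{k_*(3,3;m,n)})$ so that the $4$-fat point and $k_*(3,3;m-1,n)$ of the $2$-fat points are general on $D$, while the remaining $k_*(3,3;m,n)-k_*(3,3;m-1,n)$ points are general in $\bbP^m\times\bbP^n$. By the discussion in Section \ref{sec:inductive}, it suffices to show that both the trace and the residue of $\calL_{m\times n}^{3,3}(X)$ on $D$ vanish. The trace on $D$ is a general scheme of type $(4,2^{k_*(3,3;m-1,n)})$, so the trace system vanishes by the induction hypothesis.

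\smallskip

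The residue system is the harder half. By construction $\Res_D(X)$ is of type $(3,2^{k_*(3,3;m,n)-k_*(3,3;m-1,n)},1^{k_*(3,3;m-1,n)})$, with the $k_*(3,3;m-1,n)$ simple points lying on $D$. My plan is:
\begin{enumerate}
\item Verify (with an arithmetic lemma to be placed in Appendix \ref{appendix: contacci}) that the virtual dimension of $\LL_{m\times n}^{2,3}(3,2^{k_*(3,3;m,n)-k_*(3,3;m-1,n)})$ is nonpositive, so one expects this system to vanish.
\item Prove, as a companion statement to Proposition \ref{pro: triplo e doppi}, that the bidegree-$(2,3)$ system $\LL_{m\times n}^{2,3}(3,2^{k})$ with $k=k_*(3,3;m,n)-k_*(3,3;m-1,n)$ is regular—again by a Castelnuovo induction on $m$ (or $n$), invoking \cite[Corollary 2.2]{BCC} and \cite[Theorem 3.1]{BalBerCat12} in the auxiliary steps.
\item Show via Lemma \ref{lemma:extra_points} that the $k_*(3,3;m-1,n)$ extra simple points on $D$ impose independent conditions on $\LL_{m\times n}^{2,3}(3,2^{k})$; by that lemma it is enough to check that
\[
\LL_{m\times n}^{1,3}\bigl(2,2^{k_*(3,3;m,n)-k_*(3,3;m-1,n)}\bigr)=0.
\]
This reduces to a single arithmetic inequality
\[
1+k_*(3,3;m,n)-k_*(3,3;m-1,n)\ \ge\ \Bigl\lceil\tfrac{m+1}{m+n+1}\tbinom{n+3}{3}\Bigr\rceil+m,
\]
which would be recorded as a routine lemma in Appendix \ref{appendix: contacci} and then fed into \cite[Corollary 2.2]{BCC}.
\end{enumerate}

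\smallskip

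The main obstacle is not the overall inductive scaffolding, which is standard in this paper, but the bookkeeping of the arithmetic: one has to choose the specialization of the $2$-fat points onto $D$ carefully so that both (a) the virtual dimensions on the two sides of the Castelnuovo sequence still behave as expected after the specialization, and (b) the auxiliary \cite[Corollary 2.2]{BCC} bound on the residue of the residue actually holds. These ceiling/floor inequalities involving $k_*(3,3;m,n)$ are the fragile part; failure at any single $(m,n)$ would force the use of a different divisor or a differential Horace specialization. Once these inequalities are verified and the regularity of $\LL_{m\times n}^{2,3}(3,2^k)$ is in hand (via its own Castelnuovo induction paralleling Proposition \ref{pro: triplo e doppi}), the argument closes uniformly for all $m\ge 3$.
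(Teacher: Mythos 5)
Your plan reproduces the paper's argument essentially step by step: the same induction on $m$ with the base case $m=2$ isolated as a separate lemma proved by induction on $n$ plus software checks, the same specialization of the $4$-fat point and $k_*(3,3;m-1,n)$ of the $2$-fat points onto a bidegree-$(1,0)$ divisor $D$, the same trace/residue dichotomy, the regularity of $\LL_{m\times n}^{2,3}(3,2^{k})$ with $k=k_*(3,3;m,n)-k_*(3,3;m-1,n)$ proved separately (this is the paper's Lemma \ref{lem: triplo e doppi, bigrado ()2,3)}), and the same final reduction via Lemma \ref{lemma:extra_points} to $\LL_{m\times n}^{1,3}(2,2^{k})=0$, settled by \cite[Corollary 2.2]{BCC} together with exactly the inequality you wrote (the paper's Lemma \ref{conto: bigrado 2,3, usare bcc}).

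The one point that needs correcting is your step (1). The virtual dimension of $\LL_{m\times n}^{2,3}(3,2^{k})$ is \emph{not} nonpositive in general: this system is regular of (typically strictly) positive dimension, which is precisely why the extra simple points and Lemma \ref{lemma:extra_points} are needed at all — if it vanished on its own, step (3) would be superfluous, and as written step (1) also contradicts the regularity you assert in step (2). The arithmetic fact you actually need is that the virtual dimension of the \emph{whole} residue scheme, simple points included, is nonpositive, i.e.
\[
\vdim\LL_{m\times n}^{2,3}\bigl(3,2^{k}\bigr)\le k_*(3,3;m-1,n),
\]
which is the paper's Lemma \ref{conto: bigrado 2,3, triplo e doppi, left hand side}. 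Combined with the regularity of $\LL_{m\times n}^{2,3}(3,2^{k})$ this gives $\dim\LL_{m\times n}^{2,3}(3,2^{k})\le k_*(3,3;m-1,n)$, which is the hypothesis that Lemma \ref{lemma:extra_points} requires (together with the vanishing of $\LL_{m\times n}^{1,3}(2,2^{k})$, i.e.\ of the part of the system containing $D$) in order to conclude that the residue system is zero. With that inequality substituted for the one you stated, your plan coincides with the paper's proof.
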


\begin{lemma}
	\label{lem: quartuplo e doppi, 2 x n} 
	If $n$ is a positive integer, then $\dim \LL_{2\times n}^{3,3}(4,2^{k_*(3,3;2,n)})=0$.
	\begin{proof}
We argue by induction on $n$. By a software computation, $\dim\LL_{2\times 1}^{3,3}(4,2^{k_*(3,3;2,1)})= 0$; see Appendix \ref{appendix: M2}. Assume that $n\ge 2$ and consider a divisor $D\subset\p^2\times\p^n$ of bidegree $(0,1)$. Let $X$ be a scheme of fat points of type $(4,2^{k_*(3,3;2,n)})$ such that $X \cap D$ is general of type $(4,2^{k_*(3,3;2,n-1)})$ on $D$.  As explained in Section \ref{sec:inductive}, it is enough to prove that the residue and trace of $\calL_{2\times n}^{3,3}(X)$ are zero.
		\begin{itemize}
\item {\it Trace.}  The trace of $X$ on $D$ is a general scheme of type $(4,2^{k_*(3,3;2,n-1)})$ and the linear system $\LL_{2\times (n-1)}^{3,3}(4,2^{k_*(3,3;2,n-1)})$ is zero by induction hypothesis.
			\item {\it Residue.}
$\Res_D(X)$ is a scheme of fat points of type $(3,2^{k_*(3,3;2,n)-k_*(3,3;2,n-1)},1^{k_*(3,3;2,n-1)})$, where $X \cap D$ is general of type $(3,1^{k_*(3,3;2,n-1)})$. The residue linear system is expected to be zero by Lemma \ref{conto: left hand side del punto quartuplo, atteso vuoto}. By Lemma \ref{lem: 2 x n, bigrado (3,2)}, $\LL_{2\times n}^{3,2}(3,2^{k_*(3,3;2,n)-k_*(3,3;2,n-1)})$ is regular. Now we prove that the $k_*(3,3;2,n-1)$ simple points on $D$ impose enough conditions to make $\calL_{2\times n}^{3,2}(\Res_D(X))$ zero. 
By Lemma \ref{lemma:extra_points}, it is enough to show that $\calL_{2\times n}^{3,1}(2^{1+k_*(3,3;2,n)-k_*(3,3;2,n-1)})=0$. By \cite[Corollary 2.2]{BCC}, the latter is guaranteed by
\[
1+k_*(3,3;2,n)-k_*(3,3;2,n-1) \geq \left\lceil \frac{10(n+1)}{n+3}\right\rceil + n
\]
which holds by Lemma \ref{conto: bigrado (3,1) è vuoto}. \qedhere
		\end{itemize}
	\end{proof}
\end{lemma}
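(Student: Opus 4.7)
The plan is to prove the lemma by induction on $n$, using the Castelnuovo exact sequence with a divisor of bidegree $(0,1)$. This mirrors the inductive strategy employed in Propositions \ref{pro: triplo e doppi} and \ref{pro: quartuplo e doppi}, but here the induction runs along the second factor instead of the first, since we must keep $m=2$ fixed.

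For the base case $n=1$, I would verify that $\dim\calL_{2\times 1}^{3,3}(4,2^{k_*(3,3;2,1)})=0$ directly via a Macaulay2 computation, to be documented in Appendix \ref{appendix: M2}. For the inductive step $n\ge 2$, I choose a divisor $D\subset\bbP^2\times\bbP^n$ of bidegree $(0,1)$ (so $D\cong\bbP^2\times\bbP^{n-1}$) and specialize the scheme $X$ of type $(4,2^{k_*(3,3;2,n)})$ so that its trace on $D$ is general of type $(4,2^{k_*(3,3;2,n-1)})$. This specialization is admissible provided that $k_*(3,3;2,n-1)\le k_*(3,3;2,n)$, an arithmetic inequality that I would isolate as an auxiliary statement in Appendix \ref{appendix: contacci}. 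By \eqref{eq:chain_dim} applied to the Castelnuovo sequence, it is then enough to show that both the trace and the residue systems vanish.

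The trace $\calL_{2\times(n-1)}^{3,3}(4,2^{k_*(3,3;2,n-1)})$ vanishes by the induction hypothesis. For the residue, a standard computation shows it has the form
\[
\calL_{2\times n}^{3,2}\bigl(3,\,2^{k_*(3,3;2,n)-k_*(3,3;2,n-1)},\,1^{k_*(3,3;2,n-1)}\bigr),
\]
with the simple points lying on $D$. The strategy is to invoke Lemma \ref{lemma:extra_points}: one first shows that $\calL_{2\times n}^{3,2}(3,\,2^{k_*(3,3;2,n)-k_*(3,3;2,n-1)})$ is regular (this is Lemma \ref{lem: 2 x n, bigrado (3,2)}), and then one proves that the residue system on the divisor $D$, namely $\calL_{2\times n}^{3,1}(2^{1+k_*(3,3;2,n)-k_*(3,3;2,n-1)})$, is zero. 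The latter vanishing is an application of \cite[Corollary 2.2]{BCC}, which requires the numerical bound
\[
1+k_*(3,3;2,n)-k_*(3,3;2,n-1) \ge \ru{\tfrac{10(n+1)}{n+3}} + n,
\]
to be verified in Appendix \ref{appendix: contacci}.

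The main obstacle is not conceptual but arithmetic: one has to ensure that every numerical inequality needed to balance the specialization (the virtual dimension bound from a counterpart of Lemma \ref{conto: left hand side del punto quartuplo, atteso vuoto}, the non-negativity constraints, and the Ballico--Brambilla--Catalisano threshold above) holds uniformly in $n$. These estimates behave well asymptotically but can fail for small $n$, so I expect the delicate part to be verifying the low-$n$ cases and packaging the computations cleanly in the appendix. Once those arithmetic lemmas are in place, the geometric argument via residue and trace closes the induction.
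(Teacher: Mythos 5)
Your proposal is correct and follows essentially the same route as the paper: induction on $n$ with a bidegree $(0,1)$ divisor, trace handled by the induction hypothesis, and the residue handled by combining Lemma \ref{lem: 2 x n, bigrado (3,2)}, Lemma \ref{lemma:extra_points}, and the Ballico--Brambilla--Catalisano bound from \cite[Corollary 2.2]{BCC}, with the supporting arithmetic deferred to the appendix. The only slip is terminological: the system $\calL_{2\times n}^{3,1}(2^{1+k_*(3,3;2,n)-k_*(3,3;2,n-1)})$ is not a system "on the divisor $D$" but the subsystem of divisors in $\calL_{2\times n}^{3,2}(\Res_D(X))$ that contain $D$, after dividing out by $D$; this does not affect the correctness of the argument.
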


\begin{lemma}
	\label{lem: triplo e doppi, bigrado ()2,3)}
Let $n\ge m\ge 2$. Then $\LL_{m\times n}^{2,3}(3,2^{k_*(3,3;m,n)-k_*(3,3;m-1,n)})$ is regular.
	\begin{proof}
In order to simplify the notation, set
\[
    \ell(m,n):=k_*(3,3;m,n)-k_*(3,3;m-1,n).\]
We proceed by induction on $m$. The case $m=2$ is solved by Lemma \ref{lem: 2 x n, bigrado (2,3)}. Assume $m\ge 3$ and consider a divisor $D\subset\p^m\times\p^n$ of bidegree $(1,0)$. Let $X$ be a scheme of fat points of type $(3,2^{\ell(m,n)})$ such that $X \cap D$ is general of type $(3,2^{\ell(m-1,n)})$ on $D$. We are allowed to do it because $\ell(m-1,n)\le \ell(m,n)$ by Lemma \ref{conto:funzioniCrescenti}(2). As explained in Section \ref{sec:inductive}, it is enough to prove that the residue and the trace of $\calL_{m\times n}^{2,3}(X)$ with respect to $D$ are regular. 
\begin{itemize}
\item {\it Trace.} The trace of $X$ on $D$ is a general scheme of fat points of type $(3,2^{\ell(m-1,n)})$ and $\LL_{(m-1)\times n}^{2,3}(3,2^{\ell(m-1,n)})$ is 
regular by inductive hypothesis.
\item {\it Residue.} The residue $\Res_D(X)$ is a scheme of fat points of type $(2^{1+\ell(m,n)-\ell(m-1,n)},1^{\ell(m-1,n)})$, where $\Res_D(X)\cap D$ is a general scheme of type $(2,1^{\ell(m-1,n)})$. The system  $\calL_{m\times n}^{1,3}(\Res_D(X))$ has non-negative virtual dimension by Lemma \ref{conto: bigrado 1,3, doppi e semplici}. Lemma \ref{conto:upperBound_l-l}(2) shows that
\[
1+\ell(m,n)-\ell(m-1,n)\le\rd{\frac{m+1}{m+n+1}\binom{n+3}{3}}-m,
\]
thus $\LL_{m\times n}^{1,3}(2^{1+\ell(m,n)-\ell(m-1,n)})$ is regular by \cite[Corollary 2.2]{BCC}. In order to prove that $\calL_{m\times n}^{1,3}(\Res_D(X))$ is regular, we need to prove that the $\ell(m-1,n)$ general simple points on $D$ impose independent conditions on $\LL_{m\times n}^{1,3}(2^{1+\ell(m,n)-\ell(m-1,n)})$. By Lemma \ref{lemma:extra_points} we only need to show that $\calL_{m\times n}^{0,3}(1,2^{\ell(m,n)-\ell(m-1,n)})=0$. We conclude by observing that $\calL_{m\times n}^{0,3}(1,2^{\ell(m,n)-\ell(m-1,n)})\cong\calL_n^3(1,2^{\ell(m,n)-\ell(m-1,n)})=0$ by Lemma \ref{conto:funzioniCrescenti}(1) and Theorem \ref{thm:AH}.\qedhere
\end{itemize}
\end{proof}
\end{lemma}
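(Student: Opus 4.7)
The strategy mirrors the proof of the analogous statement for $\LL_{m\times n}^{3,3}(3,2^{k^*(3,3;m,n)})$ in Proposition~\ref{pro: triplo e doppi}: induct on $m$ and, in the inductive step, cut with a divisor $D$ of bidegree $(1,0)$ so that both the trace and the residue with respect to $D$ can be controlled via the Castelnuovo exact sequence of Section~\ref{sec:inductive}.

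The base case $m=2$ cannot be reduced further by the same slicing, since restricting to $D \cong \bbP^1\times\bbP^n$ produces bidegree $(2,3)$ on a line factor where a triple point already fails to impose independent conditions. I would dispatch it with a separate subsidiary lemma about $\bbP^2\times\bbP^n$, playing the same role as Lemma~\ref{lem: 2 x n, bigrado (3,2)} did for bidegree $(3,2)$: induct on $n$, slice with a bidegree $(0,1)$ divisor, and settle the initial case by a software check as in Appendix~\ref{appendix: M2}.

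For $m\ge 3$, abbreviate $\ell := \ell(m,n)$ and $\ell' := \ell(m-1,n)$. After checking the monotonicity $\ell'\le\ell$, specialize a general scheme $X$ of type $(3, 2^\ell)$ so that $X \cap D$ is general of type $(3, 2^{\ell'})$ on $D$. The trace $\Tr_D(X)$ is then of type $(3, 2^{\ell'})$ in bidegree $(2,3)$ on $D\cong\bbP^{m-1}\times\bbP^n$, which is regular by the inductive hypothesis. The residue $\Res_D(X)$ is of type $(2, 2^{\ell-\ell'}, 1^{\ell'})$ in bidegree $(1,3)$ on $\bbP^m\times\bbP^n$, with the $\ell'$ simple points and the $2$-fat point coming from the original triple both lying on $D$. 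Regularity of $\LL_{m\times n}^{1,3}(2^{1+\ell-\ell'})$ would follow from \cite[Corollary~2.2]{BCC} once I verify the numerical bound $1+\ell-\ell' \le \rd{\tfrac{m+1}{m+n+1}\binom{n+3}{3}} - m$, and then the $\ell'$ extra simple points on $D$ can be absorbed using Lemma~\ref{lemma:extra_points}, provided $\LL_{m\times n}^{0,3}(1, 2^{\ell-\ell'}) = 0$. The latter system is pulled back from the projection onto $\bbP^n$, so it is isomorphic to $\LL_n^3(1, 2^{\ell-\ell'})$, which vanishes by Theorem~\ref{thm:AH} once $\ell-\ell'$ is sufficiently large.

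Conceptually the argument is a direct template application of Section~\ref{sec:inductive} combined with the regularity lemmas already established in this section. The real obstacle lies in the triad of arithmetic inequalities that feed this template: non-negativity of $\vdim \LL_{m\times n}^{1,3}(\Res_D(X))$ (needed to get regularity instead of vanishing), the upper bound on $1+\ell-\ell'$ required by \cite[Corollary~2.2]{BCC}, and the numerical condition under which Alexander-Hirschowitz forces $\LL_n^3(1, 2^{\ell-\ell'}) = 0$. Each is an estimate comparing floors and binomials evaluated at $(m,n)$ versus $(m-1,n)$, and I would package them as three short lemmas in Appendix~\ref{appendix: contacci} and invoke them at the relevant points.
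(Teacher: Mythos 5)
Your inductive step for $m\ge 3$ coincides with the paper's: the same specialization of $X$ onto a bidegree $(1,0)$ divisor, the same trace/residue split, and the same three numerical inputs (non-negativity of $\vdim\calL_{m\times n}^{1,3}(\Res_D(X))$, the bound $1+\ell(m,n)-\ell(m-1,n)\le\rd{\frac{m+1}{m+n+1}\binom{n+3}{3}}-m$ needed for \cite[Corollary 2.2]{BCC}, and the vanishing $\calL_n^3(1,2^{\ell(m,n)-\ell(m-1,n)})=0$ fed into Lemma \ref{lemma:extra_points}); these are exactly Lemmas \ref{conto: bigrado 1,3, doppi e semplici}, \ref{conto:upperBound_l-l}(2) and \ref{conto:funzioniCrescenti}(1)--(2) of the appendix.

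The genuine gap is the base case $m=2$, which you dispatch in three lines but which is where almost all of the work behind this lemma actually sits. First, your reason for abandoning the $(1,0)$-slicing there is incorrect: a $3$-fat point does impose independent conditions on $\calL_{1\times n}^{2,3}$, since this only requires both degrees to be at least $2$. The real issue is that the induction on $m$ has no smaller case to land on; the paper's solution (Lemma \ref{lem: 2 x n, bigrado (2,3)}) is to keep slicing by a $(1,0)$ divisor but to place exactly $s(n)=\frac{n(n+3)}{2}$ of the double points on the trace, so that $\calL_{1\times n}^{2,3}(3,2^{s(n)})$ has virtual dimension zero, and then to prove that this system is actually zero --- which is the content of the whole chain of Lemmas \ref{lem:1xn_(3,2s)}--\ref{lem:1xn_(3,2s)_4thStep}, via multi-step inductions and specializations onto codimension-two subvarieties cut by $(0,1)$ forms, designed precisely to stay in bidegree $(2,3)$. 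Second, the substitute you propose (induct on $n$ and slice by a $(0,1)$ divisor) produces residual systems of bidegree $(2,2)$ on $\bbP^2\times\bbP^n$ carrying a number of $2$-fat points growing linearly in $n$, plus simple points on $D$ whose absorption through Lemma \ref{lemma:extra_points} would in turn require vanishing statements in bidegree $(2,1)$ with double points. Neither the paper nor the references it leans on here (\cite{BCC} treats bidegree $(1,d)$, \cite{BalBerCat12} the presence of a $\bbP^1$ factor) provide regularity for such $(2,2)$ systems, and bidegrees at most $2$ are exactly where the known defective examples live, so this is not a step one can delegate to a ``subsidiary lemma plus software check''. As written, your proposal does not establish the base case, and hence not the lemma.
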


\begin{lemma}\label{lem: 2 x n, bigrado (2,3)}
Let $n \geq 2$. Then $\LL_{2\times n}^{2,3}(3,2^{\ell(2,n)})$ is regular.
\begin{proof} 
We check the case $n = 2$ by a software computation; see Appendix \ref{appendix: M2}. Let $n \geq 3$ and set
\[ s(n):=\frac{n(n+3)}{2}\in\N.
\]
Let $D\subset\p^2\times\p^n$ be a bidegree $(1,0)$ divisor and let $X$ be a scheme of fat points of type $(3,2^{\ell(2,n)})$ such that $X \cap D$ is of type $(3,2^{s(n)})$. Note that we are allowed to do it because $s(n) \leq \ell(2,n)$ by Lemma~\ref{conto: s minore di l}(1). As explained in Section \ref{sec:inductive}, it is enough to prove that the residue and trace of $\calL_{2 \times n}^{2,3}(X)$ with respect to $D$ are regular.
\begin{itemize}
\item {\it Trace.} The trace on $X$ is a general scheme of fat points of type $(3,2^{s(n)})$ on $D$ and the linear system $\LL_{1\times n}^{2,3}(3,2^{s(n)})$ is regular by Lemma \ref{lem:1xn_(3,2s)}.
\item {\it Residue.} The residue $\Res_D(X)$ is a scheme of fat points of type $(2^{1+\ell(2,n)-s(n)},1^{s(n)})$ where $X \cap D$ is general of type $(2,1^{s(n)})$ on $D$. The system $\LL^{1,3}_{m\times n}(\Res_D(X))$ has non-negative virtual dimension by Lemma \ref{conto:lowerBound_vdim_s}. By Lemma \ref{conto: s consente di usare bcc}, since $n \geq 3$ we have
\[1+\ell(2,n)-s(n)\le\rd{\frac{3}{n+3}\binom{n+3}{3}}-2,\]
hence $\LL_{2\times n}^{1,3}(2^{1+\ell(2,n)-s(n)})$ is regular by \cite[Theorem 2.3]{BCC}. Now we need to show that the $s(n)$ simple points on $D$ impose independent conditions on $\LL_{2\times n}^{1,3}(2^{1+\ell(2,n)-s(n)})$. By Lemma~\ref{lemma:extra_points}, it is enough to show that $\LL_{2\times n}^{0,3}(1,2^{\ell(2,n)-s(n)})=0$. We conclude by observing that $\LL_{2\times n}^{0,3}(1,2^{\ell(2,n)-s(n)})\cong\calL_n^3(1,2^{\ell(2,n)-s(n)})=0$ by Lemma \ref{conto: s minore di l}(2) and Theorem \ref{thm:AH}.\qedhere
\end{itemize}
\end{proof}
\end{lemma}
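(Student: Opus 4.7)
My plan is to proceed by induction on $n$. The base case $n = 2$ reduces, after specialization, to a finite linear-algebra problem on a fixed projective scheme, which I would verify with a computer algebra system (as recorded in Appendix~\ref{appendix: M2}).

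For the inductive step, fix $n \geq 3$ and choose a divisor $D \subset \p^2 \times \p^n$ of bidegree $(1,0)$, so that $D \cong \p^1 \times \p^n$. The key is to pick an integer $s(n)$ and specialize the general scheme $X$ of fat points of type $(3, 2^{\ell(2,n)})$ so that the triple point and exactly $s(n)$ of the double points are supported on $D$. A natural candidate is $s(n) = \frac{n(n+3)}{2}$, which is precisely the value that makes $\vdim \calL^{2,3}_{1\times n}(3, 2^{s(n)})=0$, because a direct parameter count on $\p^1 \times \p^n$ gives $3\binom{n+3}{3} - \binom{n+3}{2} - (n+2)s(n)=0$ for this choice. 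Provided $s(n) \leq \ell(2,n)$ (an arithmetic inequality to be checked), this specialization is legitimate. Applying the Castelnuovo exact sequence \eqref{exact_sequence_bihom} with respect to $D$ and using the matching of virtual dimensions \eqref{eq:straightforward_vdim}, regularity of $\calL^{2,3}_{2\times n}(X)$ will follow from regularity of both the trace $\calL^{2,3}_{1\times n}(\Tr_D X)$ and the residue $\calL^{1,3}_{2\times n}(\Res_D X)$.

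The trace is of type $(3, 2^{s(n)})$ on $D \cong \p^1 \times \p^n$; I would handle it via a separate regularity lemma for $\LL^{2,3}_{1\times n}(3, 2^{s(n)})$, an auxiliary statement that plays the role of a one-factor version of the current lemma. The residue is a scheme of type $(2^{1 + \ell(2,n) - s(n)}, 1^{s(n)})$ on $\p^2 \times \p^n$, with the $s(n)$ simple points constrained to lie on $D$. I would treat it in two stages. First, invoke \cite[Corollary 2.2]{BCC} to show that $\calL^{1,3}_{2\times n}(2^{1 + \ell(2,n) - s(n)})$ is regular; this requires the bound $1 + \ell(2,n) - s(n) \leq \rd{\frac{3}{n+3}\binom{n+3}{3}} - 2$. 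Second, apply Lemma~\ref{lemma:extra_points} to conclude that the $s(n)$ additional simple points on $D$ impose independent conditions, which reduces to the vanishing $\calL^{0,3}_{2\times n}(1, 2^{\ell(2,n) - s(n)}) \cong \calL^{3}_n(1, 2^{\ell(2,n) - s(n)}) = 0$; this follows from Theorem~\ref{thm: AH} as soon as $\ell(2,n) - s(n)$ is large enough to make the virtual dimension of the Veronese system non-positive and to avoid the Alexander--Hirschowitz exceptional list.

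The main obstacle is arithmetic: one must verify simultaneously that $s(n) \leq \ell(2,n)$, that the BCC threshold inequality holds, and that $\ell(2,n) - s(n)$ is large enough for Alexander--Hirschowitz to force the required vanishing. All three inequalities are elementary manipulations of the binomial coefficients hidden inside the definition of $\ell(2,n) = k_*(3,3;2,n) - k_*(3,3;1,n)$, but they are tedious and are best relegated to the arithmetic appendix in the style of the other lemmas of this section.
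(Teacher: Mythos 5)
Your proposal follows the paper's proof essentially verbatim: same choice of $D$, same specialization putting the triple point and $s(n)=\tfrac{n(n+3)}{2}$ double points on $D$, same reduction of the trace to a one-factor lemma, and same treatment of the residue via \cite[Corollary 2.2]{BCC}, Lemma~\ref{lemma:extra_points}, and Alexander--Hirschowitz. The only addition is your (correct) observation that $s(n)$ is singled out by $\vdim\calL^{2,3}_{1\times n}(3,2^{s(n)})=0$, which the paper uses implicitly but does not state in this lemma.
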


\begin{lemma}
\label{lem:1xn_(3,2s)}
If $n\ge 2$, then $\LL_{1\times n}^{2,3}(3,2^{s(n)})$ is regular. In particular, it is zero.
\begin{proof}
Note that
\begin{align*}
\vdim\LL_{1\times n}^{2,3}(3,2^{s(n)})=3\binom{n+3}{3}-\binom{n+3}{2}-\frac{n(n+2)(n+3)}{2}=
0.
\end{align*}
We have to prove that it is indeed zero. We proceed by a double-step induction on $n$. A software computation shows that $\dim \LL_{1\times 2}^{2,3}(3,2^{s(2)})=\dim \LL_{1\times 3}^{2,3}(3,2^{s(3)})=0$; see Appendix \ref{appendix: M2}. Assume that $n\ge 4$. Let $A\cong\p^1\times\p^{n-2}$ be a subvariety defined by two general forms of bidegree $(0,1)$. Let $X = X_A + X_\circ$ be a scheme of fat points of type $(3,2^{s(n)})$, where
\begin{center}
	\begin{tabular}{r l}\label{tab:X}
		$X_A$ & is a scheme of type $(3,2^{s(n-2)})$ with general support on $A \cong\bbP^1 \times\bbP^{n-2}$; \\
		$X_\circ$ & is a general scheme of type $(2^{2n+1})$ with support outside $A$.
	\end{tabular}
\end{center}
As explained in Section \ref{sec:inductive}, we consider the exact sequence
\[
0\to
\LL_{1\times n}^{2,3}(A+X)
\to
\LL_{1\times n}^{2,3}(X)
\to
\LL_{1\times (n-2)}^{2,3}(\Tr_AX).
\]
Then it is enough to prove that both the left-most and the right-most linear systems are zero. By induction hypothesis, $\LL_{1\times (n-2)}^{2,3}(\Tr_AX)=0$, while $\LL_{1\times n}^{2,3}(A+X)=0$ by Lemma \ref{lem:1xn_(3,2s)_2ndStep}
\end{proof}
\end{lemma}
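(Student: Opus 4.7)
The virtual dimension of $\LL_{1 \times n}^{2,3}(3, 2^{s(n)})$ equals zero, so the task reduces to showing that this linear system is empty. My plan is to proceed by induction on $n$, with the base cases $n=2$ and $n=3$ verified by direct linear algebra computations in a computer algebra package, since the ambient space has very manageable size in low degrees. A double-step induction from $n-2$ to $n$ is natural here, motivated by the identity $s(n) - s(n-2) = 2n+1$, which cleanly counts the number of $2$-fat points one can afford to place off of a suitable codimension-two subvariety.

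For the inductive step I would specialize using a general codimension-two subvariety $A \cong \p^1 \times \p^{n-2}$ cut out by two general forms of bidegree $(0,1)$. Write $X = X_A + X_\circ$, where $X_A$ is a general scheme of type $(3, 2^{s(n-2)})$ supported on $A$ and $X_\circ$ consists of the remaining $2n+1$ general $2$-fat points with support disjoint from $A$. The Castelnuovo-type sequence for the inclusion $A \hookrightarrow \p^1 \times \p^n$ reads
\[
0 \to \LL_{1\times n}^{2,3}(A+X) \to \LL_{1\times n}^{2,3}(X) \to \LL_{1\times (n-2)}^{2,3}(\Tr_A X),
\]
and by semicontinuity it is enough to kill both extreme systems. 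The right-hand one is precisely $\LL_{1\times (n-2)}^{2,3}(3, 2^{s(n-2)})$, which is zero by the inductive hypothesis.

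The main obstacle is the vanishing $\LL_{1\times n}^{2,3}(A+X) = 0$. Divisors of bidegree $(2,3)$ containing $A = \V(f_1, f_2)$ form a vector space of dimension $3(n+1)^2$, realized as the image of the map $\LL_{1\times n}^{2,2} \oplus \LL_{1\times n}^{2,2} \to \LL_{1\times n}^{2,3}$, $(h_1,h_2) \mapsto f_1 h_1 + f_2 h_2$, whose kernel is $\LL_{1\times n}^{2,1}$ embedded via $(f_2, -f_1)$. One then needs the combined conditions from $X_A$ (a $3$-fat and $s(n-2)$ general $2$-fats on $A$, viewed inside the ambient $\p^1 \times \p^n$) together with the $2n+1$ off-$A$ $2$-fats of $X_\circ$ to cut this subspace to zero.

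I would handle this by proving a separate auxiliary lemma, in the spirit of the paper's invoked Lemma~\ref{lem:1xn_(3,2s)_2ndStep}, most likely via a further Castelnuovo reduction with respect to one of the defining forms $f_i$ of $A$. The trace on the bidegree $(0,1)$ divisor $H = \V(f_i) \cong \p^1 \times \p^{n-1}$ should be analyzed by applying the inductive hypothesis (or \cite[Theorem 3.1]{BalBerCat12}) to a smaller system on $H$ that already contains $A \subset H$ as a $(0,1)$ subvariety, while the residue on $\p^1 \times \p^n$ should be a bidegree $(2,2)$ linear system with a mix of fat and simple points to which results like \cite[Corollary 2.2]{BCC} apply. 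The delicate point, and what I expect to be the hardest part, is balancing the arithmetic precisely so that both summands in the auxiliary sequence vanish for every $n \ge 4$; this will presumably require the same type of tight numerical control that populates Appendix \ref{appendix: contacci} of the paper.
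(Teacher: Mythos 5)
Your proof of the lemma itself — the specialization $X = X_A + X_\circ$ onto the codimension-two subvariety $A \cong \bbP^1 \times \bbP^{n-2}$, the Castelnuovo sequence with respect to $A$, handling the trace by the double-step inductive hypothesis, and deferring $\LL_{1\times n}^{2,3}(A+X)=0$ to an auxiliary lemma — is exactly the paper's argument. Where you diverge is in the sketch of that auxiliary lemma. You propose to reduce the bidegree from $(2,3)$ to $(2,2)$ via one of the $(0,1)$ forms $f_i$ cutting out $A$. The paper instead keeps the bidegree fixed at $(2,3)$ and cascades through three further general codimension-two subvarieties $B,C,E$ of the same type, tracing repeatedly to $\bbP^1\times\bbP^{n-2}$ until the residue is a subsystem of $\LL_{1\times n}^{2,3}(A+B+C+E)$, which is zero for the trivial reason that the ideal of $A\cup B\cup C\cup E$ is generated in bidegree $(0,4)$. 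Your route runs into a concrete arithmetic problem: the trace on $H=\V(f_i)\cong\bbP^1\times\bbP^{n-1}$ already contains $A$ as a $(0,1)$ divisor, so after splitting off that divisor you get $\LL_{1\times(n-1)}^{2,2}(2,1^{s(n-2)})$, whose virtual dimension is $n(n+1)>0$; the trace does not vanish, so your Castelnuovo sequence does not immediately kill the system. The paper's choice to cascade codimension-two subvarieties rather than drop degree is precisely what sidesteps this, since every trace lands on $\bbP^1\times\bbP^{n-2}$ where the inductive hypothesis (or a smaller instance of the same statement) applies, and the final residue vanishes for a purely ideal-theoretic reason rather than a numerical one.
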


\begin{lemma}
\label{lem:1xn_(3,2s)_2ndStep} Let $n\ge 4$. Let $A\cong\p^1\times\p^{n-2}$ be a subvariety of $\p^1\times\p^n$ defined by two general forms of bidegree $(0,1)$. Let $X = X_A + X_\circ$ be a scheme of fat points as in the proof of Lemma \ref{tab:X}. Then $\LL_{1\times n}^{2,3}(A+X)=0$.
\begin{proof}
We proceed by a double-step induction on $n$. A software computation shows that the statement holds for $n=4$ and $n=5$; see Appendix \ref{appendix: M2}. Assume that $n\ge 6$. Let $B\cong\p^1\times\p^{n-2}$ be another subvariety defined by two general forms of bidegree $(0,1)$. Consider a specialization $Y = Y_{A\cap B} + Y_A + Y_B + Y_\circ$ of $X$, where
\begin{center}
	\begin{tabular}{r l}\label{tab:Y}
$Y_{A\cap B}$ & is general of type $(3,2^{s(n-4)})$ on $A \cap B \cong \bbP^1 \times \bbP^{n-4}$; \\
		$Y_A$ & is general of type $(2^{2n-3})$ on $A$, outside $B$; \\
		$Y_B$ & is general of type $(2^{2n-3})$ on $B$, outside $A$; \\
		$Y_\circ$ & is general of type $(2^4)$ with support outside $A \cup B$.
	\end{tabular}
\end{center}
Now it is enough to prove that $\calL_{1\times n}^{2,3}(A + Y)=0$. Consider the exact sequence
\[
0\to
\LL_{1\times n}^{2,3}(A+B+Y)
\to
\LL_{1\times n}^{2,3}(A+Y)
\to
\LL_{1\times (n-2)}^{2,3}(A\cap B+\Tr_BY).
\]
\begin{itemize}
\item \textit{Trace}. By induction hypothesis, $\LL_{1\times (n-2)}^{2,3}(A\cap B+\Tr_BY)=0$.
\item \textit{Residue}. By Lemma \ref{lem:1xn_(3,2s)_3rdStep}, $\LL_{1\times n}^{2,3}(A+B+Y)=0$.\qedhere
\end{itemize}
\end{proof}
\end{lemma}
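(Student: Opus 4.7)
The plan is to induct on $n$. The natural reduction here moves from $\bbP^1 \times \bbP^n$ to $\bbP^1 \times \bbP^{n-2}$ by restricting along a subvariety cut out by two general bidegree $(0,1)$ forms, so I expect a \emph{double-step} induction, with the base cases $n = 4$ and $n = 5$ handled directly by Macaulay2 (the schemes involved are explicit and small enough).

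For the inductive step ($n \geq 6$), introduce a second subvariety $B \cong \bbP^1 \times \bbP^{n-2}$ defined by two further general bidegree $(0,1)$ forms, chosen so that $A \cap B \cong \bbP^1 \times \bbP^{n-4}$, and specialize $X$ to a scheme $Y = Y_{A \cap B} + Y_A + Y_B + Y_\circ$ in which $Y_{A \cap B}$ is of type $(3, 2^{s(n-4)})$ supported on $A \cap B$, $Y_A$ and $Y_B$ are of type $(2^{2n-3})$ supported respectively on $A \setminus B$ and $B \setminus A$, and $Y_\circ$ is a general scheme of type $(2^4)$. A short arithmetic check --- essentially $s(n-4) + (2n-3) = s(n-2)$ and $(2n-3) + 4 = 2n+1$, using $s(k) = k(k+3)/2$ --- confirms that $Y$ is an honest specialization of $X$, so by semicontinuity it suffices to prove $\calL^{2,3}_{1 \times n}(A + Y) = 0$.

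Now apply the Castelnuovo exact sequence with respect to the subvariety $B$:
\[
0 \to \calL^{2,3}_{1 \times n}(A + B + Y) \to \calL^{2,3}_{1 \times n}(A + Y) \to \calL^{2,3}_{1 \times (n-2)}(A \cap B + \Tr_B Y).
\]
The trace term equals $\calL^{2,3}_{1 \times (n-2)}(A \cap B + Y_{A \cap B} + Y_B)$ on $B \cong \bbP^1 \times \bbP^{n-2}$, which is precisely the setting of the lemma with $n$ replaced by $n-2$ (the role of $A$ played by $A \cap B$, the role of $X_A$ by $Y_{A \cap B}$, and the role of $X_\circ$ by $Y_B$). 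Hence the trace vanishes by the inductive hypothesis.

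The main obstacle is the vanishing of the residue $\calL^{2,3}_{1 \times n}(A + B + Y)$, which is a more rigid linear system carrying \emph{two} codimension-$2$ fixed loci. I would isolate this as a separate auxiliary lemma (the natural ``third step'' of the double induction) and prove it by a further degeneration: peel off yet another bidegree $(0,1)$ subvariety, match its trace against a lower-dimensional instance and control the new residue via \cite[Corollary~2.2]{BCC} together with the Alexander--Hirschowitz classification. Most of the technical work sits here, in the combinatorial bookkeeping of how the multiplicities $(3, 2^{s(\cdot)}, 2^{2(\cdot)+1})$ redistribute across the strata $A \cap B$, $A \setminus B$, $B \setminus A$ and the complement, so that the virtual dimension collapses exactly to zero at every stage.
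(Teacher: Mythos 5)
Your proposal matches the paper's proof essentially step for step: the double-step induction with software base cases at $n=4,5$, the same specialization $Y$ with the same strata and multiplicity types (and your arithmetic check $s(n-4)+(2n-3)=s(n-2)$ confirming it is a legal specialization), the Castelnuovo sequence with respect to $B$, the trace handled by the inductive hypothesis, and the residue delegated to a further auxiliary lemma. The only minor divergence is your sketch of how that residue lemma would close: you suggest one more peel plus \cite[Corollary~2.2]{BCC} and Alexander--Hirschowitz, whereas the paper in fact runs two more Castelnuovo steps (introducing $C$, then $E$) and finishes with the elementary observation that the ideal of $A\cup B\cup C\cup E$ is generated in bidegree $(0,4)$, killing $\calL^{2,3}_{1\times n}(A+B+C+E)$ outright without any appeal to BCC or AH; but that concerns a later lemma and does not affect the correctness of the argument for the lemma at hand.
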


\begin{lemma}
\label{lem:1xn_(3,2s)_3rdStep}
Let $n\ge 6$. Let $A, B\cong\p^1\times\p^{n-2}$ be subvarieties of $\p^1\times\p^n$, each defined by two general forms of bidegree $(0,1)$.  Let $Y = Y_{A\cap B} + Y_A + Y_B + Y_\circ$ be a scheme of fat points as in the proof of Lemma \ref{tab:Y}. Then $\LL_{1\times n}^{2,3}(A+B+Y)=0$.
\begin{proof}
We proceed by induction on $n$. A software computation shows that our statement holds for $n=6$ and $n=7$; see Appendix \ref{appendix: M2}. Assume that $n\ge 8$. Let $C\cong\p^1\times\p^{n-2}$ be another subvariety defined by two general forms of bidegree $(0,1)$. We consider the specialization $Z = Z_{A\cap B \cap C} + Z_{A \cap B} + Z_{A \cap C} + Z_{B \cap C} + Z_A + Z_B + Z_C$ of $Y$, where
\begin{center}
	\begin{tabular}{r l}\label{tab:Z}
		$Z_{A \cap B \cap C}$ & is general of type $(3,2^{s(n-6)})$ on $A \cap B \cap C \cong \bbP^1 \times \bbP^{n-6}$; \\
		$Z_{A \cap B}$ & is general of type $(2^{2n-7})$ on $A \cap B \cong \bbP^1 \times \bbP^{n-4}$, outside $C$; \\
		$Z_{A \cap C}$ & is general of type $(2^{2n-7})$ on $A \cap C  \cong \bbP^1 \times \bbP^{n-4}$, outside $B$;\\
		$Z_{B \cap C}$ & is general of type $(2^{2n-7})$ on $B \cap C  \cong \bbP^1 \times \bbP^{n-4}$, outside $A$;\\
		$Z_A$ & is general of type $(2^4)$ on $A\cong\bbP^1 \times \bbP^{n-2}$, outside $B \cup C$; \\
		$Z_B$ & is general of type $(2^4)$ on $B\cong\bbP^1 \times \bbP^{n-2}$, outside $A \cup C$; \\
		$Z_C$ & is general of type $(2^4)$ on $C\cong\bbP^1 \times \bbP^{n-2}$, outside $A \cup B$.
	\end{tabular}
\end{center}	
Then it is enough to prove that $\calL_{1 \times n}^{2,3}(A+B+Z)=0$. Consider the exact sequence\[
0\to
\LL_{1\times n}^{2,3}(A+B+C+Z)
\to
\LL_{1\times n}^{2,3}(A+B+Z)
\to
\LL_{1\times (n-2)}^{2,3}(A\cap C+B\cap C+\Tr_CZ).
\]
\begin{itemize}
\item \textit{Trace}. By induction hypothesis, $\LL_{1\times (n-2)}^{2,3}(A\cap C+B\cap C+\Tr_CZ)=0$.
\item \textit{Residue}. The system $\LL_{1\times n}^{2,3}(A+B+C+Z)$ is zero by Lemma \ref{lem:1xn_(3,2s)_4thStep}.\qedhere
\end{itemize}
\end{proof}
\end{lemma}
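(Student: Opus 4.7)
The statement asks us to show the vanishing of a linear system on $\p^1\times\p^n$ whose base locus contains two codimension-$2$ subvarieties $A, B$ together with the specific scheme of fat points $Y$ from tab:Y. I plan to proceed by induction on $n$, following the same two-step pattern used for Lemmas \ref{lem:1xn_(3,2s)} and \ref{lem:1xn_(3,2s)_2ndStep}. The base cases $n=6$ and $n=7$ should be verified by direct computation in a computer algebra system, exactly as was done for the earlier lemmas (with the calculations recorded in Appendix \ref{appendix: M2}). For the inductive step $n\ge 8$, the idea is to introduce a third auxiliary subvariety $C\cong\p^1\times\p^{n-2}$ cut out by two general bidegree $(0,1)$ forms and to degenerate $Y$ to a new scheme $Z$ in which some fat points of $Y$ collapse onto $C$ and onto its intersections with $A$ and $B$.

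The natural specialization to use is the scheme $Z=Z_{A\cap B\cap C}+Z_{A\cap B}+Z_{A\cap C}+Z_{B\cap C}+Z_A+Z_B+Z_C$ described in tab:Z: a triple point and $s(n-6)$ double points on $A\cap B\cap C$, $2n-7$ double points on each pairwise intersection (outside the third subvariety), and $4$ double points on each of $A,B,C$ (outside the other two). One must check that this is truly a specialization of $Y$, which reduces to the arithmetic identity $s(n-4)-s(n-6)=2n-7$; this is immediate from $s(n)=n(n+3)/2$. By semicontinuity
\[
\dim\LL_{1\times n}^{2,3}(A+B+Y)\le\dim\LL_{1\times n}^{2,3}(A+B+Z),
\]
so it suffices to prove the vanishing of the right-hand side.

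At this point I apply the Castelnuovo exact sequence with respect to $C$:
\[
0\to\LL_{1\times n}^{2,3}(A+B+C+Z)\to\LL_{1\times n}^{2,3}(A+B+Z)\to\LL_{1\times(n-2)}^{2,3}(A\cap C+B\cap C+\Tr_C Z).
\]
For the trace, the key observation is that inside $C\cong\p^1\times\p^{n-2}$ the components $Z_{A\cap B}$, $Z_A$, $Z_B$ contribute nothing (they lie off $C$ by construction), while $Z_{A\cap B\cap C}$, $Z_{A\cap C}$, $Z_{B\cap C}$, $Z_C$ produce exactly the components $(3,2^{s(n-6)})$ on $A\cap B\cap C$, $(2^{2(n-2)-3})$ on each of $A\cap C$ and $B\cap C$, and $(2^4)$ on $C$ off $A\cup B$. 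This is precisely the scheme of tab:Y with $n$ replaced by $n-2$, so the trace vanishes by the inductive hypothesis. One also uses $A\cap C, B\cap C\cong\p^1\times\p^{n-4}$, which matches the codimension-$2$ configuration required in the statement at the lower level.

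The main obstacle is the residue term $\LL_{1\times n}^{2,3}(A+B+C+Z)$, which cannot be controlled by induction on $n$ alone. This is precisely the content of the next lemma of the series (Lemma \ref{lem:1xn_(3,2s)_4thStep}), where a fourth auxiliary subvariety must be introduced, prolonging the escalation one further step. The delicate point throughout is numerical: at each stage the counts of triple and double points on the various strata must match the decrement of $n$ by $2$ (each new subvariety drops the second factor from $\p^n$ to $\p^{n-2}$), and $s$ must be chosen so that $s(n)-s(n-2)$ and $s(n-2)-s(n-4)$ agree with the number of double points reallocated at each collision; these are essentially the identities already verified here for $Y\leadsto Z$.
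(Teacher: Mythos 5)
Your proposal is correct and follows essentially the same route as the paper: induction on $n$ with the base cases $n=6,7$ checked by software, the specialization of $Y$ to the scheme $Z$ of the table, the Castelnuovo sequence with respect to $C$ (trace killed by the induction hypothesis, residue deferred to Lemma~\ref{lem:1xn_(3,2s)_4thStep}). Your explicit verification of the numerics, e.g.\ $s(n-4)-s(n-6)=2n-7$ and $2n-7+4=2n-3$, is a welcome detail that the paper leaves implicit.
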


\begin{lemma}
\label{lem:1xn_(3,2s)_4thStep}
Let $n\ge 8$. Let $A, B,C\cong\p^1\times\p^{n-2}$ be subvarieties of $\p^1\times\p^n$, each defined by two general forms of bidegree $(0,1)$. Let $Z = Z_{A\cap B \cap C} + Z_{A \cap B} + Z_{A \cap C} + Z_{B \cap C} + Z_A + Z_B + Z_C$ be a scheme of fat points as in the proof of Lemma \ref{tab:Z}. Then $\LL_{1\times n}^{2,3}(A+B+C+Z)=0$.
\begin{proof}
We proceed by induction on $n$. A software computation shows that the statement holds for $n=8$ and $n=9$; see Appendix \ref{appendix: M2}. Assume that $n\ge 10$. Let $E \cong\p^1\times\p^{n-2}$ be another subvariety defined by two general forms of bidegree $(0,1)$. Let $W = W_{A \cap B \cap C \cap E} + W_{A \cap B \cap E} + W_{A\cap C \cap E} + W_{B \cap C \cap E} + W_{A \cap E} + W_{B \cap E} + W_{C \cap E}$ be a specialization of $Z$ such that
\begin{center}
	\begin{tabular}{r l}
		$W_{A \cap B \cap C \cap E}$ & is general of type $(3,2^{s(n-8)})$ on $A \cap B \cap C \cap E \cong \bbP^1 \times \bbP^{n-8}$; \\
		$W_{A \cap B \cap E}$ & is general of type $(2^{2n-11})$ on $A \cap B \cap E \cong \bbP^1 \times \bbP^{n-6}$; \\
		$W_{A \cap C \cap E}$ & is general of type $(2^{2n-11})$ on $A \cap C \cap E\cong \bbP^1 \times \bbP^{n-6}$; \\
		$W_{B \cap C \cap E}$ & is general of type $(2^{2n-11})$ on $B \cap C \cap E \cong \bbP^1 \times \bbP^{n-6}$; \\
		$W_{A \cap E}$ & is general of type $(2^4)$ on $A\cong\bbP^1 \times \bbP^{n-4}$; \\
		$W_{B \cap E}$ & is general of type $(2^4)$ on $B\cong\bbP^1 \times \bbP^{n-4}$; \\
		$W_{C \cap E}$ & is general of type $(2^4)$ on $C\cong\bbP^1 \times \bbP^{n-4}$; \\
		$W_{A\cap B \cap C}$ & is general of type $(2^{2n-11})$ on $A \cap B \cap C\cong\bbP^1 \times \bbP^{n-6}$; \\
		$W_{A \cap B}$ & is general of type $(2^{4})$ on $A \cap B\cong\bbP^1 \times \bbP^{n-4}$; \\
		$W_{A \cap C}$ & is general of type $(2^{4})$ on $A \cap C \cong\bbP^1 \times \bbP^{n-4}$; \\
		$W_{B \cap C}$ & is general of type $(2^{4})$ on $B \cap C \cong\bbP^1 \times \bbP^{n-4}$.
	\end{tabular}
\end{center}
Then it is enough to prove that $\calL_{1 \times n}^{2,3}(A+B+C+W)=0$. Consider the exact sequence\[
0\to
\LL_{1\times n}^{2,3}(A+B+C+E+W)\to
\LL_{1\times n}^{2,3}(A+B+C+W)\to
\LL_{1\times (n-2)}^{2,3}((A+B+C)\cap E+\Tr_EW).
\]
\begin{itemize}
\item \textit{Trace}. By induction hypothesis, $\LL_{1\times (n-2)}^{2,3}((A+B+C)\cap E+\Tr_EW)=0$.
\item \textit{Residue}. The system $\LL_{1\times n}^{2,3}(A+B+C+E+W)$ is a subsystem of $\LL_{1\times n}^{2,3}(A+B+C+E)$ which is zero because the ideal of $A+B+C+E$ is generated in bidegree $(0,4)$. \qedhere
\end{itemize}
\end{proof}\end{lemma}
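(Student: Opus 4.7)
My plan is to follow exactly the pattern of Lemmas~\ref{lem:1xn_(3,2s)_2ndStep} and \ref{lem:1xn_(3,2s)_3rdStep}: induction on $n$ in steps of $2$, reducing the statement in dimension $n$ to the same statement in dimension $n-2$ by introducing one more auxiliary $\bbP^1\times\bbP^{n-2}$ and applying the Castelnuovo exact sequence. The base cases $n=8$ and $n=9$ would be verified by direct Macaulay2 computations, as in Appendix~\ref{appendix: M2}; these are still feasible because for such $n$ the ambient $\binom{n+3}{3}$-dimensional target space and the given specialized scheme $Z$ are within reach of a computer algebra system.

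For $n\ge 10$, introduce a fourth subvariety $E\cong\bbP^1\times\bbP^{n-2}$ of $\bbP^1\times\bbP^n$ cut out by two general bidegree $(0,1)$ forms. Construct a specialization $W$ of $Z$ by moving appropriate components onto the loci $A\cap B\cap C\cap E\cong\bbP^1\times\bbP^{n-8}$, onto each of the three triple intersections $A\cap B\cap E$, $A\cap C\cap E$, $B\cap C\cap E\cong\bbP^1\times\bbP^{n-6}$, and onto each of the three pairwise intersections $A\cap E$, $B\cap E$, $C\cap E\cong\bbP^1\times\bbP^{n-4}$, while leaving suitable residual pieces on the intersections among $A,B,C$ off $E$. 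By semicontinuity it suffices to prove $\LL^{2,3}_{1\times n}(A+B+C+W)=0$, which via Castelnuovo applied to $E$ reduces to the vanishing of the two outer terms of
\[
    0\to\LL^{2,3}_{1\times n}(A+B+C+E+W)\to\LL^{2,3}_{1\times n}(A+B+C+W)\to\LL^{2,3}_{1\times(n-2)}((A+B+C)\cap E+\Tr_E W).
\]

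The trace lives on $E\cong\bbP^1\times\bbP^{n-2}$, where the traces of $A,B,C$ are three copies of $\bbP^1\times\bbP^{n-4}$, each cut by two general bidegree $(0,1)$ forms. Choosing the pieces of $W$ carefully, $\Tr_E W$ will coincide with the scheme from Lemma~\ref{tab:Z} in dimension $n-2$, so the induction hypothesis yields the vanishing. For the residue, $\LL^{2,3}_{1\times n}(A+B+C+E+W)\subset\LL^{2,3}_{1\times n}(A+B+C+E)$, and the latter vanishes because for generic $A,B,C,E$ the saturated ideal of the union $A\cup B\cup C\cup E$ is generated in bidegrees of second component at least $4$: each $I_X$ is generated in bidegree $(0,1)$ by two general linear forms, and for generic configurations the only elements of the intersection of these four ideals have second-degree at least $4$. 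Since our forms have bidegree $(2,3)$ and $3<4$, this system is zero.

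The main obstacle is the combinatorial bookkeeping in defining $W$. I must prescribe seven multiplicity blocks on the strata meeting $E$ (on $A\cap B\cap C\cap E$; on each of the three triple $E$-intersections; on each of the three pairwise $E$-intersections), plus the residual blocks still off $E$ (on $A\cap B\cap C$, $A\cap B$, $A\cap C$, $B\cap C$), so that (a) the totals match those of $Z$ as in Lemma~\ref{tab:Z}, hence $W$ is a genuine specialization of $Z$, and (b) $\Tr_E W$ realizes exactly the scheme required by the induction hypothesis in dimension $n-2$. In particular, the triple point has to be placed on the deepest locus $A\cap B\cap C\cap E$, so that it descends to the $3$-fat point together with the $s(n-8)$ block of $2$-fat points in the $(n-2)$-version, while the $2^4$-blocks and the $2^{2n-11}$-blocks on the various strata must combine to reproduce Lemma~\ref{tab:Z} one dimension down. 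Once this table is laid out, the verification is routine arithmetic.
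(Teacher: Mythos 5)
Your proposal follows exactly the paper's strategy: induction on $n$ in steps of $2$, introduction of a fourth codimension-$2$ locus $E$, a specialization $W$ of $Z$ so that the trace on $E$ recovers the scheme of Lemma \ref{tab:Z} in dimension $n-2$, and the termination of the residue via the observation that $\calL_{1\times n}^{2,3}(A+B+C+E)=0$ because the ideal of $A\cup B\cup C\cup E$ has no generators in bidegree $(\ast, d)$ with $d\le 3$. The only gap is that you defer the explicit bookkeeping of the eleven blocks making up $W$, which the paper writes out in a table; filling in that table is routine but necessary to confirm that $W$ is indeed a specialization of $Z$ and that $\Tr_E W$ matches the induction hypothesis.
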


\begin{example}[Lemma \ref{lem:1xn_(3,2s)}, Lemma \ref{lem:1xn_(3,2s)_2ndStep}, Lemma \ref{lem:1xn_(3,2s)_3rdStep}, Lemma \ref{lem:1xn_(3,2s)_4thStep} for $n = 10$]\label{example6}
    We show that $\calL_{1\times 10}^{2,3}(3,2^{65}) = 0$. Let $A \cong \bbP^1 \times \bbP^8$ be a subvariety defined by two general forms of bidegree $(0,1)$. Let $X = X_A + X_\circ$ be a scheme of type $(3,2^{65})$ where
    \begin{center}
        \begin{tabular}{r l}
         $X_A$ & is of type $(3,2^{44})$ with general support on $A$, indeed $s(8) = 44$;\\
         $X_\circ$ & is general of type $(2^{21})$.
        \end{tabular}
    \end{center}
    Let $B, C$ be other two general subvarieties of $\bbP^1 \times \bbP^{10}$, each one defined by two general forms of bidegree $(0,1)$. We consider a series of specializations of the scheme $X$: we describe them as union of distinct components, each one with general support in the space indicated by the diagrams in Figure~\ref{fig:specialization}.
    
    \begin{figure}[ht]
        \centering
        \includegraphics[scale=0.5]{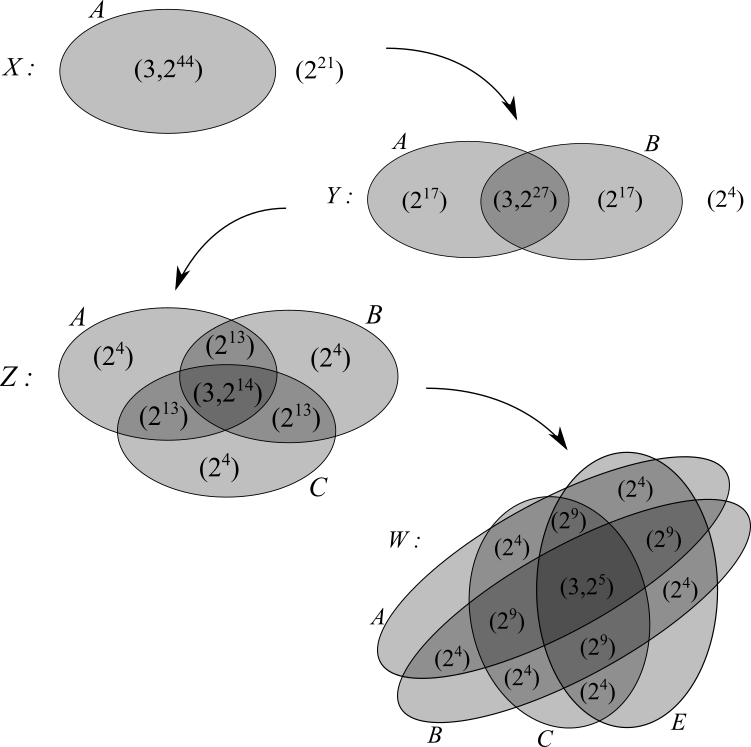}
        \caption{The specialization used in Example \ref{example6}}
        \label{fig:specialization}
    \end{figure}
    
    By using a series of Castelnuovo exact \Fra{sequences}, we obtain the following chain of inequalities
    \begin{align*}
        &\dim \calL_{1\times 10}^{2,3}(X) \leq \dim\calL_{1\times 10}^{2,3}(A + X) + \dim\calL_{1\times 8}^{2,3}(\Tr_A(X)) \\
        & \leq \left(\dim\calL_{1\times 10}^{2,3}(A + B + Y) + \dim\calL_{1\times 8}^{2,3}(A \cap B + \Tr_B(Y))\right) \\
        & \qquad  + \dim\calL_{1\times 8}^{2,3}(\Tr_A(X)) \\
        & \leq \left(\dim\calL_{1\times 10}^{2,3}(A + B + C + Z) + \dim\calL_{1\times 8}^{2,3}(A \cap C + B \cap C + \Tr_C(Z))\right) \\ &  \qquad   + \dim\calL_{1\times 8}^{2,3}(A \cap B + \Tr_B(Y))  + \dim\calL_{1\times 8}^{2,3}(\Tr_A(X)) \\
        & \leq \left(\dim\calL_{1\times 10}^{2,3}(A + B + C + E + W) + \dim\calL_{1\times 8}^{2,3}(A \cap E + B \cap E + C \cap E + \Tr_E(W))\right) \\ &  \qquad   + \dim\calL_{1\times 8}^{2,3}(A \cap C + B \cap C + \Tr_C(Z)) + \dim\calL_{1\times 8}^{2,3}(A \cap B + \Tr_B(Y))  + \dim\calL_{1\times 8}^{2,3}(\Tr_A(X)).
    \end{align*}
    In each step of the latter chain of inequalities, we may assume that the linear systems obtained from the traces on $\bbP^1 \times \bbP^8$ are known to be equal to zero by induction. Hence we are left with proving that $\calL_{1\times 10}^{2,3}(A + B + C + E + W) = 0$. This follows for the straightfoward observation that the ideal of $A \cup B \cup C \cup E$ is generated by forms in bidegree $(0,4)$, therefore $\calL_{1\times 10}^{2,3}(A + B + C + E) = 0.$
\end{example}

\begin{lemma}
	\label{lem: 2 x n, bigrado (3,2)} 
	If $n\ge 2$, then $\LL_{2\times n}^{3,2}(3,2^{k_*(3,3;2,n)-k_*(3,3;2,n-1)})$ is regular.
	\begin{proof}
In order to shorten the notation, we set
\[b(n):=k_*(3,3;2,n)-k_*(3,3;2,n-1).\]
We proceed by induction on $n$. A software computation shows that $\LL_{2\times 2}^{3,2}(3,2^{b(2)})$ is regular; see Appendix \ref{appendix: M2}. Assume that $n\ge 3$. Let $D\subset\p^2\times\p^n$ be a general divisor of bidegree $(0,1)$ and let $X$ be a scheme of fat points of type $(3,2^{b(n)})$ such that $\Tr_D(X)$ is general of type $(3,2^{b(n-1)})$ on $D \cong \bbP^2 \times\bbP^{n-1}$. As explained in Section \ref{sec:inductive}, it is enough to prove that residue and trace of $\calL_{2\times n}^{3,2}(X)$ with respect to $D$ are regular.
\begin{itemize}
	\item \textit{Trace.} The trace $\Tr_D(X)$ is a general scheme of type $(3,2^{b(n-1)})$ on $D \cong \bbP^2 \times \bbP^{n-1}$ and $\calL_{2 \times (n-1)}^{3,2}(3,2^{b(n-1)})$ is regular by induction.
	\item \textit{Residue.} The residue $\Res_D(X)$ is a scheme of fat points of type $(2^{1+b(n)-b(n-1)},1^{b(n-1)})$ such that $\Res_D(X) \cap D$ is general of type $(2,1^{b(n-1)})$ on $D$ and $\calL_{2\times n}^{3,1}(\Res_D(X))$ is regular by Lemma~\ref{lem: primo di due}. \qedhere
\end{itemize}
	\end{proof}
\end{lemma}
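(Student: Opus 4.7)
The plan is a single induction on $n$, in the same spirit as Lemma~\ref{lem: 2 x n, bigrado (2,3)}. Set $b(n):=k_*(3,3;2,n)-k_*(3,3;2,n-1)$ for brevity. The base case $n=2$ I would settle by a direct software computation, as with the other small cases in this section.

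For the inductive step, I would pick a general divisor $D\subset\p^2\times\p^n$ of bidegree $(0,1)$, so $D\cong\p^2\times\p^{n-1}$, and specialize the general scheme of type $(3,2^{b(n)})$ so that the triple point and $b(n-1)$ of the double points lie on $D$ with general support, while the remaining $b(n)-b(n-1)$ double points stay general in $\p^2\times\p^n$; the inequality $b(n-1)\le b(n)$ that makes this specialization legitimate is the kind of arithmetic estimate to be recorded in Appendix~\ref{appendix: contacci}. The Castelnuovo exact sequence \eqref{exact_sequence_bihom} combined with the additivity \eqref{eq:straightforward_vdim} then reduces the claim to regularity of the trace and the residue with respect to $D$. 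The trace is a general scheme of type $(3,2^{b(n-1)})$ on $D\cong\p^2\times\p^{n-1}$, so $\LL_{2\times(n-1)}^{3,2}$ of that trace is regular by the inductive hypothesis.

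The real work is in the residue, which is a scheme of type $(2^{1+b(n)-b(n-1)},1^{b(n-1)})$ for the bidegree $(3,1)$ system, with the $1+b(n-1)$ points on $D$ forming a general scheme of type $(2,1^{b(n-1)})$. I would isolate this as a separate lemma on bidegree $(3,1)$ systems of $\p^2\times\p^n$. The natural route is to apply Lemma~\ref{lemma:extra_points} to peel the $b(n-1)$ simple points on $D$ off the system $\LL_{2\times n}^{3,1}(2^{1+b(n)-b(n-1)})$, showing first that this pure double-point system is regular via \cite[Corollary~2.2]{BCC}, and then that the simple points on $D$ impose independent conditions by reducing to the vanishing of a strictly lower-bidegree system controlled by Theorem~\ref{thm:AH}. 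I expect the decisive obstacle to be purely arithmetic: checking that $1+b(n)-b(n-1)$ sits in the narrow window where \cite[Corollary~2.2]{BCC} applies and that the various intermediate virtual dimensions remain non-negative, so that the trace/residue bookkeeping actually closes.
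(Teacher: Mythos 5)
Your skeleton is exactly the paper's: induction on $n$ with the $n=2$ case by software, specialization of the triple point and $b(n-1)$ double points onto a general $(0,1)$-divisor $D$, trace handled by the inductive hypothesis, and the whole weight falling on the bidegree-$(3,1)$ residue of type $(2^{1+b(n)-b(n-1)},1^{b(n-1)})$. The gap is in how you propose to handle that residue. By Lemma~\ref{conto:b-b_modulo3} one has $1+b(n)-b(n-1)\in\{4,5\}$, while the regularity range in which the paper (and you, implicitly) can invoke \cite[Corollary 2.2]{BCC} for $\LL_{2\times n}^{3,1}(2^s)$ is $s\le\lfloor\tfrac{10(n+1)}{n+3}\rfloor-n$, which is at most $3$ for every $n\ge 3$ and is negative for large $n$; the vanishing range $s\ge\lceil\tfrac{10(n+1)}{n+3}\rceil+n$ is of no use either, since the residue system has positive virtual dimension and must be shown regular, not zero. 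So the very arithmetic check you flag as ``the decisive obstacle'' fails: these $(3,1)$ systems sit squarely in the unbalanced regime that \cite{BCC} does not reach, and the step ``the pure double-point system is regular via \cite[Corollary 2.2]{BCC}'' is unsupported. (For what it is worth, the rest of your reduction would close if that regularity were available: $\calL(X)\cap I_D\cong\LL_2^3(2^{b(n)-b(n-1)},1)=0$ and $b(n-1)\le\dim\calL(X)$, so Lemma~\ref{lemma:extra_points} would apply; and the regularity of $\LL_{2\times n}^{3,1}(2^{4\text{ or }5})$ is in fact true and can be seen directly by writing $F=\sum_j y_jf_j(x)$ after normalizing the $\bbP^n$-coordinates of the points --- but that is a separate argument you would have to supply, not a citation.)

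This missing piece is precisely why the paper does not take your shortcut here, in contrast with the neighbouring lemmas. Instead it proves regularity of the residue in Lemma~\ref{lem: primo di due} (with Lemma~\ref{lem: secondo di due}) by a heavier device: it adds $v(n)$ general simple points to turn the statement into a vanishing statement, then runs a triple-step induction on $n$, specializing part of the scheme onto codimension-$3$ subvarieties $A,B$ cut by general bidegree-$(0,1)$ forms, using the arithmetic of Lemmas~\ref{conto:b-b_modulo3} and~\ref{conto:v-v_modulo3} to make the specialization consistent, checking $n\in\{3,\ldots,8\}$ by software, and concluding from the fact that the ideal of $A\cup B$ is generated in bidegree $(0,2)$. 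So either adopt that machinery for the residue, or replace your BCC citation with an actual proof of regularity of $\LL_{2\times n}^{3,1}(2^{1+b(n)-b(n-1)})$; as written, the proposal has a genuine gap at that point.
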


\begin{lemma}\label{lem: primo di due}
\Ale{Let $n\ge 3$}. Let $D \subset \bbP^2 \times \bbP^n$ be a general divisor of bidegree $(0,1)$ and let $Y$ be a scheme of fat points of type $(2^{1+b(n)-b(n-1)},1^{b(n-1)})$ such that $Y \cap D$ is a general scheme of type $(2,1^{b(n-1)})$ on $D$. Then $\LL_{2\times n}^{3,1}(Y)$ is regular.
\begin{proof}
Set $v(n):=\vdim\LL_{2\times n}^{3,1}(Y)$ and let $P \subset\p^2\times\p^n$ be a set of $v(n)$ general simple points. In order to prove the statement, it suffices to show that $\LL_{2\times n}^{3,1}(Y+P)=0$. We argue by a triple-step induction on $n$. The cases $n\in\{3,4,5\}$ are checked by an explicit software computation; see Appendix \ref{appendix: M2}. Now assume that $n\ge 6$. By  Lemma \ref{conto:b-b_modulo3},
\begin{align*}
b(n)-b(n-1)&=b(n-3)-b(n-4),
\end{align*}
so it makes sense to specialize some of the base points to a subvariety of codimension 3.
Let $A\subset\p^2\times\p^n$ be the subvariety defined by the vanishing of 3 general bidegree $(0,1)$ forms. We consider the specializations $Z = Z_{D \cap A} + Z_A + Z_D$ of $Y$ and and $Q = Q_A + Q_\circ$ of $P$, where
\begin{center}
	\begin{tabular}{r l}
		$Z_{A}$ & is a scheme of type $(2^{b(n-3)-b(n-4)})$ on $A \cong \bbP^2 \times \bbP^{n-3}$, outside $D$; \\
		$Z_{D \cap A}$ & is a scheme of type $(2,1^{b(n-4)})$ general points on $A \cap D$; \\
		$Z_D$ & is a set of $b(n-1)-b(n-4)$ general points on $D$, outside $A$; \\
		$Q_A$ & is a set of $v(n-3)$ general points on $A$, outside $D$; \\
		$Q_\circ$ & is a set of $v(n)-v(n-3)$ general points outside $A$.
	\end{tabular}
\end{center}
Such specialization is possible by Lemma \ref{conto:b-b_modulo3} and Lemma \ref{conto:v-v_modulo3}. Now we only need to prove that $\calL_{2\times n}^{3,1}(Z+Q)=0$. Consider the exact sequence
\[
0\to\LL_{2\times n}^{3,1}(A+Z + Q)
\to
\LL_{2\times n}^{3,1}(Z + Q)\to
\LL_{2\times (n-3)}^{3,1}(\Tr_A(Z+Q)).
\]
Then it is enough to prove the left-most and the right-most linear systems are zero.
\begin{itemize}
	\item The trace $\Tr_A(Z + Q) = Z_A + Z_{D \cap A} + Q_A$: the linear system $\LL_{2\times (n-3)}^{3,1}(Z_A + Z_{D\cap A})$ is regular by induction and its dimension is exactly the cardinality of $Q_A$; hence $\LL_{2\times (n-3)}^{3,1}(\Tr_A(Z+Q))=0$.
	\item By Lemma \ref{lem: secondo di due}, $\LL_{2\times n}^{3,1}(A+Z + Q)=0$.\qedhere
\end{itemize}
\end{proof}
\end{lemma}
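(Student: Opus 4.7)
The plan is to pass from regularity of $\LL_{2\times n}^{3,1}(Y)$ to a vanishing statement, which is technically easier to attack by induction. Concretely, set $v(n) := \vdim\LL_{2\times n}^{3,1}(Y)$ and let $P\subset\p^2\times\p^n$ be a set of $v(n)$ general simple points. Then $\LL_{2\times n}^{3,1}(Y)$ is regular if and only if $\LL_{2\times n}^{3,1}(Y+P)=0$, so the goal becomes to show the latter vanishing.

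I would argue by induction on $n$, but with step $3$ rather than $1$, the jump being dictated by the geometry: three general divisors of bidegree $(0,1)$ in $\p^2\times\p^n$ cut out a subvariety $A\cong\p^2\times\p^{n-3}$, which is the natural locus on which to restrict and on which the inductive hypothesis lives. The base cases $n\in\{3,4,5\}$ are of modest size and can be verified computationally in Macaulay2, in the spirit of Appendix \ref{appendix: M2}. For $n\ge 6$, the inductive step rests on the Castelnuovo exact sequence
\[
0\to \LL_{2\times n}^{3,1}(A+Z+Q)\to \LL_{2\times n}^{3,1}(Z+Q)\to \LL_{2\times(n-3)}^{3,1}(\Tr_A(Z+Q))
\]
for suitable specializations $Z$ of $Y$ and $Q$ of $P$ whose supports are partly concentrated on $A$ and partly on the given hyperplane $D$.

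For such specializations to be admissible, the arithmetic must align. Writing $b(n):=k_*(3,3;2,n)-k_*(3,3;2,n-1)$, I expect a periodicity of the shape $b(n)-b(n-1)=b(n-3)-b(n-4)$, and a parallel identity $v(n)-v(n-3)=\ldots$ for the number of simple points one can afford to put on $A$; both should reduce to elementary computations with the defining formulas and are naturally handled in the computational appendix. Given these identities, one places on $A$ a component of the right numerical type so that $\Tr_A(Z+Q)$ is, by induction, a regular system whose dimension matches exactly the cardinality of the simple points pushed onto $A$. Hence the trace system vanishes.

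The main obstacle is controlling the residue $\LL_{2\times n}^{3,1}(A+Z+Q)$. Since $A$ has codimension $3$ and its defining ideal is generated in bidegree $(0,3)$, enforcing passage through $A$ is a strong condition, but the remaining system is still too large to dispose of by a further induction on $n$ within the same statement. I therefore expect that the cleanest strategy is to isolate this residue vanishing into a separate auxiliary lemma, proved by its own induction (possibly with an additional general $(0,1)$-divisor specialization, mirroring the ascending sequence of specializations used already in the proofs of Lemma \ref{lem:1xn_(3,2s)}, Lemma \ref{lem:1xn_(3,2s)_2ndStep}, Lemma \ref{lem:1xn_(3,2s)_3rdStep}, Lemma \ref{lem:1xn_(3,2s)_4thStep}), and eventually reduced to a regularity statement of lower bidegree where results such as \cite[Corollary 2.2]{BCC} and \cite[Theorem 3.1]{BalBerCat12} apply directly.
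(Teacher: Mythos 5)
Your proposal follows essentially the same path as the paper's proof: the reduction to $\LL_{2\times n}^{3,1}(Y+P)=0$ by adding $v(n)$ general simple points, the triple-step induction on $n$ driven by the periodicity $b(n)-b(n-1)=b(n-3)-b(n-4)$, the specialization onto a codimension-$3$ subvariety $A$ cut by three general $(0,1)$ forms, the Castelnuovo sequence with $A$ disposing of the trace by matching dimensions, and the delegation of the residue $\LL_{2\times n}^{3,1}(A+Z+Q)=0$ to a separate auxiliary lemma (the paper's Lemma~\ref{lem: secondo di due}, which itself introduces a second such $A$-type subvariety and kills the residue because the ideal of the union of two of them is generated in bidegree $(0,2)$). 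One small slip: you assert that the defining ideal of $A$ is generated in bidegree $(0,3)$, but $A$ is cut out by three \emph{independent} bidegree $(0,1)$ forms, so its ideal is generated in bidegree $(0,1)$; the ``generated in bidegree $(0,2)$'' phenomenon enters only at the level of the union of two such subvarieties in the next lemma.
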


\begin{lemma}
\label{lem: secondo di due} 
Let $n\ge 6$. Let $D\subset\p^2\times\p^n$ be a general divisor of bidegree $(0,1)$ and let $A\subset\p^2\times\p^n$ be the subvariety defined by $3$ general bidegree $(0,1)$ forms. Let $Z$ and $Q$ be schemes of fat points as in the proof of Lemma \ref{lem: primo di due}. Then  $\dim \LL_{2\times n}^{3,1}(A+ Z + Q)=0$.
\begin{proof}
We proceed by a triple-step induction on $n$. We check the cases $n\in\{6,7,8\}$ by an explicit software computation; see Appendix \ref{appendix: M2}. Assume that $n\ge 9$. Let $B\subset\p^2\times\p^n$ be a subvariety defined by 3 general bidegree $(0,1)$ forms. We consider a specialization $W$ of $Z$ and $R$ of $Q$ such that $W = W_{A\cap B} + W_{A \cap B \cap D} + W_{A \cap D} + W_D$ and $R = R_{A\cap B} + R_A + R_\circ$ where
\begin{center}
	\begin{tabular}{r l}
		$W_{A \cap B}$ & is a scheme of type $(2^{b(n-6)-b(n-7)})$ on $A \cap B \cong \bbP^2 \times \bbP^{n-6}$, outside $D$; \\
		$W_{A\cap B \cap D}$ & is a scheme of type $(2,1^{b(n-7)})$ on $A \cap B \cap D \cong \bbP^2 \times \bbP^{n-7}$; \\
		$W_{A \cap D}$ & is a set of $10$ points on $A \cap D$, outside $B$; \\
		$W_D$ & is a set of $10$ points on $D$, outside $A \cup B$; \\
		$R_{A \cap B}$ & is a set of $v(n-6)$ points on $A \cap B$, outside $D$; \\
		$R_A$ & is a set of $v(n-3) - v(n-6)$ points on $A$, outside $B \cup D$; \\
		$R_\circ$ & is a set of $v(n) - v(n-3)$ points outside $A \cup B \cup D$.
	\end{tabular}
\end{center}
Consider
\[
0\to
\LL_{2\times n}^{3,1}(A+ B+W+R)
\to
\LL_{2\times n}^{3,1}(A+W+R)
\to
\LL_{2\times (n-3)}^{3,1}((A\cap B)+\Tr_B(W+R)).
\]
It is enough to show that the left-most and right-most linear system are zero.
\begin{itemize}
	\item By Lemma \ref{conto:b-b_modulo3} and Lemma \ref{conto:v-v_modulo3}, the linear system $\LL_{2\times (n-3)}^{3,1}((A\cap B)+\Tr_B(W+R))$ is zero by induction.
	\item The linear system $\LL_{2\times n}^{3,1}(A+ B+W+R)$ is contained in the linear system $ \LL_{2\times n}^{3,1}(A+ B)$ which is zero because the ideal of $A \cup B$ is generated in bidegree $(0,2)$.\qedhere
\end{itemize}
\end{proof}
\end{lemma}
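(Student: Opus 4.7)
The plan is to mirror the triple-step induction on $n$ used in the surrounding lemmas, with base cases $n\in\{6,7,8\}$ checked by an explicit computer algebra computation (as is done throughout Section~\ref{sec: 33} and recorded in Appendix~\ref{appendix: M2}). The shift by $3$ is natural here because $A$ has codimension $3$ in $\bbP^2\times\bbP^n$, and the modular identities $b(n)-b(n-1)=b(n-3)-b(n-4)$ and $v(n)-v(n-3)$ established in the appendix are precisely what makes such a specialization consistent with the statement.

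For the inductive step with $n\geq 9$, I would introduce a second general codimension-$3$ subvariety $B\subset\bbP^2\times\bbP^n$ defined by $3$ general forms of bidegree $(0,1)$, and specialize $Z$ and $Q$ further onto $B$, $A\cap B$, $D\cap B$ and $A\cap B\cap D$. The sizes of the pieces of the new schemes $W$ (specialization of $Z$) and $R$ (specialization of $Q$) are dictated by the requirement that their traces on $B$ match exactly the scheme needed to apply the inductive hypothesis in $\bbP^2\times\bbP^{n-3}$; this is where Lemma~\ref{conto:b-b_modulo3} and Lemma~\ref{conto:v-v_modulo3} get used. With such a specialization in place, I would invoke the Castelnuovo exact sequence with respect to $B$:
\[
0\to \LL_{2\times n}^{3,1}(A+B+W+R)\to \LL_{2\times n}^{3,1}(A+W+R)\to \LL_{2\times(n-3)}^{3,1}\bigl((A\cap B)+\Tr_B(W+R)\bigr),
\]
and reduce the problem to showing that both the residue and the trace are zero.

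For the trace, the scheme $(A\cap B)+\Tr_B(W+R)$ on $B\cong \bbP^2\times\bbP^{n-3}$ should be exactly the configuration considered in the statement of this lemma with $n$ replaced by $n-3$, so the inductive hypothesis yields the vanishing. For the residue $\LL_{2\times n}^{3,1}(A+B+W+R)$, the crucial observation is that $A$ and $B$ are each cut out by three forms of bidegree $(0,1)$, so the ideal $I_{A\cup B}$ is generated in bidegree $(0,2)$; since our ambient linear system has bidegree $(3,1)$ with only degree~$1$ in the second factor, the subsystem $\LL_{2\times n}^{3,1}(A+B)$ is already zero, and therefore so is the residue.

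The main obstacle I anticipate is purely arithmetic: setting up $W$ and $R$ so that (i) they are legitimate specializations of $Z$ and $Q$ with the correct total numbers of $2$-fat and simple points, (ii) the pieces supported on $A$, $B$, $D$ and their pairwise and triple intersections fit inside the corresponding subvarieties (which requires dimension bounds like $n-6\geq 0$, explaining the base case $n=9$), and (iii) the trace on $B$ lands exactly in the shape of the inductive hypothesis. These bookkeeping constraints are what force the triple-step induction and what make Lemma~\ref{conto:b-b_modulo3} and Lemma~\ref{conto:v-v_modulo3} indispensable; once they are verified, the geometric core of the argument—the vanishing from the bidegree-$(0,2)$ generation of $I_{A\cup B}$—is very short.
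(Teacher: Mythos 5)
Your proposal follows the paper's own proof essentially verbatim: the same triple-step induction with software-checked base cases $n\in\{6,7,8\}$, the same auxiliary codimension-$3$ subvariety $B$, the same specialization governed by Lemma \ref{conto:b-b_modulo3} and Lemma \ref{conto:v-v_modulo3}, the same Castelnuovo sequence with respect to $B$, and the same final observation that $\LL_{2\times n}^{3,1}(A+B)=0$ because the ideal of $A\cup B$ is generated in bidegree $(0,2)$. No gaps; this is the paper's argument.
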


\Ale{
    \begin{example}[Lemma \ref{lem: 2 x n, bigrado (3,2)}, Lemma \ref{lem: primo di due}, Lemma \ref{lem: secondo di due} for $n = 8$]\label{example:2xn_(3,2)}
        We show that $\calL_{2\times 8}^{3,2}(3,2^{29})$ is regular, i.e., $\dim\calL_{2\times 8}^{3,2}(3,2^{29}) = 65$. Let $D$ be a general divisor of bidegree $(0,1)$ and let $X$ be a scheme of type $(3,2^{29})$ such that $X \cap D$ is general of type $(3,2^{26})$. By Castelnuovo exact sequence
        \begin{equation}\label{example:2xn_(3,2)_eq1}
            65 \leq \dim \calL_{2\times 8}^{3,2}(X) \leq \dim\calL_{2\times 8}^{3,1}(\Res_D(X)) + \calL_{2\times 7}^{3,2}(\Tr_D(X)).
        \end{equation}
We assume to know that $\calL_{2\times 7}^{3,2}(\Tr_D(X))$ is regular, i.e., $\dim\calL_{2\times 7}^{3,2}(\Tr_D(X)) = 45.$ The residue $Y = \Res_D(X)$ is a scheme of type $(2^4,1^{26})$ such that $Y \cap D$ is general of type $(2,1^{26})$. Since $\vdim\calL_{2\times 8}^{3,1}(Y) = 20$, we consider a set of twenty general points $P$. We want to prove that $\dim\calL_{2\times 8}^{3,1}(Y+P) = 0$. Let $A, B$ be two subvarieties of codimension $3$ defined by general forms of bidegree $(0,1)$. We consider a series of specializations of the scheme $Y+P$: we describe them as union of distinct components, each one with general support in the space indicated by the diagrams in Figure \ref{fig:specialization2}.
        
        \begin{figure}[ht]
            \centering
            \includegraphics[scale=0.5]{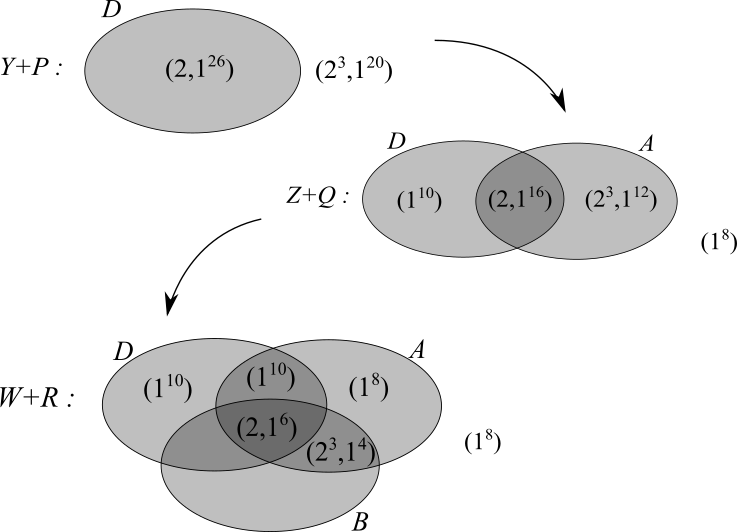}
            \caption{The specialization used in Example \ref{example:2xn_(3,2)}}
            \label{fig:specialization2}
        \end{figure}
        
        By using a series of Castelnuovo exact sequences, we obtain the following chain of inequalities
        \begin{align*}
  \dim& \calL_{2\times 8}^{3,1}(Y+P) \leq \dim\calL_{2\times 8}^{3,1}(Z+Q) \leq \dim\calL_{2\times 8}^{3,1}(A+Z+Q) + \dim\calL_{2\times 5}^{3,1}(\Tr_A(Z+Q)) \\
  & \leq \left(\dim\calL_{2\times 8}^{3,1}(A+B+W+R) + \dim\calL_{2\times 5}^{3,1}((A\cap B)+\Tr_B(W+R)\right)
  + \dim\calL_{2\times 5}^{3,1}(\Tr_A(Z+Q)).
        \end{align*}
        By inducion, we assume that the traces on $\bbP^2\times\bbP^5$ are zero at each step of the chain of inequalities. Hence we are left with checking that $\dim\calL_{2\times 8}^{3,1}(A+B+W+R) = 0$ which holds by the straightforward observation that the ideal of $A\cup B$ is generate in bidegree $(0,2)$ and therefore $\dim\calL_{2\times 8}^{3,1}(A+B) = 0$.
    \end{example}
}

\section{\texorpdfstring{$\SV_{m\times n}^{3,4}$}{SVmn34} is not defective}\label{sec: bidegree (3,4)}
Now that we have established Proposition \ref{pro: bigrado 33 come punti doppi}, it will be easier to show that $\SV_{m\times n}^{3,4}$ is also non-defective. As in Section \ref{sec: linear systems}, we can phrase this statement in terms of linear systems.
\begin{proposition}\label{pro: bigrado 34 come punti doppi} If $m$ and $n$ are positive integers, then $\LL_{m\times n}^{3,4}(2^r)$ is nonspecial for every $r\in\N$.
\end{proposition}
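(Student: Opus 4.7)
The plan is to apply Theorem \ref{thm: quello che usiamo} to the polarized variety $(\bbP^m\times\bbP^n,\calL_{m\times n}^{3,4})$, mirroring the strategy used for the bidegree $(3,3)$ case in Section \ref{sec: 33}. By the very same observation already used in the proof of Proposition \ref{pro: bigrado 33 come punti doppi}, conditions (3) and (4) of Theorem \ref{thm: quello che usiamo} are automatic: a single $j$-fat point imposes the expected number of conditions on $\calL_{m\times n}^{3,4}$ whenever $j\le 4$, and $\dim\calL_{m\times n}^{3,4}=\binom{m+3}{3}\binom{n+4}{4}$ is comfortably larger than $(m+n+1)^2$. So the real content is to establish the two statements
\[
    \calL_{m\times n}^{3,4}\!\left(3,2^{k^*(3,4;m,n)}\right)\text{ is regular}\qquad\text{and}\qquad\calL_{m\times n}^{3,4}\!\left(4,2^{k_*(3,4;m,n)}\right)=0.
\]

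For both statements I would argue by induction on $m+n$, using the Castelnuovo exact sequence \eqref{exact_sequence_bihom}. The crucial observation, and the reason why this section should be shorter than Section \ref{sec: 33}, is that now we may slice with a divisor $D$ of bidegree $(0,1)$: the residue drops the second degree from $4$ to $3$ and lands in the bidegree $(3,3)$ regime, which has just been handled by Proposition \ref{pro: bigrado 33 come punti doppi} and its supporting lemmas. Concretely, to show regularity of $\calL_{m\times n}^{3,4}(3,2^{k^*(3,4;m,n)})$ I would fix a bidegree $(0,1)$ divisor $D$ and specialize the base scheme so that its trace on $D\cong\bbP^m\times\bbP^{n-1}$ is a general scheme of type $(3,2^{k^*(3,4;m,n-1)})$. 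The trace gives a bidegree $(3,4)$ problem in one fewer dimension, handled by induction; the residue is a bidegree $(3,3)$ problem for which regularity follows from the results of Section \ref{sec: 33} (possibly combined with Lemma \ref{lemma:extra_points} to control the simple points left on $D$). The vanishing statement $\calL_{m\times n}^{3,4}(4,2^{k_*(3,4;m,n)})=0$ is treated dually: trace $(4,2^{k_*(3,4;m,n-1)})$ vanishes by induction and residue is a bidegree $(3,3)$ system with a $3$-fat point and $2$-fat points which, up to extra simple points handled via Lemma \ref{lemma:extra_points}, is controlled by Proposition \ref{pro: triplo e doppi} and Proposition \ref{pro: quartuplo e doppi}. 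When the reduction on $n$ bottoms out (small $n$) I would switch and slice with a divisor of bidegree $(1,0)$, reducing $m$; the residue then drops to bidegree $(2,4)$, which one handles inductively on top of \cite[Corollary 2.2]{BCC} and \cite[Theorem 3.1]{BalBerCat12}, exactly as in Section \ref{sec: 33}.

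The main obstacle, as already in Section \ref{sec: 33}, is not a conceptual one but an arithmetic one: for each slicing step the numbers $k^*(3,4;m,n)$, $k_*(3,4;m,n)$, and the residue discrepancies $k^*(3,4;m,n)-k^*(3,4;m-1,n)$ and $k_*(3,4;m,n)-k_*(3,4;m-1,n)$ (together with their analogues in the $n$-direction) must satisfy inequalities that guarantee (i) non-negativity of the virtual dimensions appearing in the residue, (ii) applicability of \cite[Corollary 2.2]{BCC} to the pure $2$-fat part, and (iii) that the extra simple points left by the residue impose independent conditions via Lemma \ref{lemma:extra_points}. I expect a battery of elementary but tedious estimates analogous to those collected in Appendix \ref{appendix: contacci}, together with a handful of small-$m$, small-$n$ base cases to be verified by software as in Appendix \ref{appendix: M2}. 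Once those are in place, the inductive machinery carries through in the same format as in Section \ref{sec: 33}, only with the degree-reduction in one of the two factors now terminating immediately at the already-established $(3,3)$ case instead of requiring an independent development.
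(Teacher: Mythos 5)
Your plan is essentially the paper's own proof: it reduces to Theorem \ref{thm: quello che usiamo}, with conditions (1) and (2) established exactly as in Propositions \ref{lemma:3+2kup_mxn} and \ref{pro: col quartuplo + vuoto} — slicing with a bidegree $(0,1)$ divisor so that the residue lands in the bidegree $(3,3)$ results of Section \ref{sec: 33}, handling the leftover simple points via Lemma \ref{lemma:extra_points}, switching to a $(1,0)$ divisor with bidegree $(2,4)$ residues at the base of the induction, and delegating the arithmetic inequalities and small cases to the appendices. Only a minor nit: in the vanishing statement the residue of the $4$-fat point is a $3$-fat point, so the relevant $(3,3)$ input is Proposition \ref{pro: triplo e doppi} rather than Proposition \ref{pro: quartuplo e doppi}.
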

\begin{proof}
    For $n = 1$, the statement is known by \cite[Theorem 3.1]{BalBerCat12}. For $n \geq 2$ we apply Theorem~\ref{thm: quello che usiamo}: condition (1) is Proposition \ref{lemma:3+2kup_mxn}, condition (2) is Proposition \ref{pro: col quartuplo + vuoto}, condition (3) is checked directly because a single $j$-fat point always impose independent conditions on $\calL_{m\times n}^{c,d}$ for $c,d \geq j-1$ and condition (4) is trivial.
\end{proof}
Once more, we focus on proving that $\SV_{m\times n}^{3,4}$ satisfies the first two hypotheses of Theorem \ref{thm: quello che usiamo}.
\begin{proposition}\label{lemma:3+2kup_mxn}
	If $m$ and $n$ are positive integers, then $\calL_{m\times n}^{3,4}(3,2^{k^*(3,4;m,n)})$ is regular.
\end{proposition}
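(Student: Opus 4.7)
The plan is to mimic the strategy used for the bidegree $(3,3)$ case in Proposition \ref{pro: triplo e doppi}, exploiting the fact that the extra degree in the second factor makes the arithmetic less tight. I proceed by induction on $m$. The base case $m=1$ will require a separate inductive argument on $n$: consider a divisor $D$ of bidegree $(0,1)$ on $\bbP^1\times\bbP^n$ and specialize a scheme $X$ of type $(3,2^{k^*(3,4;1,n)})$ so that $X\cap D$ is general of type $(3,2^{k^*(3,4;1,n-1)})$. The trace is regular by induction, and the residue is a system $\calL_{1\times n}^{3,3}(2^{1+k^*(3,4;1,n)-k^*(3,4;1,n-1)},1^{k^*(3,4;1,n-1)})$. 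I would check that the pure $2$-fat point part is non-special by \cite[Theorem 3.1]{BalBerCat12} and then invoke Lemma~\ref{lemma:extra_points} (reducing to showing some $\calL_{1\times n}^{3,2}$ with a single simple point has non-positive expected dimension, which can be pushed down further to a known case). A few small initial $n$ will need a software verification analogous to Appendix \ref{appendix: M2}.

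For the inductive step $m\ge 2$, let $D\subset\bbP^m\times\bbP^n$ be a divisor of bidegree $(1,0)$ and specialize a scheme $X$ of type $(3,2^{k^*(3,4;m,n)})$ so that $X\cap D$ is general of type $(3,2^{k^*(3,4;m-1,n)})$. A virtual dimension estimate analogous to \eqref{eq:vdim_withTriple} ensures $\vdim\calL_{m\times n}^{3,4}(X)\ge 0$. As explained in Section~\ref{sec:inductive}, it suffices to show that the trace $\calL_{(m-1)\times n}^{3,4}(\Tr_D(X))$ and the residue $\calL_{m\times n}^{2,4}(\Res_D(X))$ are both regular. The trace is regular by induction on $m$. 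The residue is a scheme of fat points of type $(2^{1+k^*(3,4;m,n)-k^*(3,4;m-1,n)},1^{k^*(3,4;m-1,n)})$ with the simple points lying on $D$.

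To control the residue, I would first prove an auxiliary lemma asserting that $\calL_{m\times n}^{2,4}(2^{1+k^*(3,4;m,n)-k^*(3,4;m-1,n)})$ is regular; this should follow by a second induction on $m$, again via a bidegree $(1,0)$ degeneration, using \cite[Corollary 2.2]{BCC} at the bottom after reducing through the Castelnuovo sequence to systems of the form $\calL_{m\times n}^{1,4}(\cdots)$. Then, by Lemma~\ref{lemma:extra_points}, the $k^*(3,4;m-1,n)$ extra simple points on $D$ impose independent conditions on this system provided that $\calL_{m\times n}^{1,4}(1,2^{k^*(3,4;m,n)-k^*(3,4;m-1,n)})=0$, which will be reduced to \cite[Theorem~3.1]{BalBerCat12} after checking the required arithmetic inequality. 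All the arithmetic inequalities needed (positivity of virtual dimensions and the upper bounds required to invoke \cite[Corollary 2.2]{BCC}) parallel those in Appendix~\ref{appendix: contacci} for the $(3,3)$ case and should be easier because the extra factor $\binom{n+4}{4}/\binom{n+3}{3}>1$ loosens them.

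The main obstacle is likely the base case $m=1$, where the lack of room in the first factor forces very tight numerical bounds and may require a multi-step specialization in the spirit of Lemmas~\ref{lem:1xn_(3,2s)}--\ref{lem:1xn_(3,2s)_4thStep}, together with explicit software checks for the low-$n$ initial cases. The rest is a matter of carefully verifying the arithmetic inequalities that ensure all the specializations are legal and that the upper and lower bounds in the Castelnuovo sequence match.
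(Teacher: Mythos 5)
Your route is genuinely different from the paper's, and the difference is not cosmetic. The paper inducts on $n$ with a divisor of bidegree $(0,1)$, precisely so that the residual system has bidegree $(3,3)$: its pure double-point part is then regular by the just-established Proposition \ref{pro: bigrado 33 come punti doppi}, and the only new ingredients are the vanishing Lemma \ref{lemma:enough2fat_3,2} in bidegree $(3,2)$ and some appendix arithmetic; the first-factor induction is confined to the base case $n=1$ (Lemma \ref{lemma:3+2kup_mx1}), where the $(2,4)$ residue lives on $\bbP^m\times\bbP^1$, is covered by the classification of \cite{BalBerCat12}, and involves few points ($\le 2(m+1)$). You instead induct on $m$ with a $(1,0)$ divisor, so your residue sits in bidegree $(2,4)$ on $\bbP^m\times\bbP^n$ with both factors arbitrary, and the whole proof hinges on your auxiliary claim that $\calL^{2,4}_{m\times n}\bigl(2^{1+k^*(3,4;m,n)-k^*(3,4;m-1,n)}\bigr)$ is regular.

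That auxiliary claim is where the genuine gap lies. The number of double points there is roughly $\binom{n+4}{4}\tfrac{(m+1)(m+2)(2m+3n)}{6(m+n)(m+n+1)}$, i.e.\ a fraction about $\tfrac{2m+3n}{3(m+n)}$ of the critical number $\binom{m+2}{2}\binom{n+4}{4}/(m+n+1)$, and this fraction tends to $1$ for fixed $m$ and growing $n$: you are asking for non-speciality of a bidegree-$(2,4)$ system with an almost maximal number of $2$-fat points, so, contrary to your closing remark, the arithmetic here is tighter than in the $(3,3)$ case, not looser. Moreover $(2,4)$ has $\min(c,d)=2$, which is outside the reach of the collision method of Theorem \ref{thm: quello che usiamo} and inside the territory where defective Segre--Veronese examples exist (the $(2,2d)$ families on $\bbP^1\times\bbP^n$ in \cite{BalBerCat12}); \cite[Corollary 2.2]{BCC} only handles bidegree $(1,d)$, so your ``second induction using BCC at the bottom'' would in fact require a full analogue of Lemma \ref{lem: bigrado 2,3 con pochi punti doppi}: a base case on $\bbP^1\times\bbP^n$ checked against the exceptional list of \cite{BalBerCat12}, monotonicity and virtual-dimension estimates for the point counts, BCC threshold inequalities on both sides, and an Alexander--Hirschowitz vanishing for the extra simple points. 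None of this is supplied, and it is exactly the work the paper avoids by inducting on the second factor so as to reuse the $(3,3)$ result. (Your treatment of the base case $m=1$ by a $(0,1)$-divisor induction with a $(3,3)$ residue is fine --- indeed it is the paper's argument for all $m$ --- but it cannot rescue your first-factor inductive step without the missing $(2,4)$ lemma, and your prediction that $m=1$ is ``the main obstacle'' inverts where the real difficulty sits.)
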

\begin{proof}
	We proceed by induction on $n$. The case $n = 1$ is proven in Lemma \ref{lemma:3+2kup_mx1}. Let $n \geq 2$. Let $D$ be a general divisor of bidegree $(0,1)$ in $\bbP^m \times \bbP^n$. Consider a scheme of fat points $X$ of type $(3,2^{k^*(3,4;m,n)})$ such that $X \cap D$ is general of type $(3,2^{k^*(3,4;m,n-1)})$ and the Castelnuovo exact sequence \eqref{exact_sequence_bihom}.
	\begin{itemize}
		\item \textit{Trace.} The trace $\Tr_D(X)$ is a general scheme of fat points of type $(3,2^{k^*(3,4;m,n-1)})$ on $\bbP^m \times \bbP^{n-1}$ and $\calL_{m\times (n-1)}^{3,4}(3,2^{k^*(3,4;m,n-1)})$ is regular by induction.
		\item \textit{Residue.} To ease the notation, we set
		\[
		u(m,n) := k^*(3,4;m,n)-k^*(3,4;m,n-1).
		\]
The residue $\Res_D(X)$ is a scheme of type $(2^{1+u(m,n)},1^{k^*(3,4;m,n-1)})$ such that $\Res_D(X) \cap D$ is general of type $(2,1^{k^*(3,4;m,n-1)})$ on $D$. By Lemma \ref{appendix:few2_3,3}, $\vdim\calL_{m\times n}^{3,3}(\Res_D(X))\ge 0$. By Proposition \ref{pro: bigrado 33 come punti doppi}, the linear system $\calL_{m\times n}^{3,3}(2^{1+u(m,n)})$ is regular. We need to show that the extra simple points on $D$ impose independent conditions on $\calL_{m\times n}^{3,3}(2^{1+u(m,n)})$. Thanks to Lemma~\ref{lemma:extra_points}, it suffices to show that $\calL_{m\times n}^{3,2}(1,2^{u(m,n)})=0$
. In Lemma \ref{lemma:enough2fat_3,2} we prove the stronger statement $\LL_{m\times n}^{3,2}(2^{u(m,n)})=0$.\qedhere	\end{itemize}
\end{proof}

\Fra{
\begin{lemma}\label{lemma:enough2fat_3,2}
If $m$ and $n$ are positive integers, then 
$\calL_{m\times n}^{3,2}(2^{u(m,n)}) = 0$.
\end{lemma}
\begin{proof}
The systems $\LL_{1\times n}^{3,2}(2^{h(1,n)})$ and $\LL_{m\times 1}^{3,2}(2^{h(m,1)})$ are expected
to be zero by Lemma~\ref{conto: 3, 2 con punti doppi atteso vuoto}, and they are indeed zero by \cite[Theorem 3.1]{BalBerCat12}, so we assume that $m\ge 2$ and $n\ge 2$ and we argue by induction on $n$. Let $D$ be a general divisor of bidegree $(0,1)$ in $\bbP^m\times \bbP^n$. Note that $u(m,n-1)\le u(m,n)$ by Lemma \ref{appendix:applyBCC_1+2u_mxn}\eqref{bullet: u crescente}, so we consider a scheme $X$ of fat points of type 
$(2^{u(m,n)})$ such that $X \cap D$ is general of type 
$(2^{u(m,n-1)})$ on $D \cong \bbP^m\times\bbP^{n-1}$. 
 We consider residue and trace with respect to $D$. 
	\begin{itemize}
		\item \textit{Trace.} The trace $\Tr_D(X)$ is a general scheme of type 
 $(2^{u(m,n-1)})$ on $D$ and the linear system 
 $\calL_{m\times (n-1)}^{3,2}(2^{u(m,n-1)})$ is zero by induction hypothesis.
\item \textit{Residue.} The residue $\Res_D(X)$ is a scheme of fat points of type $(2^{u(m,n)-u(m,n-1)},1^{u(m,n-1)})$ where $\Res_D(X) \cap D$ is a set of $u(m,n-1)$ simple points on $D$. By Lemma \ref{appendix:applyBCC_1+2u_mxn}(1),
\[	u(m,n) - u(m,n-1) \geq \left\lceil \frac{n+1}{m+n+1} \binom{m+3}{3}\right\rceil + n,
		\]
therefore $\calL_{m\times n}^{3,1}(2^{u(m,n)-u(m,n-1)})$ is zero by \cite[Corollary 2.2]{BCC}. A fortiori, $\calL_{m\times n}^{3,1}(\Res_D(X))$ is zero as well. 
\qedhere
	\end{itemize}
\end{proof}

\begin{lemma}\label{lemma:3+2kup_mx1}
	If $m$ is a positive integer, then $\calL_{m\times 1}^{3,4}(3,2^{k^*(3,4;m,1)})$ is regular.
\end{lemma}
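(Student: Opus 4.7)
The plan is to mirror the structure of Lemma \ref{lem: caso iniziale triplo e doppi} (the analogous base case for bidegree $(3,3)$), proceeding by induction on $m$. The base case $m=1$ will be verified directly via a software computation, to be recorded in Appendix \ref{appendix: M2}, in the same spirit as the other base cases.

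For the inductive step $m \geq 2$, I would fix a general divisor $D \subset \bbP^m \times \bbP^1$ of bidegree $(1,0)$, so that $D \cong \bbP^{m-1}\times\bbP^1$. Then I specialize the base points of $X$ so that $X \cap D$ is general of type $(3, 2^{k^*(3,4;m-1,1)})$ on $D$. This is feasible provided $k^*(3,4;m-1,1) \leq k^*(3,4;m,1)$, which should be a straightforward monotonicity check. After verifying that $\vdim \calL_{m \times 1}^{3,4}(X) \geq 0$ (an arithmetic estimate to go in Appendix \ref{appendix: contacci}), as in Section \ref{sec:inductive} it is enough to prove that both the trace and residue of $\calL_{m\times 1}^{3,4}(X)$ with respect to $D$ are regular.

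\begin{itemize}
\item \emph{Trace.} The trace on $D$ is a general scheme of type $(3, 2^{k^*(3,4;m-1,1)})$, and $\calL_{(m-1) \times 1}^{3,4}(3, 2^{k^*(3,4;m-1,1)})$ is regular by the inductive hypothesis.
\item \emph{Residue.} The residue $\Res_D(X)$ is of type $(2^{1+k^*(3,4;m,1)-k^*(3,4;m-1,1)}, 1^{k^*(3,4;m-1,1)})$ with the simple points lying on $D$, living inside the bidegree $(2,4)$ system. I would first check that $\calL_{m\times 1}^{2,4}\bigl(2^{1+k^*(3,4;m,1)-k^*(3,4;m-1,1)}\bigr)$ is regular via \cite[Theorem 3.1]{BalBerCat12}, which requires verifying the appropriate numerical bound on the number of double points (an arithmetic lemma in Appendix \ref{appendix: contacci}). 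To conclude regularity of $\calL_{m\times 1}^{2,4}(\Res_D(X))$, by Lemma \ref{lemma:extra_points} it suffices to show
\[
\calL_{m \times 1}^{1,4}\bigl(1,\,2^{k^*(3,4;m,1)-k^*(3,4;m-1,1)}\bigr) = 0,
\]
which again reduces to \cite[Theorem 3.1]{BalBerCat12} combined with a numerical check that the number of double points exceeds the threshold for emptiness of the corresponding linear system.
\end{itemize}

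The main obstacle is purely arithmetic: confirming the precise inequalities relating $k^*(3,4;m,1)$ and $k^*(3,4;m-1,1)$ needed to simultaneously (i) make the specialization legitimate, (ii) place us inside the hypotheses of \cite[Theorem 3.1]{BalBerCat12} for the bidegree $(2,4)$ system, and (iii) guarantee emptiness of the bidegree $(1,4)$ auxiliary system. These mirror the bookkeeping done for $(3,3)$ and should be collected in the appendix; the geometric argument itself is a routine Castelnuovo sequence combined with the standard trick from Lemma \ref{lemma:extra_points}.
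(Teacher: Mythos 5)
Your proposal matches the paper's proof essentially step by step: induction on $m$, a bidegree $(1,0)$ divisor $D$, trace handled by the inductive hypothesis, and the residue system of type $(2^{1+k^*(3,4;m,1)-k^*(3,4;m-1,1)},1^{k^*(3,4;m-1,1)})$ treated by first invoking \cite[Theorem 3.1]{BalBerCat12} for the $(2,4)$ double-point system and then Lemma \ref{lemma:extra_points} to reduce to the vanishing of $\calL_{m\times 1}^{1,4}(1,2^{k^*(3,4;m,1)-k^*(3,4;m-1,1)})$, which is again settled via \cite[Theorem 3.1]{BalBerCat12}. The arithmetic inputs you defer to the appendix are precisely the paper's Lemma \ref{appendix:3,4_mx1} and Lemma \ref{appendix:fewPoints_2,4}.
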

\begin{proof}
	We proceed by induction on $m$. The case $m = 1$ is \Fra{checked directly. Let $m \geq 2$ be an integer}. Let $D$ be a general divisor of bidegree $(1,0)$ in $\bbP^m\times\bbP^1$ and let $X$ be a scheme of fat points of type $(3,2^{k^*(3,4;m,1)})$ such that $X \cap D$ is general of type $(3,2^{k^*(3,4;m-1,1})$ on $D$. Consider the Castelnuovo exact sequence~\eqref{exact_sequence_bihom}.
	\begin{itemize}
		\item \textit{Trace.} The trace $\Tr_D(X)$ is general of type $(3,2^{k^*(3,4;m-1,1)})$ on $D \cong \bbP^{m-1}\times \bbP^n$ and the linear system $\calL^{3,4}_{(m-1)\times 1}(3,2^{k^*(3,4;m-1,1)})$ is regular by induction.
\item \textit{Residue.} $\Res_D(X)$ is of type $(2^{1+k^*(3,4;m,1)-k^*(3,4;m-1,1)},1^{k^*(3,4;m-1,1)})$ where $\Res_D(X) \cap D$ is general of type $(2,1^{k^*(3,4;m-1,1)})$. The virtual dimension of $\calL_{m\times 1}^{2,4}(\Res_D(X))$ is non-negative by Lemma~\ref{appendix:3,4_mx1}\eqref{item: 24m1}. The linear system $\calL_{m\times 1}^{2,4}(2^{1+k^*(3,4;m,1)-k^*(3,4;m-1,1)})$ is regular by \cite[Theorem 3.1]{BalBerCat12}: indeed $1+k^*(3,4;m,1)-k^*(3,4;m-1,1) \leq 2(m+1)$ by Lemma \ref{appendix:fewPoints_2,4}. Now we have to prove that the $k^*(3,4;m-1,1)$ simple points on $D$ impose independent conditions on $\calL_{m\times 1}^{2,4}(2^{1+k^*(3,4;m,1)-k^*(3,4;m-1,1)})$. Thanks to Lemma~\ref{lemma:extra_points}, we only have to prove that $\calL_{m\times 1}^{1,4}(1,2^{k^*(3,4;m,1)-k^*(3,4;m-1,1)})=0$.
By Lemma \ref{appendix:3,4_mx1}\eqref{item: 14m1}, $\calL_{m\times 1}^{1,4}(1,2^{k^*(3,4;m,1)-k^*(3,4;m-1,1)})$ is expected to be zero, and it is indeed zero by \cite[Theorem 3.1]{BalBerCat12}.
\qedhere
	\end{itemize}
\end{proof}
}

Now that we proved that $\SV_{m\times n}^{3,4}$ satisfies the first hypothesis of Theorem \ref{thm: quello che usiamo}, we move to the second one.
\begin{proposition}
	\label{pro: col quartuplo + vuoto} If $m\ge 1$ and $n\ge 2$, then $\LL_{m\times n}^{3,4}(4,2^{k_*(3,4;m,n)})=0$
	\begin{proof}
We proceed by induction on $n$. The case $n=2$ is solved in Lemma \ref{lem: col quartuplo + vuoto caso iniziale}. Let $n \geq 3$. Let $D$ be a general divisor of bidegree $(0,1)$ in $\bbP^m \times \bbP^n$. Let $X$ be a scheme of type $(4,2^{k_*(3,4;m,n)})$ such that $X \cap D$ is general of type $(4,2^{k_*(3,4;m,n-1)})$ on $D$. By the Castelnuovo exact sequence, it is enough to prove that trace and residue are zero.
		\begin{itemize}
			\item \textit{Trace.} The trace linear system is $\calL_{m\times (n-1)}^{3,4}(4,2^{k_*(3,4;m,n-1)})$ which is zero by induction.
\item \textit{Residue.} The residue $\Res_D(X)$ is of type $(3,2^{k_*(3,4;m,n)-k_*(3,4;m,n-1)}, 1^{k_*(3,4;m,n-1)})$ such that $\Res_D(X) \cap D$ is general of type $(3,1^{k_*(3,4;m,n-1)})$. The linear system $\calL_{m\times n}^{3,3}(\Res_D(X))$ is expected to be zero by Lemma \ref{conto: 3,3 triplodoppiesemplici atteso vuoto}. We show that $\calL_{m\times n}^{3,3}(3,2^{k_*(3,4;m,n)-k_*(3,4;m,n-1)})$ is regular: if $(m,n) \neq (1,3)$ it follows from Proposition \ref{pro: triplo e doppi}, since
			\[
			k_*(3,4;m,n)-k_*(3,4;m,n-1)\leq k^*(3,3;m,n)
			\]
by Lemma \ref{appendix:kdown34vskdown33}, while for $(m,n) = (1,3)$ it is checked directly by software computation. Now we prove that the extra $k_*(3,4;m,n-1)$ simple points on $D$ are enough to annihilate the linear system. Thanks to Lemma \ref{lemma:extra_points}, it suffices to show that $\calL_{m\times n}^{3,2}(2^{1+k_*(3,4;m,n)-k_*(3,4;m,n-1)}) = 0$. 
By construction, $1+k_*(3,4;m,n)-k_*(3,4;m,n-1)\ge u(m,n)$, so we conclude by Lemma~\ref{lemma:enough2fat_3,2}.
\qedhere
		\end{itemize}
	\end{proof}
\end{proposition}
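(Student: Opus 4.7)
The plan is to prove the vanishing by induction on $n$, with $m \geq 1$ arbitrary, reducing the bidegree $(3,4)$ question to the bidegree $(3,3)$ vanishing results already established in Proposition \ref{pro: triplo e doppi} and Proposition \ref{pro: quartuplo e doppi}. The base case $n=2$ will require a dedicated argument: I would run a secondary induction on $m$, mirroring the 2xn treatment used for bidegree $(3,3)$ in Lemma \ref{lem: quartuplo e doppi, 2 x n}, with the smallest cases verified by computer algebra in the appendix.

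For the inductive step with $n \geq 3$, take a general divisor $D \subset \bbP^m \times \bbP^n$ of bidegree $(0,1)$ and specialize the scheme $X$ of type $(4, 2^{k_*(3,4;m,n)})$ so that its $4$-fat point and exactly $k_*(3,4;m,n-1)$ of its $2$-fat points have support on $D \cong \bbP^m \times \bbP^{n-1}$. The Castelnuovo exact sequence \eqref{exact_sequence_bihom} reduces the goal to showing that both trace and residue systems vanish. The trace is the generic linear system $\calL_{m \times (n-1)}^{3,4}(4,2^{k_*(3,4;m,n-1)})$, which is zero by the inductive hypothesis.

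The residue is the more delicate piece: a system $\calL_{m \times n}^{3,3}(X')$, where $X'$ has type $(3, 2^{\ell}, 1^{k_*(3,4;m,n-1)})$ with $\ell := k_*(3,4;m,n) - k_*(3,4;m,n-1)$ and the simple points specialized on $D$. My plan is first to verify that $\calL_{m \times n}^{3,3}(3, 2^\ell)$ is regular by invoking Proposition \ref{pro: triplo e doppi}; this requires the arithmetic comparison $\ell \leq k^*(3,3;m,n)$, which I would isolate as a counting lemma deferred to the appendix. Then Lemma \ref{lemma:extra_points} reduces the task to checking that the additional $k_*(3,4;m,n-1)$ simple points on $D$ kill the system down to zero, which in turn reduces (via the $D$-residue inside the same Castelnuovo machinery) to the vanishing of $\calL_{m \times n}^{3,2}(2^{1+\ell})$. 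This last vanishing I would state as a further auxiliary lemma and prove by the same kind of inductive Castelnuovo argument on bidegree $(3,2)$ systems.

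The main obstacles are arithmetic and bookkeeping: at each reduction step I must verify that the virtual dimension behaves correctly, that the specialization fits inside $D$ (so the generic position hypothesis is consistent), and that the comparison $k_*(3,4;m,n) - k_*(3,4;m,n-1) \le k^*(3,3;m,n)$ holds uniformly. I expect some sporadic low-dimensional edge cases, most likely $(m,n) = (1,3)$, to fall outside the arithmetic range where the inductive scheme works, and these will have to be dispatched directly by computer algebra, as has been done throughout the earlier sections.
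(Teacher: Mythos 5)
Your proposal follows the paper's proof almost step for step: the same induction on $n$ with a $(0,1)$-divisor $D$, the same specialization placing the $4$-fat point and $k_*(3,4;m,n-1)$ of the double points on $D$, the same use of Proposition~\ref{pro: triplo e doppi} combined with the arithmetic bound $k_*(3,4;m,n)-k_*(3,4;m,n-1)\le k^*(3,3;m,n)$, the same reduction via Lemma~\ref{lemma:extra_points} to the vanishing of $\calL_{m\times n}^{3,2}(2^{1+\ell})$, the same secondary induction on $m$ for the base case $n=2$, and even the same identification of $(m,n)=(1,3)$ as the sporadic case to be dispatched by software. This is essentially the paper's proof.
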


\begin{lemma}
	\label{lem: col quartuplo + vuoto caso iniziale} If $m$ is a positive integer, then $\LL_{m\times 2}^{3,4}(4,2^{k_*(3,4;m,2)})=0$.
	\begin{proof}
		We work by induction on $m$. We check that $\LL_{1\times 2}^{3,4}(4,2^{k_*(3,4;1,2)})=0$ by a software computation and we assume that $m\ge 2$. Let $D\subset\p^m\times\p^2$ be a divisor of bidegree $(1,0)$. Let $X\subset\p^m\times\p^2$ be a scheme of fat points of type $(4,2^{k_*(3,4;m,2)})$ such that $\Tr_D(X)$ is a general scheme of fat points of type $(4,2^{k_*(3,4;m-1,2)})$ on $D$. Specialize $\LL_{m\times 2}^{3,4}(4,2^{k_*(3,4;m,2)})$ to $\calL_{m\times 2}^{3,4}(X)$ and consider the Castelnuovo exact sequence \eqref{exact_sequence_bihom}.
		\begin{itemize}
			\item \textit{Trace.} The trace linear system is $\LL_{(m-1)\times 2}^{3,4}(4,2^{k_*(3,4;m-1,2)})$ and it is zero by induction hypothesis.
			\item \textit{Residue.} $\Res_D(X)$ is a scheme of fat points of type $(3,2^{k_*(3,4;m,2)-k_*(3,4;m-1,2)},1^{k_*(3,4;m-1,2)})$, where $\Res_D(X)\cap D$ is general of type $(3,1^{k_*(3,4;m-1,2)})$ on $D$. By Lemma~\ref{conto: m x 2, 3,4 con quartplo atteso vuoto}, $\calL_{m\times 2}^{2,4}(\Res_D(X))$ is expected to be zero. The system $\calL_{m\times 2}^{2,4}(3,2^{k_*(3,4;m,2)-k_*(3,4;m-1,2)})$ is regular by Lemma~\ref{lem:  2,4 triplo e doppi è regolare}. Now we only have to show that the $k_*(3,4;m-1,2)$ simple points on $D$ impose enough linear conditions on $\calL_{m\times 2}^{2,4}(3,2^{k_*(3,4;m,2)-k_*(3,4;m-1,2)})$ to make it zero. By Lemma~\ref{lemma:extra_points}, it is enough to prove that $\calL_{m\times 2}^{1,4}(2,2^{k_*(3,4;m,2)-k_*(3,4;m-1,2)})=0$. Thanks to \cite[Corollary 2.2]{BCC}, we only need to check that
			\[
			    1+k_*(3,4;m,2)-k_*(3,4;m-1,2)\ge\ru{\frac{15(m+1)}{m+3}}+m,
			\]
			and this is accomplished in Lemma \ref{conto: differenza dei kdown permette bcc}.\qedhere
		\end{itemize}
	\end{proof}
\end{lemma}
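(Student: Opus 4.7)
The plan is to adapt to bidegree $(3,4)$ the Castelnuovo-based inductive strategy that succeeded for the analogous statement in bidegree $(3,3)$ (Lemma~\ref{lem: quartuplo e doppi, 2 x n}). I would induct on $m$, dispatching the base case $m=1$ via an explicit Macaulay2 computation on $\bbP^1\times\bbP^2$, of the kind documented in Appendix~\ref{appendix: M2}.

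For the inductive step $m\geq 2$, let $D\subset\bbP^m\times\bbP^2$ be a divisor of bidegree $(1,0)$, so that $D\cong\bbP^{m-1}\times\bbP^2$. I would specialize the general scheme $X$ of type $(4,2^{k_*(3,4;m,2)})$ to one whose trace on $D$ is a general scheme of type $(4,2^{k_*(3,4;m-1,2)})$; the existence of such a specialization requires the monotonicity $k_*(3,4;m-1,2)\leq k_*(3,4;m,2)$, which is a routine arithmetic check. The Castelnuovo exact sequence
\[
0\to\LL^{2,4}_{m\times 2}(\Res_D X)\to\LL^{3,4}_{m\times 2}(X)\to\LL^{3,4}_{(m-1)\times 2}(\Tr_D X)
\]
reduces the problem to the vanishing of both end systems. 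The trace vanishes by the inductive hypothesis. For the residue, whose type is $(3,2^{k_*(3,4;m,2)-k_*(3,4;m-1,2)},1^{k_*(3,4;m-1,2)})$ with the simple points general on $D$, I would first establish that the ambient system $\LL^{2,4}_{m\times 2}(3,2^{k_*(3,4;m,2)-k_*(3,4;m-1,2)})$ is regular by an auxiliary Castelnuovo induction modelled on Lemma~\ref{lem: triplo e doppi, bigrado ()2,3)}. Granted this, Lemma~\ref{lemma:extra_points} reduces the residue vanishing to showing that $\LL^{1,4}_{m\times 2}(2,2^{k_*(3,4;m,2)-k_*(3,4;m-1,2)})=0$, which follows from \cite[Corollary 2.2]{BCC} provided the inequality
\[
1+k_*(3,4;m,2)-k_*(3,4;m-1,2)\geq\left\lceil\frac{15(m+1)}{m+3}\right\rceil+m
\]
holds.

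The main obstacle will be the arithmetic bookkeeping: the monotonicity of $k_*(3,4;m,2)$ in $m$, the non-positivity of the residue virtual dimension, and the BCC threshold above must all align simultaneously, and these involve tedious floor/ceiling manipulations which I would delegate to an appendix in the style of Appendix~\ref{appendix: contacci}. A secondary technical hurdle is the auxiliary regularity of $\LL^{2,4}_{m\times 2}$ with one triple point and several double points, which itself demands a nested Castelnuovo induction with yet another round of degree/dimension bookkeeping and a further application of \cite[Corollary 2.2]{BCC} at the bottom of the recursion.
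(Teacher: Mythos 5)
Your proposal follows essentially the same route as the paper: induction on $m$ with the base case by software, specialization so that the trace on a bidegree-$(1,0)$ divisor is general of type $(4,2^{k_*(3,4;m-1,2)})$, vanishing of the trace by induction, and for the residue the combination of the expected-zero arithmetic check, the auxiliary regularity of $\calL_{m\times 2}^{2,4}(3,2^{k_*(3,4;m,2)-k_*(3,4;m-1,2)})$ (proved in the paper by exactly the nested Castelnuovo induction you anticipate), Lemma~\ref{lemma:extra_points}, and \cite[Corollary 2.2]{BCC} via the same ceiling inequality. The arithmetic bookkeeping you defer is precisely what the paper carries out in Appendix~\ref{appendix: contacci}, so no gap remains.
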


\begin{lemma}
	\label{lem:  2,4 triplo e doppi è regolare} If $m\ge 2$, then $\calL_{m\times 2}^{2,4}(3,2^{k_*(3,4;m,2)-k_*(3,4;m-1,2)})$ is regular.
	\begin{proof}
		In order to shorten the notation, set
		\[j(m):=k_*(3,4;m,2)-k_*(3,4;m-1,2).\]
		We work by induction on $m$. We check that $\calL_{2\times 2}^{2,4}(3,2^{j(2)})$ is regular by a software computation and we assume that $m\ge 3$. Let $D\subset\p^m\times\p^2$ be a divisor of bidegree $(1,0)$. Let $X\subset\p^m\times\p^2$ be a scheme of fat points of type $(3,2^{j(m)})$ such that $\Tr_D(X)$ is a general scheme of fat points of type $(3,2^{j(m-1)})$ on $D$. This is possible because $j(m)-j(m-1)=5$ by Lemma \ref{conto:differenza dei j fa 5}. Specialize $\calL_{m\times 2}^{2,4}(3,2^{j(m)})$ to $\calL_{m\times 2}^{2,4}(X)$ and consider the Castelnuovo exact sequence
		\[
		0 \rightarrow
		\calL_{m\times 2}^{1,4}(\Res_D(X)) \rightarrow
		\calL_{m\times 2}^{2,4}(X)
		\rightarrow \calL_{(m-1)\times 2}^{2,4}(\Tr_D(X)).
		\]
		\begin{itemize}
\item \textit{Trace.} The trace linear system is $\calL_{(m-1)\times 2}^{2,4}(3,2^{j(m-1)})$ and it is regular by induction hypothesis.
			\item \textit{Residue.} $\Res_D(X)$ is a scheme of fat points of type $(2,2^{5},1^{j(m-1)})$, where $\Res_D(X)\cap D$ is general of type $(2,1^{j(m-1)})$ on $D$. 
			By Lemma \ref{conto: vdim 1,4 m x 2 positiva}, $\vdim\LL_{m\times 2}^{1,4}(2,2^{5},1^{j(m-1)})\ge 0$. The system $\LL_{m\times 2}^{1,4}(2^6)$ is regular by Lemma \ref{lem: really? Davvero dobbiamo dimostrare anche questo? Non possiamo lasciarlo come esercizio al lettore? O magari come esempio di come funziona la castelnuovo, nella sezione dei preliminari?}. Now we only have to show that the $j(m-1)$ simple points on $D$ impose independent conditions on $\LL_{m\times 2}^{1,4}(2^6)$. By Lemma \ref{lemma:extra_points}, it is enough to prove that $\LL_{m\times 2}^{0,4}(1,2^5)=0$. It is straightforward to check that $\LL_{m\times 2}^{0,4}(1,2^5)\cong\LL_{2}^{4}(1,2^5)=0$.\qedhere
		\end{itemize}
	\end{proof}
\end{lemma}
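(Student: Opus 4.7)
The plan is to follow the by-now standard pattern that dominates Section~\ref{sec: bidegree (3,4)}: induction on $m$ combined with the Castelnuovo exact sequence \eqref{exact_sequence_bihom} applied to a general divisor $D\subset\p^m\times\p^2$ of bidegree $(1,0)$, with the base points specialised so that the trace is itself of the form handled by the inductive hypothesis. Before starting, I would simplify the numerics. Writing $j(m):=k_*(3,4;m,2)-k_*(3,4;m-1,2)$, a direct expansion of $r_*(3,4;m,2)=\frac{5(m+1)(m+2)}{2}$ gives $j(m)=5m+4$, so the key fact $j(m)-j(m-1)=5$ is a straightforward computation (this is the content of Lemma \ref{conto:differenza dei j fa 5}).

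The base case $m=2$, namely that $\calL_{2\times 2}^{2,4}(3,2^{14})$ is regular, I would settle by an explicit Macaulay2 computation, in line with the other initial cases treated in Appendix \ref{appendix: M2}. For the inductive step, let $X\subset\p^m\times\p^2$ be a scheme of fat points of type $(3,2^{j(m)})$, specialised so that $\Tr_D(X)$ is general of type $(3,2^{j(m-1)})$ on $D\cong\p^{m-1}\times\p^2$; such a specialisation exists precisely because $j(m)-j(m-1)=5\geq 0$. It is enough to verify that both the trace and the residue are regular.

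The trace is $\calL_{(m-1)\times 2}^{2,4}(3,2^{j(m-1)})$, which is regular by induction. The residue is $\Res_D(X)$, a scheme of type $(2,2^{5},1^{j(m-1)})$ whose simple points lie on $D$, and the relevant system is $\calL_{m\times 2}^{1,4}(\Res_D(X))$. To show this residue system is regular I would proceed in three steps: (i) check the virtual dimension is non-negative by a direct count; (ii) show that $\calL_{m\times 2}^{1,4}(2^6)$ is regular, which is the forward-referenced auxiliary lemma and can be proven by one more Castelnuovo sequence on a bidegree $(1,0)$ divisor; (iii) apply Lemma \ref{lemma:extra_points} to upgrade regularity of $\calL_{m\times 2}^{1,4}(2^6)$ to regularity after adding the $j(m-1)$ simple points on $D$.

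For step (iii) the hypothesis of Lemma \ref{lemma:extra_points} to verify is $\calL_{m\times 2}^{0,4}(1,2^5)=0$. Since forms of bidegree $(0,4)$ depend only on the $\p^2$ factor, $\calL_{m\times 2}^{0,4}(1,2^5)\cong\calL_{2}^{4}(1,2^5)$, and its vanishing follows from the Alexander--Hirschowitz theorem (Theorem \ref{thm:AH}): the system of plane quartics with five general double points is empty (virtual dimension $15-15=0$ with no exceptions in the classification), and imposing a further simple point forces the system to be zero. The main potential obstacle is step (ii): if the auxiliary lemma on $\calL_{m\times 2}^{1,4}(2^6)$ were harder than expected, one would need an additional nested specialisation, but a direct Castelnuovo argument in bidegree $(1,4)$ with six general $2$-fat points of $\p^m\times\p^2$ reduces to the trivially handled trace on $\p^{m-1}\times\p^2$ and a residue with no fat points, keeping the induction manageable.
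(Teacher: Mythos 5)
Your proposal follows the paper's own proof step for step: the same $j(m)=5m+4$ computation, the same software base case $\calL_{2\times 2}^{2,4}(3,2^{14})$, the same specialization onto a bidegree $(1,0)$ divisor with trace $(3,2^{j(m-1)})$, and the same three-part treatment of the residue (virtual dimension count, regularity of $\LL_{m\times 2}^{1,4}(2^6)$ proved by one more Castelnuovo sequence exactly as in the paper's auxiliary lemma, then Lemma \ref{lemma:extra_points} reducing to $\LL_{m\times 2}^{0,4}(1,2^5)=0$).

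One correction is needed in your justification of that last vanishing. You assert that the system of plane quartics with five general double points is empty ``with no exceptions in the classification''; in fact $(n,d,r)=(2,4,5)$ is precisely the first exceptional case listed in Theorem \ref{thm:AH}: $\LL_2^4(2^5)$ is special, of dimension $1$, spanned by the square of the unique conic through the five points. The conclusion $\LL_2^4(1,2^5)=0$ is nevertheless correct, but for a different reason: the additional simple point (the projection to $\p^2$ of the support of the residual $2$-fat point lying on $D$) is general with respect to the five double points, hence does not lie on that conic, so it kills the one-dimensional system. With this fix your argument coincides with the paper's, which simply records the vanishing as a straightforward check rather than invoking non-speciality.
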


\begin{lemma}
	\label{lem: really? Davvero dobbiamo dimostrare anche questo? Non possiamo lasciarlo come esercizio al lettore? O magari come esempio di come funziona la castelnuovo, nella sezione dei preliminari?}
	If $m$ is a positive integer, then $\LL_{m\times 2}^{1,4}(2^6)$ is regular.
	\begin{proof}
		We proceed by induction on $m$. The case $m = 1$ follows from \cite{BalBerCat12}. Let $m \geq 2$ and let $X$ be a scheme of type $(2^6)$ with general support on a general divisor $D$ of bi-degree $(1,0)$. From the Castelnuovo exact sequence, we get
		\[
		    \dim \calL_{m\times 2}^{1,4}(X) \leq \dim \calL_{(m-1)\times 2}^{1,4}(\Tr_D(X)) + \dim \calL_{m\times 2}^{0,4}(\Res_D(X)) =9(m+1)-12.
		\]
		The latter equality follows by induction and since the projection of the support of $\Res_D(X)$ on the second factor is general. Since $\vdim\calL_{m\times 2}^{1,4}(2^6) = 9(m+1)-12$, we conclude.
\end{proof}
\end{lemma}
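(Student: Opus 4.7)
My plan is induction on $m$, using the Castelnuovo sequence along a bidegree $(1,0)$ divisor. One computes
\[
    \vdim\calL_{m\times 2}^{1,4}(2^6) = 15(m+1) - 6(m+3) = 9(m+1) - 12,
\]
so the task is to show that the actual dimension does not exceed this bound. For the base case $m = 1$, the statement is a non-speciality assertion for general double points on $\bbP^1 \times \bbP^2$, which follows from \cite[Theorem 3.1]{BalBerCat12}.

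For the inductive step $m \geq 2$, I would specialize all six double points to a general divisor $D \subset \bbP^m \times \bbP^2$ of bidegree $(1,0)$, noting that $D \cong \bbP^{m-1}\times\bbP^2$, and write $X$ for the specialized scheme. Since the actual dimension can only grow under specialization, it suffices to bound $\dim\calL_{m\times 2}^{1,4}(X)$ from above. The inequality \eqref{eq:chain_dim} gives
\[
    \dim\calL_{m\times 2}^{1,4}(X) \leq \dim\calL_{(m-1)\times 2}^{1,4}(\Tr_D(X)) + \dim\calL_{m\times 2}^{0,4}(\Res_D(X)).
\]

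The trace $\Tr_D(X)$ is a general scheme of type $(2^6)$ on $D$, so by the inductive hypothesis the first summand equals $9m - 12$. The residue $\Res_D(X)$ consists of six simple points on $\bbP^m \times \bbP^2$, supported on $D$ but with general projections to the second factor. Since polynomials of bidegree $(0,4)$ depend only on the second-factor coordinates, the residue system is identified with $\calL_2^4(1^6)$, whose dimension is $\binom{6}{2} - 6 = 9$ because six general points impose independent conditions on plane quartics. Summing the two contributions gives $9m - 3 = \vdim\calL_{m\times 2}^{1,4}(2^6)$, so the actual and virtual dimensions coincide and the system is regular. I do not foresee any genuine obstacle: this is exactly the scenario in which a Castelnuovo split along a $(1,0)$-divisor closes in one step, because the residue is trivialised by the pullback structure in bidegree $(0,d)$ and the trace reduces cleanly to the inductive hypothesis without arithmetic mismatch.
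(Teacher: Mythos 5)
Your argument is correct and coincides with the paper's proof of this lemma: the same induction on $m$, the same specialization of all six double points onto a bidegree $(1,0)$ divisor $D\cong\bbP^{m-1}\times\bbP^2$, with the trace handled by the inductive hypothesis and the residue identified with $\calL_2^4(1^6)$ of dimension $9$ via the projection to the second factor, matching the virtual dimension $9(m+1)-12$. No gaps.
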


\section{\texorpdfstring{$\SV_{m\times n}^{4,4}$}{SVmn44} is not defective}\label{sec: bidegree (4,4)}
The last step to complete our proof of Theorem \ref{thm: main} is to prove that $\SV_{m\times n}^{4,4}$ is non-defective.
\begin{proposition}
    If $m$ and $n$ are positive integers, then $\calL_{m\times n}^{4,4}(2^r)$ is nonspecial for every $r \in \bbN$.
\end{proposition}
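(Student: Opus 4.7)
The plan is to apply Theorem \ref{thm: quello che usiamo} with $(V,\calL) = (\bbP^m \times \bbP^n, \calL_{m\times n}^{4,4})$, mirroring the strategy used for $(c,d) \in \{(3,3),(3,4)\}$ in the previous sections. Conditions (3) and (4) of Theorem \ref{thm: quello che usiamo} are immediate: a single $j$-fat point imposes the expected number of conditions on $\calL_{m\times n}^{c,d}$ whenever $c,d \geq j-1$, which covers $j \in \{3,4\}$ since $c=d=4$, and $\dim\calL_{m\times n}^{4,4} = \binom{m+4}{4}\binom{n+4}{4} \geq (m+n+1)^2$ for all positive $m,n$. Therefore, the proof reduces to showing the two non-trivial conditions:
\begin{enumerate}
    \item $\calL_{m\times n}^{4,4}(3, 2^{k^*(4,4;m,n)})$ is regular,
    \item $\calL_{m\times n}^{4,4}(4, 2^{k_*(4,4;m,n)}) = 0$.
\end{enumerate}

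For condition (1), I would proceed by induction on $n$ (assuming $m \leq n$ by symmetry). Let $D \subset \bbP^m \times \bbP^n$ be a general divisor of bidegree $(0,1)$ and consider a specialization $X$ of type $(3, 2^{k^*(4,4;m,n)})$ such that $X \cap D$ is general of type $(3, 2^{k^*(4,4;m,n-1)})$ on $D \cong \bbP^m \times \bbP^{n-1}$. Applying the Castelnuovo exact sequence \eqref{exact_sequence_bihom}, the trace system $\calL_{m\times (n-1)}^{4,4}(3,2^{k^*(4,4;m,n-1)})$ is regular by induction, while the residue is of type $(2^{1+u(m,n)}, 1^{k^*(4,4;m,n-1)})$, where $u(m,n) := k^*(4,4;m,n) - k^*(4,4;m,n-1)$, living in bidegree $(4,3)$. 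Since $\calL_{m\times n}^{3,4}(2^r)$ is non-special for every $r$ by Proposition \ref{pro: bigrado 34 come punti doppi} (swapping factors to apply it in bidegree $(4,3)$), the residue system without the extra simple points is regular. Then I would use Lemma \ref{lemma:extra_points} to show that the additional simple points on $D$ impose independent conditions, which reduces to proving an auxiliary vanishing $\calL_{m\times n}^{4,2}(1, 2^{u(m,n)}) = 0$; this in turn would be handled by a secondary induction using Castelnuovo and \cite[Corollary 2.2]{BCC}, analogous to Lemma \ref{lemma:enough2fat_3,2}.

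For condition (2), the structure is parallel. I would induct on $n$ (with base $n=2$ checked by a software computation plus inner induction on $m$), specializing a scheme $X$ of type $(4, 2^{k_*(4,4;m,n)})$ so that $X \cap D$ is general of type $(4, 2^{k_*(4,4;m,n-1)})$. The trace vanishes by induction. The residue is of type $(3, 2^{k_*(4,4;m,n)-k_*(4,4;m,n-1)}, 1^{k_*(4,4;m,n-1)})$ in bidegree $(4,3)$. Its expected dimension is non-positive, and $\calL_{m\times n}^{4,3}(3, 2^{k_*(4,4;m,n)-k_*(4,4;m,n-1)})$ is regular by Proposition \ref{lemma:3+2kup_mxn} (applied with factors swapped), provided the arithmetic inequality $k_*(4,4;m,n) - k_*(4,4;m,n-1) \leq k^*(3,4;n,m)$ holds. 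The extra simple points then force vanishing via Lemma \ref{lemma:extra_points}, reducing to a statement of the form $\calL_{m\times n}^{4,2}(2^{1+k_*(4,4;m,n)-k_*(4,4;m,n-1)}) = 0$, which follows from \cite[Corollary 2.2]{BCC} once the appropriate numerical bound is verified.

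The main obstacle will not be conceptual but computational: verifying that the specializations are actually feasible at every step (i.e., the relevant differences of $k^*$ and $k_*$ have the right sign and magnitude) and that the numerical hypotheses of \cite[Corollary 2.2]{BCC} are met. As in Sections \ref{sec: 33} and \ref{sec: bidegree (3,4)}, this amounts to a list of tedious arithmetic lemmas that would be collected in Appendix \ref{appendix: contacci}, together with a handful of small base cases (such as $(m,n) = (1,1), (1,2), (2,2)$) verified via the software computations described in Appendix \ref{appendix: M2}. A minor subtlety is that, because both degrees now equal $4$, one should be careful to exploit symmetry $(m,n) \leftrightarrow (n,m)$ to minimize the number of cases, and the jump from $\calL^{4,4}$ to $\calL^{4,3}$ in the residue cleanly recovers the already-established Proposition \ref{pro: bigrado 34 come punti doppi}, so no further chain of increasingly low-degree reductions is needed beyond the standard one-variable systems governed by \cite[Theorem 3.1]{BalBerCat12} and \cite[Corollary 2.2]{BCC}.
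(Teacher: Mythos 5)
Your proposal matches the paper's proof essentially up to the $m\leftrightarrow n$ symmetry: you induct on $n$ via a $(0,1)$ divisor, landing in residue bidegree $(4,3)$, whereas the paper inducts on $m$ via a $(1,0)$ divisor, landing in bidegree $(3,4)$; these are the same after swapping factors, and the auxiliary vanishing $\calL_{m\times n}^{4,2}(1,2^u)=0$ you propose is the mirror image of the paper's Lemma~\ref{lemma: 2,4 con tanti punti doppi}. The one substantive caveat: the paper dispatches $m=1$ directly by \cite[Theorem 3.1]{BalBerCat12} rather than through Theorem~\ref{thm: quello che usiamo}, and its key vanishing statement Proposition~\ref{pro: 4,4 quartuplo e doppi vuoto} explicitly excludes $(m,n)=(1,1)$; you should therefore carve out the case where one factor is $\bbP^1$ and handle it by the known result, instead of attempting to verify condition~(2) at $(1,1)$ (where it is not claimed to hold).
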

\begin{proof}
    For $m = 1$, the statement is known by \cite[Theorem 3.1]{BalBerCat12}. For $m \geq 2$ we apply Theorem~\ref{thm: quello che usiamo}: condition (1) is Proposition \ref{pro: 4,4 triplo e doppi regolare}, condition (2) is Proposition \ref{pro: 4,4 quartuplo e doppi vuoto}, condition (3) is checked directly because a single $j$-fat point always impose independent conditions on $\calL_{m\times n}^{c,d}$ for $c,d \geq j-1$ and condition (4) is trivial.
\end{proof}
\begin{proposition}
	\label{pro: 4,4 triplo e doppi regolare} If $m$ and $n$ are positive integers, then $\calL_{m\times n}^{4,4}(3,2^{k^*(4,4;m,n)})$ is regular.
	\begin{proof}
		We argue by induction on $m$.  The case $m=1$ is solved by Lemma \ref{lem: 4,4 triplo e doppi caso iniziale}. Assume that $m\ge 2$ and let $D\subset\p^m\times\p^n$ be a divisor of bidegree $(1,0)$. Let $X\subset\p^m\times\p^n$ be a scheme of fat points of type $(3,2^{k^*(4,4;m,n)})$ such that $\Tr_D(X)$ is a general scheme of fat points of type $(3,2^{k^*(4,4;m-1,n)})$ on $D$. Specialize $\calL_{m\times n}^{4,4}(3,2^{k^*(4,4;m,n)})$ to $\calL_{m\times n}^{4,4}(X)$ and consider the Castelnuovo exact sequence
		\[
		0\to
		\calL_{m\times n}^{3,4}(\Res_D(X))
		\to
		\calL_{m\times n}^{4,4}(X)
		\to
		\calL_{(m-1)\times n}^{4,4}(\Tr_D(X)).
		\]
		\begin{itemize}
			\item \textit{Trace.} The trace linear system is $\calL_{(m-1)\times n}^{4,4}(3,2^{k^*(4,4;m-1,n)})$ and it is regular by induction hypothesis.
			\item \textit{Residue.} $\Res_D(X)$ is a scheme of fat points of type $(2,2^{k^*(4,4;m,n)-k^*(4,4;m-1,n)},1^{k^*(4,4;m-1,n)})$ such that $\Res_D(X)\cap D$ is a scheme of fat points of type $(2,1^{k^*(4,4;m-1,n)})$ on $D$. By Lemma \ref{conto: 3,4, mxn doppisemplici atteso non vuoto}, $\vdim\calL_{m\times n}^{3,4}(\Res_D(X))\ge 0$. The linear system $\calL_{m\times n}^{3,4}(2,2^{k^*(4,4;m,n)-k^*(4,4;m-1,n)})$ is regular by Proposition \ref{pro: bigrado 34 come punti doppi}. We only have to prove that the $k^*(4,4;m-1,n)$ simple points on $D$ impose independent conditions on $\calL_{m\times n}^{3,4}(2,2^{k^*(4,4;m,n)-k^*(4,4;m-1,n)})$. Thanks to Lemma \ref{lemma:extra_points}, it is enough to prove that $\calL_{m\times n}^{2,4}(1,2^{k^*(4,4;m,n)-k^*(4,4;m-1,n)})=0$, and this is accomplished in Lemma \ref{lemma: 2,4 con tanti punti doppi}.\qedhere
		\end{itemize}
	\end{proof}
\end{proposition}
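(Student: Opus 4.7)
The plan is to run an induction on $m$, reserving the base case $m = 1$ for a dedicated lemma (one that would itself proceed by a symmetric induction on $n$, using Castelnuovo with respect to divisors of bidegree $(0,1)$). For the inductive step with $m \geq 2$, I would apply the Castelnuovo exact sequence \eqref{exact_sequence_bihom} with respect to a general divisor $D \subset \bbP^m \times \bbP^n$ of bidegree $(1,0)$, and specialize the scheme of fat points $X$ of type $(3,\, 2^{k^*(4,4;m,n)})$ so that the trace $\Tr_D(X)$ is general of type $(3,\, 2^{k^*(4,4;m-1,n)})$ on $D \cong \bbP^{m-1} \times \bbP^n$. This is admissible after a routine arithmetic check showing $k^*(4,4;m-1,n) \leq k^*(4,4;m,n)$. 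By the inductive hypothesis applied in the smaller product, the trace linear system $\calL_{(m-1)\times n}^{4,4}(3,\, 2^{k^*(4,4;m-1,n)})$ is regular, so the task reduces to controlling the residue.

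The residue $\Res_D(X)$ has type $(2,\, 2^{k^*(4,4;m,n)-k^*(4,4;m-1,n)},\, 1^{k^*(4,4;m-1,n)})$, where the $k^*(4,4;m-1,n)$ simple points are constrained to lie on $D$. I would first verify with an arithmetic lemma that the virtual dimension of $\calL_{m\times n}^{3,4}(\Res_D(X))$ is non-negative. The unconstrained sub-system $\calL_{m\times n}^{3,4}(2,\, 2^{k^*(4,4;m,n)-k^*(4,4;m-1,n)})$ is then regular by appeal to Proposition \ref{pro: bigrado 34 come punti doppi}, which was just established in the previous section and makes the $(4,4)$ case accessible via the $(3,4)$ case through residuation.

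It remains to show that the $k^*(4,4;m-1,n)$ extra simple points lying on $D$ impose independent conditions on this $(3,4)$ system. By Lemma \ref{lemma:extra_points}, this reduces to the vanishing of the auxiliary linear system
\[
    \calL_{m\times n}^{2,4}\bigl(1,\, 2^{k^*(4,4;m,n) - k^*(4,4;m-1,n)}\bigr).
\]
I expect this final step to be the main obstacle and would package it as an independent lemma, established by a further induction (probably on $n$) employing Castelnuovo with divisors of bidegree $(0,1)$ and appealing to the Alexander–Hirschowitz theorem on $\bbP^n$ in the terminal case, or to the threshold vanishing estimates of \cite{BCC}. The difficulty is purely arithmetic: one must control the gap $k^*(4,4;m,n) - k^*(4,4;m-1,n)$ tightly enough to force the $2$-fat points to impose the expected conditions in bidegree $(2,4)$, and this is where the bookkeeping inherited from the $(3,3)$ and $(3,4)$ arguments in the preceding sections becomes essential.
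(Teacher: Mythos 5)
Your proposal matches the paper's proof essentially step for step: induction on $m$ with the $m=1$ case deferred to a lemma, specialization so the trace on a bidegree $(1,0)$ divisor is general of type $(3,2^{k^*(4,4;m-1,n)})$, regularity of the residue system deduced from the bidegree $(3,4)$ result plus Lemma \ref{lemma:extra_points}, and the final reduction to the vanishing of $\calL_{m\times n}^{2,4}(1,2^{k^*(4,4;m,n)-k^*(4,4;m-1,n)})$, which the paper handles in Lemma \ref{lemma: 2,4 con tanti punti doppi} (by induction on $m$ rather than $n$, but via the same BCC/BalBerCat machinery you anticipate). The approach is correct and essentially identical to the paper's.
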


\begin{lemma}
	\label{lem: 4,4 triplo e doppi caso iniziale} If $n$ is a positive integer, then $\calL_{1\times n}^{4,4}(3,2^{k^*(4,4;1,n)})$ is regular.
	\begin{proof}
		We argue by induction on $n$. A software computation shows that $\calL_{1\times 1}^{4,4}(3,2^{k^*(4,4;1,1)})$ is regular. Assume that $n\ge 2$ and let $D\subset\p^1\times\p^n$ be a divisor of bidegree $(0,1)$. Let $X$ be a scheme of fat points of type $(3,2^{k^*(4,4;1,n)})$ such that $\Tr_D(X)$ is a general scheme of fat points of type $(3,2^{k^*(4,4;1,n-1)})$ on $D$. Specialize $\LL_{1\times n}^{4,4}(3,2^{k^*(4,4;1,n)})$ to $\LL_{1\times n}^{4,4}(X)$ and consider the Castelnuovo exact sequence
		\[
		0\to
		\LL_{1\times n}^{4,3}(\Res_D(X))
		\to
		\LL_{1\times n}^{4,4}(X)
		\to
		\LL_{1\times (n-1)}^{4,4}(\Tr_D(X))
		.\]
		\begin{itemize}
			\item \textit{Trace.} The trace linear system is $\LL_{1\times (n-1)}^{4,4}(3,2^{k^*(4,4;1,n-1)})$ and it is regular by induction hypothesis.
			\item \textit{Residue.} $\Res_D(X)$ is a scheme of fat points of type $(2,2^{k^*(4,4;1,n)-k^*(4,4;1,n-1)},1^{k^*(4,4;1,n-1)})$ such that $\Res_D(X)\cap D$ is a general scheme of fat points of type $(2,1^{k^*(4,4;1,n-1)})$ on $D$. By Lemma \ref{conto: 4,3 1 xn, con doppi atteso non vuoto}, $\vdim\LL_{1\times n}^{4,3}(\Res_D(X))\ge 0$. The linear system $\LL_{1\times n}^{4,3}(2,2^{k^*(4,4;1,n)-k^*(4,4;1,n-1)})$ is regular by Proposition \ref{pro: bigrado 34 come punti doppi}. Now we show that the $k^*(4,4;1,n-1)$ simple points on $D$ impose independent conditions on $\LL_{1\times n}^{4,3}(2,2^{k^*(4,4;1,n)-k^*(4,4;1,n-1)})$. Thanks to Lemma \ref{lemma:extra_points}, it suffices to prove that $\LL_{1\times n}^{4,2}(1,2^{k^*(4,4;1,n)-k^*(4,4;1,n-1)})=0$. This follows from Lemma \ref{lemma: 2,4 con tanti punti doppi}.
			\qedhere
		\end{itemize}
	\end{proof}
\end{lemma}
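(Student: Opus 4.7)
The plan is to proceed by induction on $n$, mirroring the structure used in the companion Proposition \ref{pro: 4,4 triplo e doppi regolare} and in the analogous lemmas of Sections \ref{sec: 33} and \ref{sec: bidegree (3,4)}. The base case $n = 1$ involves a linear system of bidegree $(4,4)$ on $\bbP^1 \times \bbP^1$ with a single triple point and a small number of $2$-fat points; this can be settled by an explicit computer algebra computation in the spirit of Appendix \ref{appendix: M2}.

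For the inductive step with $n \geq 2$, I would pick a general divisor $D \subset \bbP^1\times\bbP^n$ of bidegree $(0,1)$, so that $D \cong \bbP^1\times\bbP^{n-1}$, and consider a specialization $X$ of the scheme of fat points of type $(3,2^{k^*(4,4;1,n)})$ arranged so that $\Tr_D(X)$ is general of type $(3,2^{k^*(4,4;1,n-1)})$ on $D$. The Castelnuovo exact sequence
\[
0 \to \LL_{1\times n}^{4,3}(\Res_D(X)) \to \LL_{1\times n}^{4,4}(X) \to \LL_{1\times (n-1)}^{4,4}(\Tr_D(X))
\]
reduces regularity of $\LL_{1\times n}^{4,4}(X)$ to regularity of the trace and of the residue. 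The trace linear system is exactly $\LL_{1\times(n-1)}^{4,4}(3,2^{k^*(4,4;1,n-1)})$, regular by induction.

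The residue is a scheme of type $(2,2^{k^*(4,4;1,n)-k^*(4,4;1,n-1)},1^{k^*(4,4;1,n-1)})$, with the $2$-fat point of multiplicity $2$ and the simple points lying on $D$. The strategy is threefold: first verify that the virtual dimension of $\LL_{1\times n}^{4,3}(\Res_D(X))$ is non-negative, an arithmetic check I would defer to the appendix; next apply Proposition \ref{pro: bigrado 34 come punti doppi} (nondefectivity of $\SV_{m\times n}^{3,4}$ in bidegree, here $(4,3)$ by swapping factors) to conclude that $\LL_{1\times n}^{4,3}(2,2^{k^*(4,4;1,n)-k^*(4,4;1,n-1)})$ is regular when we ignore the simple points on $D$; finally invoke Lemma \ref{lemma:extra_points} to see that the extra $k^*(4,4;1,n-1)$ simple points impose independent conditions, provided that $\LL_{1\times n}^{4,2}(1,2^{k^*(4,4;1,n)-k^*(4,4;1,n-1)})=0$, which is a symmetric instance of Lemma \ref{lemma: 2,4 con tanti punti doppi}.

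The one-dimensional factor $\bbP^1$ makes the base case particularly easy but constrains the dimension count, so the genuine difficulty is arithmetic: ensuring that $k^*(4,4;1,n)-k^*(4,4;1,n-1)$ is large enough to accommodate both the $2$-fat point from the residue and the application of Lemma \ref{lemma: 2,4 con tanti punti doppi}, while small enough that the virtual dimension of the bidegree $(4,3)$ residue remains non-negative. These inequalities are elementary but tedious, and I would isolate them as numerical lemmas in the appendix. No new conceptual obstacle is expected, since the architecture of the proof is dictated by the analogous step already carried out in Proposition \ref{pro: 4,4 triplo e doppi regolare}.
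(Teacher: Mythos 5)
Your proposal follows essentially the same route as the paper's proof: induction on $n$ with the base case checked by software, specialization so that the trace on a bidegree-$(0,1)$ divisor is of type $(3,2^{k^*(4,4;1,n-1)})$, and the residue handled via the virtual-dimension check, Proposition \ref{pro: bigrado 34 come punti doppi} applied in bidegree $(4,3)$, Lemma \ref{lemma:extra_points}, and the symmetric instance of Lemma \ref{lemma: 2,4 con tanti punti doppi}. The argument is correct and matches the paper step for step.
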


\begin{lemma}
	\label{lemma: 2,4 con tanti punti doppi}
	If $m$ and $n$ are positive integers, then $\LL_{m\times n}^{2,4}(
	2^{k^*(4,4;m,n)-k^*(4,4;m-1,n)})=0$.
	\begin{proof}
		In order to lighten the notation, we set
		\[w(m,n):=k^*(4,4;m,n)-k^*(4,4;m-1,n).\]
		We prove the statement by induction on $m$. The system $\LL_{1\times n}^{2,4}(
		2^{w(1,n)})$ is expected to be zero by Lemma~\ref{conto: vdim con w sottozero} and it is therefore zero by \cite[Theorem 3.1]{BalBerCat12}. Assume that $m\ge 2$. First of all we consider the case $n=1$. If $m\ge 2$, then $\LL_{m\times 1}^{2,4}(
		2^{w(m,1)})$ is zero by Lemma \ref{conto: evitare problemi w bbc} and \cite[Theorem 3.1]{BalBerCat12}. Now assume that $m,n\ge 2$ and let $D\subset\p^m\times\p^n$ be a divisor of bidegree $(1,0)$. Since $w(m,n)\ge w(m-1,n)$ by Lemma \ref{conto: w crescente e ammette bcc}, we can consider a scheme of fat points $X$ of type $(
		2^{w(m,n)})$ such that $\Tr_D(X)$ is a general scheme of fat points of type $(
		2^{w(m-1,n)})$ on $D$. Specialize $\LL_{m\times n}^{2,4}(
		2^{w(m,n)})$ to $\LL_{m\times n}^{2,4}(X)$ and consider the Castelnuovo exact sequence
		\[
		0\to
		\LL_{m\times n}^{1,4}(\Res_D(X))
		\to
		\LL_{m\times n}^{2,4}(X)
		\to
		\LL_{(m-1)\times n}^{2,4}(\Tr_D(X))
		.\]
		\begin{itemize}
			\item \textit{Trace.} The trace linear system is $\LL_{(m-1)\times n}^{2,4}(
			2^{w(m-1,n)})$ and it is zero by induction hypothesis.
			\item \textit{Residue.} $\Res_D(X)$ is a scheme of fat points of type $(2^{w(m,n)-w(m-1,n)},1^{w(m-1,n)})$ such that $\Res_D(X)\cap D$ is a general scheme of fat points of type $(1^{w(m-1,n)})$ on $D$. In order to prove that $\LL_{m\times n}^{1,4}(\Res_D(X))=0$, it suffices to show that $\LL_{m\times n}^{1,4}(2^{w(m,n)-w(m-1,n)})=0$. This follows from \cite[Corollary 2.2]{BCC} and Lemma \ref{conto: w crescente e ammette bcc}.
			\qedhere
		\end{itemize}
	\end{proof}
\end{lemma}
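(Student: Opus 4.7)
The plan is to mirror the pattern already used for the analogous vanishing results Lemma \ref{lemma:enough2fat_3,2} and Lemma \ref{lem: 3,2 con tanti doppi è vuoto}. Write $w(m,n) := k^*(4,4;m,n) - k^*(4,4;m-1,n)$. I would proceed by induction on $m$, combining Castelnuovo exact sequence along a general bidegree $(1,0)$ divisor of $\p^m\times\p^n$ with the Brambilla--Catalisano--Chiantini result \cite[Corollary 2.2]{BCC} to handle the residue.

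For the base case $m=1$, I would verify through an elementary arithmetic computation (to be recorded as an auxiliary lemma in Appendix \ref{appendix: contacci}) that $\vdim \LL_{1\times n}^{2,4}(2^{w(1,n)}) \le 0$, so that $\LL_{1\times n}^{2,4}(2^{w(1,n)})=0$ follows at once from the classification of special linear systems on $\p^1\times\p^n$ given in \cite[Theorem 3.1]{BalBerCat12}. The edge case $m\ge 2$, $n=1$ would be dispatched in an analogous manner, again by reducing to \cite[Theorem 3.1]{BalBerCat12} after checking that the virtual dimension is non-positive.

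For the inductive step with $m,n\ge 2$, let $D\subset\p^m\times\p^n$ be a general divisor of bidegree $(1,0)$, so $D\cong\p^{m-1}\times\p^n$. A further arithmetic inequality $w(m-1,n)\le w(m,n)$ (which belongs with the appendix computations) allows me to specialize a general scheme of fat points $X$ of type $(2^{w(m,n)})$ in such a way that $\Tr_D(X)$ is a general scheme of type $(2^{w(m-1,n)})$ on $D$. By \eqref{exact_sequence_bihom}, it is enough to show that both residue and trace linear systems vanish. The trace $\LL_{(m-1)\times n}^{2,4}(\Tr_D(X)) = \LL_{(m-1)\times n}^{2,4}(2^{w(m-1,n)})$ is zero by the inductive hypothesis. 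The residue $\Res_D(X)$ is a scheme of type $(2^{w(m,n)-w(m-1,n)},1^{w(m-1,n)})$, so that $\LL_{m\times n}^{1,4}(\Res_D(X)) \subseteq \LL_{m\times n}^{1,4}(2^{w(m,n)-w(m-1,n)})$; the latter vanishes by \cite[Corollary 2.2]{BCC} provided the number of $2$-fat points meets the threshold required there.

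The main obstacle, as in the companion lemmas, is purely arithmetic: one has to establish the lower bound on $w(m,n)-w(m-1,n)$ needed to invoke \cite[Corollary 2.2]{BCC}, together with the monotonicity $w(m-1,n)\le w(m,n)$ and the sign of $\vdim \LL_{1\times n}^{2,4}(2^{w(1,n)})$. Since $w$ is defined as a difference of two ceiling expressions, the delicate point is to track the two rounding terms and distinguish cases according to the residue of $\binom{m+4}{4}\binom{n+4}{4}$ modulo $m+n+1$. These tedious but elementary estimates would be collected as a single lemma in Appendix \ref{appendix: contacci}, after which the inductive structure above closes the proof.
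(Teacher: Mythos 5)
Your plan is essentially the paper's proof: the same definition of $w(m,n)$, the same induction on $m$, the same two base cases reduced to \cite[Theorem 3.1]{BalBerCat12}, and the same Castelnuovo step along a general bidegree $(1,0)$ divisor with the trace handled by induction and the residue by \cite[Corollary 2.2]{BCC}, supported by appendix arithmetic (in the paper, Lemmas \ref{conto: vdim con w sottozero}, \ref{conto: evitare problemi w bbc} and \ref{conto: w crescente e ammette bcc}). One small caution for the edge case $m\ge 2$, $n=1$: verifying $\vdim\LL_{m\times 1}^{2,4}(2^{w(m,1)})\le 0$ is not by itself sufficient to invoke \cite[Theorem 3.1]{BalBerCat12}, since that classification includes finitely many special systems on $\bbP^1\times\bbP^n$ whose actual dimension exceeds the expected one even when the latter is zero; the paper's Lemma \ref{conto: evitare problemi w bbc} therefore also establishes the lower bound $w(m,1)>3m+2$ to rule those exceptions out, so your single arithmetic lemma should record that extra inequality as well.
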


\begin{proposition}
	\label{pro: 4,4 quartuplo e doppi vuoto}
	If $(m,n)\neq (1,1)$, then $\calL_{m\times n}^{4,4}(4,2^{k_*(4,4;m,n)})=0$.
	\begin{proof}
		Without loss of generality, we assume that $m\le n$. We argue by induction on $m$. We solve the base case $m=1$ in Lemma \ref{lem: 4,4 quartuplo e doppi caso iniziale} and we assume that $n\ge m\ge 2$. We check that the system $\calL_{2\times 2}^{4,4}(4,2^{k_*(4,4;2,2)})$ is zero via a software computation. For $n\ge 3$, let $D\subset\p^m\times\p^n$ be a divisor of bidegree $(1,0)$. Let $X\subset\p^m\times\p^n$ be a scheme of fat points of type $(4,2^{k_*(4,4;m,n)})$ such that $\Tr_D(X)$ is  a general scheme of fat points of type $(4,2^{k_*(4,4;m-1,n)})$ on $D$. Specialize $\calL_{m\times n}^{4,4}(4,2^{k_*(4,4;m,n)})$ to $\calL_{m\times n}^{4,4}(X)$ and consider the Castelnuovo exact sequence
		\[
		0\to
		\LL_{m\times n}^{3,4}(\Res_D(X))
		\to
		\LL_{m\times n}^{4,4}(X)
		\to
		\LL_{(m-1)\times n}^{4,4}(\Tr_D(X))
		.\]
		\begin{itemize}
			\item \textit{Trace.} The trace linear system is $\LL_{(m-1)\times n}^{4,4}(4,2^{k_*(4,4;m-1,n)})$. Since $m\le n$, we also have $m-1\le n$ hence we can apply the induction hypothesis to conclude that $\LL_{(m-1)\times n}^{4,4}(4,2^{k_*(4,4;m-1,n)})=0$.
\item \textit{Residue.} $\Res_D(X)$ is a scheme of fat points of type $(3,2^{k_*(4,4;m,n)-k_*(4,4;m-1,n)},1^{k_*(4,4;m-1,n)})$ such that $\Res_D(X)\cap D$ is a general scheme of fat points of type $(3,1^{k_*(4,4;m-1,n)})$ on $D$. The system $\LL_{m\times n}^{3,4}(\Res_D(X))$ is expected to be zero by Lemma \ref{conto: orfanello}. Since $$k_*(4,4;m,n)-k_*(4,4;m-1,n)\le k^*(3,4;m,n)$$ by Lemma \ref{conto: kappa alto e kappa basso e differenze}, the system $\LL_{m\times n}^{3,4}(3,2^{k_*(4,4;m,n)-k_*(4,4;m-1,n)})$ is regular by Proposition \ref{lemma:3+2kup_mxn}. Now we have to prove that the $k_*(4,4;m-1,n)$ simple points on $D$ impose enough independent conditions to make $\LL_{m\times n}^{3,4}(\Res_D(X))$ zero. Thanks to Lemma \ref{lemma:extra_points}, we only have to show that $\LL_{m\times n}^{2,4}(2,2^{k_*(4,4;m,n)-k_*(4,4;m-1,n)})=0$. This follows from Lemma \ref{lemma: 2,4 con tanti punti doppi} and Lemma \ref{conto: ultimo conto}.
			\qedhere
		\end{itemize}
	\end{proof}
\end{proposition}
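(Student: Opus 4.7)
The plan is to mirror the Castelnuovo induction developed for the bidegree $(3,4)$ analogue in Proposition \ref{pro: col quartuplo + vuoto}, now feeding in Propositions \ref{pro: bigrado 34 come punti doppi} and \ref{lemma:3+2kup_mxn} where the previous argument used the $(3,3)$ and $(3,4)$ building blocks. Since the scheme of fat points is symmetric in the two factors, I would assume $m \leq n$ and proceed by induction on $m$. The base case $m=1$ would be isolated as a separate lemma, itself proven by induction on $n$ with the smallest cases settled via a computer algebra computation (as in Lemma \ref{lem: 4,4 triplo e doppi caso iniziale}); the additional small case $(m,n)=(2,2)$ would also be verified directly by software.

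For the inductive step, assuming $m \geq 2$ and $n \geq m$, I would fix a divisor $D \subset \bbP^m \times \bbP^n$ of bidegree $(1,0)$ and specialise a general scheme $X$ of type $(4, 2^{k_*(4,4;m,n)})$ so that $\Tr_D X$ is a general scheme of type $(4, 2^{k_*(4,4;m-1,n)})$ on $D \cong \bbP^{m-1}\times\bbP^n$. The Castelnuovo sequence from Section \ref{sec:inductive} then reduces the vanishing of $\calL^{4,4}_{m\times n}(X)$ to the vanishing of both the trace and the residue. The trace system is $\calL^{4,4}_{(m-1)\times n}(4, 2^{k_*(4,4;m-1,n)})$, which is zero by the outer inductive hypothesis (noting that $m-1 \leq n$, so the symmetry assumption is preserved). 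The residue $\Res_D X$ has type
\[
\bigl(3,\; 2^{k_*(4,4;m,n)-k_*(4,4;m-1,n)},\; 1^{k_*(4,4;m-1,n)}\bigr),
\]
with the simple points lying generically on $D$.

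To kill the residual system $\calL^{3,4}_{m\times n}(\Res_D X)$, I would first verify (by an elementary but tedious arithmetic lemma, in the spirit of Appendix \ref{appendix: contacci}) that its virtual dimension is non-positive. Next, I would check the numerical comparison
\[
k_*(4,4;m,n)-k_*(4,4;m-1,n) \;\leq\; k^*(3,4;m,n),
\]
so that Proposition \ref{lemma:3+2kup_mxn} guarantees the regularity of $\calL^{3,4}_{m\times n}(3, 2^{k_*(4,4;m,n)-k_*(4,4;m-1,n)})$. Then Lemma \ref{lemma:extra_points}, applied with the extra simple points on $D$ playing the role of $Y$, would reduce the vanishing of $\calL^{3,4}_{m\times n}(\Res_D X)$ to the vanishing of the restriction $\calL^{2,4}_{m\times n}(2, 2^{k_*(4,4;m,n)-k_*(4,4;m-1,n)})$, which in turn follows from Lemma \ref{lemma: 2,4 con tanti punti doppi} once one verifies that $k_*(4,4;m,n)-k_*(4,4;m-1,n)$ is at least as large as $w(m,n)=k^*(4,4;m,n)-k^*(4,4;m-1,n)$, or a close variant thereof.

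The main obstacle is not conceptual but arithmetic: closing the Castelnuovo chain rests on a family of inequalities comparing $k_*(4,4;\,\cdot\,,\,\cdot\,)$, $k^*(3,4;\,\cdot\,,\,\cdot\,)$ and $w(\,\cdot\,,\,\cdot\,)$, all of which involve floors, ceilings, and binomial coefficients of bidegree $(4,4)$. These estimates are tight when $n$ is close to $m$, so I expect the delicate computations to occur near the diagonal; the outlying small cases $(m,n)\in\{(1,\ast),(2,2)\}$ will very likely need to be handed off to a direct \emph{Macaulay2} verification rather than squeezed into the general inequality framework.
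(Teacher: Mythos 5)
Your proposal follows the paper's own argument essentially verbatim: the same specialization to a bidegree $(1,0)$ divisor, the same trace/residue split with induction on $m$ (base cases $m=1$ and $(2,2)$ by software), the same reduction of the residue via Proposition \ref{lemma:3+2kup_mxn} and Lemma \ref{lemma:extra_points} to the vanishing of $\calL^{2,4}_{m\times n}(2,2^{k_*(4,4;m,n)-k_*(4,4;m-1,n)})$. The arithmetic comparisons you anticipate are exactly the paper's Lemmas \ref{conto: orfanello}, \ref{conto: kappa alto e kappa basso e differenze} and \ref{conto: ultimo conto} (the last giving $1+k_*(4,4;m,n)-k_*(4,4;m-1,n)\ge w(m,n)$, which is the ``close variant'' you mention), so the proof is correct and coincides with the paper's.
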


\begin{lemma}
	\label{lem: 4,4 quartuplo e doppi caso iniziale}
	If $n\ge 2$, then $\LL_{1\times n}^{4,4}(4,2^{k_*(4,4;1,n)})=0$.
	\begin{proof}
		A software computation shows that $\LL_{1\times 2}^{4,4}(4,2^{k_*(4,4;1,2)})=\LL_{1\times 3}^{4,4}(4,2^{k_*(4,4;1,3)})=0$. We assume that $n\ge 4$ and we argue by induction on $n$. Let $D\subset\p^1\times\p^n$ be a divisor of bidegree $(0,1)$. Let $X\subset\p^1\times\p^n$ be a scheme of fat points of type $(4,2^{k_*(4,4;1,n)})$ such that $\Tr_D(X)$ is a general scheme of fat points of type $(4,2^{k_*(4,4;1,n-1)})$ on $D$. Specialize $\LL_{1\times n}^{4,4}(4,2^{k_*(4,4;1,n)})$ to $\calL_{m\times n}^{4,4}(X)$ and consider the Castelnuovo exact sequence
		\[
		0\to
		\LL_{1\times n}^{4,3}(\Res_D(X))
		\to
		\LL_{1\times n}^{4,4}(X)
		\to
		\LL_{1\times (n-1)}^{4,4}(\Tr_D(X))
		.\]
		\begin{itemize}
			\item \textit{Trace.} The trace linear system is $\LL_{1\times (n-1)}^{4,4}(4,2^{k_*(4,4;1,n-1)})$ and it is zero by induction hypothesis.
			\item \textit{Residue.} $\Res_D(X)$ is a scheme of fat points of type $(3,2^{k_*(4,4;1,n)-k_*(4,4;1,n-1)},1^{k_*(4,4;1,n-1)})$ such that $\Res_D(X)\cap D$ is a general scheme of fat points of type $(3,1^{k_*(4,4;1,n-1)})$ on $D$. It is expected to be zero by Lemma \ref{conto: o il terzultimo}. Since $$k_*(4,4;1,n)-k_*(4,4;1,n-1)\le k^*(4,3;1,n)$$ by Lemma \ref{conto: quello di prima era il penultimo}, the system $\LL_{1\times n}^{4,3}(3,2^{k_*(4,4;1,n)-k_*(4,4;1,n-1)})$ is regular by Lemma \ref{lemma:3+2kup_mx1}. Now we have to prove that the $k_*(4,4;1,n-1)$ simple points on $D$ impose enough independent conditions to make $\LL_{1\times n}^{4,3}(\Res_D(X))$ zero. Thanks to Lemma \ref{lemma:extra_points}, we only have to show that $\LL_{1\times n}^{4,2}(2,2^{k_*(4,4;1,n)-k_*(4,4;1,n-1)})=0$. This follows from Lemma \ref{lemma: 2,4 con tanti punti doppi} and Lemma \ref{conto: ultimo conto}.
			\qedhere
		\end{itemize}
	\end{proof}
\end{lemma}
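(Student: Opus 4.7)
The plan is to mirror Proposition \ref{pro: 4,4 quartuplo e doppi vuoto}: that proof inducts on $m$ and degenerates along a bidegree $(1,0)$ divisor, with base case precisely $m=1$, which is the present lemma. Since we are now at $m=1$, I will instead induct on $n$, degenerating along a bidegree $(0,1)$ divisor.

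First I would check the initial cases $n=2$ and $n=3$ by an explicit Macaulay2 computation, following the pattern of every initial case in Sections \ref{sec: 33}--\ref{sec: bidegree (4,4)}. Two base cases are needed because the residue scheme involves data indexed by $n-1$, and we want the $n-1$ case of the residue-side auxiliary systems to already be within reach of Lemma \ref{lemma:3+2kup_mx1}. For the inductive step $n \geq 4$, fix a general bidegree $(0,1)$ divisor $D \subset \bbP^1 \times \bbP^n$ (so $D \cong \bbP^1 \times \bbP^{n-1}$) and specialize a scheme $X$ of type $(4,2^{k_*(4,4;1,n)})$ so that $\Tr_D(X)$ is a general scheme of type $(4,2^{k_*(4,4;1,n-1)})$ on $D$. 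The Castelnuovo exact sequence \eqref{exact_sequence_bihom} reduces the vanishing of $\LL_{1\times n}^{4,4}(X)$ to showing that both the trace $\LL_{1\times(n-1)}^{4,4}(4,2^{k_*(4,4;1,n-1)})$ and the residue $\LL_{1\times n}^{4,3}(\Res_D(X))$ are zero. The trace vanishes by the inductive hypothesis.

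For the residue, observe that $\Res_D(X)$ has type $(3,2^{k_*(4,4;1,n)-k_*(4,4;1,n-1)},1^{k_*(4,4;1,n-1)})$, with $\Res_D(X)\cap D$ general of type $(3,1^{k_*(4,4;1,n-1)})$. The strategy has three ingredients. First, an arithmetic check (to be added to Appendix \ref{appendix: contacci}) that $\vdim\LL_{1\times n}^{4,3}(\Res_D(X))\leq 0$. Second, regularity of the auxiliary system $\LL_{1\times n}^{4,3}(3,2^{k_*(4,4;1,n)-k_*(4,4;1,n-1)})$, which follows from Lemma \ref{lemma:3+2kup_mx1} provided one verifies $k_*(4,4;1,n)-k_*(4,4;1,n-1) \leq k^*(4,3;1,n)$. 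Third, an application of Lemma \ref{lemma:extra_points} to conclude that the $k_*(4,4;1,n-1)$ additional simple points on $D$ impose enough conditions to kill the system; this reduces to showing $\LL_{1\times n}^{4,2}(2,2^{k_*(4,4;1,n)-k_*(4,4;1,n-1)})=0$, which should follow from Lemma \ref{lemma: 2,4 con tanti punti doppi} (in its symmetric $(2,4)\leftrightarrow(4,2)$ form) together with an inequality of the type collected in Lemma \ref{conto: ultimo conto}.

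The main obstacle is not conceptual: the specialization-plus-Castelnuovo machinery is by now routine in this paper, and no fresh use of the collision technique from Theorem \ref{thm: generale, ambizioso, avveniristico} is needed at this point (that work has already been absorbed into Theorem \ref{thm: quello che usiamo}). The real work is arithmetic, namely verifying the three inequalities comparing $k_*(4,4;1,n)$, $k_*(4,4;1,n-1)$, $k^*(4,3;1,n)$, and the virtual dimension of the residue. These are exactly the tedious-but-mechanical checks that the appendix is set up to handle.
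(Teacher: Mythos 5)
Your proposal matches the paper's proof essentially step for step: software base cases $n=2,3$, induction on $n$ via a bidegree $(0,1)$ divisor, the same specialization of the trace, and the same three residue ingredients (negative virtual dimension, regularity of $\LL_{1\times n}^{4,3}(3,2^{k_*(4,4;1,n)-k_*(4,4;1,n-1)})$ via Lemma \ref{lemma:3+2kup_mx1} and the inequality against $k^*(4,3;1,n)$, then Lemma \ref{lemma:extra_points} reducing to $\LL_{1\times n}^{4,2}(2,2^{k_*(4,4;1,n)-k_*(4,4;1,n-1)})=0$). The arithmetic checks you propose to add are exactly Lemmas \ref{conto: o il terzultimo}, \ref{conto: quello di prima era il penultimo} and \ref{conto: ultimo conto} already in the appendix, so the argument is correct and identical in substance.
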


\begin{appendix}
	\section{Computations with algebraic software}\label{appendix: M2}
	
In this appendix we explain how to compute the dimension of a linear system with a straightforward interpolation, using the algebra software Macaulay2. This served us to check the base cases of our inductive proofs. The complete code can be found in the ancillary file of the arXiv submission or on the webpage of the second author. 

Fixed positive integers $c,d,m,n$, we consider the monomial basis $\ttB$ of the vector space of forms of bidegree $(c,d)$ in the bigraded coordinate ring $\ttS$ of $\bbP^m \times \bbP^n$, i.e.,
\begin{verbatim}
    S = QQ[x_0..x_m] ** QQ[y_0..y_n]; 
    B = first entries super basis({c,d},S).
\end{verbatim}
Sometimes we deal with linear subspaces of $\calL_{m\times n}^{c,d}$ whose base locus contains a union of general subspaces defined by bidegree $(0,1)$ forms. In this case we assume that such forms are chosen among the coordinates of $\bbP^m \times \bbP^n$ and we use as $\ttB$ the monomial basis of the homogeneous part in bidegree $(c,d)$ of the ideal defining the subspace. For example, in the case of Lemma \ref{lem:1xn_(3,2s)_4thStep}, the base locus contains the union of three general codimension-$2$ subspaces defined by forms of bidegree $(0,1)$; hence
\begin{verbatim}
    A1 = sub(ideal(y_0,y_1),S);
    A2 = sub(ideal(y_2,y_3),S);
    A3 = sub(ideal(y_4,y_5),S);
    B = first entries super basis({c,d},intersect({A1,A2,A3}));
\end{verbatim}
Now we consider the generic element in the span of $\ttB$, i.e.,
\begin{verbatim}
    C = QQ[c_0..c_(#B-1)];
    R = C[x_0..x_m]**C[y_0..y_n];
    F = sum for i to #B-1 list c_i*sub(B_i,R).
\end{verbatim}
At this point, we impose the conditions given by the scheme of fat points in the base locus. The scheme of fat points is defined by two attributes: a matrix $\ttP$ whose columns are the coordinates of the points supporting the scheme and a list of integers $\ttM$ giving the type of the scheme. Hence we obtain a system of linear equations in the $\ttc_i$'s whose solution is exactly the vector space we want to compute. Therefore we just need to compute the rank of the matrix $\ttV$ of the coefficients.
\begin{verbatim}
    V = sub(sub(diff(matrix {for j to #B-1 list C_j}, 
            transpose diff(symmetricPower(M_0-1,vars(R)),F)),
    	    for i to m+n+1 list R_i => P_0_i),QQ);
    for j from 1 to #M-1 do (
        V = V || sub(sub(diff(matrix {for j to #B-1 list C_j}, 
    	                transpose diff(symmetricPower(M_j-1,vars(R)),F)),
        	         for i to m+n+1 list R_i => P_j_i),QQ);
    	);
\end{verbatim}

	\section{Arithmetic computations}\label{appendix: contacci}
	
Here we collect some arithmetic properties that we use in the paper. For the convenience of the reader, we recall here the definition of the constants we used.
{\small
\begin{align*}
&r^*(c,d;m,n):=\ru{\frac{\binom{m+c}{m}\binom{n+d}{n}}{m+n+1}}&& 	r_*(c,d;m,n):=\rd{\frac{\binom{m+c}{m}\binom{n+d}{n}}{m+n+1}}\\
&k^*(c,d;m,n):=r^*(c,d;m,n)-m-n-1&& k_*(c,d;m,n):=r_*(c,d;m,n)-m-n-1\\
&f(m,n):=1+k^*(3,3;m,n)-k^*(3,3;m-1,n)&& \ell(m,n):=k_*(3,3;m,n)-k_*(3,3;m-1,n)\\
&s(n):=\frac{n(n+3)}{2}&&b(n):=k_*(3,3;2,n)-k_*(3,3;2,n-1) \\
&u(m,n) := k^*(3,4;m,n)-k^*(3,4;m,n-1)&&w(m,n):=k^*(4,4;m,n)-k^*(4,4;m-1,n)
\\
&v(n):=10(n+1)-(n+3)(1+b(n))+(n+2)b(n-1)&& 
\end{align*}
}

In the following, whenever we compute a root of a univariate polynomial, we use the command \texttt{solveSystem} from the {\tt NumericalAlgebraicGeometry} Macaulay2 package. The code can be found on the webpage of the second author.

\Fra{
\begin{remark}In many of our computations,
we will bound the integer part of the ratio $\frac{a}{b}$ of two natural numbers. Often we will apply the trivial properties
$$\frac{a}{b}-1\le\rd{\frac{a}{b}}\le\frac{a}{b}\le \ru{\frac{a}{b}}\le\frac{a}{b}+1,$$
but sometimes we will need the slightly more refined inequalities
$$\frac{a-(b-1)}{b}\le\rd{\frac{a}{b}}\le\frac{a}{b}\le \ru{\frac{a}{b}}\le\frac{a+b-1}{b}.$$
\end{remark}
}
\begin{lemma}
\label{conto: kbasso, usare bcc}
If $2\leq m \leq n$, then \[k^*(3,3;m,n)-k^*(3,3;m-1,n)\ge \ru{\frac{m+1}{m+n+1}\binom{n+3}{3}}+m.\]
\end{lemma}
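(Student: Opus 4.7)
The plan is to unwind the definitions and reduce to an elementary inequality between rationals, handling the ceiling operators by allowing a small amount of slack. Writing everything out,
\[
k^*(3,3;m,n) - k^*(3,3;m-1,n) = \left\lceil\tfrac{\binom{m+3}{3}\binom{n+3}{3}}{m+n+1}\right\rceil - \left\lceil\tfrac{\binom{m+2}{3}\binom{n+3}{3}}{m+n}\right\rceil - 1,
\]
so the target inequality becomes
\[
\left\lceil X_1\right\rceil \;\geq\; \left\lceil X_2\right\rceil + \left\lceil X_3\right\rceil + m + 1,
\]
with $X_1 := \binom{m+3}{3}\binom{n+3}{3}/(m+n+1)$, $X_2 := \binom{m+2}{3}\binom{n+3}{3}/(m+n)$, and $X_3 := (m+1)\binom{n+3}{3}/(m+n+1)$.

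Next I would compute $X_1 - X_2 - X_3$ explicitly. Factoring out $(m+1)\binom{n+3}{3}$ and combining fractions, a direct manipulation gives
\[
E := X_1 - X_2 - X_3 = \frac{m(m+1)(2m+3n-2)(n+1)(n+2)(n+3)}{36\,(m+n)(m+n+1)}.
\]
Since each ceiling correction lies in $[0,1)$, the desired integer inequality is implied by the continuous bound $E \geq m+3$: from this, $\lceil X_1\rceil \geq X_1 = X_2 + X_3 + E \geq \lceil X_2\rceil + \lceil X_3\rceil - 2 + m + 3 = \lceil X_2\rceil + \lceil X_3\rceil + m + 1$.

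It therefore remains to prove the polynomial inequality
\[
m(m+1)(2m+3n-2)(n+1)(n+2)(n+3) \;\geq\; 36(m+3)(m+n)(m+n+1)
\]
for $2 \leq m \leq n$. The left-hand side is a polynomial of total degree $6$ in $(m,n)$ while the right-hand side has degree $3$, so the inequality is comfortable for moderately large $m$ or $n$; in particular, dividing both sides by $(m+n)^2$ and sending either variable to infinity shows the LHS/RHS ratio grows without bound. A clean argument is to fix $m$ and observe that the inequality is monotone in $n$ once $n \geq m$, reducing the check to finitely many values; alternatively one verifies a handful of small cases $(m,n) \in \{(2,3),(3,3),(3,4),\ldots\}$ by hand and uses the asymptotic estimate to cover the rest.

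The main obstacle is the single corner case $(m,n) = (2,2)$, where $E = 4 < 5 = m+3$ and so the coarse continuous bound fails. This case must be treated separately, but fortunately it is trivial: all three of $X_1 = 20$, $X_2 = 10$, $X_3 = 6$ are integers, so the ceiling operations are vacuous and the claim reads $20 \geq 10 + 6 + 3$, which holds with room to spare. Handling this boundary case is the only non-mechanical part of the argument.
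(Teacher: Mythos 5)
Your proof is essentially the paper's argument: both reduce the ceiling inequality to the real bound $X_1 - X_2 - X_3 \geq m+3$ (equivalently $A(m,n)\geq 0$ in the paper's notation, where $A(m,n)$ is the common numerator over $36(m+n)(m+n+1)$), and both isolate the corner case $(m,n)=(2,2)$ where the continuous bound fails and verify the claim there by exact arithmetic. What you do differently, and what is genuinely an improvement, is to factor $E = X_1-X_2-X_3$ as $\frac{m(m+1)(2m+3n-2)(n+1)(n+2)(n+3)}{36(m+n)(m+n+1)}$ rather than expanding its numerator into a multivariate polynomial and studying sign changes of coefficients; the paper's route requires a Descartes-rule analysis, a comparison $A(m,n)\geq A(m,m)$ for $m\geq 9$, and a table of thresholds $N(m)$ for $m\in\{2,\dots,8\}$, whereas your factorization makes the positivity visually transparent (every factor is positive) and reduces the whole thing to a one-line comparison of a degree-$6$ product against a degree-$3$ one.

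The only soft spot is the final step: you never actually execute the proof of $m(m+1)(2m+3n-2)(n+1)(n+2)(n+3)\geq 36(m+3)(m+n)(m+n+1)$, offering two sketch strategies instead. This is easy to close. For example: when $m\geq 3$, use $m(m+1)\geq 12$, $2m+3n-2\geq 3n$, $m+3\leq n+3$, $m+n\leq 2n$, $m+n+1\leq 2n+1$, so the inequality follows from $36n(n+1)(n+2)(n+3)\geq 72n(2n+1)(n+3)$, i.e., $(n+1)(n+2)\geq 2(2n+1)$, i.e., $n(n-1)\geq 0$; when $m=2$ the inequality reduces to $(3n+2)(n+1)\geq 30$, which holds for $n\geq 3$. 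Writing this out would make the argument self-contained and still shorter than the paper's. Also note a tiny slack in your ceiling bookkeeping: $E\geq m+2$ would already suffice (since $X_i>\lceil X_i\rceil-1$ is strict and the final comparison is between integers), but $m+3$ is what both you and the paper use and it costs nothing.
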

\begin{proof} First we bound
	\begin{align*}
		k^*& (3,3;m,n) - k^*(3,3;m-1,n) - \left\lceil \frac{m+1}{m+n+1}\binom{n+3}{3}\right\rceil - m	\\
		& = \left\lceil \frac{\binom{m+3}{3}\binom{n+3}{3}}{m+n+1} \right\rceil - 1- \left\lceil \frac{\binom{m+2}{3}\binom{n+3}{3}}{m+n}\right\rceil - \left\lceil {\frac{m+1}{m+n+1}} \binom{n+3}{3}\right\rceil - m	\\
		& \geq \frac{\binom{m+3}{3}\binom{n+3}{3}}{m+n+1} - 1 - \frac{\binom{m+2}{3}\binom{n+3}{3}}{m+n} - 1 - {\frac{m+1}{m+n+1}}\binom{n+3}{3} -1- m = \frac{A(m,n)}{(m+n)(m+n+1)}, \\
	\end{align*}
	where 
	\begin{align*}
		A(m,n) = \binom{n+3}{3} & \left( \binom{m+3}{3}(m+n) - \binom{m+2}{3}(m+n+1) - (m+1)(m+n) \right) \\
		& - (m+3)(m+n)(m+n+1).
	\end{align*}
We distinguish two cases.
If $m\ge 9$, then we consider $A(m,n) - A(m,n-1)\in\bbC[m][n]$, which equals
	\[
\left(\frac{1}{3}\,m^{2}+\frac{1}{3}\,m\right)n^{3}+\left(\frac{1}{6}\,m^{3}+m^{2}+\frac{5}{6}\,m\right)n^{2}+\left(\frac{1}{2}\,m^{3}+\frac{2}{3}\,m^{2}-\frac{11}{6}\,m-6\right)n+\frac{1}{3}\,m^{3}-2\,m^{2}-\frac{19}{3}\,m.
	\]
Since $m \geq 9$, all the coefficients are positive and so $A(m,n)\ge A(m,n-1)$. 
In particular, since $m \leq n$, we have that
	\[
		A(m,n)  \geq A(m,m) = \frac{5}{36}\,m^{6}+\frac{11}{12}\,m^{5}+\frac{71}{36}\,m^{4}-\frac{31}{12}\,m^{3}-\frac{127}{9}\,m^{2}-\frac{19}{3}\,m\ge 0.
	\]
Assume now that $m\in\{2,\dots,8\}$. The polynomial $A(m,n) \in \bbC[m][n]$ has positive leading coefficient. It is easy to check that $A(m,n) \geq 0$ for $n \geq N(m)$, where 
	\[
N(m) = \begin{cases}
	3 & \mbox{ if } m = 2 \\
			2 & \mbox{ if } m\in\{3,\dots,8\}.
		\end{cases}
	\]
	Since $m \leq n$, the only remaining case is $(m,n) = (2,2)$, which is checked directly. 
\end{proof}

\begin{lemma}
\label{conto: vdim positiva, bigrado (2,3)} If $m$ and $n$ are positive integers, then \[\vdim\LL_{m\times n}^{2,3}(2^{k^*(3,3;m,n)-k^*(3,3;m-1,n)+1}) \ge k^*(3,3;m-1,n).\]
\begin{proof} We bound
\begin{align*}
&\binom{m+2}{2}\binom{n+3}{3}-(m+n+1)\ru{\frac{\binom{m+3}{3}\binom{n+3}{3}}{m+n+1}}+(m+n)\ru{\frac{\binom{m+2}{3}\binom{n+3}{3}}{m+n}}+m+n\\
&\ge\binom{m+2}{2}\binom{n+3}{3}-\binom{m+3}{3}\binom{n+3}{3}-m-n+\binom{m+2}{3}\binom{n+3}{3}+m+n=0.
\qedhere
\end{align*}
\end{proof}
\end{lemma}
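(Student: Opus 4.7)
The plan is to reduce everything to an elementary estimate on ceilings. First I would eliminate the $k^*$ notation in favor of $r^*$ by using the identities
\[
k^*(3,3;m,n)-k^*(3,3;m-1,n)+1 = r^*(3,3;m,n)-r^*(3,3;m-1,n), \qquad k^*(3,3;m-1,n) = r^*(3,3;m-1,n)-(m+n),
\]
which follow straight from the definitions. Rewriting $\vdim\LL_{m\times n}^{2,3}(2^{k^*(3,3;m,n)-k^*(3,3;m-1,n)+1}) - k^*(3,3;m-1,n)$ in these terms, the inequality to prove becomes
\[
\binom{m+2}{2}\binom{n+3}{3} - (m+n+1)\,r^*(3,3;m,n) + (m+n)\,r^*(3,3;m-1,n) + (m+n) \ge 0.
\]

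Next I would estimate the two occurrences of $r^*$ in opposite directions. For the negative term I use the standard inequality $q\ru{p/q} \le p + q - 1$ valid for positive integers $p$, $q$, applied to $q = m+n+1$ and $p = \binom{m+3}{3}\binom{n+3}{3}$, obtaining
\[
-(m+n+1)\,r^*(3,3;m,n) \ge -\binom{m+3}{3}\binom{n+3}{3} - (m+n).
\]
For the positive term I simply use $\ru{x} \ge x$ to get
\[
(m+n)\,r^*(3,3;m-1,n) \ge \binom{m+2}{3}\binom{n+3}{3}.
\]

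Substituting these two bounds into the expression above and cancelling the $\pm(m+n)$ terms, I am left with
\[
\binom{n+3}{3}\left( \binom{m+2}{2} + \binom{m+2}{3} - \binom{m+3}{3} \right),
\]
which is zero by Pascal's identity $\binom{m+2}{2} + \binom{m+2}{3} = \binom{m+3}{3}$. Hence the desired inequality holds (in fact with equality in this elementary slack). I do not anticipate any real obstacle: the only subtlety is applying the correct side of the ceiling estimate to each of the two $r^*$ terms, and the final cancellation is exactly engineered by Pascal's identity.
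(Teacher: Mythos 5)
Your proposal is correct and is essentially the paper's own argument: after unwinding the $k^*$ notation, both proofs bound the ceiling term with negative coefficient via $(m+n+1)\ru{p/(m+n+1)} \le p+m+n$ and the other via $\ru{x}\ge x$, and then conclude by the Pascal-type cancellation $\binom{m+2}{2}+\binom{m+2}{3}=\binom{m+3}{3}$. No gaps.
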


\begin{lemma}
\label{conto: bigrado 3,1 atteso vuoto}
If $n\ge 2$, then $\vdim\LL_{1\times n}^{3,1}(1,2^{k^*(3,3;1,n)-k^*(3,3;1,n-1)})\le 0$.
\end{lemma}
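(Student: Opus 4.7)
The strategy is a direct computation: unpack the virtual dimension and reduce the inequality to an explicit numerical lower bound on the difference of the two ceilings defining $k^*(3,3;1,n)$ and $k^*(3,3;1,n-1)$.

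First I would write out
\[
\vdim\LL_{1\times n}^{3,1}(1,2^{k}) \;=\; 4(n+1) \;-\; 1 \;-\; k(n+2) \;=\; (4n+3) - k(n+2),
\]
since $\dim\LL_{1\times n}^{3,1} = \binom{4}{1}(n+1)$, a simple point imposes $\binom{n+1}{n+1}=1$ condition on $V=\bbP^1\times\bbP^n$, and a $2$-fat point imposes $\binom{n+2}{n+1}=n+2$ conditions. Hence the lemma reduces to proving
\[
k \;\geq\; \frac{4n+3}{n+2} \;=\; 4 - \frac{5}{n+2}
\qquad\text{where}\qquad k := k^*(3,3;1,n)-k^*(3,3;1,n-1).
\]

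Second, I would use the explicit formula $k^*(3,3;1,n) = \left\lceil \frac{2(n+1)(n+3)}{3}\right\rceil - 3$, which follows from plugging $m=1$ and $c=d=3$ into the definition and simplifying $\frac{4\binom{n+3}{3}}{n+2} = \frac{2(n+1)(n+3)}{3}$. A routine ceiling estimate then yields
\[
k \;\geq\; \left\lceil\frac{2(n+1)(n+3)}{3} - \frac{2n(n+2)}{3}\right\rceil - 1 \;=\; \left\lceil\frac{4n+6}{3}\right\rceil - 1.
\]
For $n\geq 4$ this already gives $k\geq 4 > \frac{4n+3}{n+2}$, settling those cases.

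Finally, the two remaining cases $n=2$ and $n=3$ are handled by direct substitution: one computes $k^*(3,3;1,2)=7$, $k^*(3,3;1,1)=3$, giving $k=4\geq 11/4$; and $k^*(3,3;1,3)=13$, $k^*(3,3;1,2)=7$, giving $k=6\geq 3$. I do not expect any genuine obstacle: the whole argument is a short arithmetic check, and the only mild subtlety is the manipulation of ceilings when bounding $k$ from below.
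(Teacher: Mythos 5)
Your overall strategy is the same as the paper's: both proofs unpack $\vdim\LL_{1\times n}^{3,1}(1,2^{k})=4n+3-k(n+2)$ with $k:=k^*(3,3;1,n)-k^*(3,3;1,n-1)$ and then verify the inequality by elementary ceiling estimates (the paper bounds the whole expression above by $-\tfrac43 n^2+\tfrac43 n+3<1$ and uses integrality). However, your explicit formula for $k^*$ contains a concrete error: by definition $k^*(3,3;1,n)=\bigl\lceil \tfrac{4}{n+2}\binom{n+3}{3}\bigr\rceil-(n+2)=\bigl\lceil \tfrac{2(n+1)(n+3)}{3}\bigr\rceil-(n+2)$, so the subtracted constant is $n+2$, not $3$ (your formula is only right at $n=1$). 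This propagates: $k^*(3,3;1,2)=6$ and $k^*(3,3;1,3)=11$, not $7$ and $13$, so the true differences are $k=3$ and $k=5$ for $n=2,3$; moreover $k=\bigl\lceil \tfrac{2(n+1)(n+3)}{3}\bigr\rceil-\bigl\lceil \tfrac{2n(n+2)}{3}\bigr\rceil-1$, so your ceiling estimate yields $k\ge \bigl\lceil\tfrac{4n+6}{3}\bigr\rceil-2$, one less than the bound you display.

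The good news is that the slip is harmless to the conclusion and the repair is immediate: $\bigl\lceil\tfrac{4n+6}{3}\bigr\rceil-2$ equals $3$ at $n=2$, $4$ at $n=3$, and is at least $6$ for $n\ge 4$, which in every case exceeds $\tfrac{4n+3}{n+2}=4-\tfrac{5}{n+2}$ (namely $\ge 3$ for $n=2$ and $\ge 4$ for $n\ge 3$). So with the corrected formula your argument actually closes all $n\ge 2$ uniformly, and the separate hand checks at $n=2,3$ (which as written use the wrong values of $k^*$) become unnecessary. Please fix the constant in the formula and restate the bound as $k\ge\bigl\lceil\tfrac{4n+6}{3}\bigr\rceil-2$; as it stands, the intermediate claims are false even though the lemma itself is correctly established in spirit.
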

\begin{proof} We bound
	\begin{align*}
& 4(n+1) - 1 - (n+2)\left( \left\lceil \frac{4\binom{n+3}{3}}{n+2}\right\rceil - (n+2) - \left\lceil \frac{4\binom{n+2}{3}}{n+1}\right\rceil + (n+1) \right) \\
& \leq 4n+3 - (n+2) \left(\frac{4\binom{n+3}{3}}{n+2} - (n+2) - \frac{4\binom{n+2}{3}}{n+1} - 1 + (n+1)\right) = -\frac{4}{3}n^2 + \frac{4}{3}n + 3 < 1. 
\qedhere	\end{align*}
\end{proof}
\Fra{
As in the proof of Lemma \ref{conto: bigrado 3,1 atteso vuoto}, in some of the following computations we will use the simple observation that if a linear system $\LL$ satisfies $\vdim(\calL) < 1$ (respectively, $\vdim(\calL) > -1$) then $\vdim(\calL) \leq 0$ (respectively, $\vdim(\calL) \geq 0$). 
}

\begin{lemma}
\label{conto:funzioniCrescenti}
If $n\ge m\ge 2$, then
\begin{enumerate}
\item $\vdim\LL_{n}^{3}(1,2^{\ell(m,n)-\ell(m-1,n)}) \le 0$,
\item $\ell(m-1,n)\le \ell(m,n)$,
\item $\vdim\LL_{n}^{3}(2^{f(m,n)-f(m-1,n)})\le 0$, and
\item $f(m-1,n)\le f(m,n)$.
\end{enumerate}

\begin{proof}
    We prove (1). We bound
    \begin{align*}
        & \binom{n+3}{3}-1-(n+1)\left(\rd{\frac{\binom{m+3}{3}\binom{n+3}{3}}{m+n+1}}  - 2\rd{\frac{\binom{m+2}{3}\binom{n+3}{3}}{m+n}}+\rd{\frac{\binom{m+1}{3}\binom{n+3}{3}}{m+n-1}}\right) \\
        & \quad \leq \binom{n+3}{3}-1-(n+1)\left({\frac{\binom{m+3}{3}\binom{n+3}{3}}{m+n+1}}+1 - 2{\frac{\binom{m+2}{3}\binom{n+3}{3}}{m+n}}+{\frac{\binom{m+1}{3}\binom{n+3}{3}}{m+n-1}}+1\right) \\
        & = \frac{A(m,n)}{18(m+n+1)(m+n)(m+n-1)}
    \end{align*} 
    where
    \begin{align*}
        A(m,n) & = \left(-n^{4}-4\,n^{3}+n^{2}-20\,n-42\right)m^{3}+\left(-3\,n^{5}-12\,n^{4}+3\,n^{3}-60\,n^{2}-126\,n\right)m^{2}\\
        &+\left(-3\,n^{6}-12\,n^{5}+4\,n^{4}-56\,n^{3}-127\,n^{2}+20\,n+42\right)m-36\,n^{4
      }-54\,n^{3}+36\,n^{2}+54\,n.
    \end{align*}
    All coefficients of $A(m,n)$ as a univariate polynomial in $\bbC[n][m]$ are negative for $n \geq 2$. Hence, $A(m,n) \leq 0$ for $n \geq m \geq 2$. \Fra{Statement (2) follows} directly from (1).
    
    We prove (3). We bound
    \begin{align*}
        & \binom{n+3}{3}-(n+1)\left(\ru{\frac{\binom{m+3}{3}\binom{n+3}{3}}{m+n+1}}  - 2\ru{\frac{\binom{m+2}{3}\binom{n+3}{3}}{m+n}}+\ru{\frac{\binom{m+1}{3}\binom{n+3}{3}}{m+n-1}}\right) \\
        & \leq \binom{n+3}{3}-(n+1)\left({\frac{\binom{m+3}{3}\binom{n+3}{3}}{m+n+1}}  - 2{\frac{\binom{m+2}{3}\binom{n+3}{3}}{m+n}}-2+{\frac{\binom{m+1}{3}\binom{n+3}{3}}{m+n-1}}\right) \\
        & = \frac{A(m,n)}{18(m+n+1)(m+n)(m+n-1)}
    \end{align*}
    where
    \begin{align*}
        A(m,n) & = \left(-n^{4}-4\,n^{3}+n^{2}+52\,n+48\right)m^{3}+\left(-3\,n^{5}-12\,n^{4}+3\,n^{3}+156\,n^{2}+144\,n\right)m^{2}\\
        &+\left(-3\,n^{6}-12\,n^{5}+4\,n^{4}+160\,n^{3}+143\,n^{2}-52\,n-48\right)m+36\,n
      ^{4}+36\,n^{3}-36\,n^{2}-36\,n.
    \end{align*}
    For $n \geq 4$, the univariate polynomial $A(m,n) \in \bbC[n][m]$ has only one change of sign in the coefficients and, therefore, it has at most one positive root by Descartes' rule of signs. Note that $A(0,n) = 36\,n^{4}+36\,n^{3}-36\,n^{2}-36\,n$ and $A(1,n) = -3\,n^{6}-15\,n^{5}+27\,n^{4}+195\,n^{3}+264\,n^{2}+108\,n$. For $n \geq 4$, we have $A(0,n) > 0$ and $A(1,n) < 0$, i.e., the only positive root of $A(m,n)$ belongs to the interval $m \in (0,1)$. We conclude that $A(m,n) \leq 0$ for $m \geq 2$ and $n \geq 4$. The remaining cases $(m,n) \in \{(2,2),(2,3),(3,3)\}$ can be checked directly. \Fra{Statement (4) follows directly from (3)}.
\end{proof}

\end{lemma}

\begin{lemma}\label{conto:f_vdim_positiva}
If $n\ge m \geq 2$, then 
$\vdim\calL_{m\times n}^{2,3}(2^{f(m,n)}) \geq 0.$
\begin{proof} We bound
    \begin{align*}
	& \binom{m+2}{2}\binom{n+3}{3} - (m+n+1)\left(1+\left\lceil \frac{\binom{m+3}{3}\binom{n+3}{3}}{m+n+1} \right\rceil - \left\lceil \frac{\binom{m+2}{3}\binom{n+3}{3}}{m+n} \right\rceil\right) \\
	& \geq \binom{m+2}{2}\binom{n+3}{3} - (m+n+1) - \binom{m+3}{3}\binom{n+3}{3} - (m+n) + \frac{m+n+1}{m+n}\binom{m+2}{3}\binom{n+3}{3} \\
	& = \frac{1}{m+n}\binom{m+2}{3}\binom{n+3}{3} - m - n  = \frac{A(m,n)}{36(m+n)},
\end{align*}
where
\begin{align*}
    A(m,n)=& \left(m^{3}+3\,m^{2}+2\,m\right)n^{3}+\left(6\,m^{3}+18\,m^{2}+12\,m-72\right)n^{2}\\
    &+\left(11\,m^{3}+33\,m^{2}-122\,m-36\right)n+6\,m^{3}-54\,m^{2}-24\,m.
\end{align*}
For $m \geq 10$, all the coefficients of the univariate polynomial $A(m,n) \in \bbC[m][n]$ are positive and therefore $A(m,n) > 0$. For $2 \leq m \leq 9$, since the leading coefficient is positive, then we know that $A(m,n) > 0$ for $n \geq N(m) \geq m$. It is easy to check that $N(m) = m$ for all $m \in \{2,\ldots,9\}$. 
\end{proof}
\end{lemma}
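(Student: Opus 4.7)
The plan is to unwind the definition of $f(m,n)$, bound away the ceilings, and reduce the claimed inequality to a polynomial inequality in $m, n$ that can be checked by coefficient analysis plus a handful of boundary cases.

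First, I would write
\[
f(m,n) = 1 + k^*(3,3;m,n) - k^*(3,3;m-1,n),
\]
and plug in the definitions of $k^*(3,3;m,n)$ and $k^*(3,3;m-1,n)$. After this substitution, the quantity $\vdim \calL_{m\times n}^{2,3}(2^{f(m,n)}) = \binom{m+2}{2}\binom{n+3}{3} - (m+n+1)f(m,n)$ becomes an expression involving the two ceilings $\left\lceil \frac{\binom{m+3}{3}\binom{n+3}{3}}{m+n+1}\right\rceil$ and $\left\lceil \frac{\binom{m+2}{3}\binom{n+3}{3}}{m+n}\right\rceil$. I would bound the first ceiling from above by $\frac{\binom{m+3}{3}\binom{n+3}{3}}{m+n+1} + 1$ (which appears with a minus sign, so the bound goes in the right direction) and the second from below simply by the unrounded value. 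This yields a lower bound for the virtual dimension in the form of a rational function of $m, n$.

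Next, I would clear the denominator $(m+n)$ to obtain a statement of the form $A(m,n) \geq 0$, where
\[
A(m,n) = \binom{m+2}{3}\binom{n+3}{3} \cdot (\text{something}) - (\text{lower order terms in } m,n),
\]
an explicit polynomial in $\mathbb{Z}[m,n]$ of degree $6$. The dominant term comes from $\frac{m+n+1}{m+n}\binom{m+2}{3}\binom{n+3}{3} - \binom{m+3}{3}\binom{n+3}{3} + \binom{m+2}{2}\binom{n+3}{3}$, and a short computation should show that, after combining with the $\binom{n+3}{3}$-free corrections, the resulting polynomial $A(m,n)$ has the shape exhibited in the existing appendix lemmas.

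Finally, to conclude $A(m,n) \geq 0$ for $n \geq m \geq 2$, I would view $A(m,n)$ as a univariate polynomial in $n$ with coefficients in $\mathbb{Z}[m]$. For $m$ sufficiently large (say $m \geq 10$) all coefficients of the $n$-polynomial should become non-negative, giving $A(m,n) \geq 0$ automatically. For the remaining finitely many values $m \in \{2,\ldots,9\}$, the leading coefficient in $n$ is positive, so $A(m,n) \geq 0$ for $n \geq N(m)$ for some small threshold $N(m)$; one then checks the finitely many remaining pairs $(m,n)$ with $m \leq n \leq N(m)$ by direct substitution. The main obstacle is purely bookkeeping: choosing the bounds on the ceilings tightly enough so that the resulting $A(m,n)$ has the right sign pattern to make the coefficient-sign argument succeed, exactly as in Lemma~\ref{conto: kbasso, usare bcc} and the other appendix computations.
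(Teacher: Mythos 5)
Your proposal follows the paper's argument exactly: unwind $f(m,n)$ via the definition of $k^*$, bound $\left\lceil \tfrac{\binom{m+3}{3}\binom{n+3}{3}}{m+n+1}\right\rceil$ from above and $\left\lceil \tfrac{\binom{m+2}{3}\binom{n+3}{3}}{m+n}\right\rceil$ from below, clear the denominator $m+n$ to get a polynomial $A(m,n)$, and then observe that all coefficients of $A(m,n)\in\bbC[m][n]$ are nonnegative for $m\ge 10$ while the remaining values $m\in\{2,\dots,9\}$ give only finitely many pairs $(m,n)$ with $m\le n$ below the threshold, which are checked directly. This is precisely what the paper does, down to the same thresholds, so there is nothing to add.
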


\begin{lemma}
\label{conto:upperBound_l-l}
If $n \geq m \geq 2$, then 
	\begin{enumerate}
\item $f(m,n) - f(m-1,n) \leq \left\lfloor \frac{m+1}{m+n+1}\binom{n+3}{3} \right\rfloor - m$ and
		\item $1 + \ell(m,n) - \ell(m-1,n) \leq \left\lfloor \frac{m+1}{m+n+1}\binom{n+3}{3}\right\rfloor - m$.
	\end{enumerate}
\end{lemma}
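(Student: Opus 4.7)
Both inequalities are elementary arithmetic statements that I plan to establish by reducing to a single polynomial inequality in $m$ and $n$, so I will sketch them together.

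The first observation is that the definition of $k^*(3,3;\cdot,\cdot)$ makes the linear term $m+n+1$ telescope when we form the second differences. Explicitly, I would rewrite
\[
f(m,n)-f(m-1,n)=\ru{\tfrac{\binom{m+3}{3}\binom{n+3}{3}}{m+n+1}}-2\ru{\tfrac{\binom{m+2}{3}\binom{n+3}{3}}{m+n}}+\ru{\tfrac{\binom{m+1}{3}\binom{n+3}{3}}{m+n-1}},
\]
and similarly for $\ell(m,n)-\ell(m-1,n)$ with floors replacing ceilings. Applying $\ru{x}\le x+1$, $\rd{x}\le x$ and $-2\ru{x}\le -2x$, $-2\rd{x}\le -2(x-1)$, and bounding the right-hand side by $\rd{\tfrac{(m+1)\binom{n+3}{3}}{m+n+1}}\ge\tfrac{(m+1)\binom{n+3}{3}}{m+n+1}-1$, both (1) and (2) reduce to checking
\[
\frac{\bigl(\binom{m+3}{3}-(m+1)\bigr)\binom{n+3}{3}}{m+n+1}-\frac{2\binom{m+2}{3}\binom{n+3}{3}}{m+n}+\frac{\binom{m+1}{3}\binom{n+3}{3}}{m+n-1}+m+C\le 0
\]
for a small explicit constant $C$ (I expect $C=4$ is safe for both cases once the ceiling/floor slack is accounted for).

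The next step is to clear the denominator $(m+n+1)(m+n)(m+n-1)$ and multiply by $36$, turning the inequality into the negativity of an explicit polynomial $A(m,n)\in\bbZ[m,n]$ of total degree $6$. Following the style of the computations already carried out in Lemma~\ref{conto:funzioniCrescenti}, I would view $A(m,n)$ either as a polynomial in $n$ with coefficients in $\bbZ[m]$ or as a polynomial in $m$ with coefficients in $\bbZ[n]$, and show that for $n \ge m$ sufficiently large all the coefficients (or the signs forced by Descartes' rule, as in Lemma~\ref{conto:funzioniCrescenti}(3)) make $A(m,n)$ negative.

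The routine but genuinely finicky part—the main obstacle—will be the boundary: a bounded rectangle of small pairs $(m,n)$ where the asymptotic bound leaves a gap. I would handle these exactly as done throughout Appendix~\ref{appendix: contacci}: isolate the threshold $N(m)$ above which the one-variable polynomial $A(m,\cdot)$ is non-positive using \texttt{solveSystem}, and then directly check the finitely many residual pairs $(m,n)$ with $m\le n<N(m)$ by plugging into the original (non-relaxed) inequalities, where the true values of the ceilings and floors usually give slack that the coarse estimate lost. Since (1) and (2) differ only in the constant $C$ and in whether we use $\ru{\cdot}$ or $\rd{\cdot}$, the same polynomial analysis, with a harmless change of constant term, settles both parts simultaneously.
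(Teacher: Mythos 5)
Your sketch follows essentially the same route as the paper: telescope the linear terms so that the second differences of $k^*$ and $k_*$ become pure second differences of the ceiling/floor expressions, relax each ceiling or floor by at most $1$ to reduce to a single polynomial inequality $A(m,n)\le 0$, clear denominators, analyze signs (Descartes for the tail, explicit thresholds $N(m)$ for small $m$), and check the finitely many residual pairs directly. The paper handles the unification of (1) and (2) slightly differently than you do: instead of carrying a constant $C$, it proves the single stronger inequality $2+\ell(m,n)-\ell(m-1,n)\le\lfloor\frac{m+1}{m+n+1}\binom{n+3}{3}\rfloor-m$ (after setting aside $(m,n)\in\{(2,2),(3,3)\}$), then derives (1) from the observation $f(m,n)-f(m-1,n)\le 2+\ell(m,n)-\ell(m-1,n)$, which holds because $k^*-k_*\in\{0,1\}$. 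Your constant $C=4$ is indeed sufficient for both parts (one can check $C=3$ suffices for (1) and $C=4$ for (2)), while the paper's relaxation is tighter (effectively $C=5$), so either choice gives a correct final polynomial — the resulting degree-$6$ polynomial and its sign analysis line up with what the paper does. Your sketch is correct; only the explicit polynomial computation, thresholds $N(m)$, and the short list of residual $(m,n)$ would need to be filled in to make it a full proof.
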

\begin{proof}
In the cases $(m,n) \in \{(2,2),(3,3)\}$ the two statements are checked directly. For $n \geq m \geq 2$ and $(m,n) \not\in \{(2,2),(3,3)\}$, we prove the following
\begin{equation}\label{eq:upperBound_l-l}
2 + \ell(m,n) - \ell(m-1,n) \leq \left\lfloor \frac{m+1}{m+n+1}\binom{n+3}{3} \right\rfloor- m.
	\end{equation}
Note that $(1)$ follows from \eqref{eq:upperBound_l-l}, because $f(m,n) - f(m-1,n) \leq 2 + \ell(m,n) - \ell(m-1,n)$, while $(2)$ becomes trivial.
	Hence we only have to prove \eqref{eq:upperBound_l-l}. We bound
	\begin{align*}
		& 2 + \left\lfloor \frac{\binom{m+3}{3}\binom{n+3}{3}}{m+n+1} \right\rfloor - 2 \left\lfloor \frac{\binom{m+2}{3}\binom{n+3}{3}}{m+n} \right\rfloor +  \left\lfloor \frac{\binom{m+1}{3}\binom{n+3}{3}}{m+n-1} \right\rfloor - \left\lfloor \frac{(m+1)\binom{n+3}{3}}{m+n+1} \right\rfloor + m \\
		& \leq 2 + \frac{\binom{m+3}{3}\binom{n+3}{3}}{m+n+1} - 2 \frac{\binom{m+2}{3}\binom{n+3}{3}}{m+n} + 2 + \frac{\binom{m+1}{3}\binom{n+3}{3}}{m+n-1} - \frac{(m+1)\binom{n+3}{3}}{m+n+1} + 1 + m\\
		& = \frac{A(m,n)}{18(m+n)(m+n-1)(m+n+1)}
	\end{align*}
	where
	\begin{align*}
		A(m& ,n) = \left(-3\,m^{2}-3\,m\right)n^{4}+\left(-2\,m^{3}-18\,m^{2}+2\,m+90\right)n^{3}\\
		&+\left(-12\,m^{3}+21\,m^{2}+249\,m\right)n^{2}+\left(32\,m^{3}+252\,m^{2}-14\,m-90\right)n\\
		&+18\,m^{4}+78\,m^{3}-18\,
       m^{2}-78\,m
	\end{align*}
	We regard at $A(m,n)$ as a univariate polynomial in $\bbC[m][n]$. 
	\begin{itemize}
	    \item For $m \geq 6$, the sequence of coefficients of $A(m,n)$ has only one change of sign and, by Descartes' rule of signs, it has at most one positive root. Note that $A(m,0) = 18\,m^{4}+78\,m^{3}-18\,m^{2}-78\,m > 0$ and $A(m,m) = -5\,m^{6}-33\,m^{5}+73\,m^{4}+669\,m^{3}-32\,m^{2}-168\,m < 0$. Hence, the only positive root belongs to the interval $n \in (0,m)$. In particular, for $n \geq m$ we have $A(m,n) \leq 0$. 
	    \item For $m \in \{2,\ldots,5\}$, since the leading coefficient is negative, we know that there exists $N(m)$ such that $A(m,n) \leq 0$ for $n \geq N(m)$. In this case:
	    \[
	        N(m) = \begin{cases}
	            7 & \text{ for } m = 2; \\
	            5 & \text{ for } m \in \{3,4,5\}.
	        \end{cases}
	    \]
	    For the remaining cases $(m,n) \in \{(2,3),(2,4),(2,5),(2,6),(3,4),(4,4)\}$ we check \eqref{eq:upperBound_l-l} directly.
	\end{itemize}
\end{proof}

\begin{lemma}
	\label{conto:vdim_lowerbound_13}
If $n \geq m \geq 2$, then 
$\vdim \calL_{m\times n}^{1,3}(2^{f(m,n)-f(m-1,n)}) \geq f(m-1,n)$.
\end{lemma}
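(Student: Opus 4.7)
The plan is to reduce the inequality $\vdim\calL_{m\times n}^{1,3}(2^{f(m,n)-f(m-1,n)})\ge f(m-1,n)$ to an explicit polynomial inequality in $m,n$ with bounded denominators, then verify this inequality for $n\ge m\ge 2$ by elementary means, treating small cases separately if needed.

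First I would rewrite the conclusion as
\[
(m+1)\binom{n+3}{3}\ge (m+n+1)f(m,n)-(m+n)f(m-1,n),
\]
using that $\vdim\calL_{m\times n}^{1,3}(2^{f(m,n)-f(m-1,n)})=(m+1)\binom{n+3}{3}-(m+n+1)(f(m,n)-f(m-1,n))$ and grouping the $f(m-1,n)$ terms on the right. Expanding $f(m,n)=1+k^*(3,3;m,n)-k^*(3,3;m-1,n)$ and $f(m-1,n)=1+k^*(3,3;m-1,n)-k^*(3,3;m-2,n)$, the right-hand side becomes
\[
1+(m+n+1)k^*(3,3;m,n)-(2(m+n)+1)k^*(3,3;m-1,n)+(m+n)k^*(3,3;m-2,n),
\]
where the deterministic constants collected from the three copies of $1+$ and the $-(m+n-1-m-n)$ correction simplify to $1+2(m+n)$ after also absorbing the $(m+n+1)^2$ and $(m+n-1)(m+n)$ coming from the definition $k^*(c,d;m,n)=\lceil\cdot\rceil-(m+n+1)$.

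Next, I would bound the ceilings using $x\le\lceil x\rceil\le x+1$, applying the upper bound to the terms with positive coefficient (namely $(m+n+1)k^*(3,3;m,n)$ and $(m+n)k^*(3,3;m-2,n)$) and the lower bound to the term $-(2(m+n)+1)k^*(3,3;m-1,n)$. After clearing the denominators $m+n$ and $m+n-1$, the claim reduces to showing that a polynomial $B(m,n)\in\bbZ[m,n]$ of total degree $6$ is non-negative on $\{(m,n)\in\N^2\mid n\ge m\ge 2\}$. Explicitly, after factoring out $(m+1)$, this polynomial will have the shape
\[
B(m,n)=(m+1)\binom{n+3}{3}\bigl[P(m,n)\bigr]-Q(m,n),
\]
where $P(m,n)$ collects the binomial terms coming from the three $k^*$'s and $Q(m,n)$ is a polynomial of degree $2$ in $m+n$ absorbing the constant $1+2(m+n)$ and the rounding losses.

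Finally, I would verify $B(m,n)\ge 0$ for $n\ge m\ge 2$ by the standard strategy of this appendix: write $B(m,n)$ as a univariate polynomial in $n$ with coefficients in $\bbZ[m]$, check that for $m$ above some small threshold (I expect $m\ge 5$ or so) all coefficients are non-negative, then for $m\in\{2,3,4\}$ invoke either Descartes' rule of signs to locate the positive roots of $B(m,\cdot)$ on $(0,m)$, or simply verify the inequality directly for the finitely many residual pairs $(m,n)$ with $n\ge m$ and $n$ below the stability threshold. The main obstacle is purely the bookkeeping of the polynomial arithmetic after clearing denominators $(m+n-1)(m+n)(m+n+1)$; once $B(m,n)$ is in hand, the sign analysis is routine and essentially identical to that carried out in Lemma \ref{conto:upperBound_l-l} and Lemma \ref{conto:f_vdim_positiva}.
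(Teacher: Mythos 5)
Your proposal follows essentially the same route as the paper's own proof: rewrite the claim as $(m+1)\binom{n+3}{3}\ge (m+n+1)f(m,n)-(m+n)f(m-1,n)$, expand the $f$'s into the three $k^*$'s with the coefficients $(m+n+1)$, $-(2(m+n)+1)$, $(m+n)$ (your expansion and the constant $2(m+n)+1$ are correct), bound the ceilings in the directions you indicate, clear the denominators $(m+n)(m+n-1)$ (the $(m+n+1)$ cancels against its prefactor), and analyse the resulting degree-$6$ polynomial on $\{n\ge m\ge 2\}$. This is exactly what the paper does, ending with a polynomial $A(m,n)$ and the same appendix-style sign analysis.

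The one step of your plan that would fail is the prediction that for $m$ above a small threshold all coefficients of the cleared polynomial, viewed in $\bbC[m][n]$, are non-negative: the constant coefficient (in $n$) is of the form $-60m^3+O(m^2)$, hence strictly negative for every $m\ge 2$, so coefficientwise positivity never holds and cannot cover the infinitely many values $m\ge 5$. The remedy is the other tool you mention, applied uniformly in $m$ rather than only for $m\in\{2,3,4\}$: the paper checks that for every $m\ge 2$ the coefficient sequence in $n$ has exactly one sign change, so by Descartes' rule there is a unique positive root; since $A(m,0)<0$ and $A(m,m)>0$ for $m\ge 3$, that root lies in $(0,m)$ and the hypothesis $n\ge m$ gives $A(m,n)\ge 0$, while $m=2$ is handled by noting $A(2,n)>0$ for $n\ge 4$ and checking $(m,n)\in\{(2,2),(2,3)\}$ directly. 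With that adjustment your argument goes through and coincides with the paper's.
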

\begin{proof}
	We bound
	\begin{align*}
		&(m+1)  \binom{n+3}{3} - (m+n+1)(k^*(3,3;m,n) - 2k^*(3,3;m-1,n) + k^*(3,3;m-2,n)) \\
		& \quad - k^*(3,3;m-1,n) + k^*(3,3;m-2,n) \\
		& \geq (m+1)\binom{n+3}{3}- (m+n+1)\left( \left\lceil \frac{\binom{m+3}{3}\binom{n+3}{3}}{m+n+1} \right\rceil - (m+n+1) - 2 \left\lceil \frac{\binom{m+2}{3}\binom{n+3}{3}}{m+n} \right\rceil + 2(m+n)\right. \\ 
		& \quad  \left. +  \left\lceil \frac{\binom{m+1}{3}\binom{n+3}{3}}{m+n-1} \right\rceil - (m+n-1) \right) - \left\lceil \frac{\binom{m+2}{3}\binom{n+3}{3}}{m+n} \right\rceil + (m+n) +  \left\lceil \frac{\binom{m+1}{3}\binom{n+3}{3}}{m+n-1} \right\rceil  - (m+n-1) \\
		& \geq (m+1)\binom{n+3}{3}- (m+n+1)\left( \frac{\binom{m+3}{3}\binom{n+3}{3}}{m+n+1} + 1 - 2 \frac{\binom{m+2}{3}\binom{n+3}{3}}{m+n} + \frac{\binom{m+1}{3}\binom{n+3}{3}}{m+n-1} + 1 \right) \\ & \quad -  \frac{\binom{m+2}{3}\binom{n+3}{3}}{m+n} +  \frac{\binom{m+1}{3}\binom{n+3}{3}}{m+n-1} = \frac{A(m,n)}{36(m+n)(m+n-1)},
	\end{align*}
	where
	\begin{align*}
		A(m,n) = & \left(3\,m^{2}+3\,m\right)n^{4}+\left(2\,m^{3}+18\,m^{2}+16\,m-72\right)
      n^{3} \\
      & +\left(12\,m^{3}+33\,m^{2}-195\,m\right)n^{2}+\left(22\,m^{3}-198\,m
      ^{2}-4\,m+72\right)n-60\,m^{3}+60\,m.
	\end{align*}
	We consider $A(m,n)$ as a polynomial in $\bbC[m][n]$. For any $m \geq 2$, the change of signs in the coefficients is equal to one. Hence, by Descartes' rule of signs, $A(m,n)$ has a unique positive root. Moreover, it is immediate to check that $A(m,0) < 0$ while $A(m,m) > 0$ whenever $m \geq 3$. Therefore such unique positive root is in the interval $(0,m)$ and by the assumption $n \geq m$, we deduce that $A(m,n) \geq 0$. In the case $m = 2$, the univariate polynomial $A(2,n)$ is positive for $n \geq 4$. In the remaining cases $(m,n) \in \{(2,2),(2,3)\}$, we check the statement directly.
	\qedhere
\end{proof}

\begin{lemma}
\label{conto: bigrado 2,3, triplo e doppi, left hand side}
\Ale{Let $n \geq m \geq 2$}. Then $\vdim\LL_{m\times n}^{2,3}(3,2^{k_*(3,3;m,n)-k_*(3,3;m-1,n)}) \leq k_*(3,3;m-1,n)$.
\end{lemma}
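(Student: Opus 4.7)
The plan is to reduce the statement to an elementary inequality by direct expansion, using two standard bounds on the floor functions hidden inside $\ell(m,n)$ and $k_*(3,3;m-1,n)$. First I would write
\[
\vdim\LL_{m\times n}^{2,3}(3,2^{\ell(m,n)}) = \binom{m+2}{2}\binom{n+3}{3} - \binom{m+n+2}{2} - (m+n+1)\ell(m,n),
\]
and set $A := \binom{m+3}{3}\binom{n+3}{3}$ and $B := \binom{m+2}{3}\binom{n+3}{3}$. Substituting $\ell(m,n) = \lfloor A/(m+n+1)\rfloor - \lfloor B/(m+n)\rfloor$ and $k_*(3,3;m-1,n) = \lfloor B/(m+n)\rfloor - (m+n)$, the desired inequality becomes
\[
\binom{m+2}{2}\binom{n+3}{3} - \binom{m+n+2}{2} - (m+n+1)\lfloor A/(m+n+1)\rfloor + (m+n)\lfloor B/(m+n)\rfloor + (m+n) \leq 0.
\]

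Next I would apply the trivial bounds $(m+n+1)\lfloor A/(m+n+1)\rfloor \geq A - (m+n)$ and $(m+n)\lfloor B/(m+n)\rfloor \leq B$, which overestimate and underestimate each floor term in the direction we need. The decisive observation is Pascal's identity $\binom{m+3}{3} - \binom{m+2}{3} = \binom{m+2}{2}$, which gives $A - B = \binom{m+2}{2}\binom{n+3}{3}$. After substitution, the dominant cubic-times-cubic contribution cancels exactly with the leading $\binom{m+2}{2}\binom{n+3}{3}$ of the virtual dimension, and what remains is the clean upper bound $2(m+n) - \binom{m+n+2}{2}$.

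Finally, setting $s := m+n$, I would check the elementary inequality $4s \leq (s+2)(s+1)$, which is equivalent to $s^2 - s + 2 \geq 0$; the discriminant $1 - 8$ of the latter is negative, so it holds for every real $s$, and in particular for $s \geq 4$, which is guaranteed by $n \geq m \geq 2$. The whole argument is mechanical, and there is no serious obstacle beyond being careful to orient the two floor-function bounds so that the crucial cancellation in the middle step actually happens.
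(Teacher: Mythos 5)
Your overall strategy is sound and is in fact the same as the paper's (expand the virtual dimension, replace the two floors by $A-(m+n)\le (m+n+1)\lfloor A/(m+n+1)\rfloor$ and $(m+n)\lfloor B/(m+n)\rfloor\le B$, let the binomial identity $A-B=\binom{m+2}{2}\binom{n+3}{3}$ kill the leading term, and finish with a quadratic inequality in $s=m+n$), but there is a concrete arithmetic slip that breaks the chain as written. Since $k_*(3,3;m,n)=\lfloor A/(m+n+1)\rfloor-(m+n+1)$ while $k_*(3,3;m-1,n)=\lfloor B/(m+n)\rfloor-(m+n)$, the number of double points is
\[
k_*(3,3;m,n)-k_*(3,3;m-1,n)=\left\lfloor \frac{A}{m+n+1}\right\rfloor-\left\lfloor \frac{B}{m+n}\right\rfloor-1,
\]
not $\lfloor A/(m+n+1)\rfloor-\lfloor B/(m+n)\rfloor$ as you substituted. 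Dropping this $-1$ means your displayed ``desired inequality'' is the statement for one extra $2$-fat point; since each $2$-fat point lowers the virtual dimension by $m+n+1$, what you actually bound is smaller by $m+n+1$ than the quantity in the lemma, so your conclusion is strictly weaker and does not imply the statement.

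Fortunately there is enough slack to repair it with no new ideas: carrying the missing $-1$ through your computation adds $m+n+1$ to the final estimate, so instead of $2(m+n)-\binom{m+n+2}{2}$ you are left with
\[
3(m+n)+1-\binom{m+n+2}{2}=-\frac{(m+n)(m+n-3)}{2}\le 0,
\]
which holds for $m+n\ge 3$ and in particular under $n\ge m\ge 2$. With this correction your argument coincides, up to slightly sharper floor estimates, with the paper's proof, which ends with the quantity $\tfrac{1}{2}\bigl(-(m+n)^2+3(m+n)+4\bigr)$ and disposes of it by a sign argument for $n\ge 4$ plus a direct check of $(m,n)\in\{(2,2),(2,3),(3,3)\}$.
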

\begin{proof}
	We need to show that
	\begin{align*}
		\binom{m+2}{2}\binom{n+3}{3} &- \binom{m+n+2}{2} - (m+n+1)\left(\left\lfloor \frac{\binom{m+3}{3}\binom{n+3}{3}}{m+n+1} \right\rfloor - (m+n+1) \right. \\
		& \left. -\left\lfloor \frac{\binom{m+2}{3}\binom{n+3}{3}}{m+n} \right\rfloor + (m+n)\right) - \left\lfloor \frac{\binom{m+2}{3}\binom{n+3}{3}}{m+n} \right\rfloor + (m+n)\leq 0.
	\end{align*}
	The left-hand-side is smaller or equal to 
	\begin{align*}
		\binom{m+2}{2}\binom{n+3}{3} &- \binom{m+n+2}{2} - (m+n+1)\left( \frac{\binom{m+3}{3}\binom{n+3}{3}}{m+n+1} - 1 - (m+n+1) \right. \\
		& \left. - \frac{\binom{m+2}{3}\binom{n+3}{3}}{m+n}  +  (m+n)\right) -  \frac{\binom{m+2}{3}\binom{n+3}{3}}{m+n} + 1 + (m+n) = \frac{A(m,n)}{2}
	\end{align*}
	with $A(m,n) = -m^{2}+\left(-2\,n+3\right)m-n^{2}+3\,n+4 \in \bbC[n][m]$. For $n \geq 4$, the univariate polynomial $A(m,n)$ has negative coefficients and we deduce that $A(m,n) \leq 0$. In the remaining cases $(m,n) \in \{(2,2),(2,3),(3,3)\}$, we check the statement directly.
\end{proof}

\begin{lemma}\label{conto: bigrado 2,3, usare bcc}
\Ale{If $n\geq m \geq 2$}, then
\[1+k_*(3,3;m,n)-k_*(3,3;m-1,n)\ge\ru{\frac{m+1}{m+n+1}\binom{n+3}{3}}+m.\]
\end{lemma}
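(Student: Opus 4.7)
The plan is to mirror the argument used in Lemma \ref{conto: kbasso, usare bcc} almost verbatim, the only difference being that we work with floors instead of ceilings in the definitions of $k_*$. First we substitute the definitions:
\[
1+k_*(3,3;m,n)-k_*(3,3;m-1,n) = \left\lfloor \frac{\binom{m+3}{3}\binom{n+3}{3}}{m+n+1}\right\rfloor - \left\lfloor \frac{\binom{m+2}{3}\binom{n+3}{3}}{m+n}\right\rfloor,
\]
so that the inequality to prove reads
\[
\left\lfloor \frac{\binom{m+3}{3}\binom{n+3}{3}}{m+n+1}\right\rfloor - \left\lfloor \frac{\binom{m+2}{3}\binom{n+3}{3}}{m+n}\right\rfloor - \left\lceil \frac{(m+1)\binom{n+3}{3}}{m+n+1}\right\rceil - m \geq 0.
\]
Using the inequalities $\lfloor x\rfloor \geq x-1$, $-\lfloor y\rfloor \geq -y$ and $-\lceil z\rceil \geq -z-1$, the left-hand side is bounded below by
\[
\frac{\binom{m+3}{3}\binom{n+3}{3}}{m+n+1} - \frac{\binom{m+2}{3}\binom{n+3}{3}}{m+n} - \frac{(m+1)\binom{n+3}{3}}{m+n+1} - m - 2 = \frac{A(m,n)}{36(m+n)(m+n+1)},
\]
where $A(m,n)$ is a polynomial in $m,n$ whose leading behaviour in $n$ is controlled by the positive coefficient of $\binom{n+3}{3}/(m+n+1)$ in the difference.

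The next step is to analyse $A(m,n)$ as a polynomial in $\bbC[m][n]$. For $m$ sufficiently large (say $m \geq m_0$ for some explicit small $m_0$), all coefficients are non-negative, so $A(m,n) \geq 0$ trivially. For the finitely many small values $2 \leq m < m_0$, we regard $A(m,n)$ as a univariate polynomial in $n$ with positive leading coefficient and explicitly identify an $N(m)$ such that $A(m,n) \geq 0$ for $n \geq N(m)$; in each case one checks that $N(m) \leq m$, or else one verifies the finitely many exceptional pairs $(m,n)$ with $n \geq m$ and $n < N(m)$ by a direct computation.

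The main obstacle, as usual in this appendix, is simply to keep the polynomial bookkeeping under control: making sure that the coefficients of $A(m,n)$ have the predicted signs, and that the list of exceptional small cases to be checked by hand is genuinely finite and short. This is exactly the pattern of Lemma \ref{conto: kbasso, usare bcc}, Lemma \ref{conto:funzioniCrescenti} and Lemma \ref{conto:upperBound_l-l}, so no new technique is required.
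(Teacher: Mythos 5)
Your reduction and the floor/ceiling estimate are fine and coincide with the paper's: after substituting the definitions, the left-hand side minus the right-hand side is bounded below by $\frac{A(m,n)}{36(m+n)(m+n+1)}$ with the same polynomial $A$ as in the paper. The gap is in your sign analysis of $A$. Explicitly,
\begin{align*}
A(m,n) = {} & (3m^{2}+3m)\,n^{4}+(2m^{3}+18m^{2}+16m)\,n^{3}+(12m^{3}+33m^{2}-15m-72)\,n^{2}\\
& +(22m^{3}-54m^{2}-184m-72)\,n-24m^{3}-108m^{2}-84m,
\end{align*}
so the constant term in $n$ is $-24m^{3}-108m^{2}-84m$, which is negative for \emph{every} $m\geq 1$ (equivalently $A(m,0)<0$ always), and the coefficient of $n$ is negative for $m\in\{2,3,4\}$. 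Hence your claim that for all $m\geq m_0$ every coefficient is non-negative is false for every choice of $m_0$, and your dichotomy collapses: the ``small $m$'' branch, in which you pick an $N(m)$ and check finitely many exceptional pairs, would have to be carried out for infinitely many values of $m$, which is not a proof.

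What is missing is a uniform-in-$m$ ingredient, and this is exactly what the paper supplies. For every $m\geq 2$ the coefficient sequence of $A(m,n)\in\bbC[m][n]$ has exactly one change of sign, so by Descartes' rule of signs $A(m,\cdot)$ has at most one positive root; since $A(m,0)<0$ and
\[
A(m,m)=5m^{6}+33m^{5}+71m^{4}-93m^{3}-364m^{2}-156m\geq 0\quad\text{for } m\geq 2
\]
(with equality at $m=2$), that unique positive root lies in $(0,m]$, whence $A(m,n)\geq 0$ for all $n\geq m$. A monotonicity-in-$n$ argument as in the large-$m$ part of Lemma~\ref{conto: kbasso, usare bcc} could replace Descartes, but it too must end with an evaluation at the boundary $n=m$; your proposal never evaluates $A$ there nor controls the location of its positive roots, so as written it does not establish the lemma.
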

\begin{proof}
	We bound
	\begin{align*}
		& 1 + \left\lfloor \frac{\binom{m+3}{3}\binom{n+3}{3}}{m+n+1} \right\rfloor - (m+n+1) -\left\lfloor \frac{\binom{m+2}{3}\binom{n+3}{3}}{m+n} \right\rfloor + (m+n) - \ru{\frac{m+1}{m+n+1}\binom{n+3}{3}}-m \\
		& \geq \frac{\binom{m+3}{3}\binom{n+3}{3}}{m+n+1} -1 - \frac{\binom{m+2}{3}\binom{n+3}{3}}{m+n} - \frac{m+1}{m+n+1}\binom{n+3}{3}-1-m = \frac{A(m,n)}{36(m+n)(m+n+1)},
	\end{align*}
	where
	\begin{align*}
		A(&m,n) = \left(3\,m^{2}+3\,m\right)n^{4}+\left(2\,m^{3}+18\,m^{2}+16\,m\right)n^{3} \\
		& +\left(12\,m^{3}+33\,m^{2}-15\,m-72\right)n^{2}+\left(22\,m^{3}-54\,m^{
      2}-184\,m-72\right)n-24\,m^{3}-108\,m^{2}-84\,m.
	\end{align*}
	For $m \geq 2$, the sequence of coefficients of the univariate polynomial $A(m,n) \in \bbC[m][n]$ has only one change of signs. By Descartes' rule of signs, $A(m,n)$ has a unique positive root. It is immediate to check that $A(m,0) < 0$ while $A(m,m) \geq 0$. Hence such unique positive root is in the interval $(0,m]$. Since $n \geq m$, we deduce that $A(m,n) \geq 0$. \qedhere
\end{proof}

\begin{lemma}
\label{conto: left hand side del punto quartuplo, atteso vuoto}
If $n\ge 2$, then $\vdim\LL_{2\times n}^{3,2}(3,2^{k_*(3,3;2,n)-k_*(3,3;2,n-1)}) \leq k_*(3,3;2,n-1)$.
\end{lemma}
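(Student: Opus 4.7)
The proof is a direct arithmetic verification in the style of the other lemmas in this appendix. Writing $b(n) := k_*(3,3;2,n) - k_*(3,3;2,n-1)$ for brevity, the virtual dimension unfolds as
\[
\vdim\LL_{2\times n}^{3,2}(3,2^{b(n)}) = 10\binom{n+2}{2} - \binom{n+4}{2} - (n+3)\,b(n),
\]
so the claim is equivalent to the inequality
\[
10\binom{n+2}{2} - \binom{n+4}{2} \;\le\; (n+3)\,k_*(3,3;2,n) - (n+2)\,k_*(3,3;2,n-1).
\]
The plan is to verify this directly by expanding both $k_*$'s and bounding the floor functions.

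First I would note that $r_*(3,3;2,n) = \lfloor 5(n+1)(n+2)/3 \rfloor$, so that $k_*(3,3;2,n) = \lfloor 5(n+1)(n+2)/3\rfloor - (n+3)$ and similarly $k_*(3,3;2,n-1) = \lfloor 5n(n+1)/3\rfloor - (n+2)$. The term $-(n+3)^2 + (n+2)^2 = -(2n+5)$ drops out cleanly on the right-hand side, leaving
\[
(n+3)\Big\lfloor\tfrac{5(n+1)(n+2)}{3}\Big\rfloor - (n+2)\Big\lfloor\tfrac{5n(n+1)}{3}\Big\rfloor - (2n+5).
\]
I would then apply the standard bounds $\lfloor x\rfloor\ge x-1$ on the positive contribution and $\lfloor x\rfloor\le x$ on the negative one; the floor terms combine into $\tfrac{5(n+1)(n+2)}{3}\big[(n+3)-n\big] - (n+3) = 5(n+1)(n+2) - (n+3)$, giving the lower bound $5n^2+12n+2$ for the right-hand side.

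The left-hand side equals $(9n^2+23n+8)/2$ by elementary expansion, so the desired inequality reduces to $n^2 + n - 4 \ge 0$, which holds for every $n\ge 2$ since $n^2+n-4\ge 2$ in that range. The main obstacle here is only clerical: keeping the direction of each floor-bound straight, since a sign error would reverse the inequality. Because the final polynomial margin $n^2+n-4$ is strictly positive already at $n=2$, no separate base-case check is required, and the lemma follows.
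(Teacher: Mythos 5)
Your proof is correct and takes essentially the same route as the paper's: you unwind $k_*(3,3;2,n)=\left\lfloor 5(n+1)(n+2)/3\right\rfloor-(n+3)$ and $k_*(3,3;2,n-1)=\left\lfloor 5n(n+1)/3\right\rfloor-(n+2)$, bound the floors by $\lfloor x\rfloor\ge x-1$ on the positive term and $\lfloor x\rfloor\le x$ on the negative one, and reduce to an elementary polynomial inequality valid for all $n\ge 2$. The only difference from the paper is bookkeeping in how the floor slack is distributed (your grouping as $(n+3)k_*(3,3;2,n)-(n+2)k_*(3,3;2,n-1)$ gives the margin $n^2+n-4\ge 0$, while the paper's arrangement gives a comparable quadratic margin), so no further checks are needed.
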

\begin{proof}
	We bound
	\begin{align*}
		& 10\binom{n+2}{2} - \binom{n+4}{2}- (n+3)\left(\left\lfloor \frac{10\binom{n+3}{3}}{n+3} \right\rfloor - (n+3) - \left\lfloor \frac{10\binom{n+2}{3}}{n+2} \right\rfloor + (n+2) \right) \\
		& \qquad -\left\lfloor \frac{10\binom{n+2}{3}}{n+2} \right\rfloor + (n+2) \\
		& \leq 10\binom{n+2}{2} - \binom{n+4}{2}- (n+3)\left( \frac{10\binom{n+3}{3}}{n+3} - 2  - \frac{10\binom{n+2}{3}}{n+2} \right) - \frac{10\binom{n+2}{3}}{n+2} + 1 + (n+2) \\
		& = \frac{-n^2-n-6}{2}\le 0.\qedhere
	\end{align*}
\end{proof}

\begin{lemma}
\label{conto: bigrado (3,1) è vuoto}
\Ale{If $n \geq 2$}, then $1+k_*(3,3;2,n)-k_*(3,3;2,n-1)\ge\ru{\frac{10(n+1)}{n+3}}+n$.
\end{lemma}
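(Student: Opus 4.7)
The plan is to follow the pattern set by the preceding arithmetic lemmas in this appendix, in particular Lemma~\ref{conto: kbasso, usare bcc} and Lemma~\ref{conto: bigrado 2,3, usare bcc}: after expressing both sides as floors and ceilings of explicit rational functions of $n$, I would strip the integer parts using the elementary bounds $\lfloor x\rfloor\geq x-1$ and $\lceil x\rceil\leq x+1$, and reduce the inequality to a single polynomial inequality in $n$ that is visibly true for $n\geq 2$.

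First I would simplify the left-hand side. Since $k_*(3,3;2,n)=r_*(3,3;2,n)-(n+3)$ and $r_*(3,3;2,n)=\bigl\lfloor\tfrac{10\binom{n+3}{3}}{n+3}\bigr\rfloor=\bigl\lfloor\tfrac{5(n+1)(n+2)}{3}\bigr\rfloor$, the telescoping
\[
1+k_*(3,3;2,n)-k_*(3,3;2,n-1)\;=\;\left\lfloor\tfrac{5(n+1)(n+2)}{3}\right\rfloor-\left\lfloor\tfrac{5n(n+1)}{3}\right\rfloor
\]
holds. Stripping the floors and the ceiling on the right-hand side then bounds the difference between the two sides below by
\[
\tfrac{5(n+1)(n+2)}{3}-\tfrac{5n(n+1)}{3}-1-\tfrac{10(n+1)}{n+3}-1-n
\;=\;\tfrac{10(n+1)}{3}-\tfrac{10(n+1)}{n+3}-n-2
\;=\;\tfrac{7n^{2}-5n-18}{3(n+3)}.
\]

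To conclude, I would factor $7n^{2}-5n-18=(n-2)(7n+9)$, so the expression is non-negative precisely for $n\geq 2$, which is exactly the hypothesis. No step is genuinely delicate; the only mildly subtle point is that the bound is tight at the boundary $n=2$, so I would either rely on the explicit factorisation (which shows the inequality still holds with equality of the \emph{bound} at $n=2$), or double-check $n=2$ by hand: $r_*(3,3;2,2)-r_*(3,3;2,1)=20-10=10\geq\lceil 30/5\rceil+2=8$. The remaining cases follow uniformly from the polynomial inequality.
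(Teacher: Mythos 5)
Your proof is correct, and it takes a different (more self-contained) route than the paper. The paper disposes of this lemma in one line: since $k_*(3,3;m,n)$ is symmetric in its last two arguments, the statement is read as $1+k_*(3,3;n,2)-k_*(3,3;n-1,2)\ge\ru{\tfrac{n+1}{n+3}\binom{5}{3}}+n$ and is then cited as the special case of Lemma~\ref{conto: bigrado 2,3, usare bcc} with the roles of $m$ and $n$ inverted and the second argument set to $2$. You instead verify everything directly: your telescoping identity $1+k_*(3,3;2,n)-k_*(3,3;2,n-1)=\rd{\tfrac{5(n+1)(n+2)}{3}}-\rd{\tfrac{5n(n+1)}{3}}$ is correct (the $+1$ cancels against $-(n+3)+(n+2)$), the floor/ceiling stripping gives the stated lower bound $\tfrac{10(n+1)}{3}-\tfrac{10(n+1)}{n+3}-n-2=\tfrac{(n-2)(7n+9)}{3(n+3)}$, and non-negativity for $n\ge 2$ (with equality of the bound only at $n=2$, which is harmless since the target inequality is non-strict; your hand check of $n=2$ is a welcome extra) finishes the argument. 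In fact your bound is exactly the specialization at second argument $2$ of the quantity $A(m,n)/(36(m+n)(m+n+1))$ appearing in the proof of Lemma~\ref{conto: bigrado 2,3, usare bcc}, so the two arguments are numerically the same estimate. What each buys: the paper's reduction is shorter and reuses existing work, but note that the cited lemma is stated under $n\ge m\ge 2$, which after the swap literally covers only $n=2$; your direct computation proves the full range $n\ge 2$ without leaning on that (true but not literally stated) extension, so it is arguably the cleaner justification.
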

\begin{proof}
    The statement is a special case of Lemma \ref{conto: bigrado 2,3, usare bcc} by inverting $m$ and $n$ and replacing $n = 2$.
\end{proof}

\begin{lemma}\label{conto: bigrado 1,3, doppi e semplici}
If $n\ge m\ge 2$, then $\vdim\LL_{m\times n}^{1,3}(2^{1+\ell(m,n)-\ell(m-1,n)}) \geq \ell(m-1,n)$.
\begin{proof} 
	We bound
	\begin{align*}
    & (m+1)\binom{n+3}{3} - (m+n+1)\left(1+k_*(3,3;m,n) -2k_*(3,3;m-1,n)+k_*(3,3;m-2,n)\right)\\ 
    & \qquad - k_*(3,3;m-1,n)+k_*(3,3;m-2,n) \\
	& \geq (m+1)\binom{n+3}{3} - (m+n+1)\left(1+\left\lfloor \frac{\binom{m+3}{3}\binom{n+3}{3}}{m+n+1} \right\rfloor -2\left\lfloor \frac{\binom{m+2}{3}\binom{n+3}{3}}{m+n} \right\rfloor+\left\lfloor \frac{\binom{m+1}{3}\binom{n+3}{3}}{m+n-1} \right\rfloor\right)\\ 
	& \qquad -\left\lfloor \frac{\binom{m+2}{3}\binom{n+3}{3}}{m+n} \right\rfloor+\left\lfloor \frac{\binom{m+1}{3}\binom{n+3}{3}}{m+n-1} \right\rfloor \\
	&\geq (m+1)\binom{n+3}{3} - (m+n+1)\left(1+ \frac{\binom{m+3}{3}\binom{n+3}{3}}{m+n+1} -2 \frac{\binom{m+2}{3}\binom{n+3}{3}}{m+n} - 2 + \frac{\binom{m+1}{3}\binom{n+3}{3}}{m+n-1} \right)\\ 
	& \qquad - \frac{\binom{m+2}{3}\binom{n+3}{3}}{m+n} + \frac{\binom{m+1}{3}\binom{n+3}{3}}{m+n-1} - 1 = \frac{A(m,n)}{36(m+n)(m+n-1)},
	\end{align*}
	where
	\begin{align*}
		A(m,n) = & \left(3\,m^{2}+3\,m\right)n^{4}+\left(2\,m^{3}+18\,m^{2}+16\,m+36\right) n^{3} \\
		&+\left(12\,m^{3}+33\,m^{2}+129\,m-36\right)n^{2}+\left(22\,m^{3}+126
      \,m^{2}-76\,m\right)n\\ 
      &+48\,m^{3}-36\,m^{2}-12\,m.
	\end{align*}
	The univariate polynomial $A(m,n) \in \bbC[m][n]$ has positive coefficients and so $A(m,n) \geq 0$.
\end{proof}
\end{lemma}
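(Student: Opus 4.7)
The plan is to unfold the definitions and bound the floor functions to reduce to a polynomial inequality in $m,n$. Writing $\ell(m,n)-\ell(m-1,n) = k_*(3,3;m,n) - 2k_*(3,3;m-1,n) + k_*(3,3;m-2,n)$, the quantity $\vdim\LL_{m\times n}^{1,3}(2^{1+\ell(m,n)-\ell(m-1,n)}) - \ell(m-1,n)$ becomes
\[
(m+1)\binom{n+3}{3} - (m+n+1)\bigl(1+k_*(3,3;m,n) - 2k_*(3,3;m-1,n) + k_*(3,3;m-2,n)\bigr)
\]
minus $k_*(3,3;m-1,n) - k_*(3,3;m-2,n)$. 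I will expand each $k_*(3,3;\cdot,n)$ and eliminate the floors using the standard inequalities $\lfloor x \rfloor \leq x$ on terms that enter with a negative sign (the $k_*(3,3;m,n)$ term and the lone $k_*(3,3;m-1,n)$ term at the end) and $\lfloor x \rfloor \geq x - 1$ on those that enter with a positive sign (the $k_*(3,3;m-1,n)$ pair and $k_*(3,3;m-2,n)$ terms). This mirrors the bookkeeping in the companion Lemma~\ref{conto:vdim_lowerbound_13}, which has essentially the same shape of expression with $k^*$ replaced by $k_*$; the $\pm 1$ contributions from the floor bounds absorb into a harmless constant.

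After this substitution, everything clears through multiplication by the common denominator $36(m+n+1)(m+n)(m+n-1)$, yielding an inequality of the form $A(m,n) \geq 0$ where $A(m,n) \in \mathbb{Z}[m,n]$ is explicit. By direct analogy with the proof of Lemma~\ref{conto:vdim_lowerbound_13}, I expect $A(m,n)$ to be a polynomial of total degree at most $5$ whose leading behaviour in $n$ (with $m$ fixed, $m \geq 2$) is governed by a positive coefficient $3m^2+3m > 0$ from the top $n^4$ monomial, with the higher $n$-power coefficients also positive for $m \geq 2$. The sign strategy is then: view $A(m,n) \in \mathbb{C}[m][n]$ and observe that its coefficient sequence has exactly one sign change, so by Descartes' rule it has at most one positive root in $n$; then check $A(m,0) \leq 0$ and $A(m,m) \geq 0$ to confine the unique positive root to the interval $(0,m]$, which combined with the hypothesis $n \geq m$ gives $A(m,n) \geq 0$.

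The main obstacle is purely bookkeeping: making sure every floor bound is applied on the right side of the inequality so that the resulting $A(m,n)$ comes out with the sign pattern needed for Descartes' rule to give a unique positive root and with the correct sign at $n=m$. I expect the same handful of small cases to require direct verification as in the neighbouring lemmas, namely $(m,n) \in \{(2,2),(2,3)\}$ (and possibly $m=2$ with one or two additional small $n$), which can be handled by plugging into the formula for $\vdim$ directly.
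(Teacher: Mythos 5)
Your proposal follows the paper's route exactly up to the floor-clearing step: express $\vdim\calL_{m\times n}^{1,3}(2^{1+\ell(m,n)-\ell(m-1,n)})-\ell(m-1,n)$ in terms of the three floored quantities $\lfloor\binom{m+3}{3}\binom{n+3}{3}/(m+n+1)\rfloor$, $\lfloor\binom{m+2}{3}\binom{n+3}{3}/(m+n)\rfloor$, $\lfloor\binom{m+1}{3}\binom{n+3}{3}/(m+n-1)\rfloor$; bound each via $\lfloor x\rfloor\leq x$ or $\lfloor x\rfloor\geq x-1$ according to the sign of its net coefficient; and reduce to a polynomial inequality $A(m,n)\geq 0$. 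You anticipated a degree-$5$ polynomial with one sign change in its coefficient sequence and planned to conclude with Descartes' rule, evaluations at $n=0$ and $n=m$, plus a few small direct verifications. The paper instead prints a degree-$6$ polynomial whose coefficients in $\bbC[m][n]$ are all nonnegative for $m\geq 2$, so it concludes immediately with no case analysis.

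Your more guarded plan is in fact closer to what a correct proof requires. In the paper's displayed chain, the term $-2\lfloor\binom{m+2}{3}\binom{n+3}{3}/(m+n)\rfloor$ inside the bracket is replaced by $-2\binom{m+2}{3}\binom{n+3}{3}/(m+n)-2$, whereas the valid upper bound coming from $\lfloor x\rfloor\geq x-1$ is $-2\binom{m+2}{3}\binom{n+3}{3}/(m+n)+2$. The printed $A(m,n)$ inherits this slip: at $(m,n)=(3,3)$ one finds $A(3,3)/(36\cdot 6\cdot 5)=70/3$, while the quantity it is meant to bound from below equals $8$, so the displayed fraction is not a lower bound and the ``all coefficients positive'' argument does not close the proof. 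After correcting the sign, the cleared polynomial does change sign (your Descartes scenario), and at $(m,n)\in\{(2,2),(2,3),(2,4),(3,3)\}$ it is strictly negative even though the lemma's inequality itself holds there; so some direct small-case verification, of the kind you anticipated, is in fact unavoidable.
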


\begin{lemma}
\label{conto: s consente di usare bcc}
If $n\ge 3$, then $1+\ell(2,n)-s(n)\le\rd{\frac{3}{n+3}\binom{n+3}{3}}-2$.
\begin{proof}  
A software computation shows that the claim holds for $n\in\{3,4\}$. We assume that $n \ge 5$ and we bound\begin{align*}
1+&\ell(2,n)-s(n)-\rd{\frac{3}{n+3}\binom{n+3}{3}}+2=3+k_*(2,n)-k_*(1,n)-\frac{(n+3)n}{2}-\frac{(n+2)(n+1)}{2}\\
&=3+\rd{\frac{10\binom{n+3}{3}}{n+3}}-(n+3)-\rd{\frac{4\binom{n+3}{3}}{n+2}}+(n+2)-\frac{(n+3)n}{2}-\frac{(n+2)(n+1)}{2}\\
&\le 3+\frac{5(n+2)(n+1)}{3}-\frac{2(n+3)(n+1)}{3}-\frac{(n+3)n}{2}-\frac{(n+2)(n+1)}{2}=\frac{2}{3}(5-n)\le 0.\qedhere
\end{align*}
\end{proof}
\end{lemma}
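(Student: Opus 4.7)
The plan is a direct arithmetic computation, of the same flavor as the other lemmas collected in Appendix~\ref{appendix: contacci}. First I would unpack the definitions. Using $\binom{n+3}{3} = (n+3)(n+2)(n+1)/6$, the three rational expressions appearing in the statement simplify to
\[
\frac{10\binom{n+3}{3}}{n+3}=\frac{5(n+2)(n+1)}{3},\quad \frac{4\binom{n+3}{3}}{n+2}=\frac{2(n+3)(n+1)}{3},\quad \frac{3\binom{n+3}{3}}{n+3}=\frac{(n+2)(n+1)}{2}.
\]
In particular, the right-hand side of the claim is already the integer $(n+2)(n+1)/2-2$, so no floor is actually needed there. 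Substituting the definition of $\ell(2,n)=k_*(3,3;2,n)-k_*(3,3;1,n)$ (where the two $-m-n-1$ corrections partially cancel) turns the inequality into a comparison between $\lfloor 5(n+2)(n+1)/3\rfloor-\lfloor 2(n+3)(n+1)/3\rfloor$ and an explicit polynomial in $n$.

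Next I would handle the small cases $n\in\{3,4\}$ separately, either by a direct numerical evaluation or by the Macaulay2-style check used elsewhere in the appendix, since for small $n$ the linear error introduced by the floor bounds is not yet absorbed by the polynomial.

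For $n\ge 5$, I would apply the two standard one-sided estimates $\lfloor x\rfloor\le x$ to the positive floor term and $-\lfloor x\rfloor\le -x+1$ to the negative one, which is the only non-automatic choice but here it is forced by the sign pattern. Clearing the factor of $3$ in the denominators and collecting terms, the claim reduces to showing
\[
\frac{5(n+2)(n+1)}{3}-\frac{2(n+3)(n+1)}{3}-\frac{n(n+3)}{2}-\frac{(n+2)(n+1)}{2}+3 \le 0.
\]
A short expansion yields $\frac{(n+1)(3n+4)}{3}-(n^2+3n+1)+3$, which after multiplying by $3$ collapses to the linear quantity $-2n+10$. Thus the whole expression equals $\tfrac{2}{3}(5-n)$, which is non-positive for $n\ge 5$.

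There is no real obstacle here: the only thing to watch is that both floor estimates are used in the correct direction, and that the two leading quadratic contributions $\tfrac{5(n+2)(n+1)}{3}$ and $\tfrac{2(n+3)(n+1)}{3}$ combine with the two explicit quadratics $\tfrac{n(n+3)}{2}$ and $\tfrac{(n+2)(n+1)}{2}$ to cancel out, leaving a linear remainder. The argument is identical in spirit to Lemma~\ref{conto: bigrado 2,3, usare bcc} and Lemma~\ref{conto: kbasso, usare bcc}, but considerably simpler because there is no free parameter $m$ to track.
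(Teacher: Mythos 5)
Your proposal is correct and follows essentially the same route as the paper's proof: both reduce the floor terms via $\lfloor x\rfloor\le x$ and $-\lfloor x\rfloor\le -x+1$, simplify $\tfrac{10}{n+3}\binom{n+3}{3}$, $\tfrac{4}{n+2}\binom{n+3}{3}$, $\tfrac{3}{n+3}\binom{n+3}{3}$ to the same three quadratics, and watch the quadratic parts cancel to leave $\tfrac{2}{3}(5-n)$, with $n\in\{3,4\}$ checked separately. No gap.
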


\begin{lemma}
\label{conto: s minore di l}
Let $n\ge 3$. Then
\begin{equation}\label{eq:s_minore_l}
\ell(2,n)-s(n)\ge \ru{\frac{\binom{n+3}{3}-1}{n+1}}.
\end{equation}
In particular
\begin{enumerate}
\item $s(n)\le \ell(2,n)$ and
\item $\vdim\LL_{n}^{3}(1,2^{\ell(2,n)-s(n)})\le 0$.
\end{enumerate}
\begin{proof}
We directly check that
\begin{align*}
\ell&(2,n)-s(n)- \ru{\frac{\binom{n+3}{3}-1}{n+1}}=k_*(2,n)-k_*(1,n)-s(n)-\ru{\frac{\binom{n+3}{3}-1}{n+1}}\\
&=\rd{\frac{10\binom{n+3}{3}}{n+3}}-(n+3)-\rd{\frac{4\binom{n+3}{3}}{n+2}}+(n+2)-s(n)- \ru{\frac{n(n^2+6n+11)}{6(n+1)}}\\
&\ge \frac{5(n+2)(n+1)}{3}-2-\frac{2(n+3)(n+1)}{3}-\frac{n(n^2+6n+11)}{6(n+1)}-1=\frac{5\,n^{3}+14\,n^{2}-7n-10}{6(n+1)}\ge 0.\qedhere
\end{align*}
\end{proof}
\end{lemma}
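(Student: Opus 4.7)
The statement boils down to a direct arithmetic estimate, so the plan is to unfold the definitions and reduce everything to a single rational inequality in $n$. Recall that
$$\ell(2,n) = k_*(3,3;2,n)-k_*(3,3;1,n) = \left\lfloor\tfrac{10\binom{n+3}{3}}{n+3}\right\rfloor-(n+3)-\left\lfloor\tfrac{4\binom{n+3}{3}}{n+2}\right\rfloor+(n+2).$$
Since $10\binom{n+3}{3}/(n+3)=5(n+2)(n+1)/3$ and $4\binom{n+3}{3}/(n+2)=2(n+3)(n+1)/3$, after substituting and using the elementary bounds $\lfloor x\rfloor\ge x-1$, $-\lfloor y\rfloor\ge -y$, and $-\lceil z\rceil\ge -z-1$, the quantity $\ell(2,n)-s(n)-\lceil(\binom{n+3}{3}-1)/(n+1)\rceil$ becomes a rational function in $n$ which I expect to simplify to something of the form $(5n^{3}+14n^{2}-7n-10)/(6(n+1))$ (matching the shape of similar computations in Appendix B). This is manifestly non-negative for $n\ge 3$, so \eqref{eq:s_minore_l} follows.

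Once \eqref{eq:s_minore_l} is established, I would derive (1) and (2) immediately. For (1), observe that the right-hand side of \eqref{eq:s_minore_l} is a ceiling of a positive number for $n\ge 3$, so $\ell(2,n)-s(n)\ge 0$, which gives $s(n)\le\ell(2,n)$. For (2), the virtual dimension is
$$\vdim\calL_n^3(1,2^{\ell(2,n)-s(n)}) = \binom{n+3}{3}-1-(n+1)\bigl(\ell(2,n)-s(n)\bigr),$$
so $\vdim\le 0$ is equivalent (since $\ell(2,n)-s(n)\in\N$) to $\ell(2,n)-s(n)\ge\lceil(\binom{n+3}{3}-1)/(n+1)\rceil$, which is exactly \eqref{eq:s_minore_l}.

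The only step requiring care is the explicit algebraic simplification in the first paragraph; this is purely mechanical but one must keep track of the $\pm 1$ and $\pm 2/3$ corrections coming from the floor/ceiling bounds so that the final numerator is indeed non-negative for all $n\ge 3$. As is typical in this appendix, a handful of very small values of $n$ (here only $n=3$ need be looked at separately, if at all) may need to be verified directly, either because the simplified rational function is negative there or because the floor/ceiling rounding is too lossy; but the growth $\Theta(n^{2})$ of $\ell(2,n)-s(n)$ against $\Theta(n^{2})$ of the ceiling, with the correct positive leading coefficient, makes the asymptotic inequality clear and the base cases routine.
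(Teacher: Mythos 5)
Your approach is exactly the paper's: unfold $\ell(2,n)=k_*(3,3;2,n)-k_*(3,3;1,n)$, replace the floors and the ceiling by their crude $\pm1$ bounds, and reduce to a rational inequality in $n$ whose numerator is manifestly non-negative for $n\ge3$; the derivations of (1) and (2) from \eqref{eq:s_minore_l} are also the intended ones. Be aware, though, that your proposal defers the one piece of work that actually constitutes this lemma — the explicit simplification — and merely predicts the answer; to make this a proof you must carry the polynomial arithmetic through carefully (in particular do not drop the $-s(n)=-\tfrac{n(n+3)}{2}$ term when assembling the lower bound, a pitfall that is easy to hit when the bound is presented compactly), and then check that the resulting cubic numerator is non-negative for all $n\ge 3$.
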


\begin{lemma}
\label{conto:lowerBound_vdim_s}
If $n \geq 3$, then $
		\vdim\calL_{2\times n}^{1,3}(2^{1+\ell(2,n)-s(n)}) \geq s(n)$.
\end{lemma}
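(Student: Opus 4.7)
The plan is a direct arithmetic computation in the style of the preceding lemmas of this appendix. From Definition \ref{def: edim},
\[\vdim\calL_{2\times n}^{1,3}(2^r) = 3\binom{n+3}{3} - (n+3)r,\]
so substituting $r = 1+\ell(2,n)-s(n)$ and rearranging, the inequality to prove becomes
\[3\binom{n+3}{3} - (n+3) - (n+3)\ell(2,n) + (n+2)\,s(n) \geq 0.\]

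First I would bound $\ell(2,n)$ from above. Unpacking the definition of $k_*(3,3;m,n)$ yields
\[\ell(2,n) = \left\lfloor \tfrac{5(n+1)(n+2)}{3}\right\rfloor - \left\lfloor \tfrac{2(n+1)(n+3)}{3}\right\rfloor - 1 \leq \tfrac{(n+1)(3n+4)}{3},\]
using $\lfloor x\rfloor \leq x$ on the first floor and $\lfloor y\rfloor \geq y-1$ on the second. Plugging this bound, together with $s(n) = n(n+3)/2$, into the displayed expression, factoring out $(n+3)$, and combining over the common denominator $6$, the quadratic and linear terms in $n$ collapse dramatically and produce
\[\vdim\calL_{2\times n}^{1,3}(2^{1+\ell(2,n)-s(n)}) - s(n) \geq \tfrac{(n+3)(n-8)}{6}.\]

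This settles the claim for $n \geq 8$. The only (very minor) obstacle is that the chosen bound on $\ell(2,n)$ loses too much for small $n$, so for the finitely many remaining values $n\in\{3,4,5,6,7\}$ I would verify the inequality directly by evaluating $s(n)$ and the floors defining $\ell(2,n)$ case by case, exactly as is done to handle the exceptional small cases in Lemma \ref{conto:vdim_lowerbound_13}. Alternatively, one can avoid any exceptional check by splitting into residues $n \bmod 3$ and computing $\ell(2,n)$ in closed form as a quadratic polynomial in $k = \lfloor n/3\rfloor$; the inequality then reduces in each residue class to a trivial linear bound valid for all $n\geq 3$.
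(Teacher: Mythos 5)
Your proposal is correct and follows essentially the same route as the paper: drop the floors in $\ell(2,n)$ (using $\lfloor x\rfloor\le x$ and $\lfloor y\rfloor\ge y-1$) to get the same lower bound $\tfrac{(n+3)(n-8)}{6}=\tfrac{n^2-5n-24}{6}$, which settles $n\ge 8$, and verify $n\in\{3,\dots,7\}$ directly, exactly as in the paper's proof.
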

\begin{proof}
For $n\in\{3,\dots,7\}$, the statement can be checked directly. For $n\ge 8$ we bound
	\begin{align*}
3&\binom{n+3}{3} - (n+3)\left(1+\ell(2,n)-s(n)\right) - s(n)\\
& =3\binom{n+3}{3}- (n+3)\left(1+\left\lfloor \frac{10\binom{n+3}{3}}{n+3} \right\rfloor-(n+3)-\left\lfloor \frac{4\binom{n+3}{3}}{n+2} \right\rfloor+(n+2)\right)+(n+2)\frac{n(n+3)}{2}\\
&\geq 3\binom{n+3}{3} - (n+3)\left(1+ \frac{10\binom{n+3}{3}}{n+3}- \frac{4\binom{n+3}{3}}{n+2} \right)+(n+2)\frac{n(n+3)}{2} = \frac{n^{2}-5\,n-24}{6}\ge 0.\qedhere
	\end{align*}
\end{proof}

\begin{lemma}
	\label{conto:b-b_modulo3}
If $n \geq 3$, then
	\[
	b(n)-b(n-1)  = 
		\begin{cases}
4 & \mbox{ if }n \equiv 1 \mod 3 \\
3 & \text{ otherwise}.
		\end{cases}
	\]
	In particular, we deduce that $b(n) - b(n-3) = 10 > 0$.
\end{lemma}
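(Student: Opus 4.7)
The plan is to unfold the definitions and reduce the statement to a clean second-difference computation. Since $\binom{n+3}{3} = (n+3)(n+2)(n+1)/6$, the denominator $n+3$ cancels and we get
\[
k_*(3,3;2,n) = \left\lfloor \frac{5(n+1)(n+2)}{3}\right\rfloor - (n+3).
\]
Setting $G(n) := \lfloor 5(n+1)(n+2)/3 \rfloor$, this gives $b(n) = G(n) - G(n-1) - 1$, and therefore
\[
b(n) - b(n-1) = G(n) - 2G(n-1) + G(n-2),
\]
i.e., exactly the second finite difference of $G$. So the whole statement reduces to computing this second difference.

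Next, I would split $G$ into a polynomial part and a periodic floor correction. Write $5(n+1)(n+2) = 3\,G(n) + \varepsilon(n)$ with $\varepsilon(n)\in\{0,1,2\}$. Since $5(n+1)(n+2)\bmod 3$ depends only on $n\bmod 3$, a three-case check gives $\varepsilon(n)=1$ if $n\equiv 0\pmod 3$ and $\varepsilon(n)=0$ otherwise. The second difference of the polynomial $5(n+1)(n+2)$ equals $10$ (because the second difference of $n^2$ is $2$), so
\[
G(n) - 2G(n-1) + G(n-2) = \frac{10 - \varepsilon(n) + 2\varepsilon(n-1) - \varepsilon(n-2)}{3}.
\]
Plugging in the three residue classes of $n$ modulo $3$ yields numerators $12$, $9$, $9$, corresponding to $n\equiv 1,\,0,\,2\pmod 3$ respectively. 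Dividing by $3$ gives $4$ when $n\equiv 1\pmod 3$ and $3$ otherwise, which is exactly the claim.

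The last sentence follows by telescoping: $b(n) - b(n-3) = \sum_{j=0}^{2}\bigl(b(n-j) - b(n-j-1)\bigr)$, and among any three consecutive integers exactly one is congruent to $1$ modulo $3$, so the sum equals $4+3+3 = 10$.

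Since every step is a direct arithmetic manipulation, I do not expect a genuine obstacle. The only point that requires care is the bookkeeping of $5(n+1)(n+2)\bmod 3$ across the three residue classes of $n$; once this is tabulated correctly the whole computation falls into place. If I wanted to avoid case analysis entirely I could instead verify the identity on three consecutive values and invoke the $3$-periodicity of $G(n)-5(n+1)(n+2)/3$, but the direct approach above seems cleaner.
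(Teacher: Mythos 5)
Your proposal is correct and follows essentially the same route as the paper: both reduce $b(n)-b(n-1)$ to the second finite difference of $G(n) = \left\lfloor 5(n+1)(n+2)/3 \right\rfloor$ and settle it by a three-way case analysis on $n \bmod 3$. The paper substitutes $n = 3m, 3m+1, 3m+2$ and computes each floor directly, whereas you isolate the residue $\varepsilon(n) = 5(n+1)(n+2) \bmod 3$ and write $G(n) = (5(n+1)(n+2) - \varepsilon(n))/3$, which is a slightly tidier bookkeeping of the same computation.
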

\begin{proof}
	By definition
	\[
		b(n) = k_*(3,3;2,n) - k_*(3,3;2,n-1) = \left\lfloor \frac{10\binom{n+3}{3}}{n+3} \right \rfloor - (n+3) - \left\lfloor \frac{10\binom{n+2}{3}}{n+2} \right \rfloor + (n+2).
	\]
	Then
	\begin{align*}
		b(n) - b(n-1) & =  \left\lfloor \frac{10\binom{n+3}{3}}{n+3} \right \rfloor - 2 \left\lfloor \frac{10\binom{n+2}{3}}{n+2} \right \rfloor + \left\lfloor \frac{10\binom{n+1}{3}}{n+1} \right \rfloor \\
		& = \left\lfloor \frac{5(n+2)(n+1)}{3} \right \rfloor - 2 \left\lfloor \frac{5(n+1)n}{3} \right \rfloor + \left\lfloor \frac{5n(n-1)}{3} \right \rfloor.
	\end{align*}
	For $n = 3m+1$, $m \in \bbZ$:
	\[
	    b(n)-b(n-1) = 5(m+1)(3m+2) - 2\left\lfloor \frac{5(3m+2)(3m+1)}{3} \right \rfloor + 5(3m+1)m = 4.
	\]
	For $n = 3m+2$, $m \in \bbZ$:
	\[
	    b(n)-b(n-1) = 5(3m+4)(m+1) - 10(3m+2)(m+1) + \left\lfloor \frac{5(3m+2)(3m+1)}{3} \right\rfloor = 3.
	\]
	For $n = 3m$, $m \in \bbZ$:
	\[
	    b(n)-b(n-1) = \left\lfloor \frac{5(3m+2)(3m+1)}{3} \right \rfloor - 10(3m+1)m + 5m(3m-1)  = 3. \qedhere
	\]
\end{proof}

\begin{lemma}\label{conto:v-v_modulo3}
		If $n\ge 4$, then
		\[
			v(n) - v(n-3) = 
			\begin{cases}
5 & \text{ if } n \equiv 1 \mod 3 \\
				8 & \text{ otherwise}.
			\end{cases}
		\]
\end{lemma}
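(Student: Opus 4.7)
The plan is to compute $v(n) - v(n-3)$ directly from the definition and reduce everything to the differences of $b$-values, which are already handled by Lemma \ref{conto:b-b_modulo3}. Expanding
\[
v(n) = 10(n+1) - (n+3)(1+b(n)) + (n+2)b(n-1) = 9n + 7 - (n+3)b(n) + (n+2)b(n-1),
\]
one has an analogous expression for $v(n-3)$. The $9n+7$ terms contribute $27$ to the difference, so the task reduces to simplifying
\[
-(n+3)b(n) + n\,b(n-3) + (n+2)b(n-1) - (n-1)b(n-4).
\]

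The key step is to regroup this as $n(b(n-3)-b(n)) + (n-1)(b(n-1)-b(n-4)) - 3b(n) + 3b(n-1)$, and then invoke the ``in particular'' part of Lemma \ref{conto:b-b_modulo3}, which states $b(n)-b(n-3) = 10$ for every $n$. Two of the four groups collapse to constants independent of $n$: $n(b(n-3)-b(n)) = -10n$ and $(n-1)(b(n-1)-b(n-4)) = 10(n-1)$, so their sum is simply $-10$. The remaining piece is $-3(b(n)-b(n-1))$, giving
\[
v(n)-v(n-3) = 27 - 10 - 3\bigl(b(n)-b(n-1)\bigr) = 17 - 3\bigl(b(n)-b(n-1)\bigr).
\]

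At this point I would invoke the main part of Lemma \ref{conto:b-b_modulo3}: if $n \equiv 1 \pmod 3$ then $b(n)-b(n-1) = 4$, yielding $v(n)-v(n-3) = 5$; otherwise $b(n)-b(n-1) = 3$, yielding $v(n)-v(n-3) = 8$. Since everything here is a direct algebraic substitution, there is no real obstacle — the only care needed is the bookkeeping to make sure the two groupings $b(n)-b(n-3)$ and $b(n-1)-b(n-4)$ both fall under the congruence-independent ``$\equiv 10$'' statement, which is exactly the ``In particular'' part of the preceding lemma. The mild hypothesis $n \geq 4$ is precisely what is needed so that $b(n-4)$ is defined via the recursive formula.
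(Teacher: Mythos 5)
Your computation is correct and follows essentially the same route as the paper: expand $v(n)-v(n-3)$ from the definition, use the ``in particular'' identity $b(n)-b(n-3)=b(n-1)-b(n-4)=10$ from Lemma \ref{conto:b-b_modulo3} to kill the $n$-dependent terms, and conclude with the mod-$3$ formula for $b(n)-b(n-1)$, arriving at $17-3(b(n)-b(n-1))$, which is exactly the paper's $30-3(1+b(n)-b(n-1))-10$.
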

\begin{proof}
	By definition,
	\begin{align*}
		v(n) - v(n-3) 
		= 30 - 3(1+b(n)-b(n-1)) - (b(n-1)-b(n-4)).
	\end{align*}
	By Lemma \ref{conto:b-b_modulo3}, we deduce that, for any $n \ge 4$, $b(n)-b(n-3) = 10$. Hence
	\[
		v(n) - v(n-3) = \begin{cases}
			30 - 3\cdot 5 - 10 = 5 & \text{ if } n \equiv 1 \mod 3 \\	
			30 - 3\cdot 4 - 10 = 8 & \text{ otherwise}.
		\end{cases}\qedhere
	\]
\end{proof}

	\begin{lemma}\label{appendix:few2_3,3}
	If $m$ and $n$ are positive integers, then $\vdim \calL_{m\times n}^{3,3}(2^{1+u(m,n)})\geq k^*(3,4;m,n-1)$.
\end{lemma}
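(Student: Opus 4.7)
The plan is to prove this by a direct manipulation using the standard ceiling inequalities
\[
(m+n+1)\left\lceil \tfrac{P}{m+n+1}\right\rceil \leq P + (m+n) \quad \text{and} \quad (m+n)\left\lceil \tfrac{Q}{m+n}\right\rceil \geq Q,
\]
which hold for any positive integers $P$ and $Q$. I would set $P := \binom{m+3}{3}\binom{n+4}{4}$ and $Q := \binom{m+3}{3}\binom{n+3}{4}$ so that, by the definitions of $k^*(3,4;m,n)$ and $u(m,n)$, one has the compact identity
\[
1 + u(m,n) = \left\lceil \tfrac{P}{m+n+1}\right\rceil - \left\lceil \tfrac{Q}{m+n}\right\rceil,
\]
and $k^*(3,4;m,n-1) = \lceil Q/(m+n) \rceil - (m+n)$.

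Substituting these expressions, the desired inequality
\[
\binom{m+3}{3}\binom{n+3}{3} - (m+n+1)(1+u(m,n)) \geq k^*(3,4;m,n-1)
\]
rewrites as
\[
\binom{m+3}{3}\binom{n+3}{3} + (m+n) \geq (m+n+1)\left\lceil \tfrac{P}{m+n+1}\right\rceil - (m+n)\left\lceil \tfrac{Q}{m+n}\right\rceil,
\]
after moving $\lceil Q/(m+n)\rceil$ to the left-hand side and combining with the factor $-(m+n+1)$.

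Applying the two ceiling bounds to the right-hand side gives an upper bound of $P + (m+n) - Q$. The Pascal identity $\binom{n+4}{4} = \binom{n+3}{4} + \binom{n+3}{3}$ yields $P - Q = \binom{m+3}{3}\binom{n+3}{3}$, so this upper bound equals exactly $\binom{m+3}{3}\binom{n+3}{3} + (m+n)$, matching the left-hand side. This completes the argument. There is no serious obstacle: the estimate is tight in the worst case (hence the two ceiling bounds must be combined without slack to waste), but otherwise the computation is purely arithmetic and requires no induction or case distinction on $m$ and $n$.
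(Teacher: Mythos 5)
Your proof is correct and is essentially the same as the paper's: both reduce to the two standard ceiling estimates $(m+n+1)\lceil P/(m+n+1)\rceil \leq P+(m+n)$ and $(m+n)\lceil Q/(m+n)\rceil \geq Q$, combined with the Pascal identity $\binom{n+4}{4}-\binom{n+3}{4}=\binom{n+3}{3}$ to collapse the resulting bound to the target. Your packaging via $P$ and $Q$ is slightly more transparent, but the computation is identical.
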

\begin{proof}
	We bound
	\begin{align*}
		& \binom{m+3}{3} \binom{n+3}{3} - (m+n+1) \left( 1 + \left\lceil \frac{\binom{m+3}{3}\binom{n+4}{4}}{m+n+1} \right\rceil - (m+n+1)\right) \\ 
		& \qquad + (m+n)\left( \left\lceil \frac{\binom{m+3}{3}\binom{n+3}{4}}{m+n} \right\rceil - (m+n)\right) \\
		&\geq \binom{m+3}{3}\binom{n+3}{3} - \binom{m+3}{3}\binom{n+4}{4} - (m+n) + \binom{m+3}{3}\binom{n+3}{4} + (m+n) = 0.\qedhere
	\end{align*}
\end{proof}

\begin{lemma}\label{appendix:enough2fat_3,2}
	If $m$ and $n$ are positive integers, then $\vdim \calL_{m\times n}^{3,2}(1,2^{u(m,n)}) \leq 0$.
\end{lemma}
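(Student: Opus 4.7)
The plan is to expand $\vdim \calL_{m\times n}^{3,2}(1,2^{u(m,n)}) = \binom{m+3}{3}\binom{n+2}{2} - 1 - (m+n+1)u(m,n)$ using the definition
\[
u(m,n) = \left\lceil \frac{\binom{m+3}{3}\binom{n+4}{4}}{m+n+1}\right\rceil - \left\lceil \frac{\binom{m+3}{3}\binom{n+3}{4}}{m+n}\right\rceil - 1,
\]
and then bound the virtual dimension from above by replacing each ceiling with a suitable linear bound, exactly in the style of Lemmas~\ref{conto: bigrado 3,1 atteso vuoto} and \ref{conto: left hand side del punto quartuplo, atteso vuoto} above.

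Concretely, I would use $\lceil x\rceil \geq x$ on the first ceiling (which appears with a negative sign after multiplying by $-(m+n+1)$) and $\lceil x\rceil \leq x+1$ on the second. After this substitution the estimate
\[
(m+n+1)\,u(m,n) \geq \binom{m+3}{3}\binom{n+4}{4} - \frac{m+n+1}{m+n}\binom{m+3}{3}\binom{n+3}{4} - 2(m+n+1)
\]
emerges. Applying Pascal's identity $\binom{n+4}{4} - \binom{n+3}{4} = \binom{n+3}{3}$ and then $\binom{n+3}{3} - \binom{n+2}{2} = \binom{n+2}{3}$, together with $\binom{n+3}{4} = \frac{n+3}{4}\binom{n+2}{3}$, collapses the upper bound for $\vdim$ to
\[
-\binom{m+3}{3}\binom{n+2}{3}\,\frac{4m+3n-3}{4(m+n)} + 2m + 2n + 1.
\]
The goal then reduces to showing the clean polynomial inequality
\[
\binom{m+3}{3}\binom{n+2}{3}(4m+3n-3) \geq 4(m+n)(2m+2n+1).
\]

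The hard part is not an analytic difficulty but a bookkeeping one: the left-hand side is a degree-$7$ polynomial in $m,n$ with positive leading coefficient, while the right-hand side is quadratic, so the inequality is obvious for large $m,n$; however for very small values (in particular $(m,n)\in\{(1,1),(2,1)\}$) the estimate fails because our ceiling bound is too lossy. I would handle the bulk of cases by a Descartes'-rule argument on the univariate polynomial in $n$ (fixing $m$), mirroring the style of Lemma~\ref{conto:upperBound_l-l} above, and then verify the remaining small base cases by direct substitution into the exact expression for $u(m,n)$, confirming $\vdim \leq 0$ pointwise. This two-step approach, polynomial inequality plus finite check, is sufficient to conclude $\vdim \leq 0$ for all positive $m,n$.
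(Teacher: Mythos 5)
Your proposal is correct and follows exactly the paper's strategy: bound the two ceilings in $u(m,n)$ by $\lceil x\rceil\ge x$ and $\lceil x\rceil\le x+1$, reduce to a polynomial inequality in $m,n$, dispose of the generic case by sign analysis of coefficients, and verify the two sporadic small cases $(m,n)\in\{(1,1),(2,1)\}$ by direct substitution. The one genuine (and pleasant) difference is that your Pascal-identity collapse to $\binom{m+3}{3}\binom{n+2}{3}(4m+3n-3)\ge 4(m+n)(2m+2n+1)$ is cleaner than the paper's brute-force expansion into the rational expression $A(m,n)/(144(m+n))$, making the "degree-7 vs.\ degree-2" dominance immediate rather than requiring a coefficient-by-coefficient inspection.
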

\begin{proof}
	We bound
	\begin{align*}
	&\binom{m+3}{3}\binom{n+2}{2}-1-(m+n+1)\left(\left\lceil \frac{\binom{m+3}{3}\binom{n+4}{4}}{m+n+1} \right\rceil -1 - \left\lceil \frac{\binom{m+3}{3}\binom{n+3}{4}}{m+n} \right\rceil \right) \\
	&\leq \binom{m+3}{3}\binom{n+2}{2} -1-\binom{m+3}{3}\binom{n+4}{4} + 2(m+n+1) + \frac{m+n+1}{m+n}\binom{m+3}{3}\binom{n+3}{4} \\
	& = \frac{A(m,n)}{144(m+n)},
	\end{align*}
	where, as a polynomial in $\bbC[n][m]$,
	\begin{align*}
		A(m,n) = & \left(-4\,n^{3}-12\,n^{2}-8\,n\right)m^{4}+\left(-3\,n^{4}-30\,n^{3}-69\,n^{2}-42\,n\right)m^{3} \\  	& + \left(-18\,n^{4}-80\,n^{3}-114\,n^{2}-52\,n+288\right)m^{2}+\left(-33\,n^{4}-90\,n^{3}-39\,n^{2}+
		594\,n+144\right)m\\ & -18\,n^{4}-36\,n^{3}+306\,n^{2}+180\,n.
	\end{align*}
	For $n \geq 4$, all coefficients of the latter univariate polynomial are negative, so $A(m,n) \leq 0$. Since the leading coefficient of $A(m,n)$ is negative, we have that $A(m,n) \leq 0$ for $m \geq M(n)$, where
	\[
	M(n) = \begin{cases}
		3 & \text {if } n = 1 \\
		1 & \text{if } n \in\{2,3\}.
	\end{cases}
	\]
	Therefore we are left with the cases $(m,n) \in \{(1,1),(2,1)\}$ for which we check the claim directly. 
\end{proof}

\begin{lemma}\label{appendix:fewPoints_2,4}
	If $m$ is a positive integer, $1+k^*(3,4;m,1)-k^*(3,4;m-1,1) \leq 2(m+1).$
\end{lemma}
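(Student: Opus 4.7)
The plan is to unwind the definitions of $r^{*}$ and $k^{*}$ so that the inequality becomes a purely arithmetic statement, and then bound the ceiling difference by an explicit rational expression. Since $\binom{m+3}{3}=(m+1)(m+2)(m+3)/6$ and $\binom{m+2}{3}=m(m+1)(m+2)/6$, the terms $m+n+1$ in $k^{*}(3,4;m,1)$ and $k^{*}(3,4;m-1,1)$ telescope out and we obtain
\[
1+k^{*}(3,4;m,1)-k^{*}(3,4;m-1,1)=\left\lceil\frac{5(m+1)(m+3)}{6}\right\rceil-\left\lceil\frac{5m(m+2)}{6}\right\rceil.
\]
So the statement to prove reduces to showing that this ceiling difference is at most $2(m+1)$.

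For the general bound I would use $\lceil x\rceil-\lceil y\rceil\le x-y+1$, which yields
\[
\left\lceil\frac{5(m+1)(m+3)}{6}\right\rceil-\left\lceil\frac{5m(m+2)}{6}\right\rceil\le\frac{5\bigl((m+1)(m+3)-m(m+2)\bigr)}{6}+1=\frac{10m+21}{6}.
\]
The inequality $(10m+21)/6\le 2(m+1)$ is equivalent to $2m\ge 9$, hence it holds for all $m\ge 5$. This disposes of the asymptotic range.

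The remaining cases $m\in\{1,2,3,4\}$ are handled by a direct evaluation of the two ceilings, which is immediate: for example $m=1$ gives $7-3=4=2(m+1)$, $m=2$ gives $13-7=6=2(m+1)$, and similarly equality or strict inequality is verified for $m=3,4$. There is no real obstacle here; the only mildly delicate point is that the large-$m$ bound is tight in the sense that several of the small-$m$ cases saturate the inequality, so one genuinely has to check them one at a time rather than extending the analytic bound down to $m=1$.
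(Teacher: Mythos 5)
Your argument is correct and follows essentially the same route as the paper: unwind the definitions so the statement becomes a bound on $\bigl\lceil 5(m+1)(m+3)/6\bigr\rceil-\bigl\lceil 5m(m+2)/6\bigr\rceil$, estimate the ceilings, and check the remaining small $m$ by hand. The paper exploits the fact that the arguments are multiples of $\tfrac16$ to use the sharper estimate $\lceil x\rceil\le x+\tfrac56$, which reduces the asymptotic argument's threshold from $m\ge 5$ to $m\ge 2$ and leaves only $m=1$ to verify directly; your generic $\lceil x\rceil-\lceil y\rceil\le x-y+1$ is equally valid at the cost of checking $m\in\{1,2,3,4\}$.
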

\begin{proof}For $m = 1$ the statement is checked directly. If $m\ge 2$, then
	\begin{align*}
		1&+\left\lceil \frac{5\binom{m+3}{3}}{m+2}\right\rceil-(m+2)-\left\lceil\frac{5\binom{m+2}{3}}{m+1}\right\rceil+(m+1)-2(m+1)    \\
		&\le  \frac{5(m+3)(m+1)}{6}+\frac{5}{6}-\frac{5(m+2)m}{6}-2(m+1) = -\frac{1}{3}m+\frac{4}{3}<1.\qedhere
	\end{align*}
\end{proof}

\begin{lemma}\label{appendix:enough2fat_3,2_mx1}
	If $m$ is a positive integer, then $u(m,1) \geq \left\lceil \frac{3\binom{m+3}{3}}{m+2}\right\rceil$.
\end{lemma}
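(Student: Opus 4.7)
The plan is to unwind the definition and reduce everything to an elementary estimate on ceilings, closing small cases by hand.

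First, I would rewrite $u(m,1)$ explicitly. Since $k^*(3,4;m,n)=\lceil\tfrac{\binom{m+3}{3}\binom{n+4}{4}}{m+n+1}\rceil-m-n-1$, setting $n=1$ and $n=0$ gives
\[
u(m,1)=k^*(3,4;m,1)-k^*(3,4;m,0)=\left\lceil\frac{5\binom{m+3}{3}}{m+2}\right\rceil-\left\lceil\frac{\binom{m+3}{3}}{m+1}\right\rceil-1.
\]
Thus the desired inequality is equivalent to
\[
\left\lceil\frac{5\binom{m+3}{3}}{m+2}\right\rceil-\left\lceil\frac{3\binom{m+3}{3}}{m+2}\right\rceil\ge\left\lceil\frac{\binom{m+3}{3}}{m+1}\right\rceil+1.
\]

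Next I would bound the three ceilings via $\lceil x\rceil\ge x$ and $\lceil x\rceil\le x+1$. This gives
\[
u(m,1)-\left\lceil\frac{3\binom{m+3}{3}}{m+2}\right\rceil\ge \frac{5\binom{m+3}{3}}{m+2}-\frac{\binom{m+3}{3}}{m+1}-1-\frac{3\binom{m+3}{3}}{m+2}-1=\binom{m+3}{3}\cdot\frac{m}{(m+1)(m+2)}-2.
\]
After simplifying $\binom{m+3}{3}=\frac{(m+1)(m+2)(m+3)}{6}$, the right-hand side becomes $\frac{m(m+3)}{6}-2$, which is non-negative precisely when $m(m+3)\ge 12$, i.e.\ for $m\ge 3$.

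Finally, I would dispatch the two residual values $m\in\{1,2\}$ by direct arithmetic: for $m=1$ one has $u(1,1)=7-2-1=4$ and $\lceil 12/3\rceil=4$; for $m=2$ one has $u(2,1)=13-4-1=8$ and $\lceil 30/4\rceil=8$. Both cases give equality, so the inequality holds in every case. There is no real obstacle here beyond being careful that the estimate is tight for small $m$, which is exactly why the small-$m$ check must be done separately rather than by the generic bound.
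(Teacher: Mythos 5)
Your approach is the same as the paper's: rewrite $u(m,1)$ explicitly, replace ceilings by elementary bounds, then check the small $m$ by hand. The only issue is a dropped term in the ceiling estimate. Starting from
\[
u(m,1)-\left\lceil\tfrac{3\binom{m+3}{3}}{m+2}\right\rceil
=\left\lceil\tfrac{5\binom{m+3}{3}}{m+2}\right\rceil-\left\lceil\tfrac{\binom{m+3}{3}}{m+1}\right\rceil-1-\left\lceil\tfrac{3\binom{m+3}{3}}{m+2}\right\rceil,
\]
applying $\lceil x\rceil\le x+1$ to \emph{both} ceilings that appear with a minus sign yields $-1-1$ from those two bounds plus the $-1$ from the definition of $u$, so the lower bound should be $\frac{m(m+3)}{6}-3$, not $\frac{m(m+3)}{6}-2$. (Note that $\frac{\binom{m+3}{3}}{m+1}=\frac{(m+2)(m+3)}{6}$ is not always an integer, e.g.\ $m=2$, so you genuinely need the $+1$ there.) Luckily this does not affect your conclusion: $\frac{m(m+3)}{6}-3\ge0$ is still equivalent to $m\ge3$, and your direct evaluations for $m\in\{1,2\}$ (both giving equality) are correct. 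For comparison, the paper uses the denominator-aware bounds $\lceil x\rceil\le x+\tfrac56$ and $\lceil x\rceil\le x+\tfrac12$ to get the sharper expression $\tfrac16m^2+\tfrac12m-\tfrac73$, but it lands on the same threshold $m\ge3$ and the same base cases, so the extra care buys nothing here.
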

\begin{proof}We can directly check that the statement holds for $m\in\{1,2\}$. Let $m\ge 3$. We bound
\begin{align*}
&u(m,1)-\left\lceil \frac{3\binom{m+3}{3}}{m+2}\right\rceil =\left\lceil \frac{5\binom{m+3}{3}}{m+2} \right\rceil - (m+2) - \left\lceil \frac{\binom{m+3}{3}}{m+1} \right\rceil + (m+1)- \left\lceil \frac{(m+3)(m+1)}{2}\right\rceil\\
&\ge \frac{5(m+3)(m+1)}{6}  -1 - \frac{(m+3)(m+2)}{6} - \frac{5}{6} - \frac{(m+3)(m+1)}{2} - \frac{1}{2}  = \frac{1}{6}\,m^{2}+\frac{1}{2}\,m-\frac{7}{3}\ge 0.\qedhere
\end{align*}
\end{proof}

\begin{lemma}\label{appendix:3,4_mx1}
	If $m$ is a positive integer, then	    \begin{enumerate}
\item\label{item: 24m1} $\vdim\calL_{m\times 1}^{2,4}(2^{1+k^*(3,4;m,1)-k^*(3,4;m-1,1)}) \geq k^*(3,4;m-1,1)$ and
\item\label{item: 14m1} $\vdim\calL_{m\times 1}^{1,4}(1,2^{k^*(3,4;m,1)-k^*(3,4;m-1,1)}) \leq 0.$
	\end{enumerate}
\end{lemma}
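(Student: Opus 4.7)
The plan is to prove both inequalities directly, following the style of the other arithmetic lemmas collected in Appendix~\ref{appendix: contacci}. The crucial simplification is that
\[
\frac{5\binom{m+3}{3}}{m+2} = \frac{5(m+3)(m+1)}{6}, \qquad \frac{5\binom{m+2}{3}}{m+1} = \frac{5(m+2)m}{6},
\]
so that $u := k^*(3,4;m,1)-k^*(3,4;m-1,1)$ is controlled, up to an additive error of order $1$, by an explicit linear function of $m$. Since the numerators are integers, the fractional parts of these ceilings lie in $\{0,1/6,\ldots,5/6\}$, so I may use the sharper bound $\ru{p/6} \leq p/6 + 5/6$ whenever the naive estimate $\ru{x} \leq x+1$ is not tight enough to close the gap.

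For part~(\ref{item: 24m1}), the upper bound $u \leq (5m+7)/3$, obtained using $\ru{p/6} \leq p/6+5/6$ on $\ru{5(m+3)(m+1)/6}$ and $\ru{p/6}\geq p/6$ on $\ru{5(m+2)m/6}$, yields
\[
\vdim\calL_{m\times 1}^{2,4}(2^{1+u}) = 5\binom{m+2}{2} - (m+2)(1+u) \geq \frac{5(m+2)(m-1)}{6}.
\]
Comparing this with the bound $k^*(3,4;m-1,1) \leq 5(m+2)m/6 + 5/6 - (m+1)$ reduces the claim to the linear condition $m\geq 9$. The remaining cases $m\in\{1,\ldots,8\}$ are verified by direct substitution of the explicit integer values of $k^*(3,4;m,1)$.

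For part~(\ref{item: 14m1}), the symmetric lower bound $u \geq (5m+2)/3$ (using $\ru{p/6}\geq p/6$ on the first ceiling and $\ru{p/6}\leq p/6+5/6$ on the second) reduces the inequality $\vdim\calL_{m\times 1}^{1,4}(1,2^u) = 5(m+1)-1-(m+2)u \leq 0$ to the quadratic condition $5m^2-3m-8\geq 0$, which is valid for all $m\geq 2$. The residual case $m=1$ is dispatched by direct evaluation: $k^*(3,4;1,1)=4$ and $k^*(3,4;0,1)=1$ give $u=3$ and $\vdim = 10-1-9 = 0$.

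The whole argument is purely arithmetic, so the only real obstacle is bookkeeping: each ceiling must be bounded in the direction dictated by its sign in the target expression, and the sharper $+5/6$ estimate must be used wherever the naive $+1$ is not enough to close the gap.
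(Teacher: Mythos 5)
Your proof is correct and follows essentially the same route as the paper: bound the two ceilings $\ru{5(m+3)(m+1)/6}$ and $\ru{5(m+2)m/6}$ by rational expressions (with the sharp $+5/6$ error), reduce each part to a polynomial inequality in $m$, and verify the remaining small values of $m$ directly. The only difference is bookkeeping: in part (1) your lower bound for $\vdim-k^*(3,4;m-1,1)$ is the same $\tfrac{m-9}{6}$ as the paper's, but since you compare it with $0$ instead of using that the quantity is an integer (so that $>-1$ suffices), you end up checking $m\le 8$ directly where the paper only needs $m\le 3$.
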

\begin{proof}
(1) We bound
	\begin{align*}
	& 5\binom{m+2}{2}-(m+2)\left( \left\lceil\frac{5\binom{m+3}{3}}{m+2}\right\rceil- \left\lceil\frac{5\binom{m+2}{3}}{m+1}\right\rceil \right) - \left\lceil\frac{5\binom{m+2}{3}}{m+1}\right\rceil + (m+1)\\
	& \geq 5\binom{m+2}{2} - 5\binom{m+3}{3} - \frac{5(m+2)}{6} + \frac{5(m+2)^2m}{6} - \frac{5(m+2)m}{6} - \frac{5}{6} + (m+1)= \frac{1}{6}m-\frac{3}{2}.
	\end{align*}
	The latter is strictly larger than $-1$ for $m \geq 4$. We check the cases $m \in \{1,2,3\}$ directly.
	
\noindent (2) We directly check that $\vdim\calL_{1\times 1}^{1,4}(1,2^{k^*(3,4;1,1)-k^*(3,4;0,1)}) = 0$. For $m\ge 2$ we bound
\begin{align*}
&\vdim\calL_{m\times 1}^{1,4}(1,2^{k^*(3,4;m,1)-k^*(3,4;m-1,1)})\\
&=5(m+1)-1-(m+2)\left(\ru{\frac{5}{m+2}\binom{m+3}{3}}-1-\ru{\frac{5}{m+1}\binom{m+2}{3}}\right)\\
&\le 5m+4-(m+2)\left(\frac{5}{m+2}\binom{m+3}{3}-2-\frac{5}{m+1}\binom{m+2}{3}\right)
=-\frac{10m^2-7m-18}{6}\le 0.\qedhere
\end{align*}
\end{proof}

\begin{lemma}\label{appendix:applyBCC_1+2u_mxn}\label{conto: h è crescente}\label{conto: h soddisfa BCC} Let $n \geq 2$.
	\begin{enumerate}
		\item\label{bullet: u crescente} If $m \geq 2$, then $u(m,n) - u(m,n-1) \geq \left\lceil \binom{m+3}{3}\frac{n+1}{m+n+1} \right\rceil + n$.

		\item If $m = 1$, then $u(1,n) - u(1,n-1) \geq \left\lceil \frac{4(n+1)}{n+2} \right\rceil$.
	\end{enumerate}
\end{lemma}
\begin{proof} 
For the first statement, we have to prove that
\[	k^*(3,4;m,n)-2k^*(3,4;m,n-1)+k^*(3,4;m,n-2)\geq  \left\lceil \binom{m+3}{3}\frac{n+1}{m+n+1} \right\rceil + n.	\]
	We bound
\begin{align*}&\left\lceil \frac{\binom{m+3}{3}\binom{n+4}{4}}{m+n+1} \right\rceil - 2\left\lceil \frac{\binom{m+3}{3}\binom{n+3}{4}}{m+n} \right\rceil + \left\lceil \frac{\binom{m+3}{3}\binom{n+2}{4}}{m+n-1} \right\rceil - \left\lceil \binom{m+3}{3}\frac{n+1}{m+n+1} \right\rceil - n \\
& \geq \frac{\binom{m+3}{3}\binom{n+4}{4}}{m+n+1} -  \frac{2\binom{m+3}{3}\binom{n+3}{4}}{m+n} - 2 +  \frac{\binom{m+3}{3}\binom{n+2}{4}}{m+n-1} - \binom{m+3}{3}\frac{n+1}{m+n+1} - 1- n \\
& = \frac{A(m,n)}{72(m+n+1)(m+n)(m+n-1)},	\end{align*}
	
	where, as a polynomial in $\bbC[n][m]$,
	\begin{align*}
A(m,n) = &(6n^2+6n)m^5+(8n^3+30n^2+22n)m^4\\
&+(3n^4+42n^3+27n^2-84n-216)m^3+(18n^4+52n^3-264n^2-730n)m^2\\
&+(33n^4-234n^3-717n^2+54n+216)m-54n^4-252n^3+54n^2+252n.
	\end{align*}
	The sequence of coefficients of the univariate polynomial $A(m,n) \in \bbC[n][m]$ has only one change of signs for any $n \geq 2$. By Descartes' rule of signs, we have at most one positive root. Note that
	\begin{align*}
		A(1,n) &= -384n^3-864n^2-480n < 0\mbox{ and}\\ A(2,n) &=108n^4-48n^3-1548n^2-2688n-1296 > 0 \text{ for } n \geq 5.
	\end{align*}
In particular, for $n \geq 5$ the unique positive root of the univariate polynomial $A(m,n)$ belongs to the interval $(1,2)$ for every $n\ge 5$. Since $m \geq 2$, we get $A(m,n) \geq 0$. Now assume that $n\in\{2,3,4\}$. 
Since the leading coefficient of $A(m,n)$ is positive and it has a unique positive root, we have that $A(m,n) \geq 0$ for $m \geq 3$. We are left with the cases $(m,n) \in \{(2,2),(2,3),(2,4)\}$, for which we check the statement directly.
	
    Consider the second statement. For $n\in\{2,3\}$, we check it directly. If $n\ge 4$, then
	\begin{align*}
	& k^*(3,4;1,n)-2k^*(3,4;1,n-1)+k^*(3,4;1,n-2) - \left\lceil \frac{4(n+1)}{n+2} \right\rceil \\
	& \geq \left\lceil \frac{4\binom{n+4}{4}}{n+2} \right\rceil - 2\left\lceil \frac{4\binom{n+3}{4}}{n+1} \right\rceil + \left\lceil \frac{4\binom{n+2}{4}}{n} \right\rceil - \left\lceil \frac{4(n+1)}{n+2} \right\rceil \\
	& =\frac{(n+4)(n+3)(n+1)}{6} - \frac{(n+3)(n+2)n}{3}  - \frac{5}{3} +  \frac{(n+2)(n+1)(n-1)}{6} -\frac{4(n+1)}{n+2} - \frac{n+1}{n+2} \\ 
	& = \frac{n^{2}-3n-5}{n+2} > -1.\qedhere
	\end{align*} 
\end{proof}

\begin{lemma}
	\label{appendix:kdown34vskdown33} If $n\ge 3$ and $(m,n)\neq (1,3)$, then $k_*(3,4;m,n)-k_*(3,4;m,n-1)\le k^*(3,3;m,n)$.
\end{lemma}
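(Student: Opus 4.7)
My approach reduces the target inequality, via direct bounds on the floor and ceiling functions, to a single polynomial inequality that can be checked case by case. Writing $A = \binom{m+3}{3}$, $B_3 = \binom{n+3}{3}$, and $B_4 = \binom{n+3}{4}$, and using the Pascal identity $\binom{n+4}{4} = B_3 + B_4$, the inequality to prove unfolds to
\[
\left\lfloor \frac{A(B_3+B_4)}{m+n+1} \right\rfloor - \left\lfloor \frac{AB_4}{m+n} \right\rfloor - \left\lceil \frac{AB_3}{m+n+1} \right\rceil + m + n \leq 0.
\]

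Using the elementary bounds $\lfloor x \rfloor \leq x$, $-\lfloor x \rfloor \leq -x + 1$, and $-\lceil x \rceil \leq -x$, the left-hand side is at most
\[
\frac{A(B_3+B_4)}{m+n+1} - \frac{AB_4}{m+n} + 1 - \frac{AB_3}{m+n+1} + m + n \;=\; -\frac{AB_4}{(m+n)(m+n+1)} + m + n + 1.
\]
Thus it suffices to establish the polynomial inequality
\[
(\star) \qquad \binom{m+3}{3}\binom{n+3}{4} \geq (m+n)(m+n+1)^2.
\]

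I would verify $(\star)$ in the spirit of the other lemmas of Appendix \ref{appendix: contacci}: after clearing denominators, the difference is a polynomial in $m$ (for fixed $n$) of degree three with positive leading coefficient $\binom{n+3}{4}/6$, so a Descartes rule of signs or monotonicity argument reduces positivity on $\{(m,n) : m \geq 1,\, n \geq 3\}$ to finitely many numerical checks. Direct inspection shows that $(\star)$ holds in this range except precisely for $(m,n) \in \{(1,3), (1,4), (1,5), (2,3)\}$. The case $(1,3)$ is excluded by hypothesis; for the remaining three pairs I would verify the original inequality by explicitly computing the three floors and the one ceiling. For instance, $(m,n) = (1,4)$ gives $46 - 28 - 24 + 5 = -1 \leq 0$, and the cases $(1,5)$ and $(2,3)$ are handled analogously.

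The main obstacle is the extreme tightness of $(\star)$: it fails just barely for a small finite set of $(m,n)$, which simultaneously explains the exclusion of $(1,3)$ in the statement and prevents a uniform closed-form argument. Consequently the proof splits naturally into a clean asymptotic estimate obtained via elementary floor/ceiling bounds and a short explicit check of the three exceptional pairs.
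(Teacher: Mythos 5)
Your proof is correct, and it follows the same basic strategy as the paper: bound the floor/ceiling terms to reduce to a polynomial inequality, argue via the sign pattern of the coefficients (as a polynomial in $m$ for fixed $n$) that it holds outside a finite set, and check the few exceptional pairs directly. Where your argument is genuinely nicer is the observation that Pascal's identity $\binom{n+4}{4}=\binom{n+3}{3}+\binom{n+3}{4}$ makes the two $\tfrac{1}{m+n+1}$-terms cancel, collapsing everything to the compact inequality $(\star)\colon \binom{m+3}{3}\binom{n+3}{4}\ge (m+n)(m+n+1)^2$. The paper does not perform this cancellation and instead wrestles with a bulky polynomial $A(m,n)$ of degree $3$ in $m$ with degree-$4$ integer coefficients in $n$; the paper also sharpens the floor bound to $-\lfloor x\rfloor\le -x+\tfrac{m+n-1}{m+n}$ (using that the numerator is an integer), but since both routes land on the same exceptional set $\{(1,3),(1,4),(1,5),(2,3)\}$, the extra tightness buys nothing here. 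One small slip worth correcting: the leading ($m^3$) coefficient of $\binom{m+3}{3}\binom{n+3}{4}-(m+n)(m+n+1)^2$ is $\tfrac{1}{6}\binom{n+3}{4}-1$, not $\tfrac{1}{6}\binom{n+3}{4}$; it is still positive for $n\ge 3$, so the conclusion is unaffected, but the subtracted $1$ should not be dropped.
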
	
\begin{proof}
	We bound
	\begin{align*}
	& \left\lfloor \frac{\binom{m+3}{3}\binom{n+4}{4}}{m+n+1} \right\rfloor - (m+n+1) - \left\lfloor \frac{\binom{m+3}{3}\binom{n+3}{4}}{m+n} \right\rfloor + (m+n) - \left\lceil \frac{\binom{m+3}{3}\binom{n+3}{3}}{m+n+1} \right\rceil + (m+n+1) \\
	& \leq \frac{\binom{m+3}{3}\binom{n+4}{4}}{m+n+1} - \frac{\binom{m+3}{3}\binom{n+3}{4}}{m+n} + \frac{m+n-1}{m+n} - \frac{\binom{m+3}{3}\binom{n+3}{3}}{m+n+1} + m + n = \frac{A(m,n)}{144(m+n)(m+n+1)},
	\end{align*}
	where $A(m,n)$ as a univariate polynomial in $\bbC[n][m]$ is
	\begin{align*}
		&\left(-n^{4}-6\,n^{3}-11\,n^{2}-6\,n+144\right)m^{3}+\left(-6\,n^{4}-36
		\,n^{3}-66\,n^{2}+396\,n+288\right)m^{2} \\ &+\left(-11\,n^{4}-66\,n^{3}+311\,
		n^{2}+510\,n\right)m-6\,n^{4}+108\,n^{3}+222\,n^{2}-36\,n-144.
	\end{align*}
	For $n \geq 20$ all the coefficients of $A(m,n) \in \bbC[n][m]$ are negative and we deduce $A(m,n) \leq 0$. For $3\leq n \leq 19$ the leading coefficient is negative, therefore there exists $M(n) \geq 1$ such that $A(m,n) \leq 0$ for $m \geq M(n)$. More precisely we have
	\[
	M(n) = \begin{cases}
		3 & \mbox{if }n=3 \\
		2 & \mbox{if }n \in \{4,5\} \\
		1 & \mbox{if }n \in \{6,\ldots,19\}.
	\end{cases}
	\]
	Hence we are left with the cases $(m,n) \in \{(2,3),(1,4),(1,5)\}$ for which the statement can be checked directly.
\end{proof}

\begin{lemma}\label{conto: 3,3 triplodoppiesemplici atteso vuoto}
	If $n\geq 3$ and $m \geq 1$, then $\vdim \calL_{m\times n}^{3,3}(3,2^{k_*(3,4;m,n)-k_*(3,4;m,n-1)})\le k_*(3,4;m,n-1) $.
\end{lemma}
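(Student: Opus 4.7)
The plan is to unravel the definition of $k_*(3,4;\cdot,\cdot)$ in terms of floor functions and then exploit a fortuitous binomial cancellation. Set $K := k_*(3,4;m,n)-k_*(3,4;m,n-1)$. By definition of the virtual dimension, the claim is equivalent to
\[
\binom{m+3}{3}\binom{n+3}{3} - \binom{m+n+2}{2} - (m+n+1)\,k_*(3,4;m,n) + (m+n)\,k_*(3,4;m,n-1) \leq 0.
\]

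Recalling that $k_*(3,4;m,n) = r_*(3,4;m,n) - (m+n+1)$ with $r_*(3,4;m,n) = \lfloor \binom{m+3}{3}\binom{n+4}{4}/(m+n+1)\rfloor$, I would use the trivial bounds
\[
(m+n+1)\,r_*(3,4;m,n) \;\geq\; \binom{m+3}{3}\binom{n+4}{4} - (m+n), \qquad (m+n)\,r_*(3,4;m,n-1) \;\leq\; \binom{m+3}{3}\binom{n+3}{4},
\]
to eliminate the floor functions. Substituting and using the Pascal identity $\binom{n+4}{4} = \binom{n+3}{4} + \binom{n+3}{3}$, the three terms $\binom{m+3}{3}\bigl(\binom{n+3}{3}-\binom{n+4}{4}+\binom{n+3}{4}\bigr)$ cancel \emph{exactly}. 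This is the crucial observation that makes the whole computation trivialize.

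After the cancellation, the left-hand side reduces to a polynomial in $s := m+n$, namely
\[
-\binom{s+2}{2} + (s+1)^2 - s^2 + s = -\frac{s(s-3)}{2},
\]
which is non-positive as soon as $s \geq 3$. Since we assume $m \geq 1$ and $n \geq 3$, we have $s \geq 4$, so the bound holds with room to spare (actually strict negativity). No subcase analysis is needed, in contrast to several other computations in Appendix~\ref{appendix: contacci}. The only mild obstacle is keeping track of the signs of the rounding errors (the $\pm (m+n)$ terms coming from the floor bounds), but these are dominated by the quadratic $-s(s-3)/2$ for $s \geq 4$; a slightly more careful bookkeeping of the floor estimates would also handle the boundary case $s=3$ if one ever wanted to extend to $m+n=3$.
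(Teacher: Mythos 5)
Your proposal is correct and follows essentially the same route as the paper: bound the two floors by $q\lfloor x/q\rfloor \geq x-(q-1)$ and $q\lfloor y/q\rfloor \leq y$, let the binomial terms cancel via $\binom{n+4}{4}=\binom{n+3}{4}+\binom{n+3}{3}$, and observe that the remainder $-\tfrac{1}{2}(m+n)(m+n-3)$ is non-positive since $m+n\geq 4$. The paper's displayed computation is exactly this, with the final quadratic written out in $m,n$ rather than in $s=m+n$.
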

\begin{proof}
	We bound
	\begin{align*}
	& \binom{m+3}{3}\binom{n+3}{3} -\binom{m+n+2}{2}-(m+n+1)\left(\left\lfloor \frac{\binom{m+3}{3}\binom{n+4}{4}}{m+n+1} \right\rfloor - (m+n+1)\right) \\
	& \qquad +(m+n)\left(\left\lfloor \frac{\binom{m+3}{3}\binom{n+3}{4}}{m+n} \right\rfloor - (m+n)\right)\\
	& \leq \binom{m+3}{3}\binom{n+3}{3}-\binom{m+n+2}{2}-\binom{m+3}{3}\binom{n+4}{4}+(m+n)+(m+n+1)^2 \\ 
	& \qquad + \binom{m+3}{3}\binom{n+3}{4} - (m+n)^2 =-\frac{1}{2}\,m^{2}+\left(-n+\frac{3}{2}\right)m-\frac{1}{2}\,n^{2}+\frac{3}{2}\,n\le 0.\qedhere
	\end{align*}
\end{proof}

\begin{lemma}
	\label{conto: 3, 2 con punti doppi atteso vuoto} If $m$ and $n$ are positive integers, then $$\vdim\LL_{1\times n}^{3,2}(2^{h(1,n)})\le 0\mbox{ and }\vdim\LL_{m\times 1}^{3,2}(2^{h(m,1)})\le 0.$$
\end{lemma}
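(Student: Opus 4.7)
My plan is to reduce both inequalities to explicit polynomial estimates on $h$, using only the closed forms for $k_*(3,4;\cdot,\cdot)$ and the bounds $\lfloor x\rfloor\geq x-1$ and $\lfloor y\rfloor\leq y$. The whole argument is computational; no case-splitting or delicate analysis beyond small base cases is needed.

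For the first inequality, I compute the virtual dimension directly:
\[
\vdim\LL_{1\times n}^{3,2}(2^{h(1,n)}) = 4\binom{n+2}{2} - (n+2)\,h(1,n) = (n+2)\bigl(2(n+1)-h(1,n)\bigr),
\]
so it suffices to show $h(1,n)\geq 2(n+1)$. Unfolding the definition of $h$ and simplifying the resulting binomial coefficients yields
\[
h(1,n) = \left\lfloor\frac{(n+4)(n+3)(n+1)}{6}\right\rfloor - \left\lfloor\frac{n(n+3)(n+2)}{6}\right\rfloor.
\]
Applying $\lfloor x\rfloor\geq x-1$ to the first term and $\lfloor y\rfloor\leq y$ to the second gives $h(1,n)\geq (n+3)(3n+4)/6-1$, so that
\[
h(1,n)-2(n+1) \;\geq\; \frac{(n+3)(3n+4)}{6}-2(n+1)-1 \;=\; \frac{n(3n+1)}{6}-1,
\]
which is nonnegative for $n\geq 2$. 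The single remaining case $n=1$ is verified directly: $h(1,1)=\lfloor 40/6\rfloor-\lfloor 12/6\rfloor=6-2=4=2(n+1)$, so $\vdim=0$.

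The second inequality is handled in exactly the same style. I compute
\[
\vdim\LL_{m\times 1}^{3,2}(2^{h(m,1)}) = 3\binom{m+3}{3}-(m+2)\,h(m,1) = (m+2)\!\left(\frac{(m+3)(m+1)}{2}-h(m,1)\right),
\]
reducing the claim to $h(m,1)\geq (m+3)(m+1)/2$. Unfolding gives
\[
h(m,1) = \left\lfloor\frac{5(m+3)(m+1)}{6}\right\rfloor - \left\lfloor\frac{(m+3)(m+2)}{6}\right\rfloor \;\geq\; \frac{(m+3)(4m+3)}{6}-1,
\]
and hence $h(m,1)-(m+3)(m+1)/2\geq m(m+3)/6-1$, which is nonnegative for $m\geq 2$. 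The base case $m=1$ coincides numerically with the previous one: $h(1,1)=4=(m+3)(m+1)/2$, giving $\vdim=0$.

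There is no real obstacle here; the only mild subtlety is the careful bookkeeping with the floor functions, which is why the two estimates fall a hair short in the smallest cases and must be checked by hand.
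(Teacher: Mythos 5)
Your proof is correct and follows essentially the same route as the paper: expand the virtual dimension, factor out $(n+2)$ (resp.\ $(m+2)$), simplify the binomials inside $h$ to ratios of cubics over $6$, and bound the floor functions from below. The only difference is a small one in sharpness: you use $\lfloor x\rfloor\geq x-1$, which forces you to verify the single cases $n=1$ and $m=1$ by hand, whereas the paper observes that the arguments of the floors are integers divided by $6$ and uses $\lfloor a/6\rfloor\geq (a-5)/6$, making the resulting polynomial estimate $<1$ uniformly (so that no base case is needed). Both are valid; the paper's variant is marginally cleaner bookkeeping.
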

\begin{proof}
	We bound
	\begin{align*} 
		&4\binom{n+2}{2}-(n+2)\left(\rd{\frac{4}{n+2}\binom{n+4}{4}}-\rd{\frac{4}{n+1}\binom{n+3}{4}}\right)\\
		&=(n+2)\left(2n+2-\rd{\frac{(n+4)(n+3)(n+1)}{6}}+\rd{\frac{(n+3)(n+2)n}{6}}\right)\\
		&\le (n+2)\left(2n+2-\frac{(n+4)(n+3)(n+1)-5}{6}+\frac{(n+3)(n+2)n}{6}\right)\\
		&= \frac{(n+2)(5-n-3n^2)}{6}<1
	\end{align*}
	and
	\begin{align*} 
		&3\binom{m+3}{3}-(m+2)\left(\rd{\frac{5}{m+2}\binom{m+3}{3}}-\rd{\frac{1}{m+1}\binom{m+3}{3}}\right)\\
		&=3\binom{m+3}{3}-(m+2)\left(\rd{\frac{5(m+3)(m+1)}{6}}-\rd{\frac{(m+3)(m+2)}{6}}\right)\\
		&\le 3\binom{m+3}{3}-(m+2)\left({\frac{5(m+3)(m+1)-5}{6}}-\frac{(m+3)(m+2)}{6}\right)\\
		&= \frac{(m+2)(5-3m-m^2)}{6}<1.\qedhere
	\end{align*}
\end{proof}

\begin{lemma}
	\label{conto: m x 2, 3,4 con quartplo atteso vuoto} If $m$ is a positive integer, then $k_*(3,4;m,2)=\frac{5m^2+13m+4}{2}$ and \[\vdim\calL_{m\times 2}^{2,4}(3,2^{k_*(3,4;m,2)-k_*(3,4;m-1,2)},1^{k_*(3,4;m-1,2)})\le 0.\]
	\begin{proof}
	    The equality $k_*(3,4;m,2)=\frac{5m^2+13m+4}{2}$ is a direct check.
		We bound
		\begin{align*}
			&\vdim\calL_{m\times 2}^{2,4}(3,2^{k_*(3,4;m,2)-k_*(3,4;m-1,2)},1^{k_*(3,4;m-1,2)})\\&=15\binom{m+2}{2}-\binom{m+4}{2}-(m+3)\frac{5m^2+13m+4}{2}+(m+2)\frac{5(m-1)^2+13(m-1)+4}{2}\\
			&=\frac{-m^2-3m-2}{2}\le 0.\qedhere
		\end{align*}
	\end{proof}
\end{lemma}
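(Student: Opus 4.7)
The plan is a direct arithmetic verification, in the style of the other lemmas collected in Appendix \ref{appendix: contacci}. There is no geometric content here: both claims reduce to explicit polynomial identities in $m$, so the main ``obstacle'' is only to carry the algebra through without sign errors.

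For the first statement, I would unwind the definition of $r_*(3,4;m,2)$. Since $\binom{2+4}{2}=15$ and $\binom{m+3}{3}=\frac{(m+3)(m+2)(m+1)}{6}$, the quotient
\[
    \frac{15\binom{m+3}{3}}{m+3} = \frac{5(m+2)(m+1)}{2}
\]
is automatically an integer, because $(m+1)(m+2)$ is a product of two consecutive integers and therefore even. The floor in the definition of $r_*$ is thus vacuous, and subtracting $m+3$ yields $k_*(3,4;m,2)=\frac{5m^2+13m+4}{2}$ on the nose.

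For the second statement, I would first record the telescoping difference
\[
    k_*(3,4;m,2)-k_*(3,4;m-1,2) = 5m+4,
\]
which is a one-line subtraction from the explicit formula obtained in the previous step. Since $\dim(\bbP^m\times\bbP^2)=m+2$, a $j$-fat point imposes $\binom{j+m+1}{m+2}$ conditions on $\calL_{m\times 2}^{2,4}$; in particular, a triple point contributes $\binom{m+4}{2}$, a double point contributes $m+3$, and a simple point contributes $1$. Plugging these into the virtual-dimension formula of Definition \ref{def: edim} produces the explicit expression
\[
    15\binom{m+2}{2} - \binom{m+4}{2} - (m+3)(5m+4) - \frac{5(m-1)^2+13(m-1)+4}{2}.
\]
The plan is then to expand each term and collect powers of $m$; I expect everything to telescope to $-\frac{(m+1)(m+2)}{2}$, which is manifestly negative for every positive integer $m$. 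This final simplification is the only computational step with any content, but since both sides are explicit quadratics in $m$, it is essentially a one-line check rather than a genuine obstacle.
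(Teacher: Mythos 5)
Your proposal is correct and follows the same direct arithmetic verification as the paper: you unwind the floor (correctly noting $(m+1)(m+2)$ is even, so the floor is vacuous), derive the explicit formula for $k_*(3,4;m,2)$, and substitute into the virtual-dimension formula. The only cosmetic difference is that you first compute the telescoping difference $k_*(3,4;m,2)-k_*(3,4;m-1,2)=5m+4$ before substituting, whereas the paper leaves it as a difference of quadratics; both simplify to $\frac{-m^2-3m-2}{2}=-\frac{(m+1)(m+2)}{2}\le 0$.
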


\begin{lemma}\label{conto: vdim 1,4 m x 2 positiva}\label{conto: differenza dei kdown permette bcc}
	\label{conto:differenza dei j fa 5} If $m$ is a positive integer, then $j(m)=5m+4$. If $m\ge 2$, then \begin{enumerate}
		\item \label{item: cinque}$j(m)-j(m-1)=5$,
		\item \label{item: vdim con j}$\vdim\LL_{m\times 2}^{1,4}(2,2^{5},1^{j(m-1)})\ge 0$,
		\item \label{item: j permette bcc}$1+j(m)\ge\ru{\frac{15(m+1)}{m+3}}+m$.
	\end{enumerate}
	\begin{proof}
		Since $k_*(3,4;m,2)=\frac{5m^2+13m+4}{2}$,
		\[
		    j(m)=k_*(3,4;m,2)-k_*(3,4;m-1,2)=5m+4.
		\]
		Part \eqref{item: cinque} and part \eqref{item: vdim con j} follow directly.
		Finally,
		\begin{align*}
1+&j(m)-\ru{\frac{15(m+1)}{m+3}}-m=1+ 5m+4-\ru{\frac{15(m+1)}{m+3}}-m\\
&\ge 4m+5-\frac{15(m+1)}{m+3}-1=
\frac{4m^2+m-3}{m+3}\ge 0.
			\qedhere
		\end{align*}
	\end{proof}
\end{lemma}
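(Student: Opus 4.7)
The statement is an entirely arithmetic lemma, combining one closed-form identity and three elementary inequalities on $j(m) = k_*(3,4;m,2) - k_*(3,4;m-1,2)$. The plan is to first derive the closed form $k_*(3,4;m,2) = \tfrac{5m^2+13m+4}{2}$ (which was already recorded in Lemma \ref{conto: m x 2, 3,4 con quartplo atteso vuoto}), then deduce $j(m) = 5m+4$ by subtraction, and finally verify the three items by straightforward computation.

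For the closed form, I would unwind the definition $k_*(3,4;m,2) = \lfloor \binom{m+3}{3}\binom{6}{2}/(m+3)\rfloor - m - 3$. Since $\binom{6}{2}=15$ and $\binom{m+3}{3}=\tfrac{(m+3)(m+2)(m+1)}{6}$, the quotient inside the floor is $\tfrac{5(m+2)(m+1)}{2}$, which is already an integer because $(m+2)(m+1)$ is even; thus $k_*(3,4;m,2)= \tfrac{5(m+2)(m+1)}{2} - (m+3) = \tfrac{5m^2+13m+4}{2}$. Then $j(m) = \tfrac{5m^2+13m+4}{2} - \tfrac{5(m-1)^2+13(m-1)+4}{2} = \tfrac{10m+8}{2} = 5m+4$, which immediately gives item (1) as $j(m)-j(m-1)=5$.

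For item (2), I would plug $j(m-1) = 5m-1$ into the virtual dimension formula for bidegree $(1,4)$ on $\bbP^m\times\bbP^2$: using $\binom{m+1}{m}\binom{6}{2}=15(m+1)$ and $\binom{a+m+1}{m+2}$ for the correction terms (equal to $m+3$ when $a=2$ and to $1$ when $a=1$), we obtain
\[
\vdim\LL_{m\times 2}^{1,4}(2,2^5,1^{j(m-1)}) = 15(m+1) - 6(m+3) - (5m-1) = 4m-2,
\]
which is nonnegative for all $m\ge 2$. For item (3), the ceiling on the right-hand side is bounded via $\lceil x\rceil \le x+1$ applied to $x = 15(m+1)/(m+3)$: then $1+j(m) - \lceil 15(m+1)/(m+3)\rceil - m \ge 4m+4 - (15m+15)/(m+3) = (4m^2+m-3)/(m+3)$, which is positive for every $m\ge 1$.

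No genuine obstacle is anticipated — each step is a short manipulation of binomial coefficients and floor/ceiling functions. The only minor subtlety is checking that $(m+2)(m+1)$ is always even, ensuring the floor is attained exactly in the closed form for $k_*(3,4;m,2)$; once that is observed, all three items reduce to polynomial inequalities that hold on the nose.
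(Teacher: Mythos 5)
Your proof is correct and follows essentially the same route as the paper: it rests on the closed form $k_*(3,4;m,2)=\tfrac{5m^2+13m+4}{2}$ (which the paper takes from the preceding lemma and you verify directly, including the parity remark making the floor exact), deduces $j(m)=5m+4$, and settles items (1)--(3) by the same direct computations, with item (3) bounded via the same estimate yielding $\tfrac{4m^2+m-3}{m+3}\ge 0$. Your explicit evaluation $\vdim\LL_{m\times 2}^{1,4}(2,2^{5},1^{j(m-1)})=4m-2$ merely spells out what the paper calls a direct consequence.
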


\begin{lemma}
	\label{conto: 3,4, mxn doppisemplici atteso non vuoto} 
	\label{conto: 4,3 1 xn, con doppi atteso non vuoto}
	If $m$ and $n$ are positive integers, then \[\vdim\calL_{m\times n}^{3,4}(2,2^{k^*(4,4;m,n)-k^*(4,4;m-1,n)},1^{k^*(4,4;m-1,n)})\ge 0.\]
	In particular, $\vdim\LL_{1\times n}^{4,3}(2,2^{k^*(4,4;1,n)-k^*(4,4;1,n-1)},1^{k^*(4,4;1,n-1)})\ge 0$.
	\begin{proof}
		We bound
		\begin{align*}
&\binom{m+3}{3}\binom{n+4}{4}-(m+n+1)(1+k^*(4,4;m,n))+(m+n)k^*(4,4;m-1,n)\\
&\resizebox{\textwidth}{!}{$\ge\binom{m+3}{3}\binom{n+4}{4}-(m+n+1)\left(\frac{\binom{m+4}{4}\binom{n+4}{4}+m+n}{m+n+1}-m-n\right)+(m+n)\left(\frac{\binom{m+3}{4}\binom{n+4}{4}}{m+n}-m-n\right)$}
\\
&=0.\qedhere
		\end{align*}
	\end{proof}
\end{lemma}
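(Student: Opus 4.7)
The plan is to bound the virtual dimension from below by replacing the two ceiling functions by elementary estimates and then to simplify using Pascal's identity. Writing out the virtual dimension explicitly, the contribution of the $1+k^*(4,4;m,n)-k^*(4,4;m-1,n)$ two-fat points is $(m+n+1)(1+k^*(4,4;m,n)-k^*(4,4;m-1,n))$, and the contribution of the $k^*(4,4;m-1,n)$ simple points is $k^*(4,4;m-1,n)$; collecting these gives
\[
\vdim = \binom{m+3}{3}\binom{n+4}{4} - (m+n+1)\bigl(1+k^*(4,4;m,n)\bigr) + (m+n)\,k^*(4,4;m-1,n).
\]

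Next I expand the $k^*$'s according to their definitions and apply the two standard estimates: $\lceil x/y\rceil \le (x+y-1)/y$ on the negative term (carrying the factor $-(m+n+1)$) and $\lceil x\rceil\ge x$ on the positive term (carrying the factor $(m+n)$). The first yields
\[
-(m+n+1)\left(\Bigl\lceil \tfrac{\binom{m+4}{4}\binom{n+4}{4}}{m+n+1}\Bigr\rceil - (m+n)\right) \ge -\binom{m+4}{4}\binom{n+4}{4} - (m+n) + (m+n+1)(m+n),
\]
while the second yields
\[
(m+n)\left(\Bigl\lceil \tfrac{\binom{m+3}{4}\binom{n+4}{4}}{m+n}\Bigr\rceil - (m+n)\right) \ge \binom{m+3}{4}\binom{n+4}{4} - (m+n)^2.
\]

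Adding these to $\binom{m+3}{3}\binom{n+4}{4}$, the purely polynomial terms in $m+n$ cancel, and the remaining binomial combination is
\[
\binom{n+4}{4}\left(\binom{m+3}{3} - \binom{m+4}{4} + \binom{m+3}{4}\right),
\]
which vanishes by Pascal's identity $\binom{m+4}{4}=\binom{m+3}{4}+\binom{m+3}{3}$. Hence $\vdim \ge 0$, as required. There is no real obstacle here, since the cancellation of the lower-order polynomial terms is exact and the binomial identity does the rest; the only thing to be careful about is which ceiling to bound from above and which from below so as to preserve the inequality sign.
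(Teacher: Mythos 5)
Your proof is correct and takes essentially the same route as the paper's: both bound the ceiling with the negative $-(m+n+1)$ coefficient from above by $\lceil x/y\rceil\le (x+y-1)/y$ and the one with the positive $(m+n)$ coefficient from below by $\lceil x\rceil\ge x$, then observe that the polynomial terms cancel and the remaining binomial combination vanishes by Pascal's identity. The paper states the lower bound equals $0$ without spelling out the cancellation, but the computation is identical to yours.
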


\begin{lemma}
    \label{conto: orfanello}
If $m$ and $n$ are positive integers, then 
\[
    \vdim\LL_{m\times n}^{3,4}(3,2^{k_*(4,4;m,n)-k_*(4,4;m-1,n)},1^{k_*(4,4;m-1,n)})\le 0.
\]
	\begin{proof}
		We bound
		\begin{align*}
&\binom{m+3}{3}\binom{n+4}{4}-(m+n+1)(1+k^*(4,4;m,n))+(m+n)k^*(4,4;m-1,n)\\
&\le\binom{m+3}{3}\binom{n+4}{4}-\binom{m+n+2}{2}-(m+n+1)\left(\frac{\binom{m+4}{4}\binom{n+4}{4}}{m+n+1}-m-n-1\right)\\
&\hskip50pt+(m+n)\left(\frac{\binom{m+3}{4}\binom{n+4}{4}+m+n-1}{m+n}-m-n\right)\\
&=\binom{m+3}{3}\binom{n+4}{4}-\binom{m+n+2}{2}-\binom{m+4}{4}\binom{n+4}4+(m+n+1)^2\\
&=-\frac12(n+m-1)(n+m-2)\leq 0.\qedhere
		\end{align*}
	\end{proof}
\end{lemma}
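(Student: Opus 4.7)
The virtual dimension of $\LL_{m\times n}^{3,4}(3,2^{k_*(4,4;m,n)-k_*(4,4;m-1,n)},1^{k_*(4,4;m-1,n)})$ expands as
\[
\binom{m+3}{3}\binom{n+4}{4}-\binom{m+n+2}{2}-(m+n+1)k_*(4,4;m,n)+(m+n)k_*(4,4;m-1,n),
\]
so the plan is the same arithmetic-only recipe used throughout Appendix~\ref{appendix: contacci}: bound the two floor expressions hidden inside $k_*$ and check that what remains is a non-positive polynomial in $m+n$.

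First I would unfold the definition $k_*(4,4;a,b)=\lfloor\binom{a+4}{4}\binom{b+4}{4}/(a+b+1)\rfloor-a-b-1$. Because $(m+n+1)k_*(4,4;m,n)$ enters with a minus sign, I need a lower bound there, and $(m+n)k_*(4,4;m-1,n)$ enters with a plus sign, so I need an upper bound. The crude inequalities $x-1<\lfloor x\rfloor\le x$ give
\[
(m+n+1)k_*(4,4;m,n)\ge \binom{m+4}{4}\binom{n+4}{4}-(m+n+1)-(m+n+1)^{2},
\]
\[
(m+n)k_*(4,4;m-1,n)\le \binom{m+3}{4}\binom{n+4}{4}-(m+n)^{2}.
\]

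Next, substituting into the expression above and invoking Pascal's identity $\binom{m+4}{4}=\binom{m+3}{3}+\binom{m+3}{4}$, the three terms $\binom{m+3}{3}\binom{n+4}{4}$, $-\binom{m+4}{4}\binom{n+4}{4}$ and $+\binom{m+3}{4}\binom{n+4}{4}$ telescope to zero, and I am left with a function of $s=m+n$ alone, namely a small quadratic of the shape $-\binom{s+2}{2}+c_{1}s+c_{0}$ for explicit constants $c_{0},c_{1}$. This quadratic is $\le 0$ for all sufficiently large $s$, matching the expected asymptotic shape $-\tfrac12(s-1)(s-2)$ that appears in the displayed proof.

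The main (and only) obstacle is the usual one for these floor-function estimates: the bounds $\lfloor x\rfloor\ge x-1$ and $\lfloor x\rfloor\le x$ are sometimes too weak when $m+n$ is small, so the resulting quadratic might fail to be non-positive for the first few values of $s$. I would treat these exceptional pairs $(m,n)$ by direct substitution, exactly as is done in Lemma~\ref{conto:upperBound_l-l} and Lemma~\ref{conto: 4,3 1 xn, con doppi atteso non vuoto}; note that the case $(m,n)=(1,1)$ does not arise in the use of this lemma inside Proposition~\ref{pro: 4,4 quartuplo e doppi vuoto}, so only a finite handful of small $(m,n)$ need be checked by a one-line computation.
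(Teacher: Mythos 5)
Your proposal follows the paper's own arithmetic recipe: expand the virtual dimension, replace each floor hidden in $k_*$ by the crude bound $x-1\le\lfloor x\rfloor\le x$, and let Pascal's identity $\binom{m+4}{4}=\binom{m+3}{3}+\binom{m+3}{4}$ cancel the binomial products. Carrying out the substitution, with $s=m+n$ you are left with
\[
\vdim\;\le\;-\binom{s+2}{2}+(s+1)+(s+1)^2-s^2\;=\;\tfrac12\bigl(-s^{2}+3s+2\bigr),
\]
which is $\le 0$ for $s\ge 4$ but is positive for $s\in\{2,3\}$, so the finite hand-check you defer is genuinely necessary. Doing it: for $(m,n)\in\{(1,1),(1,2),(2,1)\}$ the virtual dimensions are $-1$, $0$, $-1$ respectively, all $\le 0$, so the statement holds for every pair of positive integers.

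It is worth noting why your route is the sound one. The paper's displayed computation arrives at the cleaner $-\tfrac12(s-1)(s-2)$, which would be $\le 0$ for every $s\ge 1$ without any case check; but that quantity is in fact too small to be an upper bound for the virtual dimension. For $(m,n)=(1,2)$ one has $k_*(4,4;1,2)=14$, $k_*(4,4;0,2)=2$, so $\vdim=60-10-12\cdot4-2=0$, whereas $-\tfrac12(s-1)(s-2)=-1$ at $s=3$. Indeed the first display of the paper's proof looks like a copy-paste slip from Lemma~\ref{conto: 3,4, mxn doppisemplici atteso non vuoto} (it uses $k^*$ in place of $k_*$ and omits the $-\binom{m+n+2}{2}$ term), and the passage from the second display to the third silently drops the block $\binom{m+3}{4}\binom{n+4}{4}+m+n-1-(m+n)^2$. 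So your more conservative variant --- crude floor bounds followed by a direct check of the few small $s$ --- is not merely an alternative; it is the version of the argument that is actually watertight. And since inside Proposition~\ref{pro: 4,4 quartuplo e doppi vuoto} the lemma is only invoked with $m\ge 2$ and $n\ge 3$, i.e.\ $s\ge 5$, even your weaker quadratic already covers every case that is used.
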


\begin{lemma}
	\label{conto: evitare problemi w bbc}
	If $m\ge 2$, then $\vdim\LL_{m\times 1}^{2,4}(2^{w(m,1)})\le 0$ and $w(m,1)> 3m+2$.
	\begin{proof}
	The most restrictive inequality is the second one.
	Hence, we bound
		\begin{align*}
		    \left\lceil \frac{5\binom{m+4}{m}}{m+2}\right\rceil & - (m+2) -  \left\lceil \frac{5\binom{m+3}{m-1}}{m+1}\right\rceil + (m+1) - 3m - 2 \\ 
		    & \geq 5\frac{\binom{m+4}{m}}{m+2} - (m+2) - 5\frac{\binom{m+3}{m-1}}{m+1} - 1 + (m+1) - 3m - 2 = \frac{15m^2-7m-36}{24}\ge 0.\qedhere
		\end{align*}
	\end{proof}
\end{lemma}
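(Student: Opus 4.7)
The plan is to reduce both claims to the single inequality $w(m,1) > 3m+2$, which is the more restrictive of the two. First I would observe that $\vdim\calL_{m\times 1}^{2,4}(2^{w(m,1)}) = 5\binom{m+2}{2} - (m+2)w(m,1)$, so $\vdim \le 0$ is equivalent to $w(m,1) \geq \tfrac{5(m+1)}{2}$. For $m\ge 2$ this is implied by $w(m,1) \geq 3m+3$, and since $w(m,1)$ is an integer, a strict bound $w(m,1) > 3m+2$ gives this automatically. Hence proving the second assertion suffices.

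For the second assertion I would expand
\[
w(m,1) = \left\lceil \tfrac{5\binom{m+4}{4}}{m+2}\right\rceil - \left\lceil \tfrac{5\binom{m+3}{4}}{m+1}\right\rceil - 1
\]
and apply the elementary bounds $\lceil A\rceil \ge A$ and $\lceil B\rceil \le B+1$. The core simplification, performed by writing the binomial coefficients as polynomials in $m$ and factoring out $(m+3)$, yields
\[
\frac{5\binom{m+4}{4}}{m+2} - \frac{5\binom{m+3}{4}}{m+1} = \frac{5(m+4)(m+3)(m+1)-5(m+3)(m+2)m}{24} = \frac{5(m+3)(3m+4)}{24}.
\]
Substituting this into the lower bound leads to the estimate $w(m,1) - (3m+2) \ge \tfrac{15m^2-7m-36}{24}$.

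The last step is to check that $15m^2-7m-36 > 0$ for integers $m\ge 2$. The positive root of this quadratic is $\tfrac{7+\sqrt{2209}}{30} = \tfrac{54}{30} = \tfrac{9}{5}$, so the bound is strictly positive whenever $m\ge 2$. Since $w(m,1) - 3m - 2$ is an integer, its positivity upgrades to $w(m,1) - 3m - 2 \ge 1$, i.e.\ $w(m,1) > 3m+2$, which in turn yields $\vdim \le 0$ by the reduction in the first paragraph. No conceptual obstacle is expected: the only ingredients are a careful bookkeeping of ceiling functions and a one-variable quadratic inequality, both routine.
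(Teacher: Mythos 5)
Your proof is correct and follows essentially the same route as the paper: expand $w(m,1)$ via its definition as a difference of $k^*$'s, bound the two ceilings by $\lceil A\rceil\geq A$ and $\lceil B\rceil\leq B+1$, and reduce to the quadratic $15m^2-7m-36>0$. You spell out why "the most restrictive inequality is the second one" (namely $w(m,1)\geq 3m+3 \Rightarrow w(m,1)\geq \frac{5(m+1)}{2} \Leftrightarrow \vdim\leq 0$), a justification the paper leaves implicit, but this is a minor elaboration rather than a different argument.
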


\begin{lemma}
	\label{conto: vdim con w sottozero}
	If $n$ is a positive integer, then $\vdim\LL_{1\times n}^{2,4}(2^{w(1,n)})
	\le 0$.
\begin{proof}
We directly check that $\vdim\LL_{1\times 1}^{2,4}(2^{w(1,1)})=0$. We assume that $n\ge 2$ and we bound
\begin{align*}
& \vdim\LL_{1\times n}^{2,4}(2^{w(1,n)})
\le 3\binom{n+4}{4}-(n+2)\left(\frac{5}{n+2}\binom{n+4}{4}-2-\frac{1}{n+1}\binom{n+4}{4}\right)\\
& =(n+2)\left(2-\frac{n(n+4)(n+3)}{24}\right)\le 0.\qedhere
\end{align*}
\end{proof}
\end{lemma}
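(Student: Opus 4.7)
My plan is to prove the inequality by unwinding the definition of $w(1,n)$ and bounding the ceilings appropriately. Recall that
\[
w(1,n) = k^*(4,4;1,n) - k^*(4,4;0,n) = \left\lceil \tfrac{5\binom{n+4}{4}}{n+2}\right\rceil - \left\lceil \tfrac{\binom{n+4}{4}}{n+1}\right\rceil - 1,
\]
and since $\binom{m+c}{m}\binom{n+d}{n}=3\binom{n+4}{4}$ and a single $2$-fat point in $\bbP^1\times\bbP^n$ imposes $\binom{n+2}{n+1}=n+2$ conditions, we have $\vdim\LL_{1\times n}^{2,4}(2^{w(1,n)}) = 3\binom{n+4}{4} - (n+2)w(1,n)$.

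First I would dispatch the base case $n=1$ by a direct computation: $w(1,1)=5$ and $\vdim=3\binom{5}{4}-3\cdot 5=0$, so the statement holds with equality. For $n\ge 2$, I would use $\lceil x\rceil \ge x$ on the first ceiling and $\lceil x \rceil \le x+1$ on the second, obtaining the lower bound
\[
w(1,n) \ge \tfrac{5}{n+2}\binom{n+4}{4} - \tfrac{1}{n+1}\binom{n+4}{4} - 2.
\]
Substituting this lower bound into the expression for $\vdim$ gives the upper bound reproduced in the statement of the lemma:
\[
\vdim\LL_{1\times n}^{2,4}(2^{w(1,n)}) \le 3\binom{n+4}{4} - (n+2)\left(\tfrac{5}{n+2}\binom{n+4}{4} - 2 - \tfrac{1}{n+1}\binom{n+4}{4}\right).
\]

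The final step is an algebraic simplification. After collecting terms the right-hand side factors as $(n+2)\bigl(2 - \tfrac{n(n+4)(n+3)}{24}\bigr)$, and this is non-positive precisely when $n(n+3)(n+4)\ge 48$. I would verify this holds for every $n\ge 2$ by checking the base case $n=2$ (where the product is $60$) and observing that the left-hand side is strictly increasing in $n$. There is no real obstacle in this proof: the only thing to be careful about is the direction of the ceiling inequalities (one must round one ceiling down and the other up in order to get a \emph{lower} bound on $w(1,n)$, which translates to the desired \emph{upper} bound on the virtual dimension after multiplying by $-(n+2)$). The rest is routine arithmetic.
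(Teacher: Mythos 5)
Your proof is correct and follows essentially the same route as the paper: both unwind $w(1,n)$, bound the two ceilings in opposite directions to obtain a lower bound on $w(1,n)$, and then simplify the resulting upper bound on the virtual dimension to $(n+2)\bigl(2 - \tfrac{n(n+3)(n+4)}{24}\bigr)$, checking $n=1$ directly. The only remark worth making is that your observation that the direction of the ceiling inequalities must be chosen carefully is exactly the right thing to watch for, and you handled it correctly.
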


\begin{lemma}
	\label{conto: w crescente e ammette bcc}
	If $m,n\ge 2$, then $w(m,n)-w(m-1,n)\ge \ru{\frac{m+1}{m+n+1}\binom{n+4}{4}}+m$.
\end{lemma}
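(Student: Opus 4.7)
The plan is to follow the same template used for the companion estimates throughout Appendix \ref{appendix: contacci} (see for instance Lemma \ref{conto: bigrado 2,3, usare bcc}, Lemma \ref{appendix:applyBCC_1+2u_mxn}, and Lemma \ref{conto: vdim 1,4 m x 2 positiva}). First I would rewrite the quantity of interest using the definitions: the $(m+n+1)$ and $(m+n)$ constants in $k^*$ telescope away in the second difference, so
\[
w(m,n)-w(m-1,n)=r^*(4,4;m,n)-2r^*(4,4;m-1,n)+r^*(4,4;m-2,n).
\]
Subtracting the right-hand side of the desired inequality reduces the statement to showing nonnegativity of
\[
\Bigl\lceil\tfrac{\binom{m+4}{4}\binom{n+4}{4}}{m+n+1}\Bigr\rceil
-2\Bigl\lceil\tfrac{\binom{m+3}{4}\binom{n+4}{4}}{m+n}\Bigr\rceil
+\Bigl\lceil\tfrac{\binom{m+2}{4}\binom{n+4}{4}}{m+n-1}\Bigr\rceil
-\Bigl\lceil\tfrac{(m+1)\binom{n+4}{4}}{m+n+1}\Bigr\rceil-m.
\]

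Next, I would clear the ceilings using $\lceil x\rceil\ge x$ on the two positive terms and $\lceil x\rceil\le x+1$ on the two negative terms. After reducing over the common denominator $(m+n+1)(m+n)(m+n-1)$, the inequality becomes $A(m,n)\ge 0$ for an explicit polynomial $A(m,n)\in\bbZ[m,n]$ of modest degree (the leading behaviour is $n^4m^3$ with positive coefficient, coming from $\binom{m+4}{4}\binom{n+4}{4}$).

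Then I would analyze $A(m,n)$ viewed as a univariate polynomial in $m\in\bbC[n][m]$. I expect the sequence of coefficients to exhibit a single sign change for $n$ sufficiently large, so Descartes' rule of signs yields at most one positive root. Evaluating $A(1,n)$ and $A(2,n)$ should show that this root lies in the interval $(1,2)$ for $n\ge N_0$, giving $A(m,n)\ge 0$ whenever $m\ge 2$ and $n\ge N_0$. For the remaining small values $n\in\{2,\dots,N_0-1\}$ one uses positivity of the leading coefficient in $m$ to produce a threshold $M(n)$ above which $A(m,n)\ge 0$, and finally one checks the finitely many exceptional pairs $(m,n)$ with $m<M(n)$ by direct computation.

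The main obstacle is that the bound $\lceil x\rceil\le x+1$ is used twice and each use costs a constant $2$, while the right-hand side already carries an explicit additive $m$; this makes the inequality close to tight for small parameters, so the finite exceptional list at the end of the argument will be the delicate step, and one must resist the temptation to discard those terms early. Everything else is routine polynomial sign analysis following the pattern established earlier in the appendix.
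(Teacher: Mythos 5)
Your plan matches the paper's proof of this lemma: the same telescoping of the second difference of $k^*$ (the constant terms cancel), the same ceiling bounds ($\lceil x\rceil\ge x$ on the positive terms, $\lceil x\rceil\le x+1$ on the negative ones), reduction to nonnegativity of an explicit polynomial $A(m,n)$, an elementary sign analysis, and a finite list of small cases checked directly against the original ceiling inequality (which is essential, since $A$ is in fact negative at $(m,n)=(2,2)$ even though the statement holds there). The only immaterial differences are in the endgame — the paper analyzes $A$ as a polynomial in $n$, bounding the high-degree terms via $n\ge 2$ and then using coefficient positivity and thresholds, whereas you propose Descartes' rule in $m$ (which also works: the $m$-coefficient sequence has a single sign change for every $n\ge 2$) — and two harmless slips in your commentary: the total additive loss from clearing ceilings is $3$ (namely $2+1$), not $2+2$, and the top-degree terms of $A$ are of type $m^2n^6$ and $m^4n^4$, not $m^3n^4$.
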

\begin{proof}
    We consider
    \begin{align*}
        k^*&(4,4,m,n)- 2k^*(4,4,m-1,n) + k^*(4,4,m-2,n) - \left\lceil \frac{m+1}{m+n+1}\binom{n+4}{4}\right\rceil-m \\
        & = \left\lceil \frac{\binom{m+4}{4}\binom{n+4}{4}}{m+n+1} \right\rceil -2 \left\lceil \frac{\binom{m+3}{4}\binom{n+4}{4}}{m+n} \right\rceil + \left\lceil \frac{\binom{m+2}{4}\binom{n+4}{4}}{m+n-1} \right\rceil - \left\lceil \frac{m+1}{m+n+1}\binom{n+4}{4}\right\rceil-m \\
        & \geq \frac{\binom{m+4}{4}\binom{n+4}{4}}{m+n+1} -2  \frac{\binom{m+3}{4}\binom{n+4}{4}}{m+n} - 2 + \frac{\binom{m+2}{4}\binom{n+4}{4}}{m+n-1} -  \frac{m+1}{m+n+1}\binom{n+4}{4}-1-m \\
        & = \frac{A(m,n)}{288(m+n+1)(m+n)(m+n-1)}
    \end{align*}
    where
    \begin{align*}
        A(m,n) = & \left(6\,m^{2}+6\,m\right)n^{6}+\left(8\,m^{3}+54\,m^{2}+46\,m\right)n^{
      5}+\left(3\,m^{4}+74\,m^{3}+147\,m^{2}+76\,m\right)n^{4}+\\
      &\left(30\,m^{4}+
      220\,m^{3}+60\,m^{2}-418\,m-864\right)n^{3}+\left(105\,m^{4}+190\,m^{3}-
      1125\,m^{2}-2938\,m\right)n^{2}\\
      &+\left(150\,m^{4}-972\,m^{3}-2886\,m^{2}+ 252\,m+864\right)n-216\,m^{4}-1008\,m^{3}+216\,m^{2}+1008\,m.
    \end{align*}
    Since $n \geq 2$ and the first four coefficients of $A(m,n)$ as univariate polynomial in $\bbC[m][n]$ are positive, we bound 
    \begin{align*}
        & \left(6\,m^{2}+6\,m\right)n^{6} + \left(8\,m^{3}+54\,m^{2}+46\,m\right)n^{
      5} + \left(3\,m^{4}+74\,m^{3}+147\,m^{2}+76\,m\right)n^{4} \\
      &\quad \geq 16\left(6\,m^{2}+6\,m\right)n^2 + 8\left(8\,m^{3}+54\,m^{2}+46\,m\right)n^2 + 4\left(3\,m^{4}+74\,m^{3}+147\,m^{2}+76\,m\right)n^2 
    \end{align*}
    and $\left(30\,m^{4}+
      220\,m^{3}+60\,m^{2}-418\,m-864\right)n^{3} \geq 8\left(30\,m^{4}+
      220\,m^{3}+60\,m^{2}-418\,m-864\right)$. Hence
    \begin{align*}
        A(m,n) \geq & \left(117\,m^{4}+550\,m^{3}-9\,m^{2}-2170\,m\right)n^{2}+\\
        &\left(150\,m^{
       4}-972\,m^{3}-2886\,m^{2}+252\,m+864\right)n\\
       &+24\,m^{4}+752\,m^{3}+696\,m
       ^{2}-2336\,m-6912
    \end{align*}
    The coefficients of the latter polynomial are all positive for $m \geq 9$ from which we deduce that $A(m,n) \geq 0$. For $2 \leq m \leq 8$, the leading term is positive hence there is $N(m) \geq 0$ such that $A(m,n) \geq 0$ for $n \geq N(m)$ with
    \[
  N(m) = \begin{cases}
   9 & \text{ if } m = 2 \\
                2 & \text{ if } m \in \{3,\ldots,8\}.
            \end{cases}
    \]
    Hence we are left with the cases $m = 2$ and $n \in \{2,\ldots,7\}$ for which the statement is checked directly.
\end{proof}

\begin{lemma}
	\label{conto: kappa alto e kappa basso e differenze}
	If $n\ge m\ge 2$ and $(m,n)\neq (2,2)$, then \[
	k_*(4,4;m,n)-k_*(4,4;m-1,n)\le k^*(3,4;m,n)
	.\]
	\begin{proof}
		We bound
		\begin{align*}
			&k_*(4,4;m,n)-k_*(4,4;m-1,n)- k^*(3,4;m,n)\\
			&=\rd{\frac{\binom{m+4}{4}\binom{n+4}{4}}{m+n+1}}-\rd{\frac{\binom{m+3}{4}\binom{n+4}{4}}{m+n}}-\ru{\frac{\binom{m+3}{3}\binom{n+4}{4}}{m+n+1}}+m+n\\
			&\le\frac{\binom{m+4}{4}\binom{n+4}{4}}{m+n+1}-\frac{\binom{m+3}{4}\binom{n+4}{4}-(m+n-1)}{m+n}-\frac{\binom{m+3}{3}\binom{n+4}{4}}{m+n+1}+m+n=\frac{A(m,n)}{576(m+n+1)(m+n)},
		\end{align*}
		where
\begin{align*}
A&(m,n)=\left(-m^{4}-6\,m^{3}-11\,m^{2}-6\,m\right)n^{4}+\left(-10\,m^{4}-60\,m^{3}-110\,m^{2}-60\,m+576\right)n^{3}\\
&+\left(-35\,m^{4}-210\,m^{3}-385\,m^{2}+1518\,m+1152\right)n^{2}+\left(-50\,m^{4}-300\,m^{3}+1178\,m^{2}+2004\,m\right)n\\
&-24\,m^{4}+432\,m^{3}+888\,m^{2}-144\,m-576.
\end{align*}
Since $n \geq m$ and the leading coefficient is always negative, we can write
\begin{align*}
    A(m,n) &\leq \left(-10\,m^{4}-60\,m^{3}-110\,m^{2}-60\,m+576\right)n^{3}\\
    &+\left(-35\,
       m^{4}-210\,m^{3}-385\,m^{2}+1518\,m+1152\right)n^{2}\\
       &+\left(-50\,m^{4}-
       300\,m^{3}+1178\,m^{2}+2004\,m\right)n\\
       &-m^{8}-6\,m^{7}-11\,m^{6}-6\,m^{5
       }-24\,m^{4}+432\,m^{3}+888\,m^{2}-144\,m-576.
\end{align*}
The coefficients of the latter univariate polynomial in $\bbC[m][n]$ are negative for $m \geq 4$. For $m \in \{2,3\}$, since the leading coefficient is negative, there exists $N(m)$ such that the polynomial is negative for $n \geq N(m)$ where
\[
    N(m) = \begin{cases}
            4 & \text{ if } m = 2\\
            2 & \text{ if } m =3.
    \end{cases}
\]
Hence we are left with the cases $m=2$ and $n \in \{2,3\}$ for which the statement is checked directly.
	\end{proof}
\end{lemma}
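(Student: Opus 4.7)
The inequality to prove is a purely arithmetic one involving floors and ceilings of ratios of products of binomial coefficients. The plan is the standard one used throughout Appendix B: bound the three floor/ceiling terms by elementary linear functions of the underlying rationals, put everything over a common denominator, and then show that the resulting numerator polynomial $A(m,n)$ is non-positive in the region $n\ge m\ge 2$, $(m,n)\ne(2,2)$.

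First I would unfold the definitions:
\[
k_*(4,4;m,n)-k_*(4,4;m-1,n)-k^*(3,4;m,n)
=\left\lfloor\tfrac{\binom{m+4}{4}\binom{n+4}{4}}{m+n+1}\right\rfloor
-\left\lfloor\tfrac{\binom{m+3}{4}\binom{n+4}{4}}{m+n}\right\rfloor
-\left\lceil\tfrac{\binom{m+3}{3}\binom{n+4}{4}}{m+n+1}\right\rceil+(m+n),
\]
after the $-(m{+}n{+}1)$ and $+(m{+}n)$ terms coming from the definitions of $k_*$ and $k^*$ cancel with the one remaining $-(m{+}n{+}1)$ and produce the $+(m+n)$ above. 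I would then apply the three safe inequalities $\lfloor x\rfloor\le x$ on the first term, $\lfloor a/b\rfloor\ge(a-b+1)/b$ (equivalently $-\lfloor a/b\rfloor\le-(a-b+1)/b$) on the second term, and $\lceil x\rceil\ge x$ on the third. Clearing denominators over the common denominator $576(m+n+1)(m+n)$ produces a polynomial numerator $A(m,n)\in\mathbb Z[m,n]$ of bidegree $(4,4)$, and the desired inequality reduces to $A(m,n)\le 0$.

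The main step is then to establish $A(m,n)\le 0$ in the specified region. Viewing $A(m,n)$ as a polynomial in $n$ with coefficients in $\mathbb Z[m]$, the leading coefficient in $n^4$ will be a negative polynomial in $m$ for $m\ge 2$ (essentially $-(m+1)(m+2)(m+3)m$ up to a constant, coming from $\binom{m+4}{4}-\binom{m+3}{4}-\binom{m+3}{3}$-type cancellations). Using the hypothesis $n\ge m$, I would dominate the contribution of the highest-degree term by the other summands, rewriting each $n^k$-coefficient so as to exploit $n\ge m$. Equivalently, since the leading coefficient in $n$ is negative for every fixed $m\ge 2$, for each $m$ there exists a threshold $N(m)$ such that $A(m,n)\le 0$ for $n\ge N(m)$; I would verify that for all $m\ge 3$ one has $N(m)\le m$, so that the hypothesis $n\ge m$ already places us in the region where $A$ is non-positive.

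The hard part will be a clean handling of the remaining small cases. For $m=2$, the polynomial $A(2,n)$ is an explicit univariate polynomial of degree $4$ in $n$ with negative leading term, so I would determine its largest real root numerically (as in the other lemmas of the appendix, using \texttt{solveSystem}) to obtain $N(2)$, and then verify the finitely many cases $m=2$, $n\in\{2,\dots,N(2)-1\}$ by direct computation; the exceptional case $(m,n)=(2,2)$ excluded in the statement will appear here, reflecting that it is genuinely a case where the naive bound fails. Similarly, for $m=3$ I would directly check any residual cases $n=3$, and for $m=2$ the cases $n\in\{3,\dots,N(2)-1\}$. The main obstacle is purely bookkeeping: ensuring that the rough upper bounds used in Step~2 are tight enough that the universal polynomial estimate covers all $n\ge m$ except for a short, explicit list of small pairs $(m,n)$, among which $(2,2)$ is precisely the one that must be excluded from the hypothesis.
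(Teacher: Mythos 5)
Your proposal is correct and follows essentially the same route as the paper: unfold $k_*$ and $k^*$, apply the standard bounds $\lfloor x\rfloor\le x$, $\lfloor a/b\rfloor\ge (a-b+1)/b$, $\lceil x\rceil\ge x$ to reduce to the non-positivity of the numerator $A(m,n)$ over $576(m+n+1)(m+n)$, and then exploit the negative leading coefficient $-m(m+1)(m+2)(m+3)$ in $n$ together with $n\ge m$ (the paper implements your ``threshold $N(m)\le m$'' idea by replacing $n^4$ with $m^4$ in the top term and checking coefficient signs, with $N(2)=4$, $N(3)=2$ and a direct check of the leftover small pairs). Your observation that $(2,2)$ is a genuine exception is also accurate, since there the left-hand side equals $26$ and the right-hand side $25$.
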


\begin{lemma}
	\label{conto: ultimo conto}
	If $m$ and $n$ are positive integers, then \begin{enumerate}
		\item\label{item: quello con m e n} $1+k_*(4,4;m,n)-k_*(4,4;m-1,n)\ge k^*(4,4;m,n)-k^*(4,4;m-1,n)$ and
		\item\label{item: puoi scambiare, tanto k44 è simmetrico} $1+k_*(4,4;1,n)-k_*(4,4;1,n-1)\ge k^*(4,4;1,n)-k^*(4,4;1,n-1)$.
	\end{enumerate}
	\begin{proof}By the definition,  it is apparent that $$1+k_*(4,4;m,n)\ge k^*(4,4;m,n)\mbox{ and }k^*(4,4;m-1,n)\ge k_*(4,4;m-1,n) ,$$
		so the first statement holds. If we write down part \eqref{item: quello con m e n} for $n=1$ we obtain
		$$1+k_*(4,4;m,1)-k_*(4,4;m-1,1)\ge k^*(4,4;m,1)-k^*(4,4;m-1,1).$$
		Note that $k_*(4,4;m,n)=k_*(4,4;n,m)$, so we can swap the last two arguments in each instance to get
		$$1+k_*(4,4;1,m)-k_*(4,4;1,m-1)\ge k^*(4,4;1,m)-k^*(4,4;1,m-1),$$
		hence part \eqref{item: puoi scambiare, tanto k44 è simmetrico} follows.
	\end{proof}
\end{lemma}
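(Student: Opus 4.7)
The plan is to prove both inequalities directly from the definitions, using only the elementary fact that for any rational $x$ one has $\lceil x\rceil - \lfloor x\rfloor \in\{0,1\}$. Translating this via the definitions of $k^*$ and $k_*$, we get $k^*(c,d;m,n)-k_*(c,d;m,n)\in\{0,1\}$ for every positive integers $c,d,m,n$.

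For part (1), I would observe that the two ingredients are completely trivial: by the above, $1+k_*(4,4;m,n)\ge k^*(4,4;m,n)$ (since the ceiling exceeds the floor by at most one), and $k^*(4,4;m-1,n)\ge k_*(4,4;m-1,n)$ (since the ceiling is at least the floor). Summing these two inequalities and rearranging yields exactly the claimed bound
\[
1+k_*(4,4;m,n)-k_*(4,4;m-1,n)\ge k^*(4,4;m,n)-k^*(4,4;m-1,n).
\]

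For part (2), the idea is to exploit the symmetry of the expression $\binom{m+4}{4}\binom{n+4}{4}/(m+n+1)$ in $m$ and $n$: this forces $k_*(4,4;m,n)=k_*(4,4;n,m)$ and $k^*(4,4;m,n)=k^*(4,4;n,m)$. Specialising part (1) to the case $n=1$ gives
\[
1+k_*(4,4;m,1)-k_*(4,4;m-1,1)\ge k^*(4,4;m,1)-k^*(4,4;m-1,1),
\]
and then swapping the two last arguments via the symmetry and relabeling $m$ as $n$ produces part (2).

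There is no real obstacle here: the statement is a purely combinatorial consequence of the ceiling/floor definitions together with the symmetry of the relevant binomial expression. In particular no arithmetic bound as in the earlier appendix lemmas is needed.
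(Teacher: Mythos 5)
Your proof is correct and follows essentially the same route as the paper's. Part (1) is the same observation that $1+k_*\ge k^*$ and $k^*\ge k_*$ (which the paper phrases as "apparent from the definition" and you justify by $\lceil x\rceil-\lfloor x\rfloor\in\{0,1\}$), and part (2) is the identical specialization-plus-symmetry argument.
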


\begin{lemma}
	\label{conto: quello di prima era il penultimo}
	If $n\ge 4$, then $k_*(4,4;1,n)-k_*(4,4;1,n-1)\le k^*(4,3;1,n)$.
	\begin{proof}
	    This is a special case of Lemma \ref{conto: ultimo conto} by inverting the role of $m$ and $n$ and replacing $n = 1$. 
	\end{proof}
\end{lemma}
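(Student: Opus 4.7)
The statement is a purely arithmetic inequality about integer-valued expressions involving floors and ceilings, quite similar in flavor to Lemma~\ref{conto: kappa alto e kappa basso e differenze}. My plan is to establish it by direct computation rather than by appealing to a previous lemma as a special case, since the analogue of Lemma~\ref{conto: kappa alto e kappa basso e differenze} would require $m\geq 2$, which fails here ($m=1$).

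First I would expand the three quantities using the definitions from the notation block:
\[
k_*(4,4;1,n) = \left\lfloor \frac{5\binom{n+4}{4}}{n+2}\right\rfloor - (n+2), \quad k_*(4,4;1,n-1) = \left\lfloor \frac{5\binom{n+3}{4}}{n+1}\right\rfloor - (n+1),
\]
and $k^*(4,3;1,n) = \left\lceil \tfrac{5\binom{n+3}{3}}{n+2}\right\rceil - (n+2)$. Substituting, the desired inequality $k_*(4,4;1,n)-k_*(4,4;1,n-1)\le k^*(4,3;1,n)$ becomes the claim that
\[
\left\lfloor \tfrac{5\binom{n+4}{4}}{n+2}\right\rfloor - \left\lfloor \tfrac{5\binom{n+3}{4}}{n+1}\right\rfloor - \left\lceil \tfrac{5\binom{n+3}{3}}{n+2}\right\rceil + (n+1)\le 0.
\]

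Next I would remove the rounding using the standard bounds $\lfloor x\rfloor \le x$, $-\lfloor a/(n+1)\rfloor \le -a/(n+1)+n/(n+1)$, and $-\lceil y\rceil \le -y$, reducing the problem to a rational-function inequality. After simplifying via $\binom{n+4}{4}/(n+2) = (n+4)(n+3)(n+1)/24$, $\binom{n+3}{4}/(n+1) = (n+3)(n+2)n/24$, and $\binom{n+3}{3}/(n+2) = (n+3)(n+1)/6$, and clearing the common denominator $24(n+1)$, the left-hand side is bounded above by $-\bigl(5n^3 - 4n^2 - 57n - 24\bigr)/\bigl(24(n+1)\bigr)$.

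The final step is to check that the cubic $P(n):= 5n^3 - 4n^2 - 57n - 24$ is non-negative for $n\ge 4$. Direct evaluation gives $P(4)=320-64-228-24=4>0$, and the derivative $P'(n)=15n^2-8n-57$ satisfies $P'(4)=151>0$, so $P$ is increasing on $[4,\infty)$. This concludes the proof. The only mild obstacle is arranging the floor/ceiling relaxations tightly enough so that the bound holds already at the boundary case $n=4$ rather than only for larger $n$; a cruder estimate (e.g.\ $\lceil y\rceil \ge y$ replaced by $\lceil y \rceil \ge y-\tfrac{n+1}{n+2}$ in the wrong place) would push the threshold higher and force us to treat $n=4$ separately.
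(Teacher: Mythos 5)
Your proposal is correct, but it follows a genuinely different route from the paper. The paper's proof is a one-line reduction: using the symmetries $k_*(4,4;1,n)=k_*(4,4;n,1)$ and $k^*(4,3;1,n)=k^*(3,4;n,1)$, it declares the statement a special case of an earlier arithmetic lemma (the text cites Lemma \ref{conto: ultimo conto} ``by inverting the role of $m$ and $n$ and replacing $n=1$''). You instead verify the inequality directly: expanding the definitions, the claim becomes $\rd{\tfrac{5\binom{n+4}{4}}{n+2}}-\rd{\tfrac{5\binom{n+3}{4}}{n+1}}-\ru{\tfrac{5\binom{n+3}{3}}{n+2}}+(n+1)\le 0$, and after the relaxations $\rd{x}\le x$, $\rd{a/(n+1)}\ge (a-n)/(n+1)$, $\ru{y}\ge y$ this is bounded by $-\tfrac{5n^3-4n^2-57n-24}{24(n+1)}$, which is nonpositive exactly from $n=4$ on; I checked the algebra and the evaluation $P(4)=4>0$, $P'(n)>0$ for $n\ge 4$, and it is all correct. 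Your caution about the reduction route is well placed: the lemma whose statement actually has the right shape after swapping the factors is Lemma \ref{conto: kappa alto e kappa basso e differenze}, but its hypothesis $n\ge m\ge 2$ fails once the second argument is specialized to $1$, and Lemma \ref{conto: ultimo conto} as stated is a lower bound involving only bidegree $(4,4)$ quantities, so it does not literally yield this inequality; the paper's citation is at best cryptic, and your computation is in fact the explicit estimate the authors left in a commented-out block of the source. What your approach buys is a self-contained, checkable verification (at the cost of some floor/ceiling bookkeeping); what the paper's approach would buy, if the cited reduction were made precise, is brevity and the reuse of an already proven bound.
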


\begin{lemma}
	\label{conto: o il terzultimo}
	If $n\ge 2$, then $\vdim\LL_{1\times n}^{4,3}(3,2^{k_*(4,4;1,n)-k_*(4,4;1,n-1)})\le k_*(4,4;1,n-1)$.
	\begin{proof}
	    	We bound
		\begin{align*}
			&\vdim\LL_{1\times n}^{4,3}(3,2^{k_*(4,4;1,n)-k_*(4,4;1,n-1)})- k_*(4,4;1,n-1)\\
			&\le 5\binom{n+3}{3}-\binom{n+3}{2}-(n+2)\left(\frac{5\binom{n+4}{4}-n-1}{n+2}-n-2\right)+(n+1)\left(\frac{5\binom{n+3}{4}}{n+1}-n-1\right)\\
			&= -\binom{n+3}{2}+n+1+(n+2)^2-(n+1)^2=\frac{-n^2 + n + 2}{2}\le 0.\qedhere
		\end{align*}
	\end{proof}
\end{lemma}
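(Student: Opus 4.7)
The plan is a direct computation in the spirit of all the arithmetic lemmas in Appendix \ref{appendix: contacci}: expand the virtual dimension, bound the two floor expressions in opposite directions, cancel the degree-$4$ terms via Pascal's identity, and check that what remains is a quadratic in $n$ which is non-positive on $\{n\ge 2\}$.

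First, using Definition \ref{def: edim} with $m=1$ and $\LL=\LL^{4,3}_{1\times n}$, a $3$-fat point imposes $\binom{n+3}{2}$ conditions and a $2$-fat point imposes $n+2$ conditions. Hence
\[
\vdim\LL_{1\times n}^{4,3}\bigl(3,2^{k_*(4,4;1,n)-k_*(4,4;1,n-1)}\bigr)-k_*(4,4;1,n-1)
=5\binom{n+3}{3}-\binom{n+3}{2}-(n+2)k_*(4,4;1,n)+(n+1)k_*(4,4;1,n-1).
\]
Recall $k_*(4,4;1,j)=\bigl\lfloor\tfrac{5\binom{j+4}{4}}{j+2}\bigr\rfloor-(j+2)$. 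Since a quotient of integers satisfies $\lfloor a/b\rfloor\ge (a-b+1)/b$, and on the other hand $\lfloor a/b\rfloor\le a/b$, I would bound the subtracted term from below (using the first inequality on $k_*(4,4;1,n)$) and the added term from above (using the second inequality on $k_*(4,4;1,n-1)$).

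The key simplification is telescopic: after plugging in these bounds, the coefficient of degree $4$ polynomials in $n$ is exactly $5\binom{n+3}{3}-5\binom{n+4}{4}+5\binom{n+3}{4}$, which vanishes by Pascal's rule $\binom{n+4}{4}=\binom{n+3}{4}+\binom{n+3}{3}$. What is left is a low-degree expression in $n$: one gets
\[
\vdim\LL_{1\times n}^{4,3}\bigl(3,2^{k_*(4,4;1,n)-k_*(4,4;1,n-1)}\bigr)-k_*(4,4;1,n-1)\le -\binom{n+3}{2}+(n+1)+(n+2)^2-(n+1)^2,
\]
which simplifies to $\tfrac12(-n^2+n+2)=-\tfrac12(n-2)(n+1)$. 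The conclusion then follows immediately for $n\ge 2$.

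I do not expect any serious obstacle: the whole thing is bookkeeping of floor inequalities plus a Pascal cancellation, which is the pattern of essentially every lemma in Appendix \ref{appendix: contacci}. The only thing to be careful about is the \emph{direction} of the floor bounds (lower bound on the term with the minus sign, upper bound on the term with the plus sign) so that each step preserves an inequality of the correct sign; a reversal here would leave an $n^4$ remainder rather than a quadratic and the argument would break.
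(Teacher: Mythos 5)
Your proposal is correct and follows exactly the same route as the paper's proof: the same expansion of the virtual dimension, the same one-sided floor bounds $\lfloor a/b\rfloor\ge (a-b+1)/b$ on $k_*(4,4;1,n)$ and $\lfloor a/b\rfloor\le a/b$ on $k_*(4,4;1,n-1)$, the same Pascal cancellation of the degree-$4$ terms, and the same final quadratic $\tfrac12(-n^2+n+2)$. Your remark on being careful with the direction of the floor bounds is exactly the right point to flag, and the computation indeed goes through.
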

\end{appendix}

\Fra{
\bibliographystyle{alpha}
\bibliography{defSV_collapsing_sub.bib}

\newcommand{\etalchar}[1]{$^{#1}$}
\begin{thebibliography}{BCC{\etalchar{+}}18}

\bibitem[AB09]{AboBra09}
H.~Abo and M.~C. Brambilla.
\newblock Secant varieties of {S}egre-{V}eronese varieties
  $\mathbb{P}^m\times\mathbb{P}^n$ embedded by $\mathcal{O}(1,2)$.
\newblock {\em Experimental Mathematics}, 18(3):369--384, 2009.

\bibitem[AB13]{AboBra13}
H.~Abo and M.~C. Brambilla.
\newblock On the dimensions of secant varieties of {S}egre-{V}eronese
  varieties.
\newblock {\em Annali di Matematica Pura ed Applicata}, 192(1):61--92, 2013.

\bibitem[Abo10]{Abo10}
H.~Abo.
\newblock On non-defectivity of certain {S}egre--{V}eronese varieties.
\newblock {\em Journal of Symbolic Computation}, 45(12):1254--1269, 2010.

\bibitem[Abr08]{Abrescia}
S.~Abrescia.
\newblock About the defectivity of certain {S}egre-{V}eronese varieties.
\newblock {\em Canadian Journal of Mathematics}, 60(5):961--974, 2008.

\bibitem[AH95]{AH}
J.~Alexander and A.~Hirschowitz.
\newblock Polynomial interpolation in several variables.
\newblock {\em Journal of Algebraic Geometry}, 4(2):201--222, 1995.

\bibitem[BBC12]{BalBerCat12}
E.~Ballico, A.~Bernardi, and M.~V. Catalisano.
\newblock Higher secant varieties of $\mathbb{P}^n\times\mathbb{P}^1$ embedded
  in bi-degree $(a,b)$.
\newblock {\em Communications in algebra}, 40(10):3822--3840, 2012.

\bibitem[BCC11]{BCC}
A.~Bernardi, E.~Carlini, and M.~V. Catalisano.
\newblock Higher secant varieties of $\mathbb{P}^n\times \mathbb{P}^m$ embedded
  in bi-degree $(1, d)$.
\newblock {\em Journal of Pure and Applied Algebra}, 215:2853--2858, 2011.

\bibitem[BCC{\etalchar{+}}18]{guida}
A.~Bernardi, E.~Carlini, M.V. Catalisano, A.~Gimigliano, and A.~Oneto.
\newblock The hitchhiker guide to: secant varieties and tensor decomposition.
\newblock {\em Mathematics}, 6:314, 2018.

\bibitem[BO08]{BraOtt08}
M.~C. Brambilla and G.~Ottaviani.
\newblock On the {A}lexander-{H}irschowitz theorem.
\newblock {\em Journal of Pure and Applied Algebra}, 212(5):1229--1251, 2008.

\bibitem[Boc05]{Bocci}
C.~Bocci.
\newblock Special effect varieties in higher dimension.
\newblock {\em Collectanea Mathematica}, 56(3):299--326, 2005.

\bibitem[CC03]{CarChi03}
E.~Carlini and J.~Chipalkatti.
\newblock On {W}aring's problem for several algebraic forms.
\newblock {\em Commentarii Mathematici Helvetici}, 78(3):494--517, 2003.

\bibitem[CGG05]{CatGerGim05}
M.~V. Catalisano, A.~V. Geramita, and A.~Gimigliano.
\newblock Higher secant varieties of {S}egre-{V}eronese varieties.
\newblock In {\em Projective varieties with unexpected properties}, pages
  81--107. Walter de Gruyter, Berlin, 2005.

\bibitem[CGG07]{catalisano2007segre}
M.~V. Catalisano, A.~V. Geramita, and A.~Gimigliano.
\newblock Segre-{V}eronese embeddings of
  {$\mathbb{P}^1\times\mathbb{P}^1\times\mathbb{P}^1$} and their secant
  varieties.
\newblock {\em Collectanea Mathematica}, 58(1):1--24, 2007.

\bibitem[CGG08]{CatGerGim08}
M.~V. Catalisano, A.~V. Geramita, and A.~Gimigliano.
\newblock On the ideals of secant varieties to certain rational varieties.
\newblock {\em Journal of Algebra}, 319:1913--1931, 2008.

\bibitem[CGO14]{carlinifourlectures}
E.~Carlini, N.~Grieve, and L.~Oeding.
\newblock Four lectures on secant varieties.
\newblock In {\em Connections between algebra, combinatorics, and geometry},
  volume~76 of {\em Springer Proceedings in Mathematics and Statistics}, pages
  101--146. Springer, New York, 2014.

\bibitem[CM21]{casarotti2019non}
A.~Casarotti and M.~Mella.
\newblock From non defectivity to identifiability.
\newblock {\em Journal of European Mathematical Society}, 2021.
\newblock Doi 10.4171/JEMS/1198.

\bibitem[DF01]{DioFon01}
C.~Dionisi and C.~Fontanari.
\newblock Grassman defectivity \`a la {T}erracini.
\newblock {\em Le Matematiche}, 56(2):245--255, 2001.

\bibitem[Eva97]{EvainIdea}
L.~Evain.
\newblock Calculs de dimensions de syst\`emes lin\'{e}aires de courbes planes
  par collisions de gros points.
\newblock {\em Comptes Rendus de l'Acad\'{e}mie des Sciences. S\'{e}rie I.
  Math\'{e}matique}, 325(12):1305--1308, 1997.

\bibitem[Eva99]{EvainSHGH}
L.~Evain.
\newblock La fonction de {H}ilbert de la r\'{e}union de {$4^h$} gros points
  g\'{e}n\'{e}riques de {$\p^2$} de m\^{e}me multiplicit\'{e}.
\newblock {\em Journal of Algebraic Geometry}, 8(4):787--796, 1999.

\bibitem[FOV99]{FOV}
H.~Flenner, L.~O'Carroll, and W.~Vogel.
\newblock {\em Joins and intersections}.
\newblock Springer, Berlin Heidelberg, 1999.

\bibitem[Gal19]{Galuppi}
F.~Galuppi.
\newblock Collisions of fat points and applications to interpolation theory.
\newblock {\em Journal of Algebra}, 534:100--128, 2019.

\bibitem[GM19]{GM}
F.~Galuppi and M.~Mella.
\newblock Identifiability of homogeneous polynomials and {C}remona
  transformations.
\newblock {\em Journal f\"{u}r die reine und angewandte Mathematik},
  757:279--308, 2019.

\bibitem[Lan12]{Lan}
J.~M. Landsberg.
\newblock {\em Tensors: geometry and applications}, volume 128 of {\em Graduate
  Studies in Mathematics}.
\newblock American Mathematical Society, Providence, RI, 2012.

\bibitem[Ott07]{Ott07}
G.~Ottaviani.
\newblock Symplectic bundles on the plane, secant varieties and {L}\"{u}roth
  quartics revisited.
\newblock In {\em Vector bundles and low codimensional subvarieties: state of
  the art and recent developments}, volume~21 of {\em Quaderni di matematica},
  pages 315--352. Dept. Math., Seconda Univ. Napoli, Caserta, 2007.

\end{thebibliography}
}

\end{document}